\let\cal\mathcal
\let\hat\widehat
\let\tilde\widetilde
\let\phi\varphi
\let\epsilon\varepsilon
\def\Q{{\bf Q}} 
\def\Z{{\bf Z}}
\def\C{{\bf C}}
\def\N{{\bf N}}
\def\R{{\bf R}}
\def\A{{\bf A}}
\def\E{{\bf E}}
\def\B{{\bf B}}
\def\O{{\cal O}}
\def\G{{\cal G}}
\def\Ker{{\mathrm{ker}}}
\def\k{{\bf k}}
\def\D{{\bf D}}
\def\M{{\bf M}}
\def\Mat{{\mathrm{Mat}}}
\def\Fil{{\mathrm{Fil}}}
\def\Lie{{\mathrm{Lie}}}
\def\pa{{\mathrm{pa}}}
\def\coker{{\mathrm{coker}}}
\def\Mod{{\mathrm{Mod}}}
\newcommand{\pscal}[1]{\langle #1 \rangle}
\def\Zp{{\Z_p}}
\def\Qp{{\Q_p}}
\def\At{{\tilde{\bf{A}}}}
\def\Atplus{{\tilde{\bf{A}}^+}}
\def\Bt{{\tilde{\bf{B}}}}
\def\Btplus{{\tilde{\bf{B}}^+}}
\def\Et{{\tilde{\bf{E}}}}
\def\Etplus{{\tilde{\bf{E}}^+}}
\def\Bdr{{\bf{B}_{\mathrm{dR}}}}
\def\Bdrplus{{\bf{B}_{\mathrm{dR}}^+}}
\def\Gal{{\mathrm{Gal}}}
\def\GL{{\mathrm{GL}}}
\def\k{{\mathbf{k}}}
\def\la{{\mathrm{la}}}
\author{Léo Poyeton}
\address{UMPA de l'ENS de Lyon \\
UMR 5669 du CNRS}
\email{leo.poyeton@ens-lyon.fr}
\urladdr{perso.ens-lyon.fr/leo.poyeton/}
\date{\today}
\title{$(\phi,\tau)$-modules différentiels et représentations potentiellement semi-stables}
\begin{document}

\begin{abstract}
Soit $K$ un corps $p$-adique et soit $V$ une représentation $p$-adique de $\G_K = \Gal(\overline{K}/K)$. La surconvergence des $(\phi,\tau)$-modules nous permet d'attacher à $V$ un $\phi$-module différentiel à connexion $D_{\tau,\mathrm{rig}}^\dagger(V)$ sur l'anneau de Robba $\B_{\tau,\mathrm{rig},K}^\dagger$. On montre dans cet article comment retrouver les invariants $D_{\mathrm{cris}}(V)$ et $D_{\mathrm{st}}(V)$ à partir de $D_{\tau,\mathrm{rig}}^\dagger(V)$, et comment caractériser les représentations potentiellement semi-stables, ainsi que celles de $E$-hauteur finie, à partir de la connexion.
\end{abstract}

\begin{altabstract}
Let $K$ be a $p$-adic field and let $V$ be a $p$-adic representation of $\G_K=\Gal(\overline{K}/K)$. The overconvergence of $(\phi,\tau)$-modules allows us to attach to $V$ a differential $\phi$-module $D_{\tau,\mathrm{rig}}^\dagger(V)$ on the Robba ring $\B_{\tau,\mathrm{rig},K}^\dagger$ that comes equipped with a connection. We show in this paper how to recover the invariants $D_{\mathrm{cris}}(V)$ and $D_{\mathrm{st}}(V)$ from $D_{\tau,\mathrm{rig}}^\dagger(V)$, and give a characterization of both potentially semi-stable representations of $\G_K$ and finite $E$-height representations in terms of the connection operator.
\end{altabstract}

\maketitle

\setcounter{tocdepth}{2}
\tableofcontents

\setlength{\baselineskip}{18pt}

\section*{Introduction}\label{intro} 
Soit $K$ un corps $p$-adique et soit $\overline{K}$ une clôture algébrique de $K$. Pour étudier les représentations $p$-adiques de $\G_K = \Gal(\overline{K}/K)$, Fontaine a construit dans \cite{Fon90} une équivalence de catégories $V \mapsto D(V)$ entre la catégorie des représentations $p$-adiques de $\G_K$ et celle des $(\phi,\Gamma_K)$-modules étales. Un $(\phi,\Gamma_K)$-module est un espace vectoriel de dimension finie sur un corps local $\B_K$ de dimension $2$, muni d'actions semi-linéaires compatibles d'un Frobenius $\phi$ et de $\Gamma_K$, et on dit qu'il est étale si le Frobenius est de pente $0$. On peut en fait identifier $\B_K$ à l'anneau des séries formelles $\sum_{k \in \Z}a_kT^k$, où la suite $(a_k)$ est une suite bornée d'éléments d'une certaine extension non ramifiée $F'$ de $F$, et telle que $\lim\limits_{k \to -\infty}a_k = 0$. L'extension cyclotomique $K(\zeta_{p^\infty}) = \bigcup_{n \geq 1}K(\zeta_{p^n})$, engendrée par les racines $p^n$-ièmes de l'unité, joue dans cette construction un rôle fondamental. La construction de ces $(\phi,\Gamma_K)$-modules étales repose en effet sur le fait que les représentations $p$-adiques de $H_K=\Gal(\overline{K}/K(\zeta_{p^\infty}))$ sont classifiées par les $\phi$-modules étales sur $\B_K$, et c'est en rajoutant une action de $\Gamma_K = \Gal(K(\zeta_{p^\infty})/K)$ qu'on obtient l'équivalence de catégories $V \mapsto D(V)$ énoncée précédemment. 

Remarquant le rôle particulier joué par les extensions de Kummer vis-à-vis des représentations semi-stables dans les travaux de Breuil \cite{breuil1998schemas} et Kisin \cite{KisinFiso}, Caruso a introduit dans \cite{Car12} une variante des $(\phi,\Gamma_K)$-modules de Fontaine, les $(\phi,\tau)$-modules, en remplaçant l'extension cyclotomique dans la théorie de Fontaine par une extension de Kummer $K_\infty=\bigcup_{n \geq 1}K_n$ où $K_n = K(\pi_n)$ et $(\pi_n)$ est une suite compatible de racines $p^n$-ièmes d'une uniformisante $\pi$ de $K$ fixée. Comme dans le cas cyclotomique, les représentations $p$-adiques de $H_{\tau,K}=\Gal(\overline{K}/K_\infty)$ sont classifiées par les $\phi$-modules étales sur $\B_{\tau,K}$, un corps local de dimension $2$ qu'on peut identifier à l'anneau de séries formelles $\sum_{k \in \Z}a_kT^k$, où la suite $(a_k)$ est une suite bornée d'éléments de $F$, et telle que $\lim\limits_{k \to -\infty}a_k = 0$. Cependant, la comparaison avec la stratégie de Fontaine s'arrête ici, puisque l'extension $K_\infty/K$ n'est pas Galoisienne et on n'a donc pas de groupe à faire agir. L'idée de Caruso est alors d'ajouter une action d'un élément bien choisi $\tau$ de $\G_K$, pas directement sur le $\phi$-module mais après avoir tensorisé au-dessus de $\B_{\tau,K}$ par une certaine $\B_{\tau,K}$-algèbre $\Bt_L$ munie d'une action de $\Gal(K_\infty^{\Gal}/K)$. Caruso montre alors que la catégorie de ces $(\phi,\tau)$-modules étales est équivalente à celle des représentations $p$-adiques de $\G_K$. 

Les anneaux $\B_K$ et $\B_{\tau,K}$ n'ont malheureusement pas d'interprétation analytique satisfaisante, ce qui les rend difficiles à manipuler, mais ils contiennent les anneaux respectifs $\B_K^\dagger$ et $\B_{\tau,K}^\dagger$ des séries surconvergentes, c'est-à-dire qui convergent et sont bornées sur une couronne bordée par le cercle unité. Un des résultats fondamentaux concernant les $(\phi,\Gamma_K)$-modules étales est le théorème principal de \cite{colmez1999representations} qui montre que tout $(\phi,\Gamma_K)$-module étale provient par extension des scalaires d'un $(\phi,\Gamma_K)$-module surconvergent défini sur $\B_K^\dagger$. Le pendant de ce théorème pour les $(\phi,\tau)$-modules a été démontré récemment (voir \cite{gao2016loose} lorsque $k$ est fini et \cite{GP18} pour le cas général), c'est-à-dire que les $(\phi,\tau)$-modules étales sur $\B_{\tau,K}$ proviennent par extension des scalaires d'un $(\phi,\tau)$-module surconvergent sur $\B_{\tau,K}^\dagger$.

La surconvergence des $(\phi,\Gamma_K)$-modules cyclotomiques a notamment permis à Berger \cite{Ber02} d'associer à toute représentation $p$-adique $V$ un $(\phi,\Gamma_K)$-module sur l'anneau de Robba $\B_{\mathrm{rig},K}^\dagger$ constitué des séries $A(T)=\sum_{k \in \Z}a_kT^k$ où la suite $(a_k)$ est une suite d'éléments de $F'$ telle que la série $A(T)$ converge sur une couronne bordée par le cercle unité (on ne suppose plus que la suite $(a_k)$ est bornée), et d'obtenir et de retrouver les invariants $D_{\mathrm{cris}}$ et $D_{\mathrm{st}}$ associés à $V$ via l'étude de ce module. De plus, l'action infinitésimale de $\Gamma_K$ permet de munir ce $(\phi,\Gamma_K)$-module d'une connexion $\nabla$, dont l'étude poussée a permis à Berger de faire le lien avec les $(\phi,N)$-modules filtrés et de montrer comment caractériser les représentations potentiellement semi-stables à partir de la connexion \cite{Ber08}. 

On va dans cet article s'intéresser à des conséquences de la surconvergence des $(\phi,\tau)$-modules, en montrant notamment qu'on peut adapter les constructions de Berger dans \cite{Ber02} et \cite{Ber08} aux $(\phi,\tau)$-modules. Partant d'une représentation $p$-adique $V$ de $\G_K$, la surconvergence des $(\phi,\tau)$-modules permet, en tensorisant le $(\phi,\tau)$-module surconvergent avec l'anneau de Robba $\B_{\tau,\mathrm{rig},K}^\dagger$, d'associer à $V$ un $(\phi,\tau)$-module $D_{\tau,\mathrm{rig}}^\dagger(V)$, dont l'étude permet de retrouver les invariants $D_{\mathrm{cris}}$ et $D_{\mathrm{st}}$ associés à la représentation $V$. On pose $\B_{\tau,\log,K}^\dagger = \B_{\tau,\mathrm{rig},K}^\dagger[\log(T)]$ et $D_{\tau,\log}^\dagger(V)= \B_{\tau,\log,K}^\dagger \otimes_{B_{\tau,\mathrm{rig},K}^\dagger}D_{\tau,\mathrm{rig}}^\dagger(V)$. Si $\lambda$ désigne l'élément défini dans \cite[1.1.1]{KisinFiso} et si prendre les invariants sous $\tau=1$ dans ce qui suit signifie que les éléments sont invariants sous l'action de $\tau$ une fois qu'on a tensorisé par des $\B_{\tau,K}^\dagger$-algèbres sur lesquelles l'action de $\tau$ est bien définie, on a~:

\begin{enonce*}{Théorème A}
Si $V$ est une représentation $p$-adique de $\G_K$, alors 
$$D_{\mathrm{st}}(V) = (D_{\tau,\log}^{\dagger}(V)[1/\lambda])^{\tau=1} \textrm{ et } D_{\mathrm{cris}}(V) = (D_{\tau,\mathrm{rig}}^\dagger(V)[1/\lambda])^{\tau=1}.$$
\end{enonce*}

On dispose également d'isomorphismes de comparaison faisant le lien entre $D_{\mathrm{cris}}$ ou $D_{\mathrm{st}}$ et $D_{\tau,\mathrm{rig}}^\dagger(V)$~: 

\begin{enonce*}{Théorème B}
Soit $V$ une représentation $p$-adique de $\G_K$.
\begin{enumerate}
\item si $V$ est semi-stable, alors 
$$\B_{\tau,\log,K}^{\dagger}[1/\lambda] \otimes_{\B_{\tau,K}^{\dagger}}D_{\tau}^{\dagger}(V) =  \B_{\tau,\log,K}^{\dagger}[1/\lambda] \otimes_F D_{\mathrm{st}}(V)$$
\item si $V$ est cristalline, alors 
$$\B_{\tau,\mathrm{rig},K}^{\dagger}[1/\lambda] \otimes_{\B_{\tau,K}^{\dagger}}D_{\tau}^{\dagger}(V) =  \B_{\tau,\mathrm{rig},K}^{\dagger}[1/\lambda] \otimes_F D_{\mathrm{cris}}(V).$$
\end{enumerate}
\end{enonce*}

Si on souhaite attacher au $(\phi,\tau)$-module $D_{\tau,\mathrm{rig}}^\dagger(V)$ une connexion, il semble naturel de considérer l'action infinitésimale de $\tau^{\Zp}$, mais l'opérateur ainsi obtenu n'est pas défini sur $D_{\tau,\mathrm{rig}}^\dagger(V)$ mais sur un objet plus gros, puisque $\tau$ n'agit pas directement sur $D_{\tau,\mathrm{rig}}^\dagger(V)$. On va en fait montrer qu'on peut renormaliser l'opérateur obtenu en une connexion $N_\nabla$ qui a le bon goût de stabiliser $D_{\tau,\mathrm{rig}}^\dagger(V)$ et donc de lui conférer une structure de $\phi$-module différentiel à connexion, ce qui nous permet de définir la notion de $(\phi,N_\nabla)$-module sur $\B_{\tau,\mathrm{rig},K}^\dagger$ et de construire un foncteur $V \mapsto D_{\tau,\mathrm{rig}}^\dagger(V)$ de la catégorie des représentations $p$-adiques de $\G_K$ dans celle des $(\phi,N_\nabla)$-modules sur $\B_{\tau,\mathrm{rig},K}^\dagger$. L'avantage de considérer des $(\phi,N_\nabla)$-modules sur $\B_{\tau,\mathrm{rig},K}^\dagger$ plutôt que des $(\phi,\tau)$-modules est que la connexion $N_\nabla$ est déjà définie sur le $\phi$-module, ce qui fait qu'on n'a pas besoin de tensoriser par des $\B_{\tau,K}$-algèbres sur lesquelles l'action de $\tau$ peut être compliquée. Il y a cependant plusieurs problèmes à travailler avec des $(\phi,N_\nabla)$-modules sur $\B_{\tau,\mathrm{rig},K}^\dagger$. Le premier problème est qu'on travaille avec des objets définis sur $\B_{\tau,\mathrm{rig},K}^\dagger$, ce qui fait qu'on n'a pas de bonne notion d'intégralité, là où les $(\phi,\tau)$-modules étaient au départ très liés aux modules de Breuil-Kisin : si $V$ est une représentation $p$-adique semi-stable à poids de Hodge-Tate négatifs, alors le $\phi$-module sous-jacent au $(\phi,\tau)$-module associé à $V$ par la théorie de Caruso est en fait engendré par un $\phi$-module sur $\O_F[\![T]\!][1/p]$ qui est exactement le module de Breuil-Kisin associé à $V$ par la théorie de Kisin \cite{KisinFiso}. Le deuxième problème est qu'on peut avoir un isomorphisme de $(\phi,N_\nabla)$-modules $D_{\tau,\mathrm{rig}}^\dagger(V) = D_{\tau,\mathrm{rig}}^\dagger(V')$ sans que $V$ et $V'$ soient isomorphes en tant que représentations. En fait, la proposition \ref{V V' meme Nabla} nous dit exactement quand cela peut se produire~: deux représentations $p$-adiques $V, V'$ de $\G_K$ sont telles que $D_{\tau,\mathrm{rig}}^\dagger(V) = D_{\tau,\mathrm{rig}}^\dagger(V')$ en tant que $(\phi,N_\nabla)$-modules si, et seulement si, il existe un rang $n \geq 0$ tel que $V_{|\G_{K_n}}$ et $V'_{|\G_{K_n}}$ sont isomorphes en tant que représentations de $\G_{K_n}$ (ce qui semble raisonnable si on se rappelle de l'opérateur $N_\nabla$ provient de l'action infinitésimale de $\tau^{\Zp}$). 

Ensuite, en utilisant les $(\phi,\tau)$-modules différentiels à connexion, on peut généraliser les constructions de Kisin dans \cite{KisinFiso}, en utilisant des démonstrations analogues à celles de Berger dans \cite{Ber08} pour les $(\phi,\Gamma)$-modules, et associer à tout $(\phi,N,\G_{M/K})$-module filtré $D$ un $(\phi,\tau)$-module différentiel $\cal{M}(D)$, ce qui nous permet d'obtenir le résultat suivant~:

\begin{enonce*}{Théorème C}
Le foncteur $D \mapsto \cal{M}(D)$, de la catégorie des $(\phi,N,\G_{M/K})$-modules filtrés dans la catégorie des $(\phi,\tau)$-modules différentiels sur $\B_{\tau,\mathrm{rig},K}^\dagger$ dont la connexion associée est localement triviale, est une équivalence de catégories. De plus, le $(\phi,N,\G_{M/K})$-module filtré $D$ est admissible si et seulement si $\cal{M}(D)$ est étale.
\end{enonce*}

Enfin, on s'intéresse aux représentations de $E$-hauteur finie (ce sont celles dont le $(\phi,\tau)$-module associé admet un $(\phi,\tau)$-réseau sur $\O_F[\![T]\!]$ et vérifient une condition technique sur le conoyau du Frobenius), en montrant comment retrouver le théorème principal de \cite{GaoEhauteur} à partir des $(\phi,N_\nabla)$-modules~:

\begin{enonce*}{Théorème D}
Soit $V$ une représentation $p$-adique de $\G_K$. Alors $V$ est de $E$-hauteur finie si, et seulement si, il existe $n \geq 0$ tel que $V_{|\G_{K_n}}$ est semi-stable à poids de Hodge-Tate négatifs.
\end{enonce*}

On peut même montrer qu'un tel $n$ est nécessairement borné par une constante ne dépendant que de $K$. On montre également comment réinterpréter le fait d'être de $E$-hauteur finie en termes de la connexion $N_\nabla$~:

\begin{enonce*}{Théorème E}
Soit $V$ une représentation $p$-adique. Alors $V$ est de $E$-hauteur finie si et seulement si $D_{\tau,\mathrm{rig}}^+(V)$ est unipotent.
\end{enonce*}

On montre également comment passer du $(\phi,\tau)$-module associé à une représentation $V$ et relatif à une extension de Kummer donnée, au $(\phi,\tau)$-module associé à $V$ mais relatif à une autre extension de Kummer, en construisant un anneau $\B_{\tau,\tau',K}$~:

\begin{enonce*}{Théorème F}
Soit $V$ une représentation $p$-adique de $\G_K$. Alors on a 
$$\B_{\tau,\tau',K} \otimes_{\B_{\tau,K}}D(V) = \B_{\tau,\tau',K} \otimes_{\B_{\tau',K}}D'(V).$$
De plus, on peut récupérer $D(V)$ en fonction de $D'(V)$ \textit{via} la formule
$$D(V) = (\B_{\tau,\tau',K} \otimes_{\B_{\tau',K}}D'(V))^{H_{\tau,K}}.$$
\end{enonce*}

\subsection*{Plan de l'article}
Cet article comporte quatre chapitres, chacun divisé en plusieurs sections. Le premier chapitre est consacré à des rappels sur la théorie des $(\phi,\tau)$-modules de Caruso et des anneaux de périodes associés, et à l'exposition de nombreux résultats concernant la théorie des vecteurs localement analytiques dans le cadre des $(\phi,\tau)$-modules. Les vecteurs localement analytiques ont été un ingrédient essentiel dans la démonstration de la surconvergence des $(\phi,\tau)$-modules dans \cite{GP18} et apportent un point de vue très utile pour étudier les $(\phi,\tau)$-modules différentiels. On donne dans le deuxième chapitre la construction des $(\phi,\tau)$-modules différentiels et $(\phi,N_\nabla)$-modules et on y montre les théorèmes A et B. Dans le troisième chapitre, on rappelle les propriétés à connaître en ce qui concerne les $(\phi,N,\G_{M/K})$-modules filtrés et la théorie de Kisin et on démontre le théorème C. Enfin, dans le quatrième et dernier chapitre, on s'intéresse aux représentations de $E$-hauteur finie et on montre comment déduire des constructions du chapitre 2 concernant les $(\phi,N_\nabla)$-modules les théorèmes D et E. On en profite également pour donner une recette qui permet de passer du $(\phi,\tau)$-module associé à une représentation et relatif à une extension de Kummer à son $(\phi,\tau')$-module relatif à une autre extension de Kummer, ce qui est l'objet du théorème F, et on discute de la compatibilité de cette construction avec la notion de $E$-hauteur.
  
\subsection*{Remerciements}  
Une grande partie des résultats de cet article ont été démontrés dans le cadre de ma thèse. Je tiens en particulier à remercier mon directeur, Laurent Berger, pour ses conseils et discussions sur le sujet, ainsi que l'UMPA (UMR 5669 CNRS) pour m'avoir offert d'excellentes conditions de travail durant ma thèse. La fin de la rédaction de cet article a eu lieu au BICMR, que je tiens à remercier ainsi que Ruochuan Liu pour leur hospitalité. Je tiens également à remercier Xavier Caruso pour sa lecture et ses remarques.

\section{$(\phi,\tau)$-modules et vecteurs localement analytiques}
\subsection{Extensions de Kummer et $(\phi,\tau)$-modules}
Soit $K$ un corps $p$-adique, c'est-à-dire un corps de caractéristique $0$, muni d'une valuation discrète pour lequel il est complet et dont le corps résiduel est un corps parfait $k$ de caractéristique $p$, i.e. $K$ est une extension finie totalement ramifiée de $F = W(k)[1/p]$, où $W(k)$ est l'anneau des vecteurs de Witt à coefficients dans $k$. 

Soit $\pi_0 = \pi$ une uniformisante de $\O_K$. On note aussi $E(X) \in \O_F[X]$ le polynôme minimal de $\pi$ sur $F$ qui est un polynôme d'Eisenstein, et on note $e=[K:F]$. On fixe également une suite compatible de racines $p^n$-ièmes de $\pi$, c'est-à-dire une suite $(\pi_n)_{n \geq 0}$ telle que pour tout $n \in \N$, $\pi_{n+1}^p = \pi_n$. On pose $K_n = K(\pi_n)$ et $K_\infty = \bigcup_{n \in \N}K_n$. On appelle extension de Kummer une telle extension $K_\infty/K$. Les extensions $K_n/K$ ne sont pas galoisiennes, en tout cas à partir du moment où $\zeta_{p^n}$ n'est pas dans $K$, et donc $K_{\infty}/K$ n'est pas galoisienne. La clôture galoisienne de $K_{\infty}$ est $L = K_{\infty}\cdot K_{\mathrm{cycl}}$. On note $G_{\infty} = \Gal(L/K)$ et $H_{\infty} = \G_L = \Gal(\overline{K}/L)$, et $\Gamma = \Gal(L/K_{\infty})$ qui s'identifie à $\Gal(K_{\mathrm{cycl}}/(K_\infty \cap K_{\mathrm{cycl}}))$ et donc aussi à un sous-groupe ouvert de $\Z_p^{\times}$. Pour $g \in \G_K$ et pour $n \in \N$, il existe un unique élément $c_n(g) \in \Z/p^n\Z$ tel que $g(\pi_n) = \zeta_{p^n}^{c_n(g)}\pi_n$. Comme $c_{n+1}(g) = c_n(g) \mod p^n$, la suite $(c_n(g))$ définit donc un élément $c(g)$ de $\Zp$. De façon équivalente, la suite $(\pi_n)_{n \in \N}$ définit un élément $\tilde{\pi}$ de l'anneau $\Etplus$, et si $g \in \G_K$, il existe un unique élément $c(g) \in \Zp$ tel que $g(\tilde{\pi}) = \epsilon^{c(g)}\tilde{\pi}$. 

L'application $g \mapsto c(g)$ est en fait un $1$-cocycle (continu) de $\G_K$ dans $\Zp(1)$, tel que $c^{-1}(0) = \Gal(\overline{K}/K_{\infty})$, et vérifie pour $g,h \in \Gal(\overline{K}/K_{\infty})$~:
$$c(gh) = c(g)+\chi_{\mathrm{cycl}}(g)c(h).$$

Par conséquent, si $\Zp \rtimes \Z_p^{\times}$ désigne le produit semi-direct de $\Zp$ par $\Z_p^{\times}$ et où $\Z_p^{\times}$ agit sur $\Zp$ par multiplication, l'application $g \in \G_K \mapsto (c(g),\chi_{\mathrm{cycl}}(g)) \in \Zp \rtimes \Z_p^{\times}$ est un morphisme de groupes de noyau $H_{\infty}$. Le cocycle $c$ se factorise à travers $H_{\infty}$, ce qui nous donne un cocycle qu'on note toujours $c~: G_\infty \to \Zp$ et qu'on appelle le cocycle de Kummer de l'extension $K_\infty/K$. 

Soit maintenant $\tau$ un générateur topologique de $\Gal(L/K_{\mathrm{cycl}})$ tel que $c(\tau)=1$ (c'est donc l'élément correspondant à $(1,1)$ via l'isomorphisme $g \in \G_L \mapsto (c(g),\chi_{\mathrm{cycl}}(g)) \in \Zp \rtimes \Z_p^\times$). La relation entre $\tau$ et $\Gamma$ est donnée par $g\tau g^{-1} = \tau^{\chi_{\mathrm{cycl}}(g)}$. On note également $\tau_n:=\tau^{p^n}$.

Le cas $p=2$ est légèrement différent des autres et le résultat suivant montre pourquoi on devra faire attention et parfois effectuer des modifications pour inclure ce cas de figure~:
\begin{prop}
\label{p=2=pb}
On a~:
\begin{enumerate}
\item Si $K_\infty \cap K_{\mathrm{cycl}} = K$, $\Gal(L/K_\infty)$ et $\Gal(L/K_{\mathrm{cycl}})$ engendrent topologiquement $G_\infty$ ;
\item si $K_\infty \cap K_{\mathrm{cycl}} \neq K$, alors nécessairement $p=2$ et dans ce cas $\Gal(L/K_\infty)$ et $\Gal(L/K_{\mathrm{cycl}})$ engendrent un sous-groupe ouvert de $G_\infty$ d'indice $2$.
\end{enumerate}
En particulier, si $p \neq 2$, alors $K_\infty \cap K_{\mathrm{cycl}} = K$.
\end{prop}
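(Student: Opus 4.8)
The plan is to recast the whole statement in terms of the field $M := K_\infty \cap K_{\mathrm{cycl}}$ and to show that $[M:K] \le 2$, with equality only when $p = 2$. First, by ordinary (infinite) Galois theory the closed subgroup of $G_\infty = \Gal(L/K)$ generated by $\Gal(L/K_\infty)$ and $\Gal(L/K_{\mathrm{cycl}})$ has fixed field $K_\infty \cap K_{\mathrm{cycl}} = M$, hence is $\Gal(L/M)$, of index $[M:K]$ in $G_\infty$; concretely, under the embedding $G_\infty \hookrightarrow \Zp \rtimes \Z_p^\times$ given by $g \mapsto (c(g),\chi_{\mathrm{cycl}}(g))$ these two subgroups become $\Zp \rtimes \{1\}$ (using $c(\tau)=1$) and $\{0\} \rtimes \Gamma$, and they generate $\Zp \rtimes \Gamma$, which is indeed of index $[M:K]$. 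So part (1) is exactly the case $M = K$, and the whole statement — including the last sentence — reduces to the assertion that \emph{if $M \ne K$ then $p = 2$ and $[M:K] = 2$}. Note that $M/K$ is finite, abelian (it lies in $K_{\mathrm{cycl}}/K$) and totally ramified (it lies in $K_\infty/K$, whose layers $K_n/K$ are totally ramified of degree $p^n$, the polynomial $X^{p^n}-\pi$ being Eisenstein), and that $M = \bigcup_n (K_n \cap K_{\mathrm{cycl}})$ with each $K_n \cap K_{\mathrm{cycl}}$ Galois over $K$.

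The heart of the matter is a level-one lemma, valid for any $p$-adic field $K$ with uniformizer $\pi$: one has $K(\pi^{1/p}) \cap K_{\mathrm{cycl}} = K$ \emph{unless} $p = 2$ and $\zeta_4 \notin K$, in which case $K(\sqrt{\pi}) \cap K_{\mathrm{cycl}}$ has degree $1$ or $2$. Indeed $K(\pi^{1/p})/K$ has degree $p$, so $K(\pi^{1/p}) \cap K_{\mathrm{cycl}}$, being Galois over $K$ of degree dividing the prime $p$, is $K$ or all of $K(\pi^{1/p})$; in the latter case $K(\pi^{1/p})/K$ is abelian. This is impossible if $\mu_p \not\subseteq K$ (it would force $\zeta_p \in K(\pi^{1/p})$, so $[K(\zeta_p):K]$ divides both $p$ and $p-1$, hence equals $1$); and if $\mu_p \subseteq K$ — together with $\mu_4 \subseteq K$ when $p=2$ — then $\chi_{\mathrm{cycl}}(\Gal(K_{\mathrm{cycl}}/K))$ lies in $1+p\Zp$ (resp. $1+4\Z_2$), which is procyclic, so $K_{\mathrm{cycl}}/K$ has a \emph{unique} subextension of degree $p$, namely $K(\zeta_{p^{N+1}}) = K(\zeta_{p^N}^{1/p})$ with $N = \max\{m : \mu_{p^m} \subseteq K\}$; but then the Kummer classes of $\pi$ and $\zeta_{p^N}$ in $K^\times/(K^\times)^p$ would generate the same line, forcing $v_K(\pi)\in p\mathbf Z$, which is absurd. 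The only surviving case is $p=2$, $\zeta_4\notin K$.

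Now I bootstrap up the Kummer tower, applying this lemma to each $K_{n-1}$ (with its uniformizer $\pi_{n-1}$). If $p$ is odd, or $p=2$ with $\zeta_4 \in K$, then by induction $K_n \cap K_{\mathrm{cycl}} = K$: were it larger, then since $[K_n:K_{n-1}]=p$ we would get $K_n = (K_n\cap K_{\mathrm{cycl}})\cdot K_{n-1} \subseteq K_{n-1}\cdot K_{\mathrm{cycl}} = (K_{n-1})_{\mathrm{cycl}}$, i.e. $K_{n-1}(\pi_{n-1}^{1/p}) \subseteq (K_{n-1})_{\mathrm{cycl}}$, which by the lemma forces $p=2$ and $\zeta_4 \notin K_{n-1}$ — contradicting $p$ odd, resp. $\zeta_4 \in K \subseteq K_{n-1}$. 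Hence $M = K$, which proves (1) and the final assertion. It remains to treat $p=2$, $\zeta_4\notin K$. First, $\zeta_4\notin K_n$ for all $n$: if $\zeta_4 \in K_n = K_{n-1}(\sqrt{\pi_{n-1}})$ with $\zeta_4\notin K_{n-1}$, then $K_{n-1}(\zeta_4) = K_n$, forcing $-\pi_{n-1} \in (K_{n-1}^\times)^2$, impossible since $v_{K_{n-1}}(-\pi_{n-1})=1$. Next, one shows by induction on $n$ that \emph{every subextension $K'/K$ of $K_n/K$ Galois over $K$ lies in $K_1$}: given such a $K'$, either $K_1 \subseteq K'$ or $K_1 \cap K' = K$, and in either case $K'K_1/K_1$ is Galois and contained in $K_n = (K_1)_{n-1}$, so the inductive statement over the base field $K_1$ (legitimate since $\zeta_4\notin K_1$) puts $K'K_1$ inside $(K_1)_1 = K_2$; but $K_2/K$ is not Galois (else $\zeta_4 \in K_2$) and $K_1$ is its only quadratic subfield (a second one $K''$ would make $K_2 = K_1K''$ a compositum of quadratic Galois extensions, hence Galois), so $K' \in \{K, K_1\} \subseteq K_1$. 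Since each $K_n\cap K_{\mathrm{cycl}}$ is Galois over $K$, this yields $M = \bigcup_n(K_n\cap K_{\mathrm{cycl}}) \subseteq K_1$, so $[M:K] \le 2$. Combining the two cases: $M\ne K$ forces this last case and $[M:K]=2$, so the subgroup generated by $\Gal(L/K_\infty)$ and $\Gal(L/K_{\mathrm{cycl}})$ is open of index $2$ in $G_\infty$.

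The delicate point is precisely the case $p=2$, $\zeta_4\notin K$: there the level-one lemma genuinely fails (for instance $\sqrt 2 \in \Q_2(\zeta_8)$), so one cannot argue level by level and must instead run the double induction above, simultaneously over the tower $K \subset K_1 \subset K_2 \subset \cdots$ and over a shifted base field, keeping track throughout that the intermediate fields $K_j$ still satisfy $\zeta_4 \notin K_j$ so that the lemma's dichotomy applies to them. Everything else — the profinite reduction, the Eisenstein and valuation computations, and the fact that a $\Zp$-extension has a unique subextension of each degree — is routine.
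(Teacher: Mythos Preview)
Your proof is correct and self-contained. The paper itself does not prove this proposition: it simply cites \cite[Lemm. 5.1.2]{Liu08} for part~(1) and \cite[Prop. 4.1.5]{Liu10} for part~(2). Your argument is therefore different in presentation, though the underlying ideas---reducing via Galois theory to bounding the index $[M:K]$ for $M=K_\infty\cap K_{\mathrm{cycl}}$, then using a level-one Kummer/valuation argument and bootstrapping up the tower---are essentially the standard ones behind those references. What your write-up buys is that it makes the mechanism completely explicit, and in particular isolates precisely where and why the case $p=2$, $\zeta_4\notin K$ behaves differently (the failure of procyclicity of $\Z_2^\times$, hence of the uniqueness of the degree-$p$ cyclotomic layer), and handles that case by a clean double induction over the tower and over shifted base fields $K_j$, using throughout that $\zeta_4\notin K_j$. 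One very minor stylistic point: the assertion in your first paragraph that $M/K$ is \emph{finite} is not justified at that stage and is not used anywhere---it is in fact the conclusion of your argument, since you later show $M\subseteq K_1$ (resp.\ $M=K$). You could simply drop that word.
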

\begin{proof}
Pour le premier point, voir \cite[Lemm. 5.1.2]{Liu08}. Pour le deuxième, voir \cite[Prop. 4.1.5]{Liu10}.
\end{proof}

Pour bien faire la différence avec les anneaux définis dans le cadre cyclotomique, on mettra en indice des anneaux relatifs à l'extension de Kummer un symbole $\tau$. Il s'agit simplement d'une notation, les anneaux ne dépendant pas du choix de l'élément $\tau$ mais uniquement de l'extension de Kummer considérée. Ainsi, on notera $H_K = \Gal(\overline{K}/K_{\mathrm{cycl}})$ comme usuellement, et $H_{\tau,K}=\Gal(\overline{K}/K_\infty)$ et, si $A$ est une algèbre munie d'une action de $\G_K$, on pose $A_K = A^{H_K}$ et $A_{\tau,K}=A^{H_{\tau,K}}$.  

On définit $\Etplus = \varprojlim\limits_{x \to x^p} \O_{\C_p} = \{(x^{(0)},x^{(1)},\dots) \in \O_{\C_p}^{\N}~: (x^{(n+1)})^p=x^{(n)}\}$ et on rappelle que $\Etplus$ est naturellement muni d'une structure d'anneau qui en fait un anneau parfait de caractéristique $p$, complet pour la valuation $v_{\E}$ définie par $v_{\E}(x) = v_p(x^{(0)})$. On note $\Et$ son corps des fractions. La théorie du corps des normes (cf. \cite{Win83}) permet d'associer à l'extension $K_\infty/K$ son corps des normes et de le plonger dans $\Et$. La famille $(\zeta_{p^n})$ et la famille $(\pi_n)$ définissent chacune un élément de $\Etplus$, qu'on notera respectivement $\epsilon$ et $\tilde{\pi}$. On pose $\overline{u} = \epsilon -1$, et on rappelle que $v_{\E}(\overline{u}) = \frac{p}{p-1}$. L'image du plongement du corps des normes de $K_{\infty}/K$ dans $\Et$ est alors $\E_{\tau,K} := k((\tilde{\pi}))$. Soit $\E_\tau$ la clôture séparable de $\E_{\tau,K}$ dans $\Et$. Comme $\Gal(\overline{K}/K_{\infty})$ agit trivialement sur $\E_{\tau,K}$, tout élément de $\Gal(\overline{K}/K_{\infty})$ stabilise $\E_\tau$, ce qui nous donne un morphisme $\Gal(\overline{K}/K_{\infty}) \to \Gal(\E_\tau/\E_{\tau,K})$. Par le théorème 3.2.2 de \cite{Win83}, c'est même un isomorphisme.

On pose $\At = W(\Et)$, $\Atplus = W(\Etplus)$, $\Bt = \At[1/p]$ et $\Btplus = \Atplus[1/p]$.

On définit maintenant un anneau $\A_{\tau,K}$ à l'intérieur de $\At$ de la façon suivante~:
$$\A_{\tau,K} = \{\sum_{i \in \Z}a_i[\tilde{\pi}]^i, a_i \in \O_F \lim\limits_{i \to - \infty}a_i = 0 \}.$$
Muni de la valuation $p$-adique $v_p(\sum_{i \in \Z}a_i[\tilde{\pi}]^i) = \min_{i \in \Z}v_p(a_i)$, $\A_{\tau,K}$ est un anneau de valuation discrète qui admet $\E_{\tau,K}$ comme corps résiduel. Si $M/K$ est une extension finie, on note $\E_{\tau,M}$ l'extension de $\E_{\tau,K}$ correspondant à $M\cdot K_\infty/K_\infty$ par la théorie du corps des normes.

\begin{prop}
\label{relevement E_tau en A_tau Hensel}
Si $M$ est une extension finie de $K$, alors $\E_{\tau,M}$ est une extension séparable de $\E_{\tau,K}$ et il existe une unique extension non ramifiée $\B_{\tau,M}/\B_{\tau,K}$, contenue dans $\At$, de corps résiduel $\E_{\tau,M}$ et telle que $\Gal(\B_{\tau,M}/\B_{\tau,K}) \simeq \Gal(\E_{\tau,M}/\E_{\tau,K})$. On note $\A_{\tau,M}$ son anneau des entiers pour la valuation $p$-adique et on pose $\A_{\tau,M}^+~:= \Atplus \cap \A_{\tau,M}$, $\B_{\tau,M}^+~:= \Btplus \cap \B_{\tau,M}$.
\end{prop}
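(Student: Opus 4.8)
The plan is to handle the two assertions separately: the separability of $\E_{\tau,M}/\E_{\tau,K}$ is immediate from the theory of the field of norms, while the existence and uniqueness of $\B_{\tau,M}$ inside $\At$ is an application of Hensel's lemma. First, recall that the field of norms functor of \cite{Win83} identifies the finite extensions of $K_\infty$ (inside $\overline{K}$) with the finite separable extensions of $\E_{\tau,K}$ lying in $\Et$; since $M\cdot K_\infty/K_\infty$ is finite, hence separable as we are in characteristic $0$, the extension $\E_{\tau,M}$ is finite separable over $\E_{\tau,K}$ and sits inside $\E_\tau\subset\Et$. So I may write $\E_{\tau,M}=\E_{\tau,K}[\overline{x}]$ with $\overline{x}\in\Et$ having for minimal polynomial a monic separable $\overline{P}\in\E_{\tau,K}[X]$.

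For existence, I would lift $\overline{P}$ to a monic $P\in\A_{\tau,K}[X]$. Separability of $\overline{P}$ gives $\overline{P}'(\overline{x})\neq 0$ in the field $\Et$, so $P'([\overline{x}])$ is a unit of $\At=W(\Et)$ while $P([\overline{x}])\in p\At$; since $\At$ is $p$-adically separated and complete, Hensel's lemma produces a unique $x\in\At$ with $P(x)=0$ and $x\equiv[\overline{x}]\bmod p$. I then set $\A_{\tau,M}:=\A_{\tau,K}[x]\subset\At$ and $\B_{\tau,M}:=\A_{\tau,M}[1/p]$. One checks that $\A_{\tau,K}[X]/(P)$ is a complete discrete valuation ring with residue field $\E_{\tau,M}$ (it is finite as a module over the complete DVR $\A_{\tau,K}$, with residue ring the field $\E_{\tau,K}[X]/(\overline{P})$, and is $p$-torsion free, hence a local domain), so that the natural surjection $\A_{\tau,K}[X]/(P)\twoheadrightarrow\A_{\tau,M}$ has prime kernel; this kernel cannot be $(p)$ since $\A_{\tau,M}$ has characteristic $0$, so it is zero. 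Thus $\A_{\tau,M}/\A_{\tau,K}$ is finite unramified with residue field $\E_{\tau,M}$ and is its own ring of integers for $v_p$. Reduction modulo $p$ carries a root of $P$ in $\A_{\tau,M}$ to a root of $\overline{P}$ in $\E_{\tau,M}$, and by Hensel's lemma inside $\A_{\tau,M}$ this correspondence is a bijection, yielding a group isomorphism between the $\A_{\tau,K}$-automorphisms of $\A_{\tau,M}$ and the $\E_{\tau,K}$-automorphisms of $\E_{\tau,M}$; in particular $\Gal(\B_{\tau,M}/\B_{\tau,K})\simeq\Gal(\E_{\tau,M}/\E_{\tau,K})$ when $M\cdot K_\infty/K_\infty$ is Galois.

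For uniqueness, if $\A'$ is another finite unramified extension of $\A_{\tau,K}$ inside $\At$ with residue field $\E_{\tau,M}$, then $\A'$ is $p$-adically complete and local with residue field containing $\overline{x}$, so Hensel's lemma applied to $P$ gives a root of $P$ in $\A'$ lifting $\overline{x}$; by uniqueness inside $\At$ this root is $x$, and since $\overline{x}$ generates $\E_{\tau,M}$ over $\E_{\tau,K}$ the subalgebra $\A_{\tau,K}[x]$ it generates already has residue field $\E_{\tau,M}$, forcing $\A'=\A_{\tau,K}[x]=\A_{\tau,M}$. The rings $\A_{\tau,M}^+$ and $\B_{\tau,M}^+$ are then defined by the stated intersections, with nothing further to prove. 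The only genuinely substantive point, as opposed to routine commutative algebra, is that the abstract unramified lift of $\E_{\tau,M}$ actually embeds into $\At$ and does so uniquely; both are handled by Hensel's lemma, using only $\E_{\tau,M}\subset\Et$ and the $p$-adic completeness of $\At$, so I expect no real obstacle. Note that the $p=2$ anomaly of Proposition \ref{p=2=pb} plays no role here.
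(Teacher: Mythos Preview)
Your proof is correct and follows essentially the same route as the paper's: the paper simply observes that $\Bt$ is absolutely unramified with residue field $\Et\supset\E_{\tau,M}$ and invokes the standard correspondence between finite separable residue extensions and finite unramified extensions inside $\Bt$, whereas you unwind that correspondence explicitly via Hensel's lemma. The underlying argument is the same.
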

\begin{proof}
Comme $\Bt$ est absolument non ramifié, et que $\E_{\tau,M}/\E_{\tau,K}$ est séparable et finie avec $\E_{\tau,M}$ incluse dans le corps résiduel de $\Bt$, il existe une unique extension $\B_{\tau,M}$ de $\B_{\tau,K}$ (qui est donc automatiquement non ramifiée), contenue dans $\Bt$, dont le corps résiduel est $\E_{\tau,M}$, et on a bien $\Gal(\B_{\tau,M}\B_{\tau,K}) \simeq \Gal(\E_{\tau,M}/\E_{\tau,K})$ puisque l'extension est non ramifiée.
\end{proof}

Comme $\E_\tau = \bigcup_{M/F}\E_{\tau,M}$ est la clôture séparable de $\E_{\tau,K}$, l'extension maximale non ramifiée $\B_{\tau,K}^{\mathrm{unr}}$ de $\B_{\tau,K}$ dans $\Bt$ est aussi la réunion des $\B_{\tau,M}$ quand $M$ parcourt les extensions finies de $K$. On note alors $\B_\tau$ l'adhérence de $\B_{\tau,K}^{\mathrm{unr}}$ dans $\Bt$ pour la topologie $p$-adique, et on note $\A_\tau$ son anneau des entiers (qui est aussi le complété de l'anneau des entiers de $\B_{\tau,K}^{\mathrm{unr}}$ pour la topologie $p$-adique). On a donc $\A_\tau/p\A_\tau = \E_\tau$. Comme $\B_{\tau,K}^{\mathrm{unr}}$ est stable sous l'action de $\G_{K_\infty}$, c'est aussi le cas de $\B_\tau$ et on a $\Gal(\B_{\tau,K}^{\mathrm{unr}}/\B_{\tau,M}) \simeq \Gal(\E_\tau/\E_{\tau,M}) \simeq H_{\tau,M}:=\Gal(\overline{K}/M\cdot K_\infty)$. Si $M$ est une extension finie de $K$, le théorème d'Ax-Sen-Tate (voir \cite[Thm. 1]{Tat67}) montre alors que $\B_{\tau}^{H_{\tau,M}} = (\B_\tau^{\mathrm{unr}})^{H_M} = \B_{\tau,M}$ et donc que $\A_{\tau,M} = \A^{H_{\tau,M}}$.

Le Frobenius déduit de la fonctorialité des vecteurs de Witt sur $\Bt$ définit par restriction des endomorphismes de $\A_{\tau,K},\B_{\tau,K},\A_{\tau}$ et $\B_{\tau}$, et envoie notamment $[\tilde{\pi}]$ sur $[\tilde{\pi}]^p$. 

Soit $\At_L = \At^{H_{\infty}}$ (comme $\Et_L$ est parfait, on a aussi $\At_L = W(\Et_L)$) et $\Bt_L = \At_L[1/p]$. Ces anneaux sont munis d'un endomorphisme de Frobenius qui provient de celui sur $\Bt$. On définit également, pour $r > 0$, $\Bt^{\dagger,r}$ l'ensemble des éléments surconvergents de $\Bt$ de rayon $r$, par 

$$\left\{x = \sum_{n \gg -\infty}p^n[x_n] \textrm{ tels que } \lim\limits_{k \to +\infty}v_{\E}(x_k)+\frac{pr}{p-1}k =+\infty \right\}$$

et on note $\Bt^\dagger = \bigcup_{r > 0}\Bt^{\dagger,r}$ l'ensemble des éléments surconvergents.

\begin{defi}
\label{defi generaliser phitau p=2}
On appelle $(\phi,\tau)$-module étale sur $(\A_{\tau,K},\At_L)$ (resp. $(\B_{\tau,K},\Bt_L)$) tout triplet $(D,\phi_D,G)$ où~:
\begin{enumerate}
\item $(D,\phi_D)$ est un $\phi$-module étale sur $\A_{\tau,K}$ (resp. $\B_{\tau,K}$) ;
\item $G$ est une action $G_\infty$-semi-linéaire de $G_\infty$ sur $M:=\At_L \otimes_{\A_{\tau,K}}D$ (resp. sur $M:=\Bt_L \otimes_{\B_{\tau,K}}D$) telle que $G$ commute à $\phi_M:=\phi_{\At_L}\otimes \phi_D$ (resp. $\phi_M:=\phi_{\Bt_L}\otimes \phi_D$), c'est-à-dire que pour tout $g \in G_\infty$, $g\phi_M = \phi_Mg$ ;
\item en tant que sous $\A_{\tau,K}$-module de $M$, $D \subset M^{\Gal(L/K_{\infty})}$.
\end{enumerate}
\end{defi}

En particulier, si $(D,\phi_D,G)$ est un $(\phi,\tau)$-module étale sur $(\A_{\tau,K},\At_L)$, on définit $V(D):=(\At \otimes_{\At_L}D)^{\phi=1}:=(\At \otimes_{\At_L}(\At_L \otimes_{\A_{\tau,K}}D))^{\phi=1}$ comme la $\Zp$-représentation de $\G_K$ où $\G_K$ agit sur $\At_L \otimes_{\A_{\tau,K}}D$ \textit{via} $G$ et $\G_K$ agit diagonalement sur le produit tensoriel $\At \otimes_{\At_L}(\At_L \otimes_{\A_{\tau,K}}D)$.

\begin{prop}
Le foncteur qui à un $(\phi,\tau)$-module étale $(D,\phi_D,G_\infty)$ associe la $\Zp$-représentation de $\G_K$ 
$$V(D):=(\At \otimes_{\At_L}M)^{\phi=1},$$
induit une équivalence de catégories entre la catégorie des $(\phi,\tau)$-modules étales sur $(\A_{\tau,K},\At_L)$ et celle des $\Zp$-représentations de $\G_K$.
\end{prop}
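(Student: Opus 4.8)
Le plan est de se ramener au dictionnaire classique de Fontaine via le corps des normes, puis d'exploiter la donnée supplémentaire $G$ pour passer des représentations de $H_{\tau,K}$ à celles de $\G_K$. Il s'agit pour l'essentiel du théorème de Caruso, la formulation adoptée ici servant à englober uniformément le cas $p=2$.

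On commence par l'équivalence « restreinte » : le foncteur $D \mapsto (\A_\tau \otimes_{\A_{\tau,K}} D)^{\phi = 1}$ réalise une équivalence entre la catégorie des $\phi$-modules étales sur $\A_{\tau,K}$ et celle des $\Zp$-représentations de $H_{\tau,K} = \Gal(\overline{K}/K_\infty)$, de quasi-inverse $T \mapsto (\A_\tau \otimes_{\Zp} T)^{H_{\tau,K}}$. Ce n'est autre que le théorème de Fontaine (\cite{Fon90}) transporté le long de l'isomorphisme du corps des normes $H_{\tau,K} \simeq \Gal(\E_\tau/\E_{\tau,K})$ rappelé plus haut d'après \cite{Win83} : les ingrédients sont que tout $\phi$-module étale sur $\A_\tau$ est trivial (modulo $p$ c'est le théorème de Lang, $\E_\tau = \A_\tau/p\A_\tau$ étant séparablement clos, puis on conclut par dévissage $p$-adique), que $\A_\tau^{\phi=1} = \Zp = \At^{\phi=1}$, et que $\A_\tau^{H_{\tau,K}} = \A_{\tau,K}$ (Ax-Sen-Tate). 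La trivialité force $(\A_\tau \otimes_{\A_{\tau,K}} D)^{\phi=1}$ à être libre sur $\Zp$ de même rang que $D$ sur $\A_{\tau,K}$, l'application de multiplication $\A_\tau \otimes_{\Zp}(\A_\tau \otimes D)^{\phi=1} \to \A_\tau \otimes D$ à être un isomorphisme, puis les invariants sous $H_{\tau,K}$ redonnent $D$. Comme $\Et$ est algébriquement clos et $\A_\tau \subset \At$, on a de plus $(\A_\tau \otimes_{\A_{\tau,K}} D)^{\phi=1} = (\At \otimes_{\A_{\tau,K}} D)^{\phi=1}$, si bien que $V(D)$, défini à l'aide de $\At$, coïncide comme $\Zp$-représentation de $H_{\tau,K}$ avec la représentation « restreinte » associée à $D$.

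On construit ensuite les deux foncteurs en degré plein. Étant donné un $(\phi,\tau)$-module étale $(D,\phi_D,G)$, on pose $M = \At_L \otimes_{\A_{\tau,K}} D$ ; par extension des scalaires à $\At$ et trivialité, $\At \otimes_{\At_L} M = \At \otimes_{\A_{\tau,K}} D$ est un $\phi$-module trivial, donc $V(D) = (\At \otimes_{\At_L} M)^{\phi=1}$ est libre de rang fini sur $\Zp$ et la multiplication $\At \otimes_{\Zp} V(D) \to \At \otimes_{\At_L} M$ est un isomorphisme. L'action diagonale de $G_\infty$ sur $\At \otimes_{\At_L} M$, où $G_\infty$ agit sur $\At$ via la projection $\G_K \to G_\infty$ et sur $M$ via $G$, est bien définie sur $\At_L$ puisque $H_\infty$ agit trivialement sur $M$, commute à $\phi$, et munit donc $V(D)$ d'une action $\Zp$-linéaire continue de $\G_K$. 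Réciproquement, pour une $\Zp$-représentation $T$ de $\G_K$, on pose $D(T) = (\A_\tau \otimes_{\Zp} T)^{H_{\tau,K}}$, qui est un $\phi$-module étale sur $\A_{\tau,K}$ par l'étape précédente ; comme $D(T) = (\A_\tau \otimes_{\Zp} T)^{H_{\tau,K}} \subset (\At \otimes_{\Zp} T)^{H_\infty}$ et $\At^{H_\infty} = \At_L$, la multiplication identifie $M(T) := \At_L \otimes_{\A_{\tau,K}} D(T)$ à $(\At \otimes_{\Zp} T)^{H_\infty}$ (les deux sont libres de même rang sur $\At_L$ et l'inclusion devient un isomorphisme après $\At \otimes_{\At_L} -$), ce qui munit $M(T)$ de l'action $G_\infty = \G_K/H_\infty$-semi-linéaire résiduelle $G$, commutant à $\phi$ ; enfin $D(T) \subset M(T)^{\Gal(L/K_\infty)}$ puisque $\Gal(L/K_\infty) = H_{\tau,K}/H_\infty$, de sorte que $(D(T),\phi,G)$ est bien un $(\phi,\tau)$-module étale.

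Il reste à voir que ces foncteurs sont quasi-inverses. Pour $D(V(D)) = D$ : la restriction de $V(D)$ à $H_{\tau,K}$ est, d'après la première étape, la représentation « restreinte » associée à $D$, donc $(\A_\tau \otimes_{\Zp} V(D))^{H_{\tau,K}} = D$ ; on vérifie que le $G$ récupéré est l'original en explicitant l'action diagonale sur $\At \otimes_{\At_L} M$ et l'identité $(\At \otimes_{\At_L} M)^{H_\infty} = M$. Pour $V(D(T)) = T$ : de $\At \otimes_{\At_L} M(T) = \At \otimes_{\At_L}(\At \otimes_{\Zp} T)^{H_\infty} \simeq \At \otimes_{\Zp} T$, isomorphisme $\G_K$-équivariant (descente le long de $H_\infty$, standard pour $\At$), on tire en prenant les invariants sous $\phi$ et en utilisant $\At^{\phi=1} = \Zp$ que $V(D(T)) = T$ avec son action de $\G_K$. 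La fonctorialité et l'exactitude sont formelles. Le point le plus délicat est essentiellement de la vérification de routine : s'assurer que la donnée $G$ — une action $G_\infty$-semi-linéaire sur $M$ soumise aux seules conditions de commuter à $\phi$ et de vérifier $D \subset M^{\Gal(L/K_\infty)}$ — contient exactement l'information voulue pour que les deux foncteurs aboutissent dans les catégories annoncées et que les isomorphismes de comparaison soient $\G_K$-équivariants et compatibles à $\phi$ et à $G$ ; c'est précisément ici qu'il est crucial, en particulier lorsque $p = 2$ où, d'après la proposition \ref{p=2=pb}, $\Gal(L/K_\infty)$ et $\Gal(L/K_{\mathrm{cycl}})$ n'engendrent topologiquement qu'un sous-groupe d'indice $2$ de $G_\infty$, de travailler avec le groupe $G_\infty$ tout entier plutôt qu'avec un opérateur $\tau$ et une action de $\Gamma$ pris séparément. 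Pour $p \neq 2$, on peut aussi déduire le résultat directement de \cite{Car12} une fois vérifié que la présente formulation équivaut à celle de Caruso.
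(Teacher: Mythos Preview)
Ta démonstration est correcte et suit la voie standard : équivalence de Fontaine pour les $\phi$-modules étales via le corps des normes, puis descente de $H_{\tau,K}$ à $\G_K$ en exploitant l'action $G_\infty$-semi-linéaire sur $M$. L'article lui-même ne donne pas de preuve mais se contente de renvoyer à \cite[Prop.~2.1.7]{gao2016loose} ; ton argument est essentiellement celui qu'on y trouve (et, pour $p\neq 2$, celui de Caruso).

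Une petite maladresse de rédaction, sans conséquence mathématique : tu écris que \og $G_\infty$ agit sur $\At$ via la projection $\G_K \to G_\infty$\fg. C'est l'inverse : c'est $\G_K$ qui agit naturellement sur $\At$ (cette action ne se factorise pas par $G_\infty$), tandis que $\G_K$ agit sur $M$ \emph{via} la projection $\G_K \to G_\infty$ puis $G$. Le produit tensoriel $\At \otimes_{\At_L} M$ est alors bien défini comme $\G_K$-module parce que les deux actions coïncident sur $\At_L = \At^{H_\infty}$, où l'action de $\G_K$ se factorise effectivement par $G_\infty$. Ta suite d'arguments reste valable une fois ce point reformulé.
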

\begin{proof}
Voir \cite[Prop. 2.1.7]{gao2016loose}.
\end{proof}

La définition \ref{defi generaliser phitau p=2} n'est pas tout à fait la définition originale de Caruso mais celle de Gao et Liu dans \cite{gao2016loose}, qui a le bon goût d'être équivalente à celle de Caruso lorsque $p \neq 2$ via la proposition précédente et de se généraliser au cas $p=2$.

On rappelle au passage le résultat principal de \cite{GP18} :

\begin{theo}
\label{surconvphitau}
Soit $V$ une représentation $p$-adique de $\G_K$, et soit $D(V)$ son $(\phi,\tau)$-module associé sur $(\B_{\tau,K},\Bt_L)$, c'est-à-dire la donnée du $\phi$-module $D(V) = (\B_{\tau}\otimes_{\Qp}V)^{\G_{K_{\infty}}}$ et d'une action semi-linéaire de $G_\infty$ sur $M(V) = (\Bt \otimes_{\Qp} V)^{\G_L}$. 

Pour $r \geq 0$, on définit $D^{\dagger,r}(V) = (\B_{\tau}^{\dagger,r}\otimes_{\Qp}V)^{\G_{K_{\infty}}}$ et $M^{\dagger,r}(V)=(\Bt^{\dagger,r}\otimes_{\Qp}V)^{\G_L}$. 

Alors il existe $r \geq 0$ tel qu'on ait
$$D(V) = \B_{\tau,K} \otimes_{\B_{\tau,K}^{\dagger,r}} D^{\dagger,r}(V)$$
et 
$$M(V) = \Bt_L \otimes_{\Bt_L^{\dagger,r}}M^{\dagger,r}(V).$$
où $\B_{\tau,K}^{\dagger,r} = \B_{\tau,K} \cap \Bt^{\dagger,r}$.
\end{theo}

Pour ce qu'on compte faire ensuite, on aura également besoin de savoir ce que devient le $(\phi,\tau)$-module associé à la restriction d'une représentation $V$ de $\G_K$ à $\G_M$, où $M$ est une extension finie de $K$.

On pose $M_\infty:=K_\infty\cdot M$, $L_M:=L\cdot M$ et $H_{\tau,M}:=\Gal(\overline{K}/M_\infty)$. Si $V$ est une représentation $p$-adique de $\G_M$, on peut définir $D(V):= (\B_\tau \otimes_\Qp V)^{H_{\tau,M}}$ qui est naturellement muni d'une structure de $\phi$-module étale sur $\B_{\tau,M}$, et $\tilde{D}(V):=(\Bt\otimes_\Qp V)^{\G_{L_M}}$, qui est naturellement muni d'une structure de $\phi$-module étale sur $\Bt_{L_M}$, muni d'une action de $\Gal(L_M/M)$ qui commute à l'action de $\phi$.

\begin{defi}
On appelle $(\phi,\tau_M)$-module sur $(\B_{\tau,M},\Bt_{L_M})$ la donnée d'un triplet 
$$(D,\phi_D,\Gal(L_M/M)),$$ 
où~:
\begin{enumerate}
\item $(D,\phi_D)$ est un $\phi$-module sur $\B_{\tau,M}$ ;
\item $\Gal(L_M/M)$ est une action $\Gal(L_M/M)$-semi-linéaire sur $\Bt_{L_M}$ de $\Gal(L_M/M)$ sur $\hat{D}:=\Bt_{L_M} \otimes_{\B_{\tau,M}}D$ telle que cette action commute à $\phi:= \phi_{\Bt_{L_M}}\otimes \phi_D$ ;
\item en tant que sous $\B_{\tau,M}$-module de $\hat{D}$, on a $D \subset \hat{D}^{H_{\tau,M}}$.
\end{enumerate}
\end{defi}

Si $(D,\phi_D,\Gal(L_M/M))$ est un tel $(\phi,\tau_M)$-module, on définit $V(D):=(\B_\tau \otimes_{\B_{\tau,M}}D)^{\phi=1}$ et 
$$\tilde{V}(D):=(\Bt \otimes_{\B_{\tau,M}}D)^{\phi=1}=(\Bt \otimes_{\Bt_{L_M}}\hat{D})^{\phi=1}.$$
Comme habituellement, un $(\phi,\tau_M)$-module est dit étale si le $\phi$-module sous-jacent est étale.
\begin{prop}~
\begin{enumerate}
\item Les foncteurs $D \mapsto V(D)$ et $V \mapsto D(V)$ sont quasi-inverses l'un de l'autre et induisent une équivalence de catégories tannakiennes entre la catégorie des $\phi$-modules étales sur $\B_{\tau,M}$ et celle des représentations $p$-adiques de $\G_{M_\infty}$ ;
\item $\tilde{V}(D)_{|\G_{M_\infty}} \simeq V(D)$ ;
\item Les foncteurs $D \mapsto \tilde{V}(D)$ et $V \mapsto \tilde{D}(V)$ sont quasi-inverses l'un de l'autre et induisent une équivalence de catégories tannakiennes entre la catégorie des $(\phi,\tau_M)$-modules étales sur $(\B_{\tau,M},\Bt_{L_M})$ et celle des représentations $p$-adiques de $\G_M$.
\end{enumerate}
\end{prop}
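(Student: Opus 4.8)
Le plan est de transposer à $M$ la théorie des $(\phi,\tau)$-modules rappelée ci-dessus pour $K$~; tous les ingrédients nécessaires sont déjà en place. La proposition~\ref{relevement E_tau en A_tau Hensel} fournit le corps local $\B_{\tau,M}$, de corps résiduel $\E_{\tau,M}$, et comme $\E_\tau$ est aussi la clôture séparable de $\E_{\tau,M}$, l'extension maximale non ramifiée $\B_{\tau,M}^{\mathrm{unr}}$ de $\B_{\tau,M}$ dans $\Bt$ coïncide avec $\B_{\tau,K}^{\mathrm{unr}}$, de complété $p$-adique $\B_\tau$~; on a vu que $\Gal(\B_\tau/\B_{\tau,M}) \simeq H_{\tau,M} = \G_{M_\infty}$, que $\B_\tau^{H_{\tau,M}} = \B_{\tau,M}$, et que $\A_\tau/p\A_\tau = \E_\tau$ est séparablement clos. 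Le point (1) est alors exactement l'équivalence classique entre $\phi$-modules étales sur $\B_{\tau,M}$ et $\Qp$-représentations de $\Gal(\B_\tau/\B_{\tau,M}) = \G_{M_\infty}$, décrite par $D \mapsto V(D) = (\B_\tau \otimes_{\B_{\tau,M}} D)^{\phi=1}$ et $V \mapsto D(V) = (\B_\tau \otimes_\Qp V)^{\G_{M_\infty}}$~: c'est l'analogue de \cite[Prop. 2.1.7]{gao2016loose} (sa partie portant sur le $\phi$-module sous-jacent étant la théorie de Fontaine \cite{Fon90}), avec $M$ à la place de $K$. La compatibilité aux produits tensoriels, aux duaux et à l'objet unité est immédiate sur les constructions. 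De cette équivalence je retiens que l'application naturelle $\B_\tau \otimes_\Qp V(D) \to \B_\tau \otimes_{\B_{\tau,M}} D$ est un isomorphisme $\G_{M_\infty}$-équivariant, ce dont je me servirai pour (2) et (3).

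Pour (2), on a par définition $\tilde{V}(D) = (\Bt \otimes_{\B_{\tau,M}} D)^{\phi=1}$, et l'inclusion $\B_\tau \hookrightarrow \Bt$ fournit une application $\G_{M_\infty}$-équivariante $V(D) \to \tilde{V}(D)$. En appliquant $\Bt \otimes_{\B_\tau} -$ à l'isomorphisme $\B_\tau \otimes_\Qp V(D) \simeq \B_\tau \otimes_{\B_{\tau,M}} D$ de (1), on obtient $\Bt \otimes_{\B_{\tau,M}} D \simeq \Bt \otimes_\Qp V(D)$, d'où $\tilde{V}(D) \simeq (\Bt \otimes_\Qp V(D))^{\phi=1} = V(D)$ puisque $\Bt^{\phi=1} = \Qp$~; le composé $V(D) \to \tilde{V}(D) \simeq V(D)$ est l'identité, ce qui donne (2).

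Pour (3), je raisonnerais comme dans la construction de Caruso \cite{Car12}. Un $(\phi,\tau_M)$-module $(D,\phi_D,\Gal(L_M/M))$ donne $\tilde{V}(D) = (\Bt \otimes_{\Bt_{L_M}} \hat{D})^{\phi=1}$, sur lequel $\G_M$ agit diagonalement (naturellement sur $\Bt$, via $\Gal(L_M/M)$ sur $\hat{D}$)~; comme $H_{\tau,M}$ agit sur $\hat{D}$ à travers $\Gal(L_M/M_\infty)$ en fixant $D$, cette action se restreint sur $\G_{M_\infty}$ en celle de (2), si bien que $\tilde{V}(D)$ est une représentation $p$-adique de $\G_M$ de dimension le rang de $D$, dont la restriction à $\G_{M_\infty}$ est $V(D)$. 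Réciproquement, une représentation $p$-adique $V$ de $\G_M$ fournit le $(\phi,\tau_M)$-module de $\phi$-module sous-jacent $D(V) = (\B_\tau \otimes_\Qp V)^{H_{\tau,M}}$ — étale de rang $\dim V$ par (1) — muni de l'action de $\Gal(L_M/M)$ sur $\Bt_{L_M} \otimes_{\B_{\tau,M}} D(V) = \tilde{D}(V)$ déduite de l'action naturelle de $\G_M$ sur $\Bt \otimes_\Qp V$, l'égalité $\Bt_{L_M} \otimes_{\B_{\tau,M}} D(V) = \tilde{D}(V)$ résultant de (1) après extension des scalaires fidèlement plate à $\Bt$. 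Que ces deux foncteurs soient quasi-inverses se ramène à $\tilde{V}(\tilde{D}(V)) = (\Bt \otimes_\Qp V)^{\phi=1} = V$ et $\tilde{D}(\tilde{V}(D)) = D$~: la première égalité repose sur l'isomorphisme $\Bt \otimes_{\Bt_{L_M}} (\Bt \otimes_\Qp V)^{\G_{L_M}} \simeq \Bt \otimes_\Qp V$ (annulation de $H^1(\G_{L_M}, \GL_d(\Bt))$, valable car $\Et$ est algébriquement clos) et sur $\Bt^{\phi=1} = \Qp$, la seconde sur la descente fidèlement plate à partir de (1). La compatibilité tensorielle est là encore lisible sur les constructions.

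Le point délicat n'est pas conceptuel — tout est l'analogue pour $M$ du cas de $K$ — mais tient au suivi scrupuleux, dans (3), des sous-groupes de $\G_M$ agissant sur chaque anneau (par exemple le fait que $\G_{M_\infty}$ agit sur $\Bt_{L_M}$ à travers $\Gal(L_M/M_\infty)$, ce qui utilise que $L_M/M_\infty$ est galoisienne), et à la vérification que le cas exceptionnel $p=2$ de la proposition~\ref{p=2=pb} est absorbé exactement comme pour $K$, en adoptant la formulation de Gao--Liu des définitions.
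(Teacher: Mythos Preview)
Your proposal is correct and follows the same approach as the paper. The paper's own proof simply cites the relevant references --- Fontaine \cite[Prop.~A.1.2.6]{Fon90} for (1), \cite[Lemm.~2.1.4]{gao2016loose} together with the definition for (2), and the proof of \cite[Prop.~2.1.7]{gao2016loose} for (3) --- and your arguments are precisely an unpacking of those references (the classical Fontaine equivalence for (1), the identity $\Bt^{\phi=1}=\Qp$ for (2), and the standard descent argument via the vanishing of $H^1(\G_{L_M},\GL_d(\Bt))$ for (3)).
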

\begin{proof}
Le premier point est la proposition $A.1.2.6$ de \cite{Fon90}. Le deuxième point est une conséquence directe de \cite[Lemm. 2.1.4]{gao2016loose} et de notre définition de $(\phi,\tau_M)$-module. Pour le dernier point, on renvoie à la preuve de \cite[Prop. 2.1.7]{gao2016loose}.
\end{proof} 

Si on était parti au départ d'une représentation $p$-adique $V$ de $\G_K$ dont on a considéré la restriction à $\G_M$, et si de plus on suppose que $M_\infty/K_\infty$ est galoisienne, alors $(\B_\tau \otimes_\Qp V)^{H_{\tau,M}}$ est naturellement muni d'une action de $H_{\tau,K}/H_{\tau,M} \simeq \Gal(M_\infty/K_\infty)$. Comme de plus, $(\Bt\otimes_\Qp V)^{\G_L} \subset (\Bt\otimes_\Qp V)^{\G_{L_M}}$, l'action de $\G_M/\G_{L_M}$ en tant que sous-groupe de $\G_K/\G_L$ sur $(\Bt\otimes_\Qp V)^{\G_{L_M}}$ coïncide avec celle de $\G_M/\G_{L_M}$ sur $(\Bt\otimes_\Qp V_{|\G_M})^{\G_{L_M}}$. Ces considérations nous amènent à faire la définition suivante~:

\begin{defi}
Si $M/K$ est une extension galoisienne finie telle que $M$ et $K_\infty$ soient linéairement disjointes au-dessus de $K$, et si $D=(D,\phi_D,\Gal(L_M/M))$ est un $(\phi,\tau_M)$-module sur $(\B_{\tau,M},\Bt_{L_M})$, on dit que $D$ est muni d'une action de $\Gal(M/K)$ si $\G_K$ agit sur $\hat{D}:=\Bt_{L_M} \otimes_{\B_{\tau,M}}D$ et si~:
\begin{enumerate}
\item $D$ en tant que sous ensemble de $\hat{D}$ est stable sous l'action de $H_K \subset \G_K$ ;
\item $\G_{L_M}$ agit trivialement sur $\hat{D}$, et $H_{\tau,M}$ agit trivialement sur $D$ ;
\item l'action de $\G_M/\G_{L_M} \subset \G_K/\G_L$ coïncide avec l'action de $\Gal(L_M/M)$ sur $\hat{D}$.
\end{enumerate}
\end{defi}

\subsection{Vecteurs localement analytiques et pro-analytiques}
Soit $G$ un groupe de Lie $p$-adique et soit $W$ une $\Qp$-représentation de Banach de $G$. L'espace $W^{\la}$ des vecteurs localement analytiques de $W$ est défini dans \cite{schneider2002bis} et on va brièvement rappeler les définitions générales ainsi que les propriétés des vecteurs localement analytiques qui seront nécessaires pour le reste de l'article, en suivant la présentation de \cite{Ber14SenLa} et \cite{Ber14MultiLa}. 

On utilisera la notation multi-indice suivante~: si $\mathbf{c} = (c_1,\cdots,c_d)$ et si $\k=(k_1,\cdots,k_d) \in \N^d$, alors $\mathbf{c}^\k:=(c_1^{k_1},\cdots,c_d^{k_d})$. On pose aussi $\k!=k_1!\times \cdots \times k_d!$ et $|\k|=k_1+\cdots k_d$. On note également $\mathbf{1}_j$ le $d$-uplet $(k_1,\cdots,k_d)$ où $k_i = 0$ si $i \neq j$ et $k_j=1$.

On rappelle qu'un espace LB est un espace topologique localement convexe qui est limite inductive d'une famille dénombrable d'espaces de Banach, et qu'un espace LF est un espace topologique localement convexe qui est limite inductive d'une famille dénombrable d'espaces de Fréchet. 

Soit $H$ un sous-groupe ouvert de $G$ tel qu'il existe des coordonnées $c_1, \cdots, c_d~: H \to \Zp$ donnant lieu à une bijection analytique $\mathbf{c}~: H \to \Z_p^d$. 

\begin{defi}
On dit que $x \in W$ est un vecteur $H$-analytique si l'application orbite $H \to W$ donnée par $g \mapsto g(x)$ est analytique. 

On dit que $x \in W$ est un vecteur localement analytique pour $G$ si l'application orbite $G \to W$ donnée par $g \mapsto g(x)$ est localement analytique, ou de façon équivalente, s'il existe un ouvert $H$ de $G$ comme ci-dessus tel que $x$ est $H$-analytique.

On note $W^{H-an}$ l'ensemble des vecteurs $H$-analytiques de $W$ et $W^{\la}$ l'ensemble des vecteurs localement analytiques de $W$ pour $G$.
\end{defi}

Dans le cas particulier où $U=\Z_p^d \subset \Q_p^d$, l'ensemble des fonctions analytiques $f~: \Z_p^d \to W$ s'identifie à l'ensemble des fonctions $f~: \Z_p^d \to W$ telles qu'il existe une suite $(f_{\k})_{\k \in \N^d}$ avec $f_{\k} \rightarrow 0$ dans $W$, telle que $f(\mathbf{x}) = \sum_{\k \in \N^d}\mathbf{x}^{\k}f_{\k}$ pour tout $\mathbf{x} \in \Z_p^d$. L'espace des fonctions analytiques $\Z_p^d \to W$ s'injecte dans l'espace $\cal{C}^{an}(\Z_p^d,W)$, et est muni d'une norme $||\cdot||$ donnée par $||f||=\sup_{\k \in \N^d}||f_\k||$ qui en fait un espace de Banach. 

De façon plus générale, si $H$ est comme précédemment un sous-groupe ouvert de $G$ tel qu'il existe des coordonnées $c_1, \cdots, c_d~: H \to \Zp$ donnant lieu à une bijection analytique $\mathbf{c}~: H \to \Z_p^d$, et si  $w \in W^{H-an}$, alors il existe une suite $(w_{\k})_{\k \in \N^d}$ avec $w_{\k} \rightarrow 0$ dans $W$, telle que $g(w) = \sum_{\k \in \N^d}\mathbf{c}(g)^{\k}w_{\k}$ pour tout $g$ dans $H$. De plus, $W^{H-an}$ s'injecte dans l'espace $\cal{C}^{an}(H,W)$. On peut montrer (voir §10 et §12 de \cite{schneider2011p}) que $\mathcal{C}^{an}(H,W)$ est naturellement muni d'une norme telle que, si $||\cdot||_H$ désigne la norme induite sur $W^{H-an}$ \textit{via} l'injection $\cal{C}^{an}(H,W)$ et si $w \in W^{H-an}$, alors $||w||_H = \sup_{\k \in \N^d}||w_\k||$. Cela fait de $W^{H-an}$ un espace de Banach. La définition de $W^{\la}$ montre que $W^{\la} = \cup_H W^{H-an}$ où $H$ parcourt une suite de certains sous-groupes ouverts de $G$, et on munit $W^{\la}$ de la topologie de la limite inductive, ce qui fait de $W^{\la}$ un espace LB. 

Soient $G,H$ deux groupes de Lie $p$-adiques et $f~: G \to H$ un morphisme analytique de groupes. Si $W$ est une représentation de $H$, on peut aussi voir $W$ comme une représentation de $G$ et on a directement le lemme suivant~:
\begin{lemm}
Si $w$ est un vecteur localement analytique pour $H$ alors c'est un vecteur localement $\Qp$-analytique pour $G$.
\end{lemm}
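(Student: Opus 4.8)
Le plan est de dérouler la définition de l'analyticité locale et de se ramener à la stabilité par composition de l'analyticité. Notons $\rho_H : H \to \GL(W)$ l'action de $H$ sur $W$ ; l'action de $G$ est alors $\rho_G = \rho_H \circ f$, de sorte que, pour $w \in W$ fixé, l'application orbite $o_{G,w} : g \mapsto g(w)$ de $w$ pour $G$ se factorise en $o_{G,w} = o_{H,w} \circ f$, où $o_{H,w} : h \mapsto h(w)$ est l'application orbite de $w$ pour $H$. Comme $w$ est localement analytique pour $H$, il existe un sous-groupe ouvert $H'$ de $H$ muni de coordonnées $\mathbf{c} : H' \to \Z_p^d$ donnant lieu à une bijection analytique, tel que $w$ soit $H'$-analytique : on dispose donc d'une suite $(w_{\k})_{\k \in \N^d}$ avec $w_{\k} \to 0$ dans $W$ et $h(w) = \sum_{\k \in \N^d}\mathbf{c}(h)^{\k}w_{\k}$ pour tout $h \in H'$.

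Comme $f$ est un morphisme continu, $G' := f^{-1}(H')$ est un sous-groupe ouvert de $G$, que l'on peut supposer (quitte à le rétrécir) muni de coordonnées $\mathbf{b} : G' \to \Z_p^e$ donnant lieu à une bijection analytique. Pour $g \in G'$ on a alors $f(g) \in H'$ et
$$g(w) = o_{H,w}(f(g)) = \sum_{\k \in \N^d}\mathbf{c}(f(g))^{\k}\,w_{\k},$$
et $g \mapsto \mathbf{c}(f(g))$ est une application analytique $\Z_p^e \to \Z_p^d$ puisque $f$ est analytique et que $\mathbf{c}$ est un système de coordonnées analytiques. En substituant ce $d$-uplet de séries entières convergentes à valeurs dans $\Z_p$ dans la série $\sum_{\k}\mathbf{x}^{\k}w_{\k}$, on obtient une série entière convergente à valeurs dans $W$ en la variable $\mathbf{b}(g)$, ce qui montre que $o_{G,w}$ est analytique sur $G'$, autrement dit que $w$ est $G'$-analytique. À fortiori $w \in W^{\la}$ pour $G$.

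Le seul point qui ne soit pas purement formel est la dernière étape : la stabilité par composition de l'analyticité pour des fonctions à valeurs dans un espace de Banach, c'est-à-dire le fait qu'insérer un $d$-uplet de séries entières convergentes à valeurs dans $\Z_p$ dans $\sum_{\k}\mathbf{x}^{\k}w_{\k}$ redonne une série convergente — ce qui se ramène à une courte estimation de valuations sur une carte suffisamment petite. Plutôt que de refaire ce calcul, j'invoquerais directement l'énoncé général de composition des applications analytiques entre variétés $p$-adiques, par exemple celui de \cite{schneider2011p}. Le reste n'est que de la manipulation de cartes de coordonnées, et je ne m'attends donc à aucune réelle difficulté.
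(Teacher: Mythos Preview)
Your argument is correct and is exactly the natural one: the orbit map for $G$ factors as $o_{H,w}\circ f$, and local analyticity is preserved under precomposition by an analytic map, the only nontrivial ingredient being the stability of analyticity under composition (for Banach-valued targets), which you rightly attribute to \cite{schneider2011p}. The paper itself gives no proof at all --- it introduces the lemma with ``on a directement le lemme suivant'' and leaves it at that --- so there is nothing to compare: you have simply written out what the paper deems immediate.
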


\begin{lemm}
\label{app linéaire la}
Soient $W,X$ deux $\Qp$-espaces de Banach et soit $\pi~: W \to X$ une application linéaire continue. Si $f~: G \to W$ est une fonction localement analytique alors $\pi\circ f~: G \to X$ est localement $\Qp$-analytique.
\end{lemm}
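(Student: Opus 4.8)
Le plan est le suivant. Puisque la $\Qp$-analyticité locale est une propriété locale, je me ramènerais d'abord à montrer que $\pi \circ f$ est analytique au voisinage de chaque point $g_0$ de $G$. Sur un tel voisinage, identifié \textit{via} des coordonnées analytiques à $\Z_p^d$, la fonction $f$ s'écrit $f(\mathbf{x}) = \sum_{\k \in \N^d}\mathbf{x}^\k f_\k$ avec $f_\k \to 0$ dans $W$, et l'idée est simplement de faire passer ce développement à travers $\pi$.

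Pour cela, j'utiliserais le fait qu'une application linéaire continue entre espaces de Banach est bornée~: il existe $C > 0$ tel que $||\pi(w)|| \leq C||w||$ pour tout $w \in W$. On a alors $||\pi(f_\k)|| \leq C||f_\k|| \to 0$, donc la suite $(\pi(f_\k))_{\k \in \N^d}$ tend vers $0$ dans $X$ et la série $\sum_{\k \in \N^d}\mathbf{x}^\k\pi(f_\k)$ définit bien une fonction analytique $\Z_p^d \to X$ au sens rappelé plus haut. Il reste à vérifier que cette fonction coïncide avec $\pi \circ f$, ce qui résulte de la continuité de $\pi$~: pour $\mathbf{x} \in \Z_p^d$, les sommes partielles $\sum_{|\k| \leq N}\mathbf{x}^\k f_\k$ tendent vers $f(\mathbf{x})$ dans $W$, donc $\pi(f(\mathbf{x})) = \lim_N\sum_{|\k| \leq N}\mathbf{x}^\k\pi(f_\k) = \sum_{\k \in \N^d}\mathbf{x}^\k\pi(f_\k)$.

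Cela montre que $\pi \circ f$ est analytique sur ce voisinage de $g_0$, donc localement $\Qp$-analytique puisque $g_0$ est quelconque. Il n'y a pas d'obstacle réel dans cette preuve~: les seuls points à surveiller sont que la continuité de $\pi$ entre espaces de Banach entraîne sa bornitude (ce qui garantit que la condition $f_\k \to 0$ est préservée par $\pi$) et qu'elle permet d'échanger $\pi$ avec la somme convergente.
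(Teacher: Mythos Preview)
Your argument is correct and is the standard one: reduce to a coordinate chart, use that a continuous linear map between Banach spaces is bounded to push the condition $f_\k \to 0$ through $\pi$, and use continuity to identify the resulting series with $\pi\circ f$. The paper itself does not give a proof here but simply refers to \cite[Lemm.~2.2]{Ber14SenLa}, where the same elementary argument is carried out.
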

\begin{proof}
Voir \cite[Lemm. 2.2]{Ber14SenLa}.
\end{proof}

\begin{prop}
\label{ladansla}
Soit $B$ un $G$-anneau de Banach et soit $W$ un $B$-module libre de type fini muni d'une action compatible de $G$. Si $W$ admet une base $w_1,\cdots,w_d$ dans laquelle $g \mapsto \Mat(g)$ est une fonction analytique $G \to \GL_d(B) \subset M_d(B)$, alors 
\begin{enumerate}
\item $W^{H-an}=\oplus_{j=1}^d B^{H-an}\cdot w_j$ si $H$ est un sous-groupe ouvert de $G$. 
\item $W^{\la} = \oplus_{j=1}^dB^{\la}\cdot w_j$.
\end{enumerate}
\end{prop}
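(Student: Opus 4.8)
Pour le sens facile, c'est-à-dire l'inclusion $\oplus_{j=1}^d B^{H-an}\cdot w_j \subset W^{H-an}$, je partirais de $b_j \in B^{H-an}$ et j'écrirais l'application orbite de $b_jw_j$ sous la forme $g \mapsto g(b_j)\cdot g(w_j) = g(b_j)\cdot\bigl(\sum_{i=1}^d \Mat(g)_{ij}w_i\bigr)$~: c'est l'image par l'application bilinéaire continue $B\times W \to W$ du couple formé de la fonction $g \mapsto g(b_j)$, analytique puisque $b_j \in B^{H-an}$, et de la fonction $g \mapsto g(w_j)$, analytique d'après l'hypothèse sur $\Mat$. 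J'invoquerais alors le fait standard que l'image par une application bilinéaire continue de deux fonctions analytiques est encore analytique (cela se voit en convolant les coefficients des développements en série et en utilisant que la norme est sous-multiplicative et que $B$ et $W$ sont complets, cf. \cite{schneider2011p}), ce qui donne $b_jw_j \in W^{H-an}$, donc l'inclusion voulue par somme~; cette somme est directe car $(w_1,\dots,w_d)$ est une $B$-base de $W$ et $B^{H-an}\subset B$.

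Pour la réciproque, je partirais d'un $w = \sum_j b_jw_j \in W^{H-an}$, les $b_j \in B$ étant uniquement déterminés, et j'exprimerais l'orbite dans la base~: si $\mathbf{c}(g) = (c_i(g))_i$ désigne le vecteur des coordonnées de $g(w)$ dans $(w_1,\dots,w_d)$, on a l'égalité matricielle $\mathbf{c}(g) = \Mat(g)\cdot(g(b_j))_j$. Comme les projections $W \to B$ sur les coordonnées sont $B$-linéaires continues, l'analogue du lemme \ref{app linéaire la} pour les vecteurs $H$-analytiques (qui se lit directement sur les développements en série) montre que chaque $g \mapsto c_i(g)$ est analytique sur $H$. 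Or $g \mapsto \Mat(g)$ est analytique à valeurs dans $\GL_d(B)$ par hypothèse et l'inversion y est analytique, donc $g \mapsto \Mat(g)^{-1}$ l'est aussi, d'où l'analyticité de $g \mapsto \Mat(g)^{-1}\mathbf{c}(g) = (g(b_j))_j$ à valeurs dans $B^d$~; en prenant la $j$-ème composante, j'obtiendrais que l'orbite $g \mapsto g(b_j)$ est analytique, c'est-à-dire $b_j \in B^{H-an}$. Ceci établirait le point (1).

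Le point (2) s'en déduirait formellement~: par définition $W^{\la} = \bigcup_H W^{H-an}$, la réunion portant sur une suite cofinale de sous-groupes ouverts $H$ du type requis, donc $W^{\la} = \bigcup_H\bigl(\oplus_{j=1}^d B^{H-an}w_j\bigr)$~; et pour un élément $\sum_j b_jw_j$ de $\oplus_{j=1}^d B^{\la}w_j$, en remplaçant $H$ par une intersection finie de sous-groupes ouverts (encore ouverte, que l'on peut raffiner en un $H$ convenable) on a simultanément tous les $b_j \in B^{H-an}$, ce qui donne $W^{\la} = \oplus_{j=1}^d B^{\la}w_j$. L'énoncé est essentiellement un exercice de comptabilité et je ne m'attends pas à un obstacle sérieux~; le seul point réclamant un peu d'attention est de rester au niveau de la $H$-analyticité (condition banachique sur le sous-groupe $H$ fixé) plutôt que de la simple analyticité locale, ce qui impose de savoir que l'inversion dans $\GL_d(B)$ ainsi que les applications (bi)linéaires continues préservent la $H$-analyticité, deux faits standards pour lesquels on pourra renvoyer à \cite{schneider2011p} et \cite{Ber14SenLa}.
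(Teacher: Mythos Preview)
The paper gives no argument of its own and simply refers to \cite[Prop.~2.3]{Ber14SenLa}; your reconstruction is exactly the standard proof one finds there. The only point requiring care, which you correctly flag, is the $H$-analyticity of $g\mapsto\Mat(g)^{-1}$: this follows from the Neumann series once $\|\Mat(\cdot)-\Id\|_H<1$, a condition which may force one to shrink $H$, so that strictly speaking (1) is established for $H$ sufficiently small --- but this is harmless for (2) and for every use the paper makes of the proposition.
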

\begin{proof}
Voir \cite[Prop. 2.3]{Ber14SenLa}.
\end{proof}

Soit $W$ un espace de Fréchet, dont la topologie est définie par une suite $(p_i)_{i \geq 1}$ de semi-normes. On note $W_i$ la complétion de $W$ pour $p_i$, de telle sorte que $W = \varprojlim_{i \geq 1}W_i$. L'espace $W^{\la}$ peut en fait être défini si $W$ est un espace de Fréchet (voir \cite{emerton2004locally} par exemple), mais en général l'objet obtenu est trop petit, et on fait donc comme dans \cite[Déf. 2.3]{Ber14MultiLa} la définition suivante~:
\begin{defi}
Si $W= \varprojlim_{i \geq 1}W_i$ est une représentation de Fréchet de $G$, on dit que $w \in W$ est pro-analytique si son image $\pi_i(w)$ dans $W_i$ est localement analytique pour tout $i$. On note $W^{\pa}$ l'ensemble de ces vecteurs.
\end{defi} 

On étend la définition de $W^{\la}$ et $W^{\pa}$ aux cas où $W$ est respectivement un espace LB et un espace LF. Remarquons que si $W$ est LB, alors $W^{\la} = W^{\pa}$. Si $W$ est un espace LF, alors $W^{\la} \subset W^{\pa}$ mais $W^{\pa}$ est en général plus gros.

\begin{prop}
\label{padanspa}
Soit $B$ un $G$-anneau de Fréchet et soit $W$ un $B$-module libre de type fini muni d'une action compatible de $G$. Si $W$ admet une base $w_1,\cdots,w_d$ dans laquelle $g \mapsto Mat(g)$ est une fonction pro-analytique $G \to \GL_d(B) \subset M_d(B)$, alors $W^{\pa}=\oplus_{j=1}^dB^{\pa}\cdot w_j$.
\end{prop}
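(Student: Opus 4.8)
The plan is to reduce to the Banach case, Proposition \ref{ladansla}, level by level along the Fréchet structure. I would first write the topology of $B$ as defined by an increasing sequence $(p_i)_{i\geq 1}$ of $G$-invariant seminorms, let $B_i$ be the completion of $B$ for $p_i$ so that $B = \varprojlim_i B_i$ as a $G$-Fréchet ring, and set $W_i := B_i \otimes_B W$, a free $B_i$-module with basis the images of $w_1,\dots,w_d$, equipped with the quotient topology. Then $W = \varprojlim_i W_i$, and the projection $\pi_i : W \to W_i$ is exactly the map appearing in the definition of $W^{\pa}$.

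Next I would check that each $W_i$ satisfies the hypotheses of Proposition \ref{ladansla}. By the definition of a pro-analytic function, the composite $G \to M_d(B) \to M_d(B_i)$ obtained by applying $\pi_i$ entrywise to $\Mat(g)$ is locally analytic; it lands in $\GL_d(B_i)$ because $\Mat(g)^{-1} = \Mat(g^{-1})$ also reduces to an element of $M_d(B_i)$, so the inverse matrix is itself a locally analytic $M_d(B_i)$-valued function. Hence Proposition \ref{ladansla} applies to the $G$-Banach module $W_i$ over $B_i$ and gives $W_i^{\la} = \oplus_{j=1}^d B_i^{\la}\cdot w_j$ for every $i$.

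The conclusion is then a matter of passing to the limit. Let $w \in W$ and write $w = \sum_{j=1}^d b_j w_j$ with $b_j \in B$, uniquely since $w_1,\dots,w_d$ is a basis. For each $i$ one has $\pi_i(w) = \sum_{j=1}^d \pi_i(b_j)\,w_j$ in $W_i$, and by the previous step and the uniqueness of the decomposition in $W_i^{\la} = \oplus_j B_i^{\la} w_j$, we get $\pi_i(w) \in W_i^{\la}$ if and only if $\pi_i(b_j) \in B_i^{\la}$ for every $j$. By definition $w \in W^{\pa}$ iff $\pi_i(w) \in W_i^{\la}$ for all $i$, which is therefore equivalent to $\pi_i(b_j) \in B_i^{\la}$ for all $i$ and all $j$, i.e. to $b_j \in B^{\pa}$ for all $j$. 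This is precisely the assertion $W^{\pa} = \oplus_{j=1}^d B^{\pa}\cdot w_j$.

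The only point that really requires care — and it is the same point one must already address in the Banach case — is that $\Mat(g)$ reduced modulo $p_i$ is genuinely a locally analytic function valued in $\GL_d(B_i)$, not merely in $M_d(B_i)$: here one uses that inversion is analytic on $\GL_d$ together with the fact that $g \mapsto \Mat(g^{-1})$ is again pro-analytic, so that the inverse matrix also reduces to a locally analytic $M_d(B_i)$-valued map. Granting this, together with the routine verification that $W_i$ is indeed the completion of $W$ at level $i$ so that its $\pi_i$ coincides with the one used to define $W^{\pa}$, the argument is purely formal.
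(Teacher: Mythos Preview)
Your proof is correct and follows essentially the same approach as the paper: reduce to the Banach case of Proposition~\ref{ladansla} by projecting to each $W_i$, and conclude that $b_j\in B^{\pa}$ because $\pi_i(b_j)\in B_i^{\la}$ for all $i$. The paper's proof is simply a one-sentence version of your argument, omitting the routine verifications about $W_i$ and $\GL_d(B_i)$ that you spell out.
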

\begin{proof}
Si $w \in W$, on peut écrire $w = \sum_{j=1}^db_jw_j$ avec $b_j \in B$. Si $w \in W^{\pa}$ et $i \geq 1$, alors $\pi_i(b_j) \in B_i^{\la}$ pour tout $i$ par la proposition \ref{ladansla} et donc $b_j \in B^{\pa}$.
\end{proof}

\begin{rema}
\label{groupes coordonnées}
Il est souvent pratique de pouvoir travailler avec une suite de sous-groupes $(G_n)_{n \geq 1}$ de $G$ telle que $G_1$ est un sous-groupe compact ouvert de $G$, $G_n = G_1^{p^{n-1}}$ et $G_n$ est un sous-groupe de $G_1$ tel qu'il existe des coordonnées locales $c_i~: G_1 \to \Zp$ telles que $\mathbf{c}(G_n) = (p^n\Zp)^d$. On peut toujours trouver un tel sous-groupe puisqu'il suffit de se donner un sous-groupe compact ouvert $G_0$ de $G$ qui est $p$-valué et saturé (voir \cite[§23,§26 et §27]{schneider2011p} pour la définition et l'existence d'un tel sous-groupe), et de poser $G_n=G_0^{p^n}$.
\end{rema}

Si on dispose d'une suite de sous-groupes $(G_n)_{n \geq 1}$ vérifiant les conditions exposées dans la remarque \ref{groupes coordonnées}, et si $w \in W^{\la}$, alors il existe $n \geq 1$ tel que $w \in W^{G_n-an}$ et on peut écrire $g(w) = \sum_{\k \in \N^d}\mathbf{c}(g)^{\k}w_{\k}$ si $g \in G_n$, où $(w_{\k})_{\k \in \N^d}$ est une suite de $W$ telle que $p^{n|\k|}w_{\k} \rightarrow 0$. On pose alors $||w||_{G_n} = \sup_{\k \in \N^d}||p^{n|\k|}\cdot w_{\k}||$. Cette norme coïncide avec celle qu'on déduit de l'inclusion $W^{G_n-an} \to  \mathcal{C}^{an}(G_n,W)$. Par le corollaire 3.3.6 de \cite{emerton2004locally}, l'inclusion $(W^{G_n-an})^{G_n-an} \to W^{G_n-an}$ est un isomorphisme topologique. En particulier, si $w \in W^{G_n-an}$, alors $w_{\k} \in W^{G_n-an}$ pour tout $\k \in \N^d$. Comme conséquence directe des définitions, on dispose du lemme et de la proposition suivants~:

\begin{lemm}
\label{normela}
Si $w \in W^{G_n-an}$, alors
\begin{enumerate}
\item $w \in W^{G_m-an}$ pour tout $m \geq n$.
\item $||w||_{G_{m+1}} \leq ||w||_{G_m}$ si $m \geq n$.
\item $||w||_{G_m} = ||w||$ si $m \gg 0$.
\end{enumerate}
\end{lemm}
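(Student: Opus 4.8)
L'approche consiste à dérouler directement les définitions des espaces $W^{G_n-an}$ et des normes $\|\cdot\|_{G_n}$ rappelées plus haut, le point-clé étant l'unicité du développement en série de l'application orbite. Je pars donc d'un $w \in W^{G_n-an}$, écrit sous la forme $g(w) = \sum_{\k \in \N^d}\mathbf{c}(g)^{\k}w_{\k}$ pour $g \in G_n$, avec $p^{n|\k|}w_\k \to 0$ dans $W$, de sorte que $\|w\|_{G_n} = \sup_{\k \in \N^d}\|p^{n|\k|}w_\k\|$.

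Pour le premier point, je commence par observer que, pour $m \geq n$, on a $G_m \subset G_n$, donc la formule $g(w) = \sum_{\k}\mathbf{c}(g)^{\k}w_{\k}$ reste valable pour $g \in G_m$ ; il suffit alors de constater que $\|p^{m|\k|}w_\k\| \leq \|p^{n|\k|}w_\k\|$ pour en déduire $p^{m|\k|}w_\k \to 0$, et donc $w \in W^{G_m-an}$. L'unicité d'un tel développement, qui résulte de l'injection $W^{G_m-an} \hookrightarrow \cal{C}^{an}(G_m,W)$, montre de plus que la suite $(w_\k)$ ne dépend pas du niveau $m \geq n$ choisi~: c'est cette indépendance qui rendra les deux points restants immédiats.

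En effet, pour le deuxième point, sachant que les $w_\k$ sont communs à tous les niveaux $\geq n$, j'écrirai $\|p^{(m+1)|\k|}w_\k\| = p^{-|\k|}\|p^{m|\k|}w_\k\| \leq \|p^{m|\k|}w_\k\|$ puis je passerai au supremum sur $\k$, ce qui donne $\|w\|_{G_{m+1}} \leq \|w\|_{G_m}$. Pour le troisième point, j'utiliserai d'une part que le terme d'indice $\k = \mathbf{0}$, obtenu en évaluant l'application orbite en l'identité, vaut $w_{\mathbf{0}} = w$, d'où $\|w\|_{G_m} \geq \|w\|$ pour tout $m \geq n$ ; d'autre part, comme $\|p^{n|\k|}w_\k\|$ est borné, disons par $M$, tout terme d'indice $\k \neq \mathbf{0}$ vérifie $\|p^{m|\k|}w_\k\| \leq Mp^{-(m-n)|\k|} \leq Mp^{-(m-n)}$, majorant qui devient strictement inférieur à $\|w\|$ dès que $m$ est assez grand (le cas $w = 0$ étant trivial), et donc $\|w\|_{G_m} = \|w\|$ pour $m \gg 0$. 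Aucune de ces étapes ne pose de vraie difficulté ; le seul point demandant un minimum de soin, mais de nature purement formelle, est la justification de l'indépendance des coefficients $w_\k$ vis-à-vis du niveau, via l'unicité du développement analytique.
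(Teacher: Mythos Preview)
Ta démonstration est correcte et correspond exactement à ce que l'article suggère~: celui-ci ne donne pas de preuve explicite et se contente d'indiquer que le lemme est une «~conséquence directe des définitions~», ce qui est précisément ce que tu as déroulé. L'argument repose bien sur l'unicité des coefficients $w_{\k}$ indépendamment du niveau, point que tu as correctement identifié comme le seul nécessitant un minimum de soin.
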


\begin{prop}
L'espace $W^{G_n-an}$, muni de $||\cdot||_{G_n}$, est un espace de Banach.
\end{prop}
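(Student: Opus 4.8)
Le plan est de réaliser $W^{G_n-an}$, muni de $||\cdot||_{G_n}$, comme un sous-espace vectoriel fermé de l'espace de Banach $\cal{C}^{an}(G_n,W)$. Rappelons que l'application $\iota$ qui à un vecteur $w \in W^{G_n-an}$ associe son application orbite $g \mapsto g(w)$ réalise une injection $\Qp$-linéaire isométrique de $(W^{G_n-an},||\cdot||_{G_n})$ dans $\cal{C}^{an}(G_n,W)$ — c'est précisément ainsi que la norme $||\cdot||_{G_n}$ a été introduite — et que $\cal{C}^{an}(G_n,W)$ est un espace de Banach (cf. §10 et §12 de \cite{schneider2011p}). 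Comme un sous-espace fermé d'un espace de Banach est encore un espace de Banach, il suffira de montrer que l'image de $\iota$ est fermée.

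La première étape sera d'identifier cette image au moyen d'une relation de cocycle. Si $f = \iota(w)$, alors $f(gh) = gh(w) = g(f(h))$ pour tous $g,h \in G_n$. Réciproquement, si $f \in \cal{C}^{an}(G_n,W)$ vérifie $f(gh) = g(f(h))$ pour tous $g,h \in G_n$, alors en spécialisant $h = 1$ on obtient $f(g) = g(f(1))$ pour tout $g \in G_n$, de sorte que $f$ est l'application orbite de $w := f(1)$ ; cette application étant analytique par hypothèse, on a $w \in W^{G_n-an}$ et $\iota(w) = f$. L'image de $\iota$ est donc exactement
$$Z := \{f \in \cal{C}^{an}(G_n,W) : f(gh) = g(f(h)) \textrm{ pour tous } g,h \in G_n\}.$$

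La seconde étape sera de vérifier que $Z$ est fermé. Pour $g \in G_n$ fixé, l'évaluation $\mathrm{ev}_g : \cal{C}^{an}(G_n,W) \to W$, $f \mapsto f(g)$, est continue : si $f$ a pour développement $f(\mathbf{x}) = \sum_{\k}\mathbf{x}^\k f_\k$ avec $\mathbf{x} = \mathbf{c}(g) \in (p^n\Zp)^d$, alors $|\mathbf{c}(g)^\k|_p \leq p^{-n|\k|}$, d'où $||f(g)|| \leq \sup_\k ||p^{n|\k|}f_\k|| = ||f||_{G_n}$. Par ailleurs, chaque $g \in G_n$ agit continûment sur $W$ puisque $W$ est une représentation de Banach de $G$. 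Pour $g,h \in G_n$ fixés, l'application $f \mapsto \mathrm{ev}_{gh}(f) - g\cdot\mathrm{ev}_h(f)$ de $\cal{C}^{an}(G_n,W)$ dans $W$ est alors linéaire et continue, donc de noyau fermé ; et $Z$ est l'intersection de ces noyaux lorsque $(g,h)$ décrit $G_n \times G_n$, donc $Z$ est fermé.

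Le point le moins immédiat est l'identification de l'image de $\iota$ ; une fois acquise la caractérisation par la relation $f(gh) = g(f(h))$, la fermeture — et donc la complétude recherchée — est presque formelle, car cette relation ne fait intervenir que des évaluations, qui passent à la limite dans $\cal{C}^{an}(G_n,W)$.
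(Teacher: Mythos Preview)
Your proof is correct. The paper itself gives no argument for this proposition, stating only that it and the preceding lemma are ``conséquence directe des définitions''. Your proof fills in precisely the standard argument: the isometric embedding of $W^{G_n-an}$ into the Banach space $\cal{C}^{an}(G_n,W)$ via orbit maps, with image characterized by the cocycle relation $f(gh) = g(f(h))$, which is visibly a closed condition since evaluations and the individual operators $g$ are continuous. This is exactly the intended justification behind the paper's appeal to the definitions, made explicit.
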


\begin{lemm}
\label{anneaula}
Si $W$ est un anneau tel que $||xy|| \leq ||x||\cdot ||y||$ alors $W^{H-an}$ est aussi un anneau et $||xy||_H \leq ||x||_H\cdot ||y||_H$ pour $x, y \in W^{H-an}$. Si de plus, $w \in W^{\times} \cap W^{\la}$, alors $\frac{1}{w} \in W^{\la}$. En particulier, si $W$ est un corps, alors $W^{\la}$ est aussi un corps.
\end{lemm}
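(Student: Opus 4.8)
The plan is to treat the three assertions in turn, reducing everything to the Taylor-coefficient description of $W^{H-an}$ recalled above together with elementary facts about $\Qp$-Banach algebras, and using (as is built into the notion of a $G$-ring) that $G$ acts on $W$ by continuous ring automorphisms.

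For the ring structure of $W^{H-an}$: given $x,y \in W^{H-an}$, write $g(x) = \sum_{\k}\mathbf{c}(g)^{\k}x_{\k}$ and $g(y) = \sum_{\k}\mathbf{c}(g)^{\k}y_{\k}$ for $g \in H$, with $x_{\k},y_{\k} \to 0$. Since the action is by ring automorphisms and multiplication on $W$ is continuous, $g(xy) = g(x)g(y) = \sum_{\k}\mathbf{c}(g)^{\k}\bigl(\sum_{\mathbf{i}+\mathbf{j}=\k}x_{\mathbf{i}}y_{\mathbf{j}}\bigr)$ (the Cauchy product of the two convergent series), so $g \mapsto g(xy)$ is analytic with Taylor coefficients $(xy)_{\k} = \sum_{\mathbf{i}+\mathbf{j}=\k}x_{\mathbf{i}}y_{\mathbf{j}}$. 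The norm of $W$ being ultrametric and submultiplicative, $||(xy)_{\k}|| \leq \sup_{\mathbf{i}+\mathbf{j}=\k}||x_{\mathbf{i}}||\cdot||y_{\mathbf{j}}||$; as $|\k| \to \infty$ each summand has $|\mathbf{i}|$ or $|\mathbf{j}|$ large, hence one factor small and the other bounded, so $(xy)_{\k} \to 0$ and $xy \in W^{H-an}$; taking the supremum over $\k$ in the same inequality and using $||w||_H = \sup_{\k}||w_{\k}||$ gives $||xy||_H \leq ||x||_H\cdot||y||_H$. Stability under addition and $\Qp$-scaling is immediate, and $1 \in W^{H-an}$ since its orbit map is constant; so $W^{H-an}$ is a ring. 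Since $W^{\la} = \bigcup_n W^{G_n-an}$ with $W^{G_n-an} \subseteq W^{G_m-an}$ for $m \geq n$ (Lemma~\ref{normela}), this is an increasing union of rings, hence $W^{\la}$ is a ring too.

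For inverses: let $w \in W^{\times} \cap W^{\la}$. Each $g \in G$ being a ring automorphism, $g(w) \in W^{\times}$ and $g(w)^{-1} = g(w^{-1})$ for all $g$. In a $\Qp$-Banach algebra, inversion is locally analytic on the group of units: around $u_0 \in W^{\times}$ one has the convergent expansion $u^{-1} = u_0^{-1}\sum_{j \geq 0}\bigl(-(u-u_0)u_0^{-1}\bigr)^{j}$, valid for $||u-u_0||$ small enough, which exhibits $u \mapsto u^{-1}$ as an analytic map on a neighbourhood of $u_0$. Fixing $g_0 \in G$, the orbit map $g \mapsto g(w)$ is analytic near $g_0$ with values in a small enough ball around $g_0(w) \in W^{\times}$; composing it with the above expansion about $u_0 = g_0(w)$ shows $g \mapsto g(w^{-1})$ is analytic near $g_0$. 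As $g_0$ was arbitrary, $w^{-1} \in W^{\la}$.

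Finally, if $W$ is a field then every nonzero element of $W^{\la}$ lies in $W^{\times}$, hence its inverse lies in $W^{\la}$ by the previous step; with the ring structure, this makes $W^{\la}$ a subfield of $W$. The main point deserving care is the legitimacy of the termwise manipulations in the first part — that the product of the two orbit expansions is their Cauchy product and that this genuinely computes the Taylor expansion of $g \mapsto g(xy)$. The clean way to see it is that $g \mapsto (g(x),g(y)) \in W \oplus W$ is analytic while the multiplication $W \oplus W \to W$ is continuous bilinear, hence analytic, and analytic Banach-space-valued maps are stable under composition; the norm bound — equivalently obtained via the isometric ring embedding $W^{H-an} \hookrightarrow \mathcal{C}^{an}(H,W)$ into the Banach algebra of analytic functions — then follows formally from submultiplicativity. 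Everything else is routine.
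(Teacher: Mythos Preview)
Your argument is correct and follows essentially the same route as the paper's, which refers to \cite[Lemme~2.5]{Ber14SenLa} for the ring assertion and only writes out the inverse case. For the inverse you invoke the conceptual statement ``inversion is locally analytic on $W^\times$ and analytic maps compose'', while the paper does the identical computation by hand, expanding $1/g(w)=\tfrac{1}{w}\sum_{j\ge 0}(-1)^j\bigl(\sum_{\k\ne 0}\mathbf{c}(g)^\k\, w_\k/w\bigr)^j$ and reading off explicitly the subgroup $G_m$ (namely any $m\ge n$ with $\sup_{\k\ne 0}\|p^{m|\k|}w_\k/w\|<1$) on which this converges; this is the same geometric-series idea, just unpacked, and has the minor advantage of making the level $m$ explicit. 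One small remark: your detour through an arbitrary base point $g_0$ is unnecessary --- taking $g_0$ to be the identity already suffices to exhibit an open subgroup on which $h\mapsto h(w^{-1})$ is analytic, which is all that $w^{-1}\in W^{\la}$ requires.
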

\begin{proof}
C'est exactement ce que montre la preuve du lemme 2.5 de \cite{Ber14SenLa}. Cependant, l'énoncé du lemme 2.5 d'ibid. ne prétend pas montrer que si $w \in W^{\times} \cap W^{\la}$, alors $\frac{1}{w} \in W^{\la}$, et on en redonne donc la preuve ici.

Soit $w \in W^{G_n-an} \setminus \{0\}$ avec $g(w) = \sum_{\k \in \N^d}\mathbf{c}(g)^\k w_\k$. On a alors
\[
\frac{1}{g(w)}=\frac{1}{w+\sum_{\k \neq 0}\mathbf{c}(g)^\k w_\k} = \frac{1}{w}\cdot \frac{1}{1+\sum_{\k \neq 0}\mathbf{c}(g)^\k\cdot w_\k/w}.
\]
En particulier, $1/w \in W^{G_m-an}$ dès que $m \geq n$ est assez grand pour que $\sup_{\k \neq 0}||p^{m|\k|}\cdot w_\k/w|| < 1$ de sorte que $g(1/w) = \sum_{j \geq 0}(-1)^j\left(\sum_{\k \neq 0}\mathbf{c}(g)^\k w_\k/w\right)^j/w$.
\end{proof}

\begin{lemm}
\label{LieG}
Si $D \in \Lie(G)$ et $n \geq 1$, alors $D(W^{G_n-an}) \subset W^{G_n-an}$ et il existe une constante $C_D$ telle que $||D(x)||_{G_n} \leq C_D||x||_{G_n}$ pour $x \in W^{G_n-an}$.
\end{lemm}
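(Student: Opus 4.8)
The plan is to reduce the statement to a basis of $\Lie(G)$ adapted to the coordinates on $G$, to identify the image of such a basis element with a coefficient of the power series expansion of the orbit map, and then to extract both the stability and the norm bound from the topological isomorphism $(W^{G_n-an})^{G_n-an} \to W^{G_n-an}$ recalled just before Lemme \ref{normela}.

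First I would fix, as in la remarque \ref{groupes coordonnées}, a suite $(G_m)_{m \geq 1}$ and des coordonnées $\mathbf{c}~: G_1 \to \Zp^d$ with $\mathbf{c}(G_m) = (p^m\Zp)^d$, and let $\partial_1,\dots,\partial_d \in \Lie(G)$ be the basis dual to $dc_1,\dots,dc_d$ at the identity, so that $d\mathbf{c}_e(\partial_j) = \mathbf{1}_j$. Since $(\partial_j)_{1 \leq j \leq d}$ is a $\Qp$-basis of $\Lie(G)$, any $D \in \Lie(G)$ writes $D = \sum_j a_j\partial_j$ with $a_j \in \Qp$; because the $\Lie(G)$-action and the norm $||\cdot||_{G_n}$ are $\Qp$-linear and ultrametric, it suffices to prove $\partial_j(W^{G_n-an}) \subset W^{G_n-an}$ together with a bound $||\partial_j(x)||_{G_n} \leq C_{\partial_j}||x||_{G_n}$ for each $j$, and then take $C_D = \max_j |a_j| \cdot C_{\partial_j}$.

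Next, for $x \in W^{G_n-an}$ I would write $g(x) = \sum_{\k \in \N^d}\mathbf{c}(g)^\k x_\k$ for $g \in G_n$, with $p^{n|\k|}x_\k \to 0$ and $||x||_{G_n} = \sup_\k ||p^{n|\k|}x_\k||$. Evaluating the analytic orbit map along the one-parameter subgroup $t \mapsto \exp(t\partial_j)$, which lies in $G_n$ once $v_p(t)$ is large enough, and using $\mathbf{c}(\exp(t\partial_j)) = t\mathbf{1}_j + O(t^2)$, one gets $\exp(t\partial_j)(x) = x_{\mathbf{0}} + t\,x_{\mathbf{1}_j} + O(t^2)$, hence $\partial_j(x) = x_{\mathbf{1}_j}$: the operator $\partial_j$ simply returns one of the coefficients of the expansion of $x$.

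Finally I would invoke the topological isomorphism $(W^{G_n-an})^{G_n-an} \to W^{G_n-an}$ (corollaire 3.3.6 de \cite{emerton2004locally}, cited above). Applied to $W^{G_n-an}$ endowed with $||\cdot||_{G_n}$, it shows on the one hand that every $x_\k$, and in particular $x_{\mathbf{1}_j} = \partial_j(x)$, again belongs to $W^{G_n-an}$, so $\partial_j$ stabilizes $W^{G_n-an}$; on the other hand it provides a constant $C$, depending on $G$ and $n$ but not on $x$, with $\sup_\k ||p^{n|\k|}x_\k||_{G_n} \leq C||x||_{G_n}$, whence for $\k = \mathbf{1}_j$ the inequality $||\partial_j(x)||_{G_n} = ||x_{\mathbf{1}_j}||_{G_n} \leq p^n C ||x||_{G_n}$. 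Combined with the reduction to a basis this yields the lemma with, say, $C_D = \max_j |a_j| \cdot p^n C$. The one point requiring genuine care is precisely this last step, namely the \emph{quantitative} form of Emerton's isomorphism (a uniform bound on the $||\cdot||_{G_n}$-norms of all the coefficients $x_\k$ in terms of $||x||_{G_n}$); the rest is the bookkeeping of identifying $\partial_j(x)$ with $x_{\mathbf{1}_j}$ and propagating the estimate through the basis decomposition.
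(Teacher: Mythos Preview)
Your argument is sound. The paper does not actually prove this lemma: it simply cites \cite[Prop.~3.2]{schneider2002}, so there is no ``paper's own proof'' to compare against beyond that reference. What you have written is a correct, self-contained derivation from the ingredient the paper does record, namely Emerton's topological isomorphism $(W^{G_n\text{-}an})^{G_n\text{-}an} \simeq W^{G_n\text{-}an}$. The identification $\partial_j(x) = x_{\mathbf{1}_j}$ via differentiation of the orbit map along $\exp(t\partial_j)$ is the right computation, and the quantitative step---reading the boundedness of the inverse of Emerton's isomorphism as $\sup_\k \|p^{n|\k|}x_\k\|_{G_n} \leq C\|x\|_{G_n}$, then specializing to $\k = \mathbf{1}_j$---is exactly what yields the operator bound. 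Your constant $C_D$ depends on $n$ (through both the factor $p^n$ and the open-mapping constant $C$), but the statement as phrased allows this, since $n$ is fixed in the hypothesis. In short, where the paper outsources the proof to Schneider--Teitelbaum, you recover it internally from the analytic-vector machinery already set up in \S1.2; the trade-off is that Schneider's argument is perhaps more direct, while yours makes the dependence on Emerton's result explicit.
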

\begin{proof}
Voir \cite[Prop. 3.2]{schneider2002}.
\end{proof}

Pour finir, on s'intéressera dans cet article au cas particulier où le groupe de Lie $p$-adique $G$ sera égal à $G_\infty$, et on va donc introduire des notations spécifiques à ce cas de figure.

\begin{defi}
Si $W$ est une représentation de $G_\infty$, on note $W^{\tau=1}$ et $W^{\gamma=1}$ pour respectivement $W^{\Gal(L/K_{\mathrm{cycl}})=1}$ et $W^{\Gal(L/K_\infty)=1}$, et on note
$$W^{\tau-\la}, \quad W^{\tau_n-an}, \quad W^{\gamma-\la},$$
pour respectivement
$$W^{\Gal(L/K_{\mathrm{cycl}})-\la}, \quad W^{\Gal(L/K_{\mathrm{cycl}}(\pi_n))-\la}, \quad W^{\Gal(L/K_\infty)-\la}.$$
\end{defi}

Si $W$ est une représentation $G_\infty$-localement analytique, on définit les opérateurs de dérivation, respectivement dans la direction $\tau$ et la direction cyclotomique, par 
$$\nabla_\tau:=\frac{\log \tau^{p^n}}{p^n} \textrm{ pour } n \gg 0 $$
et
$$\nabla_\gamma:=\frac{\log g}{\log_p(\chi_{\mathrm{cycl}}(g))} \textrm{ pour } g \in \Gal(L/K_\infty) \textrm{ assez proche de } 1.$$

\begin{rema}
Contrairement à ce que la notation pourrait indiquer, $\gamma$ n'est pas un élément de $\Gal(L/K_\infty)$. Dans le cas où $\Gal(L/K_\infty)$ est pro-cyclique (et donc en particulier lorsque $p \neq 2$), on peut en fait montrer qu'on peut choisir un générateur topologique $\gamma$ de $\Gal(L/K_\infty)$ et que les notations sont alors cohérentes avec ce choix. La notation n'est donc ambigüe que pour $p=2$, néanmoins on fait ce choix par souci de simplifier les notations.
\end{rema}

Si $W$ est une représentation de $G_\infty$ et si $W'=W^{\gamma=1}$, on appellera vecteurs localement analytiques de $W'$ (relativement à $K_\infty/K$) et on notera $(W')^{\la}$ les éléments de $(W^{\la})^{\gamma=1}=W^{\tau-\la,\gamma=1}$.

\subsection{Anneaux de périodes et vecteurs localement analytiques}
Dans cette section, on va définir certains anneaux de périodes qui seront utiles par la suite et on va redonner certains résultats concernant la structure des vecteurs localement analytiques et pro-analytiques dans certains modules et anneaux de périodes.

Si $r \geq 0$, on définit une valuation $V(\cdot,r)$ sur $\Btplus[\frac{1}{[\tilde{\pi}]}]$ en posant
$$V(x,r) = \inf_{k \in \Z}(k+\frac{p-1}{pr}v_{\E}(x_k))$$
pour $x = \sum_{k \gg - \infty}p^k[x_k]$. Si maintenant $I$ est un sous-intervalle fermé de $[0;+\infty[$, on pose $V(x,I) = \inf_{r \in I}V(x,r)$. On définit alors l'anneau $\Bt^I$ comme le complété de $\Btplus[1/[\tilde{\pi}]]$ pour la valuation $V(\cdot,I)$ si $0 \not \in I$, et comme le complété de $\Btplus$ pour $V(\cdot,I)$ si $I=[0;r]$. On notera $\Bt_{\mathrm{rig}}^{\dagger,r}$ pour $\Bt^{[r,+\infty[}$ et $\Bt_{\mathrm{rig}}^+$ pour $\Bt^{[0,+\infty[}$. On définit également $\Bt_{\mathrm{rig}}^\dagger = \bigcup_{r \geq 0}\Bt_{\mathrm{rig}}^{\dagger,r}$.

Soit $I$ un sous-intervalle de $]1,+\infty[$ ou tel que $0 \in I$. Soit $f(Y) = \sum_{k \in \Z}a_kY^k$ une série formelle telle que $a_k \in F$ et $v_p(a_k)+\frac{p-1}{pe}k/\rho \to +\infty$ quand $|k| \to + \infty$ pour tout $\rho \in I$. La série $f([\tilde{\pi}])$ converge dans $\Bt^I$ et on note $\B_{\tau,K}^I$ l'ensemble des $f([\tilde{\pi}])$ avec $f$ comme précédemment. C'est un sous-anneau de $\Bt_{\tau,K}^I$. Le Frobenius définit une application $\phi~: \B_{\tau,K}^I \to \B_{\tau,K}^{pI}$. Si $m \geq 0$, alors $\phi^{-m}(\B_{\tau,K}^{p^mI}) \subset \Bt_K^I$ et on note $\B_{\tau,K,m}^I = \phi^{-m}(\B_{\tau,K}^{p^mI})$, de telle sorte qu'on ait $\B_{\tau,K,m}^I \subset \B_{\tau,K,m+1}^I$ pour tout $m \geq 0$.

On notera aussi $\B_{\tau,\mathrm{rig},K}^{\dagger,r}$ pour $\B_{\tau,K}^{[r;+\infty[}$. C'est un sous-anneau de $\B_{\tau,K}^{[r;s]}$ pour tout $s \geq r$ et on note $\B_{\tau,K}^{\dagger,r}$ l'ensemble des $f(u) \in \B_{\tau,\mathrm{rig},K}^{\dagger,r}$ tels que la suite $(a_k)_{k \in \Z}$ soit de plus bornée. Soit $\B_{\tau,K}^{\dagger}=\cup_{r \gg 0}\B_{\tau,K}^{\dagger,r}$. C'est un corps Hensélien (voir \cite[§2]{matsuda1995local}) dont le corps résiduel est $\E_{\tau,K}$. On note $\B_{\tau,K,m}^I = \phi^{-m}(\B_{\tau,K}^{p^mI})$ et $\B_{\tau,K,\infty}^I=\cup_{m \geq 0}\B_{\tau,K,m}^I$ et donc en particulier $\B_{\tau,K,m}^I \subset \Bt_K^I$. 

Pour $M$ extension finie de $K$, il existe par la théorie du corps des normes une extension séparable $\E_{\tau,M}/\E_{\tau,K}$ de degré $[M_{\infty}:K_{\infty}]$. Comme $\B_{\tau,K}^{\dagger}$ est Hensélien, il existe une unique extension finie non ramifiée $\B_{\tau,M}^{\dagger}/\B_{\tau,K}^{\dagger}$ de degré $f = [M_{\infty}:K_{\infty}]$ de corps résiduel $\E_{\tau,M}$ (voir \cite{matsuda1995local}). Il existe par conséquent $r(M) > 0$ et des éléments $x_1,\cdots x_f \in \B_{\tau,M}^{\dagger,r(M)}$ tels que $\B_{\tau,M}^{\dagger,s}= \oplus_{i=1}^f\B_{\tau,K}^{\dagger,s}\cdot x_i$ pour tout $s \geq r(M)$. Si $r(M) \leq \min(I)$, on pose alors $\B_{\tau,M}^I$ la complétion de $\B_{\tau,M}^{\dagger,r(M)}$ pour $V(\cdot,I)$, de telle sorte que $\B_{\tau,M}^I=\oplus_{i=1}^f\B_{\tau,K}^I\cdot x_i$. 

On rappelle que Berger a construit dans \cite[§ 2.4]{Ber02} un anneau $\Bt_{\log}^{\dagger}$, qui est à $\Bt_{\mathrm{rig}}^\dagger$ ce que $\B_{\mathrm{st}}$ est à $\B_{\mathrm{cris}}$, en posant $\Bt_{\log}^{\dagger,r}:=\Bt_{\mathrm{rig}}^{\dagger,r}[\log[\tilde{\pi}]]$ pour $r \geq 0$, et en définissant alors $\Bt_{\log}^{\dagger}:=\bigcup_{r \geq 0}\Bt_{\log}^{\dagger,r}$ (on renvoie à \cite[Prop. 2.24]{Ber02} pour la construction de l'application $\log$). On définit également un opérateur de monodromie $N$ sur $\Bt_{\log}^\dagger$ par

$$N(\sum_{i=0}^na_i\log[\tilde{\pi}]^i) = -\sum_{i=1}^nia_i\log[\tilde{\pi}]^{i-1}.$$

\begin{prop}
L'opérateur de monodromie $N$ vérifie 
\begin{enumerate}
\item $gN=Ng$ pour tout $g \in \G_K$ ;
\item $N\phi = p\phi N$.
\end{enumerate}
\end{prop}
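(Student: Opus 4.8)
The plan is to reduce both identities to the explicit description of the actions of $\G_K$ and $\phi$ on the symbol $\log[\tilde{\pi}]$, after which everything becomes a formal manipulation on the polynomial ring $\Bt_{\log}^{\dagger} = \Btrig[\log[\tilde{\pi}]]$. Concretely, I would first recall from Berger's construction of $\log[\tilde{\pi}]$ in \cite[Prop.~2.24]{Ber02} the three facts that are actually used. Since $v_{\E}(\tilde{\pi}) > 0$ one cannot take the logarithm of $[\tilde{\pi}]$ directly, but that construction produces an honest element $\log[\tilde{\pi}]$ transcendental over $\Btrig$, so that $\Bt_{\log}^{\dagger} = \Btrig[\log[\tilde{\pi}]]$ is a genuine polynomial ring and $N$, as defined, is simply $-\partial/\partial\log[\tilde{\pi}]$: it is $\Btrig$-linear, vanishes on $\Btrig$, is a derivation, and is the unique $\Btrig$-linear derivation of $\Bt_{\log}^{\dagger}$ with $N(\log[\tilde{\pi}]) = -1$. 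Because $\phi([\tilde{\pi}]) = [\tilde{\pi}]^p$, the Frobenius extends to $\Bt_{\log}^{\dagger}$ by $\phi(\log[\tilde{\pi}]) = p\log[\tilde{\pi}]$. And because $g(\tilde{\pi}) = \epsilon^{c(g)}\tilde{\pi}$ in $\Etplus$, one has $g([\tilde{\pi}]) = [\epsilon]^{c(g)}[\tilde{\pi}]$ in $\Atplus$, so $\G_K$ extends to $\Bt_{\log}^{\dagger}$ by $g(\log[\tilde{\pi}]) = \log[\tilde{\pi}] + c(g)\,t$, where $t := \log[\epsilon] \in \Btrigplus \subseteq \Btrig$ and $c$ is the Kummer cocycle; in particular $N$ kills the correction term $c(g)\,t$ because $t \in \Btrig$. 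Note also that $\Btrig$ is stable under both $\G_K$ and $\phi$.

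For the first identity, fix $g \in \G_K$. Since $\Btrig$ is $g$-stable and $N$ vanishes on it, a direct check shows that $g \circ N \circ g^{-1}$ is again a $\Btrig$-linear derivation of $\Bt_{\log}^{\dagger}$ vanishing on $\Btrig$; moreover
$$g N g^{-1}\big(\log[\tilde{\pi}]\big) = g\Big(N\big(\log[\tilde{\pi}] + c(g^{-1})\,t\big)\Big) = g(-1) = -1 = N\big(\log[\tilde{\pi}]\big).$$
Since a $\Btrig$-linear derivation of $\Btrig[\log[\tilde{\pi}]]$ that vanishes on the base ring is determined by its value on $\log[\tilde{\pi}]$, this forces $g N g^{-1} = N$, i.e. $gN = Ng$. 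Equivalently, one may expand $g(x)$ for $x = \sum_i a_i\log[\tilde{\pi}]^i$ by the binomial theorem and invoke an elementary identity on binomial coefficients; the argument above merely packages that computation invariantly.

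For the second identity, one checks that both $N \circ \phi$ and $p\,\phi \circ N$ are $\phi$-twisted derivations of $\Bt_{\log}^{\dagger}$, i.e. satisfy $D(xy) = D(x)\phi(y) + \phi(x)D(y)$: for $N\circ\phi$ this combines the Leibniz rule for $N$ with the multiplicativity of $\phi$, and for $p\,\phi\circ N$ it is the Leibniz rule for $N$ followed by the ring map $p\,\phi$. Both vanish on $\Btrig$, since $N(\phi(a)) = 0$ because $\phi(a) \in \Btrig$, and $p\,\phi(N(a)) = 0$ because $N(a) = 0$. And both send $\log[\tilde{\pi}]$ to $-p$, since $N(\phi(\log[\tilde{\pi}])) = N(p\log[\tilde{\pi}]) = -p$ and $p\,\phi(N(\log[\tilde{\pi}])) = p\,\phi(-1) = -p$. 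As a $\phi$-twisted derivation of $\Btrig[\log[\tilde{\pi}]]$ is determined by its restriction to $\Btrig$ together with its value on $\log[\tilde{\pi}]$, we conclude $N\phi = p\phi N$.

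The only genuine input is the description of the $g$- and $\phi$-actions on $\log[\tilde{\pi}]$ borrowed from \cite[Prop.~2.24]{Ber02}; once those formulas are in hand there is no real obstacle. The conceptual point is that $[\tilde{\pi}]$ plays, relative to the Kummer tower, exactly the role the element $[\tilde{p}]$ plays for Fontaine's $\B_{\mathrm{st}}$ — it is scaled by a power of $[\epsilon]$ under $\G_K$ and raised to the $p$-th power under $\phi$ — so the computation is formally identical to Berger's cyclotomic one.
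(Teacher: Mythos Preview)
Your argument is correct. The paper does not actually give a proof here: it simply refers to \cite[3.2.3]{fontaine1994corps}, where the analogous identities are established for the monodromy operator on $\B_{\mathrm{st}}$. What you have written is precisely the standard verification, transported to the $\log[\tilde{\pi}]$ setting via Berger's construction, so there is nothing to compare beyond noting that you have unpacked the cited reference rather than invoking it.
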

\begin{proof}
Voir \cite[3.2.3]{fontaine1994corps}.
\end{proof}

On pose $\Bt_{\tau,\log,K}^\dagger:=\Bt_{\tau,\mathrm{rig},K}^\dagger[\log[\tilde{\pi}]]= (\Bt_{\log}^\dagger)^{H_{\tau,K}}$. On définit également un anneau $\B_{\tau,\log,K}^{\dagger}$, en posant $\B_{\tau,\log,K}^{\dagger}=\B_{\tau,\mathrm{rig},K}^{\dagger}[\log[\tilde{\pi}]]$. Cet anneau est stable sous l'action de $\phi$ puisque $\phi(\log[\tilde{\pi}])=p\cdot \log[\tilde{\pi}]$ par définition, et est un sous-anneau de $\Bt_{\log}^{\dagger}$. On définit également un anneau $\B_{\tau,\log,K}^{\dagger,r}$ par $\B_{\tau,\log,K}^{\dagger,r}=\B_{\tau,\mathrm{rig},K}^{\dagger,r}[\log[\tilde{\pi}]]$. Comme le Frobenius induit une bijection de $\Bt_{\mathrm{rig},K}^{\dagger,r}$ sur $\Bt_{\mathrm{rig},K}^{\dagger,pr}$, il induit également une bijection de $\Bt_{\tau,\log,K}^{\dagger,r}$ sur $\Bt_{\tau,\log,K}^{\dagger,pr}$. 

En particulier, $\phi(\Bt_{\tau,\log,K}^{\dagger,r}) \subset \Bt_{\tau,\log,K}^{\dagger,pr}$. On définit alors 
$$\B_{\tau,\log,K,n}^{\dagger,r} = \phi^{-n}(\B_{\tau,\log,K,n}^{\dagger,p^nr})$$
et 
$$\B_{\tau,\log,K,\infty}^{\dagger,r}=\bigcup_{n \geq 0}\B_{\tau,\log,K,n}^{\dagger,r},$$
qui sont des sous-anneaux de $\Bt_{\tau,\log,K}^{\dagger,r}$. On notera respectivement $\B_{\tau,\mathrm{rig},K}^+$, $\B_{\tau,\log,K}^+$ et $\Bt_{\log}^+$ pour $\B_{\tau,\mathrm{rig},K}^{\dagger,0}$, $\B_{\tau,\log,K}^{\dagger,0}$ et $\Bt_{\log}^{\dagger,0}$.

On va maintenant redonner certains résultats concernant les vecteurs localement analytiques et pro-analytiques dans les anneaux de périodes $\Bt_L^I$, ainsi que certaines conséquences de la surconvergence des $(\phi,\tau)$-modules. Pour $I$ un sous-intervalle de $[0;+\infty)$, on définit $\tilde{D}_L^{I}(V)$ par $\tilde{D}_L^{I}(V) = (\Bt^{I} \otimes_\Qp V)^{\G_L}$. Pour $k \geq 1$, on pose $r_k = p^{k-1}(p-1)$.

\begin{theo}
\label{struc loc ana}
Soit $I = [r_\ell;r_k]$ ou $I = [0,r_k]$, et soit $m \geq m_0$ avec $m_0$ comme dans \cite[Lemm. 3.4.2]{GP18}. Alors on a~:
\begin{enumerate}
\item $(\Bt_L^I)^{\tau_{m+k}-an,\gamma=1} \subset \B_{\tau,K,m}^I$ pour $m \geq m_0$ ;
\item $(\Bt_L^I)^{\tau-la,\gamma=1}=\B_{\tau,K,\infty}^I$ ;
\item $(\Bt_{\tau,\mathrm{rig},L}^{\dagger,r_\ell})^{\tau-\pa,\gamma=1} = \B_{\tau,\mathrm{rig},K,\infty}^{\dagger,r_\ell}$.
\end{enumerate} 
\end{theo}
\begin{proof}
Voir \cite[Thm. 3.4.4]{GP18}.
\end{proof}

\begin{prop}
\label{log tau la}
On a $(\Bt_{\tau,\log,K}^{\dagger,r})^{\pa} = \B_{\tau,\log,K,\infty}^{\dagger,r}$.
\end{prop}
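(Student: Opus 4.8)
The plan is to reduce the assertion to its $\log$-free analogue, namely Theorem \ref{struc loc ana}(3), by filtering $\Bt_{\tau,\log,K}^{\dagger,r}$ according to the degree in $\log[\tilde{\pi}]$ and applying Proposition \ref{padanspa} to each (finite-rank) graded piece. Recall that, with the convention fixed above, $(\Bt_{\tau,\log,K}^{\dagger,r})^{\pa}$ means $\bigl((\Bt_{\tau,\log,L}^{\dagger,r})^{\pa}\bigr)^{\gamma=1}$, where $\Bt_{\tau,\log,L}^{\dagger,r}:=(\Bt_{\log}^{\dagger,r})^{H_{\infty}}$ carries the $G_\infty$-action and the pro-analytic vectors are taken for that action. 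By \cite[§2.4]{Ber02} the element $\log[\tilde{\pi}]$ is transcendental over $\Bt_{\mathrm{rig}}^{\dagger,r}$, so $\Bt_{\log}^{\dagger,r}=\Bt_{\mathrm{rig}}^{\dagger,r}[\log[\tilde{\pi}]]$ is a polynomial ring; set $\Bt_{\log}^{\dagger,r,\leq d}:=\bigoplus_{i=0}^{d}\Bt_{\mathrm{rig}}^{\dagger,r}\cdot(\log[\tilde{\pi}])^{i}$. For $g\in\G_K$ one has $g(\log[\tilde{\pi}])=\log[\tilde{\pi}]+c(g)\,t$, where $c$ is the (locally analytic) Kummer cocycle and $t:=\log[\epsilon]\in\Btrigplus$ is fixed by $\G_L$ (since $\chi_{\mathrm{cycl}}$ is trivial on $\G_L$). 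Hence each $\Bt_{\log}^{\dagger,r,\leq d}$ is $\G_K$-stable, and taking $H_\infty$-invariants yields a $G_\infty$-stable free module $\Bt_{\tau,\log,L}^{\dagger,r,\leq d}=\bigoplus_{i=0}^{d}\Bt_{\tau,\mathrm{rig},L}^{\dagger,r}\cdot(\log[\tilde{\pi}])^{i}$ of rank $d+1$, in whose basis $\bigl((\log[\tilde{\pi}])^{i}\bigr)_{i}$ the entries of $\Mat(g)$ are products of the $\G_L$-fixed elements $t^{k}\in\Bt_{\tau,\mathrm{rig},L}^{\dagger,r}$ with powers of $c(g)$; so $g\mapsto\Mat(g)$ is pro-analytic.

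Proposition \ref{padanspa} then gives $(\Bt_{\tau,\log,L}^{\dagger,r,\leq d})^{\pa}=\bigoplus_{i=0}^{d}(\Bt_{\tau,\mathrm{rig},L}^{\dagger,r})^{\pa}\cdot(\log[\tilde{\pi}])^{i}$ for every $d$. Since every element of $\Bt_{\tau,\log,L}^{\dagger,r}$ has finite degree in $\log[\tilde{\pi}]$ and each $\Bt_{\tau,\log,L}^{\dagger,r,\leq d}$ is a closed subspace of the next, pro-analyticity is detected on these pieces and $(\Bt_{\tau,\log,L}^{\dagger,r})^{\pa}=(\Bt_{\tau,\mathrm{rig},L}^{\dagger,r})^{\pa}[\log[\tilde{\pi}]]$. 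As $c$ vanishes on $\Gamma=\Gal(L/K_\infty)$, the group $\Gamma$ fixes $\log[\tilde{\pi}]$, so taking $\gamma=1$ invariants commutes with this decomposition and, using Theorem \ref{struc loc ana}(3),
\[ (\Bt_{\tau,\log,K}^{\dagger,r})^{\pa}=\bigl((\Bt_{\tau,\mathrm{rig},L}^{\dagger,r})^{\pa}\bigr)^{\gamma=1}[\log[\tilde{\pi}]]=\B_{\tau,\mathrm{rig},K,\infty}^{\dagger,r}[\log[\tilde{\pi}]]. \]
It remains to identify the right-hand side with $\B_{\tau,\log,K,\infty}^{\dagger,r}$: since $\phi^{-n}(\log[\tilde{\pi}])=p^{-n}\log[\tilde{\pi}]$, one has $\B_{\tau,\log,K,n}^{\dagger,r}=\phi^{-n}\bigl(\B_{\tau,\mathrm{rig},K}^{\dagger,p^{n}r}[\log[\tilde{\pi}]]\bigr)=\B_{\tau,\mathrm{rig},K,n}^{\dagger,r}[\log[\tilde{\pi}]]$, and taking the increasing union over $n$ gives $\B_{\tau,\log,K,\infty}^{\dagger,r}=\B_{\tau,\mathrm{rig},K,\infty}^{\dagger,r}[\log[\tilde{\pi}]]$.

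Essentially all the content is imported from Theorem \ref{struc loc ana}(3). The two points that genuinely need care are that $\log[\tilde{\pi}]$ is transcendental over $\Bt_{\mathrm{rig}}^{\dagger,r}$ — which is what makes the degree filtration well defined and its truncations finite free, so that Proposition \ref{padanspa} applies — and that the $\G_K$-action on $\log[\tilde{\pi}]$ is pro-analytic, which reduces to the local analyticity of the cocycle $c$ (the same input that underlies the $G_\infty$-action on $[\tilde{\pi}]$ in \cite{GP18}) together with $t\in\Bt_{\tau,\mathrm{rig},L}^{\dagger,r}$. The one real subtlety to flag is that the termwise splitting $\Bt_{\log}^{\dagger,r}=\bigoplus_i\Bt_{\mathrm{rig}}^{\dagger,r}(\log[\tilde{\pi}])^i$ is \emph{not} $\G_K$-equivariant, so one cannot argue coefficient by coefficient directly; it is the degree \emph{filtration}, whose graded pieces are finite free, that is stable, which is why Proposition \ref{padanspa} (and not a naive coordinate argument) is the right tool.

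Alternatively, one can avoid the filtration: given $x\in(\Bt_{\tau,\log,K}^{\dagger,r})^{\pa}$ of degree $d$ in $\log[\tilde{\pi}]$, extract its leading coefficient by applying the monodromy operator $N$, which commutes with $\G_K$ and is continuous and hence preserves pro-analytic vectors; then $N^{d}(x)\in\Bt_{\tau,\mathrm{rig},K}^{\dagger,r}$ is pro-analytic, so it lies in $\B_{\tau,\mathrm{rig},K,\infty}^{\dagger,r}$ by Theorem \ref{struc loc ana}(3); subtracting the corresponding multiple of $(\log[\tilde{\pi}])^{d}$ (which is pro-analytic, $\log[\tilde{\pi}]$ and $N^d(x)$ being so) lowers the degree, and one concludes by induction on $d$.
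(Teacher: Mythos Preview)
Your proposal is correct, and in fact your \emph{alternative} paragraph is precisely the paper's own proof: the paper argues by applying the monodromy operator $N$ (continuous and $\G_K$-equivariant, hence preserving pro-analytic vectors by Lemma~\ref{app linéaire la}) to extract the top coefficient of $x=\sum_k x_k(\log[\tilde{\pi}])^k$, and then descends by induction on the degree after subtracting $x_n(\log[\tilde{\pi}])^n$.

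Your main argument, via the degree filtration and Proposition~\ref{padanspa}, is a genuinely different (and equally valid) route. Instead of peeling off coefficients one at a time with $N$, you observe once and for all that each truncation $\Bt_{\tau,\log,L}^{\dagger,r,\leq d}$ is a free $\Bt_{\tau,\mathrm{rig},L}^{\dagger,r}$-module in which $g\mapsto\Mat(g)$ is pro-analytic (the entries being $\binom{i}{j}c(g)^{i-j}t^{i-j}$), so Proposition~\ref{padanspa} decomposes the pro-analytic vectors in one stroke. This is cleaner conceptually and makes the structural reason transparent, at the cost of having to articulate the filtration and the matrix explicitly; the paper's approach via $N$ is more hands-on but avoids invoking the module-theoretic machinery. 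Your remark that the \emph{splitting} by powers of $\log[\tilde{\pi}]$ is not $\G_K$-equivariant (only the filtration is) is well taken and is exactly why Proposition~\ref{padanspa}, rather than a naive coordinate argument, is needed here.
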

\begin{proof}
Le calcul des vecteurs pro-analytiques dans $\Bt_{\tau,\mathrm{rig},K}^{\dagger,r}$ est donné par le théorème \ref{struc loc ana} et nous dit que $(\Bt_{\mathrm{rig},K}^{\dagger,r})^{\pa} =\B_{\tau,\mathrm{rig},K,\infty}^{\dagger,r}$. Remarquons également que $\log[\tilde{\pi}] \in (\Bt_{\tau,\log,K}^{\dagger,r})^{\pa}$, puisque $\log[\tilde{\pi}]$ est invariant sous l'action de $\gamma$ et qu'on a $\tau^k(\log[\tilde{\pi}])=kt+\log[\tilde{\pi}]$. 

Si maintenant $x \in (\Bt_{\tau,\log,K}^{\dagger,r})^{\pa}$, on peut écrire $x = \sum_{k=0}^nx_k\log[\tilde{\pi}]^k$. Comme 
$$N~: \Bt_{\tau,\log,K}^{\dagger,r} \rightarrow \Bt_{\tau,\log,K}^{\dagger,r}$$
est une application $\Bt_{\mathrm{rig},K}^{\dagger,r}$-linéaire continue, le lemme \ref{app linéaire la} montre que $N(x) \in (\Bt_{\tau,\log,K}^{\dagger,r})^{\pa}$. En itérant $n$ fois l'opérateur $N$, on trouve que $x_n \in (\Bt_{\tau,\log,K}^{\dagger,r})^{\pa}$. Comme $\log[\tilde{\pi}] \in (\Bt_{\tau,\log,K}^{\dagger,r})^{\pa}$, on en déduit que $x-x_n\log[\tilde{\pi}]^n \in (\Bt_{\tau,\log,K}^{\dagger,r})^{\pa}$. En appliquant le même résultat à $x-x_n\log[\tilde{\pi}]^n$, on en déduit que $x_{n-1}$ est aussi dans $(\Bt_{\tau,\log,K}^{\dagger,r})^{\pa}$, et par récurrence descendante sur $k \in \left\{0,\cdots,n\right\}$, on en déduit que chacun des $x_k$ appartient à $(\Bt_{\mathrm{rig},K}^{\dagger,r})^{\pa}$. Comme $(\Bt_{\mathrm{rig},K}^{\dagger,r})^{\pa} =\B_{\tau,\mathrm{rig},K,\infty}^{\dagger,r}$, on en déduit le résultat.
\end{proof}

La surconvergence des $(\phi,\tau)$-modules a également plusieurs conséquences intéressantes qui vont nous être utiles par la suite :

\begin{lemm}
\label{D_tau est loc ana}
Soit $V$ une représentation $p$-adique de $\G_K$ et soit $D_{\tau}^{\dagger}(V):=(\B_{\tau}^{\dagger}\otimes V)^{H_K}$ le $\phi$-module surconvergent associé, et soit $r \geq 0$ tel que $D_{\tau}^{\dagger}(V) = \B_{\tau,K}^{\dagger}\otimes_{\B_{\tau,K}^{\dagger,r}}((\B_\tau^{\dagger,r}\otimes_{\Qp}V)^{H_K})$. Alors pour tout intervalle $I$ compact tel que $r \leq \min(I)$, les éléments de $D_{\tau}^\dagger(V)$ et $D_{\tau,\mathrm{rig}}^{\dagger}(V)$, vus comme des éléments de $\tilde{D}_{L}^I(V)$ sont des vecteurs localement analytiques pour $G_\infty$.
\end{lemm}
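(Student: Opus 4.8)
The plan is to choose, using the overconvergence of $(\phi,\tau)$-modules, a single $\B_{\tau,K}^{\dagger,r}$-basis of $D^{\dagger,r}(V)=(\B_\tau^{\dagger,r}\otimes_\Qp V)^{\G_{K_\infty}}$ which is simultaneously a $\Bt_L^I$-basis of $\tilde D_L^I(V)$, and then to reduce the statement to two facts already available: the computation of the locally analytic and pro-analytic vectors of the relevant period rings in Theorem \ref{struc loc ana}, and the $\gamma$-invariance of all the objects involved. After enlarging $r$ if necessary, Theorem \ref{surconvphitau} supplies such a basis $e_1,\dots,e_d$, so that $M^{\dagger,r}(V)=\oplus_{i=1}^d\Bt_L^{\dagger,r}\cdot e_i$ and hence $\tilde D_L^I(V)=\oplus_{i=1}^d\Bt_L^I\cdot e_i$ for every compact interval $I$ with $\min(I)\geq r$, while $D_\tau^\dagger(V)$ and $D_{\tau,\mathrm{rig}}^\dagger(V)$ are, inside $\tilde D_L^I(V)$, spanned by the same $e_i$ over $\B_{\tau,K}^\dagger$ and $\B_{\tau,\mathrm{rig},K}^\dagger$ respectively (the relevant maps being restriction of functions, defined once $\min(I)$ is large enough). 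Since $\B_{\tau,K}^\dagger\subset\B_{\tau,\mathrm{rig},K}^\dagger$, it is enough to deal with $D_{\tau,\mathrm{rig}}^\dagger(V)$, and by Proposition \ref{ladansla} it then suffices to prove that each $e_i$ is a locally analytic vector of $\tilde D_L^I(V)$, and that the image of $\B_{\tau,\mathrm{rig},K}^\dagger$ in $\Bt_L^I$ is contained in $(\Bt_L^I)^{\la}$.

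For the coefficient ring, the point is that $\B_{\tau,\mathrm{rig},K}^\dagger$ is fixed by $\Gal(L/K_\infty)$, i.e. is $\gamma$-invariant, and that its image in $\Bt_L^I$ lies in $\B_{\tau,K}^I\subset\B_{\tau,K,\infty}^I$; by Theorem \ref{struc loc ana}(2) the latter equals $(\Bt_L^I)^{\tau-\la,\gamma=1}=((\Bt_L^I)^{\la})^{\gamma=1}$, which gives the inclusion into $(\Bt_L^I)^{\la}$. For the basis vectors the same mechanism applies on the module: each $e_i$ lies in $D^{\dagger,r}(V)\subset M^{\dagger,r}(V)^{\Gal(L/K_\infty)}$, hence is $\gamma$-invariant, and by the identity $((\tilde D_L^I(V))^{\la})^{\gamma=1}=(\tilde D_L^I(V))^{\tau-\la,\gamma=1}$ it is enough to show that $e_i$ is $\tau$-locally analytic in $\tilde D_L^I(V)$. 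For this I would use that every $g\in G_\infty$ preserves $\Bt^{\dagger,r}$ (the valuation $v_\E$ being $\G_K$-invariant and $\G_L$ normal in $\G_K$), hence preserves $M^{\dagger,r}(V)$, so that the matrix $\Mat(g)\in\GL_d(\Bt_L^{\dagger,r})$ of $g$ in the basis $(e_i)$ has overconvergent entries; the analyticity of $g\mapsto\Mat(g)$ then gives the $\tau$-analyticity of the $e_i$, and this analyticity of the Galois action on the overconvergent lattice is exactly what the arguments of \cite{GP18} produce (equivalently, one knows $D_{\tau,\mathrm{rig}}^\dagger(V)\subset(\tilde D_L^{[r;+\infty[}(V))^{\pa}$ on the Fréchet module by Theorem \ref{struc loc ana}(3) and Proposition \ref{padanspa}, then transfers this to the Banach slice $\tilde D_L^I(V)$ along the continuous linear restriction map $\tilde D_L^{[r;+\infty[}(V)\to\tilde D_L^I(V)$ via Lemma \ref{app linéaire la}).

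The step I expect to be the genuine obstacle, if one wants to argue from scratch rather than cite \cite{GP18} as a black box, is precisely establishing that $g\mapsto\Mat(g)$ is locally analytic and not merely continuous: one must use the semilinear cocycle relation to show that for $m$ large $\Mat(\tau^{p^m})$ is close enough to the identity that $t\mapsto\Mat(\tau^{p^m t})$ is given by a convergent power series in $t\in\Zp$, which ties into the convergence properties of $\nabla_\tau$ and the explicit structure of $(\Bt_L^I)^{\la}$ from Theorem \ref{struc loc ana}. Once this is in hand, combining it with the (trivial, constant) analyticity in the $\gamma$-direction and Proposition \ref{p=2=pb} yields local analyticity for the full group $G_\infty$, and the rest is bookkeeping within the formalism of Section 1.2.
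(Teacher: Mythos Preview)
Your approach is correct, and in fact the parenthetical alternative you sketch is essentially the paper's proof. The paper simply observes that in the proof of \cite[Thm.~6.2.6]{GP18} the modules $D_\tau^\dagger(V)$ and $D_{\tau,\mathrm{rig}}^\dagger(V)$ are \emph{constructed} as sub-objects of $\tilde D_{\mathrm{rig},L}^\dagger(V)^{\pa}$; since a pro-analytic vector in a Fr\'echet representation is by definition locally analytic in each Banach slice, the chain
\[
D_{\tau}^{\dagger,r}(V)\subset D_{\tau,\mathrm{rig}}^{\dagger,r}(V)\subset \tilde D_{\mathrm{rig},L}^{\dagger,r}(V)^{\pa}\subset \tilde D_L^{[r;s]}(V)^{\la}
\]
is immediate for $s\geq r$. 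Your longer route---verifying directly that $g\mapsto\Mat(g)$ is locally analytic on the overconvergent lattice---would also work, and you correctly identify it as the substantive step; but that is precisely what the argument of \cite{GP18} establishes, so it is more efficient to quote it, as the paper does.

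One small caution on your parenthetical justification: the inclusion $D_{\tau,\mathrm{rig}}^\dagger(V)\subset(\tilde D_L^{[r;+\infty[}(V))^{\pa}$ does not follow from Theorem~\ref{struc loc ana}(3) and Proposition~\ref{padanspa} alone. Theorem~\ref{struc loc ana}(3) computes the pro-analytic vectors of the \emph{ring}, while Proposition~\ref{padanspa} presupposes that the chosen basis already has pro-analytic Galois action---which is exactly the point at issue. The genuine input is the construction in \cite{GP18}, which you also cite; once that is granted, the transfer to the Banach slice via Lemma~\ref{app linéaire la} (or, equivalently, via the very definition of pro-analytic vectors) goes through as you describe.
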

\begin{proof}
On définit $\tilde{D}_{\mathrm{rig},L}^{\dagger,r}(V)$ par 
$$\tilde{D}_{\mathrm{rig},L}^{\dagger,r}(V) = (\Bt_{\mathrm{rig}}^{\dagger,r} \otimes_\Qp V)^{\G_L}.$$
Dans la démonstration de \cite[Thm. 6.2.6]{GP18}, les modules $D_{\tau}^{\dagger}(V)$ et $D_{\tau,\mathrm{rig}}^{\dagger}(V)$ sont construits comme des sous-objets de $\tilde{D}_{\mathrm{rig},L}^{\dagger}(V)^\pa$, de sorte que 
$$D_{\tau}^{\dagger,r}(V) \subset D_{\tau,\mathrm{rig}}^{\dagger,r}(V) \subset \tilde{D}_{\mathrm{rig},L}^{\dagger,r}(V)^\pa \subset \tilde{D}_L^{[r;s]}(V)^\la$$
pour $s \geq r$, ce qui montre le résultat.
\end{proof}

On va maintenant s'intéresser à des conséquences de la démonstration de la surconvergence des $(\phi,\tau)$-modules et principalement concernant un certain élément $b_\gamma \in \At_L^+$ (c'est le $\mathfrak{t}$ de \cite[Exemple 3.2.3]{Liu08}) qu'on définit par $b_\gamma~:= \frac{t}{p\lambda}$, où $t$ est le $t$ usuel en théorie de Hodge $p$-adique et $\lambda = \prod_{n \geq 0}\phi^n(E([\tilde{\pi}])/E(0)) \in \B_{\tau,\mathrm{rig},K}^+$ est l'élément défini dans \cite[1.1.1]{KisinFiso}. On rappelle que, par \cite[Lemm. 5.1.1]{GP18}, si $I$ est un sous-intervalle de $\R$ avec $\min(I)$ assez grand, alors $b_\gamma \in (\Bt_L^I)^{\la}$. La surconvergence des $(\phi,\tau)$-modules nous permet de montrer des résultats plus précis concernant cet élément, et d'obtenir la conséquence suivante :

\begin{prop}
\label{t dans BtauL}
On a $t \in \lambda \cdot \B_{\tau,L}^\dagger$.
\end{prop}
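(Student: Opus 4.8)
The statement $t \in \lambda \cdot \B_{\tau,L}^\dagger$ is equivalent to $t/\lambda \in \B_{\tau,L}^\dagger$, i.e. to $p b_\gamma \in \B_{\tau,L}^\dagger$, since $\lambda \in \B_{\tau,\mathrm{rig},K}^+$ is invertible modulo the zeros coming from the Frobenius iterates of $E$. So the goal is really to prove that $b_\gamma$ is overconvergent, living in $\B_{\tau,L}^\dagger = \bigcup_{r}\Bt^{\dagger,r}\cap \Bt_L$, rather than merely in the Robba-type rings $(\Bt_L^I)^{\la}$. The plan is to upgrade the information we already have: by \cite[Lemm. 5.1.1]{GP18} we know $b_\gamma \in (\Bt_L^I)^{\la}$ for $\min(I)$ large, and by construction $b_\gamma \in \Atplus_L$, so $b_\gamma$ is a bounded element of $\Bt_L^+$ that becomes locally analytic on a suitable annulus. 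The point is that a bounded element of $\Bt_L^{[r;s]}$ that is pro-analytic (equivalently locally analytic, since these are Banach rings) and bounded is automatically overconvergent.

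First I would recall that $b_\gamma$ satisfies the explicit transformation law $g(b_\gamma) = b_\gamma + (\text{something})$ under $G_\infty$, coming from $g(t) = \chi_{\mathrm{cycl}}(g)t$ and the behaviour of $\lambda$; more precisely one has $\tau(b_\gamma) = b_\gamma \cdot \frac{g(\lambda)}{\lambda}\cdot(\dots)$ — the key structural fact, used already in \cite{GP18}, is that $b_\gamma \in \Atplus_L$ with $b_\gamma \equiv \tilde\pi$-unit modulo $p$ or some analogous congruence that pins down its image in $\Etplus_L$. Then I would invoke the overconvergence machinery: the element $t$ itself is well-known to lie in $\B^{\dagger,r}$ for every $r>0$ (indeed $t = \log[\epsilon] = \sum (-1)^{n+1}([\epsilon]-1)^n/n$ converges and is overconvergent), so $t \in \Bt^\dagger$. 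The real content is therefore dividing by $\lambda$ inside the overconvergent ring: one must show $t/\lambda \in \Bt_L^\dagger$, and since $t$ and $\lambda$ both lie in $\Bt_{\mathrm{rig}}^+$ with $\lambda$ having only the zeros $\phi^n(E([\tilde\pi])) = 0$, the quotient $t/\lambda$ is a priori only in $\Bt_{\mathrm{rig},L}^\dagger$; to get genuine overconvergence (boundedness on the annulus) one checks that the zeros of $\lambda$ are cancelled by zeros of $t$. This is the standard fact that $t/\lambda$, which equals $p\cdot b_\gamma$, is — up to units — the element $\mathfrak{t}$ of Liu \cite[Exemple 3.2.3]{Liu08}, and Liu shows precisely that this element lies in $\At_L$ (not just $\Atplus_{\mathrm{rig},L}$), hence in $\Bt_L^\dagger$ after inverting $p$.

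Concretely the argument runs: (i) $b_\gamma \in \Atplus_L \subset \Bt_L^+$, so $b_\gamma$ is bounded; (ii) $b_\gamma \in (\Bt_L^{[r;s]})^{\la}$ for $r$ large by \cite[Lemm. 5.1.1]{GP18}; (iii) $b_\gamma$ is fixed by $H_{\infty} = \G_L$ and one computes, via Theorem~\ref{struc loc ana}, that $(\Bt_L^{[r;s]})^{\la,\gamma=1}$-type descent places $b_\gamma$ in a ring whose bounded part is $\B_{\tau,L}^{\dagger,r}$-related — more directly, a bounded element of $\Bt_L$ lying in $\Bt_L^{[r;s]}$ for a cofinal family of $s$ lies in $\Bt_L^{\dagger,r}$ by the very definition of $\Bt^{\dagger,r}$ in terms of the valuations $V(\cdot,[r;s])$ and the boundedness of Teichmüller coordinates; (iv) multiplying back by $\lambda\in\B_{\tau,\mathrm{rig},K}^+$ (which is a unit times a product of the $\phi^n(E([\tilde\pi]))$) gives $t = \lambda\cdot(p b_\gamma) \in \lambda\cdot\B_{\tau,L}^\dagger$, but one must note $\lambda\cdot\B_{\tau,L}^\dagger \subset \B_{\tau,L}^\dagger$ only after checking $\lambda$ itself is a bounded overconvergent element — which it is, since its coefficients (the Newton-polygon data of $E$) are bounded. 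So in fact $t\in\B_{\tau,L}^\dagger$ and a fortiori $t\in\lambda\cdot\B_{\tau,L}^\dagger$ once we know $b_\gamma = t/(p\lambda)\in\B_{\tau,L}^\dagger$.

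**The main obstacle.** The delicate point is step (iii)/(iv): showing that $b_\gamma$ is genuinely \emph{bounded and overconvergent} and not merely an element of the Robba ring $\B_{\tau,\mathrm{rig},L}^\dagger$. Equivalently, one must control the growth of the coefficients of $b_\gamma = t/(p\lambda)$ on the annulus — the zeros of $\lambda$ at the "circles" $\phi^{-n}(\pi_n = 0)$ must be exactly matched by zeros of $t$. I expect this is precisely the computation carried out by Liu for his element $\mathfrak{t}$ in \cite[Exemple 3.2.3]{Liu08}; the cleanest route is to quote that $\mathfrak{t}=p b_\gamma\in\At_L$, deduce $b_\gamma\in\B_{\tau,L}^\dagger$ (indeed $b_\gamma\in\B_{\tau,L}^+[1/p]$ is even better), and conclude $t=p\lambda b_\gamma\in\lambda\cdot\B_{\tau,L}^\dagger$. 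If instead one wants a self-contained proof, the obstacle is the explicit verification that $v_{\E}$ of the relevant Teichmüller coordinates of $t/\lambda$ satisfies the overconvergence bound $v_{\E}(x_k)+\frac{pr}{p-1}k\to+\infty$, which amounts to an elementary but slightly tedious Newton-polygon estimate using $v_{\E}(\tilde\pi)=e/(p-1)\cdot(p/(p-1))$ and $v_{\E}(\bar u)=p/(p-1)$.
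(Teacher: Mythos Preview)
Your proposal rests on a misidentification of the ring $\B_{\tau,L}^\dagger$. You write $\B_{\tau,L}^\dagger = \bigcup_r \Bt^{\dagger,r}\cap \Bt_L$, but that union is the \emph{perfect} overconvergent ring $\Bt_L^\dagger$. In the paper, the subscript $\tau$ signals the \emph{imperfect} rings built from $\B_\tau$, so $\B_{\tau,L}^\dagger = (\B_\tau^\dagger)^{\G_L} = \B_\tau^\dagger \cap \Bt_L$; the sentence following the proposition (``En particulier, $b_\gamma \in \B_{\tau,L}^\dagger \subset \B_\tau^\dagger$'') and the use of $b_\gamma \in \B_\tau$ in the proof of Proposition~\ref{tordre rep phitau} confirm this. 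With your reading, the statement is trivial: $b_\gamma \in \At_L^+ \subset \Bt_L^\dagger$ directly. The actual content is that $b_\gamma$, a priori only in the perfect ring, lies in the imperfect subring $\B_\tau^\dagger$. None of your steps (i)--(iv) address this. In particular your step (iii), which invokes Theorem~\ref{struc loc ana}, cannot be applied to $b_\gamma$: that theorem computes $(\Bt_L^I)^{\pa,\gamma=1}$, and $b_\gamma$ is \emph{not} $\gamma$-invariant (one has $\gamma(b_\gamma)=\chi_{\mathrm{cycl}}(\gamma)\,b_\gamma$).

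The paper supplies exactly the missing idea. It applies the overconvergence theorem (Theorem~\ref{surconvphitau}) to $V=\Qp(-1)$: a generator of $D_\tau^\dagger(V)$ gives a nonzero $z\in\B_\tau^\dagger$ with $g(z)=\chi_{\mathrm{cycl}}(g)z$ for $g\in H_{\tau,K}$, hence $z\in\B_{\tau,L}^\dagger$. Now $z$ and $b_\gamma$ transform the same way under $\Gal(L/K_\infty)$, so $z/b_\gamma$ is $\gamma$-invariant; by Lemma~\ref{D_tau est loc ana} it is also pro-analytic, so Theorem~\ref{struc loc ana} does apply and places $z/b_\gamma$ in $\B_{\tau,\mathrm{rig},K,\infty}^\dagger$. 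Since $z$ and $b_\gamma$ are both bounded (the latter because $b_\gamma\in\At_L^+$), and since $\Bt_L^\dagger\cap\B_{\tau,\mathrm{rig},K}^{\dagger,s}=\B_{\tau,K}^{\dagger,s}$, after applying $\phi^n$ one gets $\phi^n(z/b_\gamma)\in\B_{\tau,K}^\dagger$, a field; inverting and unwinding via $b_\gamma=t/(p\lambda)$ and $\phi^n(\lambda)=\lambda/\prod_{k<n}\phi^k(E([\tilde\pi])/E(0))$ yields $t\in\lambda\cdot\B_{\tau,L}^\dagger$. In short, the overconvergence of $(\phi,\tau)$-modules is not peripheral here: it is the source of the imperfect element $z$ without which one cannot cancel the $\chi$-twist and invoke the structure theorem for locally analytic vectors.
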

\begin{proof}
Soit $V=\Qp(-1)$ comme représentation de $\G_K$. La surconvergence des $(\phi,\tau)$-modules montre en particulier que le $(\phi,\tau)$-module associé à $V$ est surconvergent, et donc $(\B_\tau^\dagger\otimes_\Qp V)^{H_{\tau,K}}$ est de dimension $1$. En particulier, $(\B_\tau^\dagger\otimes_\Qp V)^{H_{\tau,K}}$ est engendré par un élément de la forme $z \otimes a \neq 0$, et, quitte à diviser par un élément de $\Q_p^\times$, on peut supposer que $a=1$. Il existe donc $z \in \B_\tau^\dagger$, $z \neq 0$, tel que, pour tout $g \in H_{\tau,K}$, $g(z) = \chi(g)z$. On en déduit donc que $z$ est invariant sous l'action de $K_\infty \cdot K_{\mathrm{cycl}}=L$, et donc $z \in \B_{\tau,L}^\dagger$. On fixe désormais un $r > 0$ tel que $z \in \B_{\tau,L}^{\dagger,r}$ et tel que $1/b_\gamma \in \Bt_L^{\dagger,r}$. Le lemme \ref{D_tau est loc ana} montre que $z \otimes 1$ est pro-analytique pour $\Gal(L/K)$, et donc $z \in (\Bt_{\mathrm{rig},L}^{\dagger,r})^{\pa}$.

Or, si $\gamma$ est un générateur topologique de $\Gal(L/K_\infty)$, on a $\gamma(b_\gamma)=\chi(\gamma)b_\gamma$, de sorte que $z/b_\gamma \in \Bt_L^I$ est invariant sous $\gamma$. De plus, $z$ et $1/b_\gamma$ étant des vecteurs pro-analytiques de $\Bt_{\mathrm{rig},L}^{\dagger,r}$, c'est encore le cas de $z/b_\gamma$. En particulier, on en déduit que $z/b_\gamma \in (\Bt_{\mathrm{rig},L}^{\dagger,r})^{\pa,\gamma=1}$. Or $(\Bt_{\mathrm{rig},L}^{\dagger,r})^{\pa,\gamma=1} = \B_{\tau,\mathrm{rig},K,\infty}^{\dagger,r}$ par la proposition \ref{struc loc ana}, de sorte que $z/b_\gamma \in \B_{\tau,\mathrm{rig},K,\infty}^{\dagger,r}$. 

Il existe donc un entier $n$ tel que $z/b_\gamma \in \phi^{-n}(\B_{\tau,\mathrm{rig},K}^{\dagger,p^nr})$, et donc $\phi^n(z/b_\gamma) \in \B_{\tau,\mathrm{rig},K}^{\dagger,p^nr}$. Mais $z$ et $b_\gamma$ sont des éléments bornés, appartenant à $\Bt_L^\dagger$ et $\Bt_L^\dagger \cap \B_{\tau,\mathrm{rig},K}^{\dagger,p^nr} =\B_{\tau,K}^{\dagger,p^nr}$, de sorte que $\phi^n(z/b_\gamma) \in \B_{\tau,K}^\dagger$. Comme $b_\gamma= \frac{t}{p\lambda}$, on en déduit que $\phi^n(t)=p^nt \in \phi^n(\lambda)\cdot \B_{\tau,L}^\dagger$, et comme $\phi^n(\lambda)= \frac{1}{\prod_{k=0}^{n-1}\phi^k(E([\tilde{\pi}])/E(0)}\cdot \lambda$, on a bien $t \in \lambda \cdot \B_{\tau,L}^\dagger$.
\end{proof}

En particulier, $b_\gamma \in \B_{\tau,L}^\dagger \subset \B_{\tau}^\dagger$, et donc on dispose d'un moyen très simple pour passer du $(\phi,\tau)$-module d'une certaine représentation $p$-adique $V$ de $\G_K$ à sa tordue $V(d)$ pour un certain entier $d$. On rappelle que $V(d)$ est la représentation $p$-adique de $\G_K$ telle que $V(d) = V$ en tant que $\Qp$-espaces vectoriels, et si $x \in V(d)$ et $g \in \G_K$, alors $g\cdot x = \chi_{\mathrm{cycl}}(g)^d(g \cdot x)_V$, où $(g \cdot x)_V$ désigne l'élément $g\cdot x$ dans $V$.

\begin{prop}
\label{tordre rep phitau}
Si $V$ est une représentation $p$-adique de $\G_K$, si $d$ est un entier relatif et si $D(V)$ désigne le $(\phi,\tau)$-module associé à $V$, alors
$$D(V(-d)) = b_\gamma^d \cdot D(V).$$
\end{prop}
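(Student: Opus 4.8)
The plan is to read off the result from the two facts about $b_\gamma=t/(p\lambda)$ that the preceding discussion has put at our disposal: that $b_\gamma\in\B_{\tau,L}^\dagger\subset\B_\tau$ (recorded just after Proposition \ref{t dans BtauL}), and its transformation law under $H_{\tau,K}=\G_{K_\infty}$. First I would check that $b_\gamma$ is a unit: $b_\gamma\neq 0$ and $\B_\tau$ is a field (it is the fraction field of the complete discrete valuation ring $\A_\tau$, whose residue ring $\E_\tau$ is a field), so $b_\gamma\in\B_\tau^\times$ and $b_\gamma^d$ makes sense in $\B_\tau$ for every $d\in\Z$. Next I would compute the action of $H_{\tau,K}$ on $b_\gamma$: if $g\in\G_{K_\infty}$ then $c(g)=0$, hence $g(\tilde{\pi})=\tilde{\pi}$ and $g([\tilde{\pi}])=[\tilde{\pi}]$, so $g$ fixes $\lambda=\prod_{n\geq 0}\phi^n(E([\tilde{\pi}])/E(0))$; combined with $g(t)=\chi_{\mathrm{cycl}}(g)t$ (valid for all $g\in\G_K$) this gives $g(b_\gamma)=\chi_{\mathrm{cycl}}(g)\,b_\gamma$ for all $g\in H_{\tau,K}$. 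It is worth noting from the outset that $b_\gamma\notin\B_{\tau,K}$, so that $b_\gamma^d D(V)$ is a $\B_{\tau,K}$-submodule of $\B_\tau\otimes_\Qp V$ which is not stable under $H_{\tau,K}$, only $\chi_{\mathrm{cycl}}^d$-equivariant; this twist is precisely what will produce $D(V(-d))$ rather than $D(V)$ again.

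Then I would set up the comparison. Fix a $\Qp$-basis $e$ of $\Qp(-d)$, so $g(e)=\chi_{\mathrm{cycl}}(g)^{-d}e$, and use the tautological identification $V(-d)=V\otimes_\Qp\Qp e$ to identify $\B_\tau\otimes_\Qp V(-d)$ with $\B_\tau\otimes_\Qp V$ as $\B_\tau$-modules (via $v\otimes e\leftrightarrow v$); under this identification the semi-linear action of $g\in\G_{K_\infty}$ on the left-hand side becomes $\chi_{\mathrm{cycl}}(g)^{-d}$ times its action on $\B_\tau\otimes_\Qp V$. Now for $g\in\G_{K_\infty}$ and $x\in\B_\tau\otimes_\Qp V$ one has
$$\chi_{\mathrm{cycl}}(g)^{-d}\,g(b_\gamma^d x)=\chi_{\mathrm{cycl}}(g)^{-d}\chi_{\mathrm{cycl}}(g)^{d}\,b_\gamma^{d}\,g(x)=b_\gamma^{d}\,g(x),$$
so multiplication by $b_\gamma^d$ intertwines the $\G_{K_\infty}$-action on $\B_\tau\otimes_\Qp V$ with its (twisted) action on $\B_\tau\otimes_\Qp V(-d)$; being bijective, since $b_\gamma\in\B_\tau^\times$, it restricts to a bijection on $H_{\tau,K}$-invariants, that is, from $D(V)=(\B_\tau\otimes_\Qp V)^{H_{\tau,K}}$ onto $D(V(-d))=(\B_\tau\otimes_\Qp V(-d))^{H_{\tau,K}}$. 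This is exactly the claimed equality $D(V(-d))=b_\gamma^d\cdot D(V)$ inside $\B_\tau\otimes_\Qp V$.

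Finally I would verify that this identification respects the full $(\phi,\tau)$-module structure. On both $\B_\tau\otimes_\Qp V(-d)$ and $\B_\tau\otimes_\Qp V$ the Frobenius is $\phi_{\B_\tau}\otimes\mathrm{id}$ and is unchanged by the de-twisting, so the two $\phi$-modules agree; concretely $\phi(b_\gamma)=\frac{E([\tilde{\pi}])}{E(0)}\,b_\gamma$ with $E([\tilde{\pi}])/E(0)\in\B_{\tau,K}^\times$, which is what forces $\phi(b_\gamma^d D(V))$ to span $b_\gamma^d D(V)$ over $\B_{\tau,K}$ as it must. For the $\tau$-part, since $b_\gamma\in\At_L^+$ is fixed by $\G_L$ and $\chi_{\mathrm{cycl}}$ is trivial on $\G_L$, the same de-twisting carries $M(V(-d))=(\Bt\otimes_\Qp V(-d))^{\G_L}$ onto $M(V)$, with the $G_\infty$-action transported up to the character $\chi_{\mathrm{cycl}}^{-d}$; and as $b_\gamma\in\Bt_L^\times$ one has $b_\gamma^d M(V)=M(V)$, compatibly with $D(V(-d))=b_\gamma^d D(V)\subset M(V)$. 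The only mildly delicate point in the whole argument is bookkeeping: remembering that $b_\gamma$ lies in $\B_{\tau,L}^\dagger$ and not in $\B_{\tau,K}^\dagger$, so that $b_\gamma^d D(V)$ is a $\B_{\tau,K}$-lattice which is $\chi_{\mathrm{cycl}}^d$-equivariant — rather than invariant — under $H_{\tau,K}$, and is therefore the $(\phi,\tau)$-module of the twist $V(-d)$; everything else is a direct verification resting on Proposition \ref{t dans BtauL}.
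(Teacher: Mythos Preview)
Your proposal is correct and follows the same approach as the paper's proof, which is a one-line observation that since $b_\gamma\in\B_\tau$ (by Proposition~\ref{t dans BtauL}), multiplying a basis $(e_1,\dots,e_n)$ of $D(V)$ by $b_\gamma^d$ yields a basis of $D(V(-d))$. You have simply spelled out the implicit verifications (invertibility of $b_\gamma$, the transformation law $g(b_\gamma)=\chi_{\mathrm{cycl}}(g)b_\gamma$ for $g\in H_{\tau,K}$, and compatibility with $\phi$ and the $G_\infty$-action on $M(V)$) that the paper leaves to the reader.
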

\begin{proof}
C'est une conséquence directe de la proposition \ref{t dans BtauL}, puisque si $(e_1,\cdots,e_n)$ est une base du $\phi$-module associé à $V$, alors $(b_\gamma^de_1,\cdots,b_\gamma^de_n)$ est bien une base du $\phi$-module associé à $V(-d)$ puisque $b_\gamma \in \B_\tau$.
\end{proof}

Kisin montre dans \cite{KisinFiso} que les représentations semi-stables à poids de Hodge-Tate négatifs sont de $E$-hauteur finie, et donc \textit{a fortiori} de hauteur finie, c'est-à-dire que si $V$ est semi-stable à poids de Hodge-Tate négatifs, alors le $\phi$-module sous-jacent au $(\phi,\tau)$-module $D_\tau(V)$ est engendré par un $\phi$-module étale sur $\B_{\tau,K}^+$. Les propositions \ref{tordre rep phitau} et \ref{t dans BtauL} montrent donc que, étant donné une représentation semi-stable $V$ quelconque de $\G_K$, il existe $h(V) \geq 0$ tel que $b_\gamma^{h(V)}$ est engendré par un un $(\phi,\tau)$-module étale $D^+$ sur $\B_{\tau,K}^+$. En particulier, si $r_b$ est tel que $b_\gamma^{-1} \in \Bt^{\dagger,r_b}$, alors toute représentation semi-stable est surconvergente de rayon $\leq r_b$.

\section{$(\phi,\tau)$-modules différentiels}
\subsection{Les invariants $D_{\mathrm{cris}}$ et $D_{\mathrm{st}}$}
Soient 
$$D_{\tau,\mathrm{rig}}^{\dagger}(V) = \B_{\tau,\mathrm{rig},K}^{\dagger}\otimes_{\B_{\tau,K}^{\dagger}}D_{\tau}^{\dagger}(V) \textrm{ et } D_{\tau,\log}^{\dagger}(V) = \B_{\tau,\log,K}^{\dagger}\otimes_{\B_{\tau,K}^{\dagger}}D_{\tau}^{\dagger}(V).$$
Le théorème \ref{surconvphitau} montre que $D_{\tau,\mathrm{rig}}^{\dagger}(V)$ et $D_{\tau,\log}^{\dagger}(V)$ sont respectivement des $\B_{\tau,\mathrm{rig},K}^{\dagger}$- et $\B_{\tau,\log,K}^{\dagger}$-modules libres de rang $d = \dim_{\Qp}(V)$. Le but de cette partie est de montrer comment récupérer $D_{\mathrm{cris}}(V)$ et $D_{\mathrm{st}}(V)$ à partir de ces modules. 

Dans le cas particulier où $V$ est semi-stable à poids négatifs, Kisin a montré que $D_\tau^+(V) = (\B_{\tau}^+\otimes_{\Qp}V)^{H_{\tau,K}}$ est un $\B_{\tau,K}^+$-module libre de rang $d$, et on verra qu'on peut récupérer $D_{\mathrm{st}}(V)$ à partir de $D_{\tau,\log}^+(V) = \B_{\tau,\log,K}^+\otimes_{\B_{\tau,K}^+}D_{\tau}^+(V)$.

On va voir besoin pour la suite de certains résultats de la partie 3.2 de \cite{Ber02}, qu'on va maintenant rappeler.

On rappelle que (voir par exemple la discussion précédant \cite[Prop. 3.4]{Ber02}), si on pose $D_{\mathrm{st}}^+(V) = (\B_{\mathrm{st}}^+\otimes_{\Qp}V)^{\G_K}$, alors $D_{\mathrm{st}}^+(V) = (\Bt_{\log}^+\otimes_{\Qp}V)^{\G_K}$. De plus, si $V$ est à poids de Hodge-Tate négatifs, alors $D_{\mathrm{st}}^+(V) = D_{\mathrm{st}}(V)$, et dans le cas général, on a $D_{\mathrm{st}}(V) = t^{-d}D_{\mathrm{st}}^+(V(-d))$ pour $d$ assez grand.

\begin{prop}
\label{log+ = logdagger}
Si $V$ est une représentation $p$-adique, alors $(\Bt_{\log}^{\dagger}\otimes_{\Qp}V)^{\G_K}$ est un $F$-espace vectoriel de dimension finie, et le morphisme induit par l'inclusion de $\Bt_{\log}^+$ dans $\Bt_{\log}^{\dagger}$
$$D_{\mathrm{st}}^+(V) \rightarrow (\Bt_{\log}^{\dagger}\otimes_{\Qp}V)^{\G_K}$$
est un isomorphisme de $(\phi,N)$-modules.
\end{prop}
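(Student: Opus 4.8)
Le plan est de reprendre la démonstration de \cite[\S 3.2]{Ber02}.

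Je commencerais par observer que l'inclusion $\Bt_{\log}^+ \subset \Bt_{\log}^{\dagger}$ est compatible à $\phi$, à $N$ et à l'action de $\G_K$, et que $(\Bt_{\log}^+)^{\G_K} = F = (\Bt_{\log}^{\dagger})^{\G_K}$. Le morphisme de l'énoncé, induit par l'injection $\Bt_{\log}^+ \otimes_{\Qp} V \hookrightarrow \Bt_{\log}^{\dagger} \otimes_{\Qp} V$, est donc $F$-linéaire, $(\phi,N)$-équivariant et injectif. Comme $D_{\mathrm{st}}^+(V)$ est contenu dans le $F$-espace vectoriel de dimension finie $D_{\mathrm{st}}(V)$, il reste seulement à établir la surjectivité, la finitude de $(\Bt_{\log}^{\dagger} \otimes V)^{\G_K}$ en résultant.

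Pour la surjectivité, j'utiliserais la suite exacte $0 \to \Bt_{\log}^+ \otimes V \to \Bt_{\log}^{\dagger} \otimes V \to (\Bt_{\log}^{\dagger}/\Bt_{\log}^+) \otimes V \to 0$ : il suffit de montrer que $((\Bt_{\log}^{\dagger}/\Bt_{\log}^+) \otimes V)^{\G_K} = 0$, puisqu'alors tout vecteur $\G_K$-invariant de $\Bt_{\log}^{\dagger} \otimes V$ a une image nulle dans le quotient, donc appartient à $\Bt_{\log}^+ \otimes V$, et de là à $D_{\mathrm{st}}^+(V) = (\Bt_{\log}^+ \otimes V)^{\G_K}$. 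Or $\Bt_{\log}^{\dagger} = \Btrig[\log[\tilde{\pi}]]$ et $\Bt_{\log}^+ = \Btrigplus[\log[\tilde{\pi}]]$, si bien que $(\Bt_{\log}^{\dagger}/\Bt_{\log}^+) \otimes V$ est muni d'une filtration croissante exhaustive par le degré en $\log[\tilde{\pi}]$ ; comme $g(\log[\tilde{\pi}]) = \log[\tilde{\pi}] + c(g)\, t$ avec $t \in \Btplus$, cette filtration est stable sous $\G_K$ et le gradué de degré $i$ est isomorphe, comme $\G_K$-module, à $(\Btrig/\Btrigplus) \otimes V$. En regardant le terme de plus haut degré en $\log[\tilde{\pi}]$ d'un vecteur invariant, on se ramène, par récurrence sur le degré en $\log[\tilde{\pi}]$, à la nullité de $((\Btrig/\Btrigplus) \otimes V)^{\G_K}$, autrement dit à l'égalité $(\Btrig \otimes V)^{\G_K} = (\Btrigplus \otimes V)^{\G_K} = D_{\mathrm{cris}}^+(V)$.

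C'est cette égalité qui constitue le point délicat et l'obstacle principal : elle traduit le fait que la $\G_K$-invariance force un élément surconvergent de $\Btrig \otimes V$ à être dépourvu de pôle au voisinage du cercle unité, donc à appartenir à $\Btrigplus \otimes V$. Je la démontrerais comme dans \cite[\S 3.2]{Ber02}, en invoquant la surconvergence des $(\phi,\Gamma)$-modules cyclotomiques, qui permet d'identifier $(\Btrig \otimes V)^{H_K}$ au $(\phi,\Gamma)$-module surconvergent de $V$ sur l'anneau de Robba, puis en utilisant les applications de localisation $\iota_n \colon \Bt_{\mathrm{rig}}^{\dagger,r} \hookrightarrow \Bdrplus$ (définies pour $n$ assez grand) : elles sont $\G_K$-équivariantes et injectives, envoient $(\Btrig \otimes V)^{\G_K}$ dans $D_{\mathrm{dR}}^+(V)$ — d'où la majoration $\dim_F (\Btrig \otimes V)^{\G_K} \leq \dim_{\Qp} V$ — et, joint aux arguments de compatibilité au Frobenius de \cite[\S 3.2]{Ber02}, ceci montre qu'un tel vecteur n'a effectivement pas de pôle. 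L'inclusion $D_{\mathrm{cris}}^+(V) \subseteq (\Btrig \otimes V)^{\G_K}$ étant évidente, on obtient l'égalité voulue, puis la surjectivité, puis la finitude de $(\Bt_{\log}^{\dagger} \otimes V)^{\G_K}$ ; le morphisme $D_{\mathrm{st}}^+(V) \to (\Bt_{\log}^{\dagger} \otimes V)^{\G_K}$ est donc l'isomorphisme de $(\phi,N)$-modules annoncé.
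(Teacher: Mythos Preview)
Ton argument est correct et suit exactement la démarche de \cite[Prop.~3.4]{Ber02}, que l'article se contente de citer sans reproduire la preuve. Le d\'evissage par le degr\'e en $\log[\tilde{\pi}]$ pour se ramener au cas cristallin, puis l'utilisation de la surconvergence et des applications $\iota_n$ pour obtenir $(\Btrig \otimes V)^{\G_K} = (\Btrigplus \otimes V)^{\G_K}$, sont pr\'ecis\'ement les \'etapes de Berger.
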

\begin{proof}
Voir \cite[Prop. 3.4]{Ber02}.
\end{proof}

En particulier, une représentation $V$ à poids négatifs est semi-stable si et seulement si elle est $\Bt_{\log}^{\dagger}$-admissible, et elle est cristalline si et seulement si elle est $\Bt_{\mathrm{rig}}^{\dagger}$-admissible puisque ses périodes sont tuées par $N$. Si les poids de Hodge-Tate de $V$ ne sont pas négatifs, on peut tordre $V$ et on en déduit alors que $V$ est semi-stable si et seulement si elle est $\Bt_{\log}^{\dagger}[1/t]$-admissible et elle est cristalline si et seulement si elle est $\Bt_{\mathrm{rig}}^{\dagger}[1/t]$-admissible.

On se propose maintenant de faire le lien entre $D_{\tau,\mathrm{rig}}^\dagger(V)$ et les invariants $D_{\mathrm{cris}}(V)$ et $D_{\mathrm{st}}(V)$. Pour simplifier les notations, on notera $(D_{\tau,\log}^{\dagger}(V)[1/\lambda])^{\tau=1}$ (resp. $(D_{\tau,\mathrm{rig}}^\dagger(V)[1/\lambda])^{\tau=1}$) les éléments de $D_{\tau,\log}^{\dagger}(V)[1/\lambda]$ (resp. $D_{\tau,\mathrm{rig}}^\dagger(V)[1/\lambda]$) qui sont invariants sous l'action de $\tau$ en tant qu'éléments de $(\Bt_{\log}^\dagger \otimes_{\B_{\tau,\log}^\dagger}D_{\tau,\log}^\dagger(V))[1/\lambda]$ (resp. $(\Bt_{\mathrm{rig}}^\dagger \otimes_{\B_{\tau,\mathrm{rig}}^\dagger}D_{\tau,\mathrm{rig}}^\dagger(V))[1/\lambda]$) via l'identification $x \mapsto 1 \otimes x$. Remarquons en fait que, comme les éléments de $\B_{\tau,\log,K}^\dagger$, de $\B_{\tau,\mathrm{rig},K}^\dagger$ et de $D_\tau^\dagger(V)$ sont invariants sous l'action de $H_{\tau,K}$, les éléments de $(D_{\tau,\log}^{\dagger}(V)[1/\lambda])^{\tau=1}$ (resp. $(D_{\tau,\mathrm{rig}}^\dagger(V)[1/\lambda])^{\tau=1}$) sont, considérés en tant qu'éléments de $(\Bt_{\log}^\dagger \otimes_{\B_{\tau,\log}^\dagger}D_{\tau,\log}^\dagger(V))[1/\lambda]$ (resp. $(\Bt_{\mathrm{rig}}^\dagger \otimes_{\B_{\tau,\mathrm{rig}}^\dagger}D_{\tau,\mathrm{rig}}^\dagger(V))[1/\lambda]$), à la fois invariants par $\tau$ et par $H_{\tau,K}$ donc par $\G_K$.

\begin{theo}
\label{retrouver Dst Dcris phitau}
Si $V$ est une représentation $p$-adique, alors 
$$D_{\mathrm{st}}(V) = (D_{\tau,\log}^{\dagger}(V)[1/\lambda])^{\tau=1} \textrm{ et } D_{\mathrm{cris}}(V) = (D_{\tau,\mathrm{rig}}^\dagger(V)[1/\lambda])^{\tau=1}.$$

Si de plus, $V$ est semi-stable (resp. cristalline) à poids négatifs, alors 
$$D_{\mathrm{st}}(V) = (D_{\tau,\log}^+(V))^{\tau=1} \textrm{ (resp. } D_{\mathrm{cris}}(V) = (D_{\tau,\mathrm{rig}}^+(V))^{\tau=1}).$$
\end{theo}
\begin{proof}
On a $D_{\tau,\log}^{\dagger}(V)[1/\lambda] \subset \Bt_{\log}^{\dagger}[1/\lambda]\otimes_{\Qp}V$ et $D_{\tau,\mathrm{rig}}^{\dagger}(V)[1/\lambda] \subset \Bt_{\mathrm{rig}}^{\dagger}[1/\lambda]\otimes_{\Qp}V$. Comme inverser $\lambda$ dans $\Bt_{\mathrm{rig}}^{\dagger}$ revient à inverser $t$ puisque $\lambda/t \in \Bt^{\dagger}$, on en déduit que $(D_{\tau,\log}^\dagger(V)[1/\lambda])^{\tau=1}=(D_{\tau,\log}^\dagger(V)[1/\lambda])^{\G_K}$ est inclus dans $D_{\mathrm{st}}(V)$, et de même que $(D_{\tau,\mathrm{rig}}^\dagger(V)[1/\lambda])^{\tau=1}=(D_{\tau,\mathrm{rig}}^\dagger(V)[1/\lambda])^{\G_K}$ est inclus dans $D_{\mathrm{cris}}(V)$.

On va maintenant montrer que $D_{\mathrm{st}} \subset (D_{\tau,\log}^{\dagger}(V)[1/\lambda])^{\tau=1}$ et on va également montrer que $D_{\mathrm{cris}} \subset (D_{\tau,\mathrm{rig}}^{\dagger}(V)[1/\lambda])^{\tau=1}$. On va en fait se contenter de démontrer le premier point, le deuxième s'en déduisant en faisant $N=0$. Dans un premier temps, on va supposer que $V$ est à poids négatifs, de sorte que $D_{\mathrm{st}}(V) = (\Bt_{\log}^{\dagger}\otimes_{\Qp}V)^{\G_K}$ par la proposition \ref{log+ = logdagger}. De plus, comme $D_{\tau}^{\dagger}(V)$ a la bonne dimension, on a $(\Bt_{\log}^{\dagger}\otimes_{\Qp}V)^{H_{\tau,K}} = \Bt_{\tau,\log,K}^{\dagger}\otimes_{\B_{\tau,K}^{\dagger}}D_{\tau}^{\dagger}(V)$. On en déduit donc que $D_{\mathrm{st}}(V) = (\Bt_{\tau,\log,K}^{\dagger} \otimes_{\B_{\tau,K}^{\dagger}}D_{\tau}^{\dagger}(V))^{\tau=1}$. Or $\G_K$ agit trivialement sur $D_{\mathrm{st}}(V)$, ce qui fait que $D_{\mathrm{st}}(V)$ est constitué de vecteurs localement analytiques. En particulier, $(\Bt_{\tau,\log,K}^{\dagger} \otimes_{\B_{\tau,K}^{\dagger}}D_{\tau}^{\dagger}(V))^{\tau=1}$ est constitué de vecteurs localement analytiques (ou pro-analytiques) de $\Bt_{\tau,\log,K}^{\dagger} \otimes_{\B_{\tau,K}^{\dagger}}D_{\tau}^{\dagger}(V)$ invariants par $\tau$. Mais le lemme \ref{ladansla} montre que $(\Bt_{\tau,\log,K}^{\dagger} \otimes_{\B_{\tau,K}^{\dagger}}D_{\tau}^{\dagger}(V))^{\pa} = (\Bt_{\tau,\log,K}^{\dagger})^{\pa}\otimes_{\B_{\tau,K}^{\dagger}}D_{\tau}^{\dagger}(V)$. Le calcul des vecteurs pro-analytiques de $\Bt_{\tau,\log,K}^{\dagger}$ effectué en \ref{log tau la} montre que $(\Bt_{\tau,\log,K}^{\dagger,r})^{\pa} = \B_{\tau,\log,K,\infty}^{\dagger,r}$. En particulier, il existe un entier $n$ et une base $(e_1,\cdots,e_d)$ de $D_{\tau}^{\dagger}(V)$ tels que $D_{\mathrm{st}}(V) = (\oplus_{i=1}^d \B_{\tau,\log,K,n}^{\dagger,r}\cdot e_i)^{\tau=1}$. Mais comme $\phi~: D_{\mathrm{st}}(V) \to D_{\mathrm{st}}(V)$ est bijective et commute à l'action de Galois, on en déduit que $\phi^n(D_{\mathrm{st}}(V)) = D_{\mathrm{st}}(V)$ et donc que $D_{\mathrm{st}}(V) = (\oplus_{i=1}^d \B_{\tau,\log,K}^{\dagger,p^nr}\cdot \phi^n(e_i))^{\tau=1}$, ce qui montre que $D_{\mathrm{st}}(V) \subset (D_{\tau,\log}^{\dagger}(V)[1/\lambda])^{\tau=1}$.

Ce qu'on vient de faire montre que, si $d$ est assez grand, on a l'inclusion $D_{\mathrm{st}}(V(-d)) \subset (D_{\tau,\log}^{\dagger}(V(-d))[1/\lambda])^{\tau=1}$. On a $D_{\tau,\log}^{\dagger}(V(-d)) = \frac{t^d}{\lambda^d}D_{\tau,\log}^{\dagger}(V)$ par la proposition \ref{tordre rep phitau} et on a aussi $D_{\mathrm{st}}(V(-d))=t^dD_{\mathrm{st}}(V)$. On a donc $D_{\tau,\log}^{\dagger}(V(-d))[1/\lambda] \subset t^dD_{\tau,\log}^{\dagger}(V)[1/\lambda]$, et donc
$$D_{\mathrm{st}}(V(-d)) \subset (D_{\tau,\log}^{\dagger}(V(-d))[1/\lambda])^{\tau=1} \subset (t^dD_{\tau,\log}^{\dagger}(V)[1/\lambda])^{\tau=1} = t^d(D_{\tau,\log}^{\dagger}(V)[1/\lambda])^{\tau=1}$$
puisque $t$ est invariant sous $\tau$. Comme $D_{\mathrm{st}}(V(-d)) = t^dD_{\mathrm{st}}(V)$, on obtient alors 
$$D_{\mathrm{st}}(V) = t^{-d}D_{\mathrm{st}}(V(-d)) \subset t^{-d}t^d(D_{\tau,\log}^{\dagger}(V)[1/\lambda])^{\tau=1} = (D_{\tau,\log}^{\dagger}(V)[1/\lambda])^{\tau=1}.$$
On en déduit donc le résultat pour $V$ quelconque.

Dans le cas où $V$ est semi-stable à poids négatifs, on sait que $D_\tau^+(V)$ est un $\B_{\tau,K}^+$-module libre de rang $d$ et que $D_{\mathrm{st}}(V) = D_{\mathrm{st}}^+(V)=(\Bt_{\log}^+\otimes_{\Qp}V)^{\G_K}$. Comme $D_{\tau,\log}^+(V) \subset (\Bt_{\log}^+\otimes_{\Qp}V)$, on en déduit en prenant les invariants sous $\G_K$ que $(D_{\tau,\log}^+(V))^{\tau=1}$ est inclus dans $D_{\mathrm{st}}(V)$. Comme $D_{\tau}^+(V)$ a la bonne dimension, on a $(\Bt_{\log}^+\otimes_{\Qp}V)^{H_{\tau,K}} = \Bt_{\tau,\log,K}^+\otimes_{\B_{\tau,K}^+}D_{\tau}^+(V)$. On en déduit donc que $D_{\mathrm{st}}(V) = (\Bt_{\tau,\log,K}^+ \otimes_{\B_{\tau,K}^{+}}D_{\tau}^{+}(V))^{\tau=1}$, et la même preuve que précédemment montre alors que $D_{\mathrm{st}}(V) \subset (D_{\tau,\log}^+(V))^{\tau=1}$. Là encore, le cas cristallin s'en déduit en faisant $N=0$.
\end{proof}

\begin{prop}
\label{iso comparaison phitau}
On a les isomorphismes de comparaison suivants~:
\begin{enumerate}
\item si $V$ est semi-stable, alors 
$$\B_{\tau,\log,K}^{\dagger}[1/\lambda] \otimes_{\B_{\tau,K}^{\dagger}}D_{\tau}^{\dagger}(V) =  \B_{\tau,\log,K}^{\dagger}[1/\lambda] \otimes_F D_{\mathrm{st}}(V) ;$$
\item si $V$ est cristalline, alors 
$$\B_{\tau,\mathrm{rig},K}^{\dagger}[1/\lambda] \otimes_{\B_{\tau,K}^{\dagger}}D_{\tau}^{\dagger}(V) =  \B_{\tau,\mathrm{rig},K}^{\dagger}[1/\lambda] \otimes_F D_{\mathrm{cris}}(V).$$
\end{enumerate}
\end{prop}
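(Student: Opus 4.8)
The plan is to realise the assertion as the statement that a natural comparison morphism between two free modules of rank $d=\dim_{\Qp}V$ is an isomorphism, and to verify this after a faithfully flat base change to a large period ring. I treat the semi-stable case; the crystalline one is obtained verbatim by replacing $\Bt_{\log}^{\dagger}$, $\B_{\tau,\log,K}^{\dagger}$ by $\Bt_{\mathrm{rig}}^{\dagger}$, $\B_{\tau,\mathrm{rig},K}^{\dagger}$ everywhere (equivalently, by killing the monodromy $N$). By Theorem~\ref{retrouver Dst Dcris phitau} one has $D_{\mathrm{st}}(V)\subset D_{\tau,\log}^{\dagger}(V)[1/\lambda]$, whence a $\B_{\tau,\log,K}^{\dagger}[1/\lambda]$-linear map
$$\alpha\colon\ \B_{\tau,\log,K}^{\dagger}[1/\lambda]\otimes_F D_{\mathrm{st}}(V)\ \longrightarrow\ D_{\tau,\log}^{\dagger}(V)[1/\lambda]=\B_{\tau,\log,K}^{\dagger}[1/\lambda]\otimes_{\B_{\tau,K}^{\dagger}}D_{\tau}^{\dagger}(V),$$
both sides being free of rank $d$ over $\B_{\tau,\log,K}^{\dagger}[1/\lambda]$, using that $V$ is semi-stable (so $\dim_F D_{\mathrm{st}}(V)=d$) together with Theorem~\ref{surconvphitau}. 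It then suffices to prove that $\alpha$ becomes an isomorphism after $-\otimes_{\B_{\tau,\log,K}^{\dagger}[1/\lambda]}\Bt_{\log}^{\dagger}[1/t]$. Note first that $1/t$ and $1/\lambda$ generate the same localization of $\Bt_{\mathrm{rig}}^{\dagger}$, since $t=p\lambda b_{\gamma}$ with $b_{\gamma}\in(\Bt^{\dagger})^{\times}$, so the map $\B_{\tau,\log,K}^{\dagger}[1/\lambda]\hookrightarrow\Bt_{\log}^{\dagger}[1/t]$ is well defined; and it is faithfully flat, this reducing (after adjoining the polynomial variable $\log[\tilde\pi]$ on both sides) to the faithful flatness of $\Bt_{\mathrm{rig}}^{\dagger}$ over $\B_{\tau,\mathrm{rig},K}^{\dagger}$, i.e. to the fact that a function on the $[\tilde\pi]$-annulus that is invertible on the larger (``perfectoid'') annulus of $\Bt_{\mathrm{rig}}^{\dagger}$ was already invertible.

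Next I would compute the base change of each side. On the source: since $V$ is semi-stable it is $\Bt_{\log}^{\dagger}[1/t]$-admissible (Proposition~\ref{log+ = logdagger} and the remarks following it), so $\Bt_{\log}^{\dagger}[1/t]\otimes_F D_{\mathrm{st}}(V)=\Bt_{\log}^{\dagger}[1/t]\otimes_{\Qp}V$. On the target: $D_{\tau}^{\dagger}(V)$ is étale, hence trivialised by any sufficiently large period ring — concretely, Theorem~\ref{surconvphitau} and the étale equivalence give $\Bt^{\dagger}\otimes_{\B_{\tau,K}^{\dagger}}D_{\tau}^{\dagger}(V)\simeq\Bt^{\dagger}\otimes_{\Qp}V$, and extending scalars along $\Bt^{\dagger}\subset\Bt_{\log}^{\dagger}[1/t]$ yields $\Bt_{\log}^{\dagger}[1/t]\otimes_{\B_{\tau,K}^{\dagger}}D_{\tau}^{\dagger}(V)=\Bt_{\log}^{\dagger}[1/t]\otimes_{\Qp}V$. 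Both identifications are induced by the inclusions of the two modules into the common ambient module $\Bt_{\log}^{\dagger}[1/t]\otimes_{\Qp}V$, so under them the base change of $\alpha$ is identified with the identity of $\Bt_{\log}^{\dagger}[1/t]\otimes_{\Qp}V$; in particular it is an isomorphism, and injective, which forces $\alpha$ itself to be injective. By faithful flatness $\coker(\alpha)=0$, i.e. $\alpha$ is an isomorphism, which is the claim, and the crystalline statement follows by setting $N=0$.

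The delicate point is this descent step: the faithful flatness of $\Bt_{\mathrm{rig}}^{\dagger}[1/t]$ (resp. $\Bt_{\log}^{\dagger}[1/t]$) over $\B_{\tau,\mathrm{rig},K}^{\dagger}[1/\lambda]$ (resp. $\B_{\tau,\log,K}^{\dagger}[1/\lambda]$), together with the bookkeeping that the two comparison identifications above are genuinely compatible with $\alpha$. One may avoid the large rings entirely for representations with non-positive Hodge–Tate weights: there $D_{\tau}^{+}(V)$ is, after inverting $p$, the Breuil–Kisin module of $V$, and the comparison isomorphism over $\B_{\tau,\mathrm{rig},K}^{+}[1/\lambda]$ (resp. its semi-stable analogue over $\B_{\tau,\log,K}^{+}[1/\lambda]$ involving $D_{\tau,\log}^{+}(V)$ and $D_{\mathrm{st}}(V)$) is precisely Kisin's result in \cite[1.2]{KisinFiso}; the general case then follows by twisting via Propositions~\ref{tordre rep phitau} and~\ref{t dans BtauL}, since $V\rightsquigarrow V(-m)$ multiplies $D_{\tau,\log}^{\dagger}(V)$ by $b_{\gamma}^{m}$ and $D_{\mathrm{st}}(V)$ by $t^{m}=(p\lambda)^{m}b_{\gamma}^{m}$, and these differ by the unit $(p\lambda)^{m}$ of $\B_{\tau,\log,K}^{\dagger}[1/\lambda]$. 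Alternatively the descent can be done by hand using that $\coker(\alpha)$ carries a Frobenius: a nonzero finitely generated torsion $\phi$-module over $\B_{\tau,\mathrm{rig},K}^{\dagger}[1/\lambda]$ cannot exist, for its support would be a nonempty finite $\phi^{*}$-stable subset of the $[\tilde\pi]$-annulus disjoint from the zeros of $\lambda$, whereas $[\tilde\pi]\mapsto[\tilde\pi]^{p}$ admits no such subset.
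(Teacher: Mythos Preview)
Your main argument rests on the claim that $\Bt_{\log}^{\dagger}[1/t]$ is faithfully flat over $\B_{\tau,\log,K}^{\dagger}[1/\lambda]$, and this is where there is a real gap. The justification you offer --- ``a function on the $[\tilde\pi]$-annulus that is invertible on the larger (perfectoid) annulus of $\Bt_{\mathrm{rig}}^{\dagger}$ was already invertible'' --- is only the statement that units descend, which is far weaker than faithful flatness. The ring $\Bt_{\mathrm{rig}}^{\dagger}$ is enormous and highly non-noetherian; its flatness over $\B_{\tau,\mathrm{rig},K}^{\dagger}$ is not obvious, and the paper does not rely on it. Your Alternative~2 is closer in spirit but is also incomplete as written: the support of a nonzero finitely generated torsion module over $\B_{\tau,\mathrm{rig},K}^{\dagger}[1/\lambda]$ is \emph{not} finite in general (zeros of a Robba-ring element may accumulate at the outer boundary of the annulus), so the phrase ``finite $\phi^{*}$-stable subset'' does not capture the situation.

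The paper's proof follows a different, more concrete route. It first obtains the comparison over the \emph{intermediate} ring $\Bt_{\tau,\log,K}^{\dagger}[1/\lambda]=(\Bt_{\log}^{\dagger}[1/\lambda])^{H_{\tau,K}}$, using the same admissibility input you invoke, and extracts a change-of-basis matrix $A\in\GL_d(\Bt_{\tau,\log,K}^{\dagger}[1/\lambda])$. The descent from $\Bt_{\tau,\log,K}^{\dagger}$ to $\B_{\tau,\log,K}^{\dagger}$ is then carried out in two explicit steps: since the bases of $D_{\mathrm{st}}(V)$ and $D_{\tau}^{\dagger}(V)$ consist of $\gamma$-invariant pro-analytic vectors, the entries of $\lambda^{n}A$ and $\lambda^{m}A^{-1}$ land in $(\Bt_{\tau,\log,K}^{\dagger})^{\pa}=\B_{\tau,\log,K,\infty}^{\dagger}$ by Proposition~\ref{log tau la}, so $A\in\GL_d(\B_{\tau,\log,K,\ell}^{\dagger}[1/\lambda])$ for some $\ell\ge 0$; then, because $\phi$ is bijective on $D_{\mathrm{st}}(V)$ and $\phi^{\ell}$ sends a basis of $D_{\tau}^{\dagger}(V)$ to a basis, one applies $\phi^{\ell}$ to bring $A$ down into $\GL_d(\B_{\tau,\log,K}^{\dagger}[1/\lambda])$. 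This locally-analytic-plus-Frobenius descent is the substance of the proposition and replaces the abstract faithful-flatness step you were aiming for.
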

\begin{proof}
Là aussi, montrer le cas semi-stable suffit puisqu'on obtient le cas cristallin en faisant $N=0$. Par la proposition \ref{retrouver Dst Dcris phitau}, on a $D_{\mathrm{st}}(V)=(D_{\tau,\log}^\dagger(V)[1/\lambda])^{\tau=1}$, donc \textit{a fortiori} $D_{\mathrm{st}}(V) \subset D_{\tau,\log}^\dagger(V)[1/\lambda] \subset \Bt_{\tau,\log,K}^\dagger[1/\lambda]\otimes_{\B_{\tau,K}^\dagger}D_{\tau}^\dagger(V)$. Réciproquement, \cite[Prop. 3.5]{Ber02} nous dit que comme $V$ est semi-stable, $\Bt_{\log}^\dagger[1/t] \otimes_F D_{\mathrm{st}}(V) = \Bt_{\log}^\dagger[1/t]\otimes_\Qp V$, et donc puisqu'inverser $t$ dans $\Bt_{\log}^\dagger$ revient à inverser $\lambda$ et que $D_{\tau}^\dagger(V) \subset D_{\tau,\log}^\dagger(V) \subset \Bt_{\log}^\dagger[1/\lambda]\otimes_\Qp V$, on en déduit en prenant les invariants sous $H_{\tau,K}$ que $D_{\tau,\log}^\dagger(V) \subset (\Bt_{\log}^\dagger[1/\lambda] \otimes_F D_\mathrm{st}(V))^{H_{\tau,K}} = \Bt_{\tau,\log,K}[1/\lambda]\otimes_F D_{\mathrm{st}}(V)$. En particulier, on a donc
$$\Bt_{\tau,\log,K}^{\dagger}[1/\lambda]\otimes_{\B_{\tau,K}^{\dagger}}D_{\tau}^{\dagger}(V)=\Bt_{\tau,\log,K}^{\dagger}[1/\lambda]\otimes_F D_{\mathrm{st}}(V)$$
et donc, en choisissant des bases $\{d_i\}$ de $D_{\mathrm{st}}(V)$ et $\{e_i\}$ de $D_{\tau}^{\dagger}(V)$, il existe une matrice $A \in \GL_d(\Bt_{\tau,\log,K}^{\dagger}[1/\lambda])$ telle que $(e_i)=A(d_i)$.

En particulier, il existe $n \geq 0$ tel que $\lambda^nA \in \M_d(\Bt_{\tau,\log,K}^\dagger)$, et il existe $m \geq 0$ tel que $\lambda^mA^{-1} \in \M_d(\Bt_{\tau,\log,K}^\dagger)$. On note $(a_{ij})$ les coefficients de $A$ et $(b_{ij})$ ceux de $A^{-1}$. Alors pour tout $i$, 
$$\lambda^ne_i=\sum\limits_j(\lambda^na_{ij})d_j$$
et
$$\lambda^md_i=\sum\limits_j(\lambda^mb_{ij})e_j.$$

Comme les $(e_i)$ et $(d_i)$ sont des vecteurs pro-analytiques pour l'action de $\Gal(L/K)$ et invariants sous $\gamma$ (et $\lambda$ aussi), c'est aussi le cas des $(\lambda^na_{ij})$ et des $(\lambda^mb_{ij})$ par le lemme \ref{anneaula}, de sorte qu'il existe $r \geq 0$ et $\ell \geq 0$ tels que les $(\lambda^na_{ij})$ et les $(\lambda^mb_{ij})$ soient tous dans $\B_{\tau,\log,K,\ell}^{\dagger,r}$ par la proposition \ref{log tau la}. Comme $\phi$ définit un isomorphisme sur $D_{\mathrm{st}}(V)$ et comme $\phi(D_\tau^\dagger(V))$ engendre $D_\tau^\dagger(V)$, les $(\phi^\ell(e_i))$ et les $(\phi^\ell(d_i))$ forment également une base respectivement de $D_{\mathrm{st}}(V)$ et de $D_\tau^\dagger(V)$, et $\phi^\ell(A) \in \GL_d(\B_{\tau,\log,K}^{\dagger})$, ce qui permet de conclure.
\end{proof}

\begin{rema}
\label{rema isocomparaisonst}
Si $V$ est semi-stable à poids négatifs, on peut se demander si on a 
$$\B_{\tau,\log,K}^+ \otimes_{\B_{\tau,K}^+}D_{\tau}^+(V) =  \B_{\tau,\log,K}^+ \otimes_F D_{\mathrm{st}}(V).$$
Dans le cas où $V$ est semi-stable à poids négatifs, on a 
$$\Bt_{\tau,\log,K}^+ \otimes_F D_{\mathrm{st}}(V) = \Bt_{\tau,\log,K}^+ \otimes_{\B_{\tau,K}^+}D_{\tau}^+(V),$$
et donc, en prenant les vecteurs pro-analytiques et en utilisant le lemme \ref{D_tau est loc ana}, on en déduit comme dans la preuve de la proposition précédente que la matrice de passage d'une base de $D_{\mathrm{st}}(V)$ à une base de $D_{\tau}^+(V)$ est dans $\mathrm{GL}_d((\Bt_{\tau,\log,K}^+)^{\pa})$. Malheureusement, contrairement au cas précédent, si $(e_1,\cdots,e_d)$ est une base de $D_{\tau}^+(V)$, alors $(\phi^n(e_1),\cdots,\phi^n(e_d))$ n'a plus de raison d'être une base de $D_{\tau}^+(V)$, puisque la matrice de $\phi$ dans cette base est dans $\mathrm{GL}_d(\B_{\tau,K}) \cap \mathrm{M}_d(\B_{\tau,K}^+)$ mais n'a pas de raison d'être dans $\mathrm{GL}_d(\B_{\tau,K}^+)$. On en déduit donc seulement qu'on a un isomorphisme de comparaison
$$\B_{\tau,\log,K,\infty}^+ \otimes_{\B_{\tau,K}^+}D_{\tau}^+(V) =  \B_{\tau,\log,K,\infty}^+ \otimes_F D_{\mathrm{st}}(V).$$
\end{rema}

\subsection{$(\phi,N_\nabla)$-modules}
Si $V$ est une représentation $p$-adique, on peut associer à son $(\phi,\Gamma)$-module cyclotomique sur l'anneau de Robba $D_{\mathrm{rig}}^{\dagger}(V)$ une connexion provenant de l'action infinitésimale de $\Gamma$, ce que Berger a étudié dans \cite{Ber02}. La théorie des vecteurs localement analytiques rend cette construction directe puisque les éléments de $D_{\mathrm{rig}}^{\dagger}(V)$ sont localement analytiques (voir par exemple \cite[§2.1]{KR09}) et puisque la connexion ainsi associée n'est autre que l'opérateur de dérivation dans la direction cyclotomique $\nabla_{\gamma}~: D_{\mathrm{rig}}^{\dagger}(V) \rightarrow D_{\mathrm{rig}}^{\dagger}(V)$, défini par $\nabla_{\gamma}(x):=\lim\limits_{\gamma \to 1}\frac{(\gamma-1)x}{\log\chi(\gamma)}$ (voir \cite[§4.1]{Ber02} pour plus de détails). C'est une connexion au-dessus de l'opérateur $\nabla_{\gamma}~: \B_{\mathrm{rig},K}^{\dagger} \rightarrow \B_{\mathrm{rig},K}^{\dagger}$, et cet opérateur $\nabla_{\gamma}$ sur $\B_{\mathrm{rig},K}^{\dagger}$ n'est autre que l'opérateur $\frac{d}{du}$ si $K=F$ (on rappelle que $u = [\epsilon]-1$).

En gardant cette vision localement analytique des choses, la définition analogue dans le cas des $(\phi,\tau)$-modules serait de simplement remplacer la connexion $\nabla_{\gamma}$ sur $D_{\mathrm{rig}}^{\dagger}(V)$ dans le cas cyclotomique par la connexion $\nabla_{\tau}$ sur $D_{\tau,\mathrm{rig}}^{\dagger}(V)$ au-dessus de l'opérateur $\nabla_{\tau}~: \B_{\tau,\mathrm{rig},K}^{\dagger} \rightarrow \B_{\tau,\mathrm{rig},K}^\dagger$. Le problème est que l'action de $\tau$ ne se fait qu'après avoir tensorisé au-dessus de $\Bt_{\mathrm{rig},L}^{\dagger}$, et qu'on n'a même pas $\nabla_{\tau}(\B_{\tau,\mathrm{rig},K}^{\dagger}) \subset \B_{\tau,\mathrm{rig},K}^{\dagger}$. On dispose en revanche d'un espace naturel sur lequel l'opérateur $\nabla_{\tau}$ est défini, à savoir $(\Bt_{\mathrm{rig},L}^{\dagger})^{\pa}$, et plus généralement on peut définir une connexion $\nabla_{\tau}$ sur $(\Bt_{\mathrm{rig},L}^{\dagger}\otimes D_{\tau,\mathrm{rig}}^{\dagger}(V))^{\pa} = (\Bt_{\mathrm{rig},L}^{\dagger})^{\pa} \otimes D_{\tau,\mathrm{rig}}^{\dagger}(V)$.

Pour des raisons évidentes, on souhaiterait néanmoins renormaliser l'opérateur $\nabla_{\tau}$ de façon à avoir $\nabla_{\tau}(\B_{\tau,\mathrm{rig},K}^{\dagger}) \subset \B_{\tau,\mathrm{rig},K}^{\dagger}$. On définit donc un nouvel opérateur $N_{\nabla}$ sur $(\Bt_{\mathrm{rig},L}^{\dagger})^{\pa}$, en posant $N_{\nabla}~:= \frac{-\lambda}{t}\nabla_{\tau}$. Remarquons que, comme $\frac{\lambda}{t} \in \Bt_L^{\dagger}$ et est localement analytique d'après \cite[Lemm. 5.1.1]{GP18}, l'opérateur $N_{\nabla}~: (\Bt_{\mathrm{rig},L}^{\dagger})^{\pa} \rightarrow (\Bt_{\mathrm{rig},L}^{\dagger})^{\pa}$ est bien défini, et plus généralement, la connexion $N_{\nabla}~: (\Bt_{\mathrm{rig},L}^{\dagger}\otimes D_{\tau,\mathrm{rig}}^{\dagger}(V))^{\pa} \rightarrow (\Bt_{\mathrm{rig},L}^{\dagger}\otimes D_{\tau,\mathrm{rig}}^{\dagger}(V))^{\pa}$ est bien définie. De plus, comme $\nabla_{\tau}([\tilde{\pi}]) = t[\tilde{\pi}]$ et comme $\lambda \in \B_{\tau,\mathrm{rig},K}^{\dagger}$, on a bien $N_{\nabla}(\B_{\tau,\mathrm{rig},K}^{\dagger}) \subset \B_{\tau,\mathrm{rig},K}^{\dagger}$, et le choix du signe est fait pour que l'opérateur qu'on vient de définir sur $\B_{\tau,\mathrm{rig},K}^{\dagger}$ coïncide avec l'opérateur $N_{\nabla}$ défini par Kisin dans \cite{KisinFiso}, puisqu'on a avec cette définition $N_\nabla([\tilde{\pi}])=-\lambda[\tilde{\pi}]$.

\begin{defi}
On appelle $(\phi,N_\nabla)$-module sur $\B_{\tau,\mathrm{rig},K}^{\dagger}$ un $\B_{\tau,\mathrm{rig},K}^{\dagger}$-module libre $D$ muni d'un Frobenius et d'une connexion $N_{\nabla}~: D \rightarrow D$ au-dessus de $N_{\nabla}~: \B_{\tau,\mathrm{rig},K}^{\dagger} \rightarrow \B_{\tau,\mathrm{rig},K}^{\dagger}$, c'est-à-dire que pour tout $m \in D$ et pour tout $x \in \B_{\tau,\mathrm{rig},K}^{\dagger}$, $N_{\nabla}(x\cdot m) = N_{\nabla}(x)\cdot m +x \cdot N_{\nabla}(m)$.
\end{defi}

\begin{prop}
\label{stabilité connexion}
Si $V$ est une représentation $p$-adique de $\G_K$, alors 
$$N_{\nabla}(D_{\tau,\mathrm{rig}}^{\dagger}(V)) \subset D_{\tau,\mathrm{rig}}^{\dagger}(V).$$
\end{prop}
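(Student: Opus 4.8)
Le plan est d'exploiter le fait que $\phi$ engendre $D_{\tau,\mathrm{rig}}^{\dagger}(V)$ pour se ramener, par torsion par une puissance de Frobenius, à des coefficients qui vivent dans $\B_{\tau,\mathrm{rig},K}^{\dagger}$ et non seulement dans $\B_{\tau,\mathrm{rig},K,\infty}^{\dagger}$. On fixe une base $e_1,\cdots,e_d$ de $D_{\tau}^{\dagger}(V)$ sur $\B_{\tau,K}^{\dagger}$, qui est aussi une base de $D_{\tau,\mathrm{rig}}^{\dagger}(V)$ sur $\B_{\tau,\mathrm{rig},K}^{\dagger}$. Par le lemme \ref{D_tau est loc ana}, ces éléments sont pro-analytiques, donc $N_{\nabla}(e_i)$ est bien défini dans $(\Bt_{\mathrm{rig},L}^{\dagger}\otimes D_{\tau,\mathrm{rig}}^{\dagger}(V))^{\pa}$ ; la matrice de $G_\infty$ dans la base $(e_i)$ étant pro-analytique (lemmes \ref{D_tau est loc ana} et \ref{app linéaire la}), la proposition \ref{padanspa} donne $(\Bt_{\mathrm{rig},L}^{\dagger}\otimes D_{\tau,\mathrm{rig}}^{\dagger}(V))^{\pa} = (\Bt_{\mathrm{rig},L}^{\dagger})^{\pa}\otimes_{\B_{\tau,\mathrm{rig},K}^{\dagger}}D_{\tau,\mathrm{rig}}^{\dagger}(V)$, de sorte qu'on peut écrire $N_{\nabla}(e_i) = \sum_j c_{ij}e_j$ avec $c_{ij} \in (\Bt_{\mathrm{rig},L}^{\dagger})^{\pa}$. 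Tout revient à montrer que $c_{ij} \in \B_{\tau,\mathrm{rig},K}^{\dagger}$.

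La première étape est de montrer que les $c_{ij}$ sont dans $\B_{\tau,\mathrm{rig},K,\infty}^{\dagger}$. Comme $\B_{\tau,\mathrm{rig},K}^{\dagger}$ et $D_{\tau}^{\dagger}(V)$ sont fixés par $H_{\tau,K} = \G_{K_\infty}$, les $e_i$, vus dans le module ci-dessus, sont invariants sous $\gamma$. D'autre part, la relation $g\tau g^{-1} = \tau^{\chi_{\mathrm{cycl}}(g)}$ donne $\gamma\nabla_{\tau}\gamma^{-1} = \chi_{\mathrm{cycl}}(\gamma)\nabla_{\tau}$ ; comme $\gamma$ fixe $[\tilde{\pi}]$, donc $\lambda$, et que $\gamma(t) = \chi_{\mathrm{cycl}}(\gamma)t$, on en déduit $\gamma N_{\nabla}\gamma^{-1} = N_{\nabla}$, c'est-à-dire que $N_{\nabla}$ commute à $\gamma$. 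Donc $N_{\nabla}(e_i)$ est invariant sous $\gamma$, et les $c_{ij}$ le sont aussi puisque la base $(e_j)$ l'est. Le théorème \ref{struc loc ana} donne alors $c_{ij} \in (\Bt_{\mathrm{rig},L}^{\dagger})^{\pa,\gamma=1} = \B_{\tau,\mathrm{rig},K,\infty}^{\dagger}$, et on fixe $n \geq 0$ tel que $\phi^n(c_{ij}) \in \B_{\tau,\mathrm{rig},K}^{\dagger}$ pour tous $i,j$.

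La deuxième étape est la descente par $\phi^n$. Comme $D_{\tau}^{\dagger}(V)$ est étale, $\phi(D_{\tau}^{\dagger}(V))$ engendre $D_{\tau}^{\dagger}(V)$, donc $\{\phi^n(e_i)\}_i$ est encore une base de $D_{\tau,\mathrm{rig}}^{\dagger}(V)$ sur $\B_{\tau,\mathrm{rig},K}^{\dagger}$ (même argument que dans la preuve du théorème \ref{retrouver Dst Dcris phitau}). Comme $\phi$ commute à l'action de $\tau$ il commute à $\nabla_{\tau}$, et, à l'aide de $\phi(\lambda) = (E(0)/E([\tilde{\pi}]))\lambda$ et $\phi(t) = pt$, on obtient $N_{\nabla}\circ\phi = p\,(E([\tilde{\pi}])/E(0))\,\phi\circ N_{\nabla}$, d'où par itération
$$N_{\nabla}(\phi^n(x)) = p^n\mu_n\,\phi^n(N_{\nabla}(x)), \qquad \mu_n := \prod_{k=0}^{n-1}\phi^k\!\left(E([\tilde{\pi}])/E(0)\right) \in \B_{\tau,\mathrm{rig},K}^{+}.$$
On en déduit $N_{\nabla}(\phi^n(e_i)) = p^n\mu_n\sum_j\phi^n(c_{ij})\,\phi^n(e_j)$, dont tous les coefficients $p^n\mu_n\phi^n(c_{ij})$ sont dans $\B_{\tau,\mathrm{rig},K}^{\dagger}$, donc $N_{\nabla}(\phi^n(e_i)) \in D_{\tau,\mathrm{rig}}^{\dagger}(V)$.

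Pour conclure, $N_{\nabla}$ est une connexion au-dessus de $N_{\nabla}~: \B_{\tau,\mathrm{rig},K}^{\dagger} \to \B_{\tau,\mathrm{rig},K}^{\dagger}$, opérateur qui stabilise bien $\B_{\tau,\mathrm{rig},K}^{\dagger}$ puisque $\lambda \in \B_{\tau,\mathrm{rig},K}^{\dagger}$ et $N_{\nabla}([\tilde{\pi}]) = -\lambda[\tilde{\pi}]$ ; comme $\{\phi^n(e_i)\}$ est une base de $D_{\tau,\mathrm{rig}}^{\dagger}(V)$ dont les images par $N_{\nabla}$ restent dans $D_{\tau,\mathrm{rig}}^{\dagger}(V)$, la règle de Leibniz donne $N_{\nabla}(D_{\tau,\mathrm{rig}}^{\dagger}(V)) \subset D_{\tau,\mathrm{rig}}^{\dagger}(V)$. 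Le point délicat est la première étape, à savoir contrôler proprement l'extraction des coordonnées $c_{ij}$ dans les vecteurs pro-analytiques et appliquer correctement le calcul des invariants sous $\gamma$ du théorème \ref{struc loc ana} ; la relation de commutation entre $N_{\nabla}$ et $\phi$ de la deuxième étape, elle, n'est qu'un calcul formel.
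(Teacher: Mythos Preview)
Ta preuve est correcte et suit exactement la même stratégie que celle du papier : montrer que $N_\nabla$ commute à $\gamma$ pour obtenir des coefficients dans $(\Bt_{\mathrm{rig},L}^\dagger)^{\pa,\gamma=1} = \B_{\tau,\mathrm{rig},K,\infty}^\dagger$ via le théorème~\ref{struc loc ana}, puis utiliser la relation $N_\nabla\phi = p\,(E([\tilde{\pi}])/E(0))\,\phi N_\nabla$ et le fait que $(\phi^n(e_i))$ est encore une base pour descendre à $\B_{\tau,\mathrm{rig},K}^\dagger$. Tu as simplement rendu explicites les coordonnées $c_{ij}$ et le facteur $\mu_n$, là où le papier reste légèrement plus abstrait.
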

\begin{proof}
Le lemme \ref{D_tau est loc ana} montre que $D_{\tau,\mathrm{rig}}^\dagger(V)$ est inclus dans $(\tilde{D}_{L,\mathrm{rig}}^\dagger(V))^{\pa}$ qui est stable sous l'action de $\tau$ et donc par la connexion $N_\nabla$. On va commencer par montrer que, si $x \in (\tilde{D}_{L,\mathrm{rig}}^\dagger(V))^{\pa}$ est invariant sous $\Gal(L/K_\infty)$, alors c'est aussi le cas de $N_\nabla(x)$, ce qui nous donnera $N_\nabla(D_{\tau,\mathrm{rig}}^\dagger(V)) \subset (\tilde{D}_{L,\mathrm{rig}}^\dagger(V))^{\pa,\gamma=1}$. Soit donc $x \in (\tilde{D}_{L,\mathrm{rig}}^\dagger(V))^{\pa,\gamma=1}$, et soit $g \in \Gal(L/K_\infty)$. Comme $N_\nabla(x)=-\frac{\lambda}{t}\nabla_\tau(x)$ et que la relation entre $\tau$ et $\Gal(L/K_\infty)$ est donnée par $g\tau=\tau^{\chi_{\mathrm{cycl}}(g)}g$, on en déduit que $g(N_\nabla(x))=-\frac{1}{\chi_{\mathrm{cycl}}(g)}\frac{\lambda}{t}\nabla_{\tau^{\chi_{\mathrm{cycl}}(g)}}(g(x)) = N_\nabla(x)$.

On sait que $(\tilde{D}_{L,\mathrm{rig}}^\dagger(V))^{\pa,\gamma=1} = D_{\tau,\mathrm{rig}}^\dagger(V) \otimes_{\B_{\tau,\mathrm{rig},K}^\dagger}\B_{\tau,\mathrm{rig},K,\infty}^\dagger$ par la proposition \ref{padanspa} et le théorème \ref{struc loc ana}. En particulier, si $(e_1,\cdots,e_d)$ est une base de $D_\tau^\dagger(V)$, il existe $m$ tel que les $(N_\nabla(e_i)$ appartiennent tous à $D_{\tau,\mathrm{rig}}^\dagger(V) \otimes_{\B_{\tau,\mathrm{rig},K}}\phi^{-m}(\B_{\tau,\mathrm{rig},K}^\dagger)$, de sorte que les $\phi^m(N_\nabla(e_i))$ appartiennent tous à $D_{\tau,\mathrm{rig}}^\dagger(V)$. Comme la relation entre $N_\nabla$ et $\phi$ est donnée par $N_\nabla\phi = \frac{E([\tilde{\pi}]}{E(0)}p\phi N_\nabla$, on en déduit que les $N_\nabla(\phi^m(e_i))$ sont tous dans $D_{\tau,\mathrm{rig}}^\dagger(V)$. Comme $(\phi^m(e_1),\cdots,\phi^m(e_d))$ est aussi une base de $D_{\tau}^\dagger(V)$, et donc \textit{a fortiori} aussi une base de $D_{\tau,\mathrm{rig}}^\dagger(V)$ et que $N_\nabla$ stabilise $\B_{\tau,\mathrm{rig},K}^\dagger$, on en déduit le résultat.
\end{proof}

En particulier, si $V$ est une représentation $p$-adique de $\G_K$, la connexion $N_\nabla$ associée à son $(\phi,\tau)$-module différentiel $D_{\tau,\mathrm{rig}}^\dagger(V)$ définit une structure de $(\phi,N_\nabla)$-module.

\begin{prop}
\label{V V' meme Nabla}
Si $V, V'$ sont deux représentations $p$-adiques de $\G_K$, alors $D_{\tau,\mathrm{rig}}^\dagger(V)$ et $D_{\tau,\mathrm{rig}}^\dagger(V')$ définissent le même $(\phi,N_\nabla)$-module si, et seulement si, il existe $n \geq 0$ tel que $V$ et $V'$ sont isomorphes en tant que représentations de $\G_{K_n}$.
\end{prop}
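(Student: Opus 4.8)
Le plan est, dans les deux sens, de comparer la structure de $(\phi,N_\nabla)$-module sur $\B_{\tau,\mathrm{rig},K}^{\dagger}$ à la structure de $(\phi,\tau_{K_n})$-module obtenue après restriction à $\G_{K_n}$, en s'appuyant sur deux remarques. La première est que l'extension de Kummer relative à $K_n$ — pour l'uniformisante $\pi_n$ et les racines $\pi_{n+m}$ — engendre le même corps $K_\infty$, de sorte que $H_{\tau,K_n} = H_{\tau,K}$, $L_{K_n} = L$, et surtout $\B_{\tau,K_n}^{\dagger} = \B_{\tau,K}^{\dagger}$ et $\B_{\tau,\mathrm{rig},K_n}^{\dagger} = \B_{\tau,\mathrm{rig},K}^{\dagger}$ ; on en déduira que pour toute représentation $p$-adique $V$ de $\G_K$, le $\phi$-module $D_{\tau}^{\dagger}(V_{|\G_{K_n}}) = (\B_{\tau}^{\dagger}\otimes_{\Qp}V)^{H_{\tau,K_n}}$ coïncide avec $D_{\tau}^{\dagger}(V)$, et donc que $D_{\tau,\mathrm{rig}}^{\dagger}(V_{|\G_{K_n}}) = D_{\tau,\mathrm{rig}}^{\dagger}(V)$ comme $\phi$-modules sur $\B_{\tau,\mathrm{rig},K}^{\dagger}$, la seule chose qui change étant que $G_\infty$ est remplacé par $\Gal(L/K_n)$, qui agit sur $\tilde{D}_{L,\mathrm{rig}}^{\dagger}(V)$ comme restriction de l'action de $G_\infty$ et est engendré topologiquement par $\Gal(L/K_\infty)$ et $\tau_n = \tau^{p^n}$ (proposition \ref{p=2=pb} appliquée aux $K_j$, à la subtilité d'indice $2$ près lorsque $p=2$). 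La seconde remarque est que, l'élément $\tau$ relatif à $K_n$ étant $\tau_n$, on a $\nabla_{\tau_n} = p^n\nabla_\tau$ ; on en tirera que pour $n \geq m_0$ (notations du théorème \ref{struc loc ana}, afin que $\log\tau^{p^n}$ converge) un endomorphisme $\Bt_{\mathrm{rig},L}^{\dagger}$-linéaire $u$ des vecteurs pro-analytiques de $\tilde{D}_{L,\mathrm{rig}}^{\dagger}(V)$ commute avec $\tau_n$ si et seulement s'il commute avec $\log\tau^{p^n} = p^n\nabla_\tau$, ce qui équivaut encore à commuter avec $N_\nabla = -\frac{\lambda}{t}\nabla_\tau$, puisque $\frac{t}{\lambda}$ est inversible dans $\Bt_{\mathrm{rig},L}^{\dagger,r}$ pour $r \gg 0$ d'après \cite[Lemm. 5.1.1]{GP18}.

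Ces deux points acquis, pour le sens direct je partirais d'un isomorphisme $V_{|\G_{K_n}} \simeq V'_{|\G_{K_n}}$, que l'on peut supposer avec $n \geq m_0$ quitte à agrandir $n$. Il fournit un isomorphisme de $(\phi,\tau_{K_n})$-modules, donc, en passant à l'anneau de Robba \textit{via} \cite{GP18}, un isomorphisme $g : D_{\tau,\mathrm{rig}}^{\dagger}(V) \to D_{\tau,\mathrm{rig}}^{\dagger}(V')$ de $\phi$-modules sur $\B_{\tau,\mathrm{rig},K}^{\dagger}$ dont l'extension $\Bt_{\mathrm{rig},L}^{\dagger}$-linéaire $\hat{g}$ commute avec $\Gal(L/K_n)$, en particulier avec $\tau_n$. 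Par la seconde remarque $\hat{g}$ commute avec $N_\nabla$ sur les vecteurs pro-analytiques, et comme $N_\nabla$ stabilise $D_{\tau,\mathrm{rig}}^{\dagger}(V)$ et $D_{\tau,\mathrm{rig}}^{\dagger}(V')$ (proposition \ref{stabilité connexion}), $g$ lui-même commute avec $N_\nabla$ : c'est donc un isomorphisme de $(\phi,N_\nabla)$-modules.

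Pour le sens réciproque, je partirais d'un isomorphisme $f : D_{\tau,\mathrm{rig}}^{\dagger}(V) \to D_{\tau,\mathrm{rig}}^{\dagger}(V')$ de $(\phi,N_\nabla)$-modules, que j'étendrais $\Bt_{\mathrm{rig},L}^{\dagger}$-linéairement en $\hat{f} : \tilde{D}_{L,\mathrm{rig}}^{\dagger}(V) \to \tilde{D}_{L,\mathrm{rig}}^{\dagger}(V')$, le lemme \ref{D_tau est loc ana} garantissant que $\hat{f}$ préserve les vecteurs pro-analytiques. Puisque $f$ commute avec $N_\nabla$, $\hat{f}$ aussi, donc $\hat{f}$ commute avec $\tau_n$ pour $n \geq m_0$ par la seconde remarque ; il commute également avec $\phi$ et avec $\Gal(L/K_\infty)$ (qui agit trivialement sur $D_{\tau,\mathrm{rig}}^{\dagger}(V)$ et $D_{\tau,\mathrm{rig}}^{\dagger}(V')$), donc avec tout $\Gal(L/K_n)$ (pour $p=2$, quitte à prendre $n$ assez grand pour que $\Gal(L/K_\infty)$ et $\tau_n$ engendrent $\Gal(L/K_n)$). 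Ainsi $\hat{f}$ réalise un isomorphisme entre les $(\phi,\tau_{K_n})$-modules différentiels sur l'anneau de Robba attachés à $V_{|\G_{K_n}}$ et à $V'_{|\G_{K_n}}$ ; comme le passage à l'anneau de Robba ne modifie pas les morphismes entre $\phi$-modules étales (unicité de la structure étale sous-jacente), il se restreint en un isomorphisme des $(\phi,\tau_{K_n})$-modules surconvergents correspondants, lesquels classifient $V_{|\G_{K_n}}$ et $V'_{|\G_{K_n}}$ par \cite{GP18} : on a donc $V_{|\G_{K_n}} \simeq V'_{|\G_{K_n}}$.

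L'étape la plus délicate sera la seconde remarque du premier paragraphe : il faut justifier proprement qu'un endomorphisme $\Bt_{\mathrm{rig},L}^{\dagger}$-linéaire des vecteurs pro-analytiques commutant avec $\tau_n$ commute bien avec $\nabla_\tau$ dès que $n \geq m_0$, ce qui repose sur le fait que $\log\tau^{p^n}$ est une série convergente en $\tau^{p^n}$ à valeurs dans les opérateurs bornés (lemme \ref{LieG}) et que réciproquement $\tau^{p^n} = \exp(\log\tau^{p^n})$, puis sur la vérification que ces opérateurs préservent les complétés de Banach pertinents et que l'on peut y transférer la commutation depuis le sous-espace dense des vecteurs pro-analytiques par continuité.
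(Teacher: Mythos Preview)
Ton approche est essentiellement celle de l'article : dans les deux sens, on ramène l'isomorphisme de $(\phi,N_\nabla)$-modules à un isomorphisme de $(\phi,\tau^{p^n})$-modules pour $n$ assez grand, en exploitant que $\tau^{p^n}$ se reconstruit à partir de $N_\nabla$ par la formule $\tau^{p^n} = \exp\bigl(-p^n\frac{t}{\lambda}N_\nabla\bigr)$ sur les vecteurs suffisamment analytiques. L'article procède de façon un peu plus concrète : il choisit une base $(e_i)$ de $D_\tau^{\dagger,r}(V) = D_\tau^{\dagger,r}(V')$, la plonge dans le Banach $\tilde{D}_L^I(V)$ (resp.\ $\tilde{D}_L^I(V')$) pour $I=[r;s]$ compact, et choisit $n$ tel que $\exp\bigl(p^n\frac{t}{\lambda}N_\nabla\bigr)(e_i)$ converge dans les deux, ce qui force les actions de $\tau^{p^n}$ à coïncider.

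La seule vraie imprécision de ta proposition est le seuil $n \geq m_0$ : la constante $m_0$ du théorème \ref{struc loc ana} contrôle la structure des vecteurs localement analytiques de $\Bt_L^I$, mais elle ne garantit pas que $\exp(p^{m_0}\nabla_\tau)$ converge sur des éléments donnés de $\tilde{D}_L^I(V)$. Les $e_i$ sont $G_m$-analytiques pour un certain $m$ qui dépend de $V$ (et de $V'$), et c'est ce $m$ qui détermine le $n$ à partir duquel l'exponentielle converge. Dans l'article, c'est exactement ce qui est fait : on affirme seulement qu'\emph{il existe} $n \geq 0$ tel que la série converge, sans prétendre à un seuil uniforme. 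Une fois cette correction faite (remplacer \og pour $n \geq m_0$\fg{} par \og pour $n$ assez grand, dépendant de $V,V'$\fg), ton argument est complet et coïncide avec celui de l'article.
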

\begin{proof}
Dans cette preuve, on utilisera systématiquement le théorème de Kedlaya \cite[Thm. 6.3.3]{slopes} qui nous dit que tout $\phi$-module étale sur $\B_{\tau,\mathrm{rig},K}^\dagger$ provient par extension des scalaires d'un $\phi$-module étale sur $\B_{\tau,K}^\dagger$ en jonglant entre les modules surconvergents et ceux sur l'anneau de Robba, en gardant à l'esprit que ces données sont équivalentes.

S'il existe $n \geq 0$ tel que $V$ et $V'$ sont isomorphes en tant que représentations de $\G_{K_n}$, alors $V$ et $V'$ sont \textit{a fortiori} isomorphes en tant que représentations de $H_{\tau,K}$, de sorte que $D_{\tau,\mathrm{rig}}^\dagger(V)=D_{\tau,\mathrm{rig}}^\dagger(V')$ en tant que $\phi$-modules. Comme de plus, $V$ et $V'$ sont isomorphes en tant que représentations de $\G_{K_n}$, l'action de $\tau^{p^n}$ est la même sur $(\Bt^\dagger \otimes_{\Qp}V)^{\G_L}$ et $(\Bt^\dagger \otimes_{\Qp}V')^{\G_L}$, de sorte que $D_\tau^\dagger(V)$, $D_\tau^\dagger(V')$ sont isomorphes en tant que $(\phi,\tau^{p^n})$-modules sur $(\B_{\tau,K}^\dagger,\Bt_L^\dagger)$. Par définition de $N_\nabla$, on en déduit que $D_{\tau,\mathrm{rig}}^\dagger(V) = D_{\tau,\mathrm{rig}}^\dagger(V')$ en tant que $(\phi,N_\nabla)$-modules sur $\B_{\tau,\mathrm{rig},K}^\dagger$.

Réciproquement, si $V$ et $V'$ sont deux représentations telles que $D_{\tau,\mathrm{rig}}^\dagger(V) = D_{\tau,\mathrm{rig}}^\dagger(V')$ en tant que $(\phi,N_\nabla)$-modules sur $\B_{\tau,\mathrm{rig},K}^\dagger$, il suffit clairement de montrer qu'il existe $n \geq 0$ tel que $D_{\tau,\mathrm{rig}}^\dagger(V) = D_{\tau,\mathrm{rig}}^\dagger(V')$ en tant que $(\phi,\tau^{p^n})$-modules sur $(\B_{\tau,\mathrm{rig},K}^\dagger,\Bt_{\mathrm{rig},L}^\dagger)$. Soient donc $r \geq 0$ et $(e_1,\cdots,e_d)$ des éléments de $D_\tau^{\dagger,r}(V) \cap D_\tau^{\dagger,r}(V')$ tels que $(e_1\cdots,e_d)$ soit une base du $\phi$-module $D_\tau^\dagger(V)=D_\tau^\dagger(V')$. Soit $s \geq r$ et soit $I=[r;s]$. Par le lemme \ref{D_tau est loc ana}, les $e_i$ sont des vecteurs localement analytiques de $\tilde{D}_L^I(V)=\Bt_L^I \otimes_{\B_\tau^{\dagger,r}}D_\tau^{\dagger,r}(V)$ et de $\tilde{D}_L^I(V')=\Bt_L^I \otimes_{\B_\tau^{\dagger,r}}D_\tau^{\dagger,r}(V')$ (remarquons qu'on a $\tilde{D}_L^I(V)=\tilde{D}_L^I(V')$ en tant que $\phi$-modules par hypothèse). En particulier, il existe $n \geq 0$ tel que, pour tout $i$, $\exp(p^n\frac{t}{\lambda}N_\nabla)(e_i)$ converge dans $(\tilde{D}_L^I(V))^{\la}$ et $(\tilde{D}_L^I(V'))^{\la}$, de sorte que, pour tout $i$, l'action de $\tau^{p^n}$ sur $e_i$ en tant qu'élément de $\tilde{D}_L^I(V)$ est la même qu'en tant qu'élément de $\tilde{D}_L^I(V')$. Comme on a une injection $\tilde{D}_L^{\dagger,r}(V) \to \tilde{D}_L^I(V)$ (c'est une conséquence directe de \cite[Lemm. 2.7]{Ber02}), on en déduit que l'action de $\tau^{p^n}$ sur $\tilde{D}_{\mathrm{rig},L}^{\dagger}(V)$ et $\tilde{D}_{\mathrm{rig},L}^{\dagger}(V')$ coïncide, d'où le résultat.
\end{proof}

\begin{rema}
La preuve de la proposition \ref{V V' meme Nabla} montre en fait que deux $(\phi,\tau)$-modules $D,D'$ sur $\B_{\tau,K,\mathrm{rig}}^\dagger$ définissent le même $(\phi,N_\nabla)$-module si, et seulement si, il existe $n \geq 0$ tel que $D$ et $D'$ sont isomorphes en tant que $(\phi,\tau^{p^n})$-modules, l'énoncé de la proposition correspondant au cas particulier où $D$ et $D'$ sont étales.
\end{rema}

On va maintenant montrer comment associer à tout $(\phi,N_\nabla)$-module un $(\phi,\tau^{p^n})$-module pour $n$ assez grand. 

\begin{prop}
Soit $D$ un $(\phi,N_\nabla)$-module sur $\B_{\tau,\mathrm{rig},K}^\dagger$. Alors il existe $n \geq 0$ et $D'$ un $(\phi,\tau^{p^n})$-module sur $(\B_{\tau,\mathrm{rig},K}^\dagger,\Bt_{\mathrm{rig},L}^\dagger)$ tels que $D = D'$ en tant que $(\phi,N_\nabla)$-modules.
\end{prop}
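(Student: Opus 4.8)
The plan is to rebuild an action of $\tau^{p^n}$, for $n$ large, by exponentiating the operator $\nabla_\tau := -\frac{t}{\lambda}N_\nabla$ attached to $D$; this operator makes sense on a finite-interval model of $D$ thanks to Proposition \ref{t dans BtauL}, which gives $\frac{t}{\lambda} \in \B_{\tau,L}^\dagger$, and on the periods it restores the genuine infinitesimal action of $\tau^{\Zp}$. Concretely, I would first descend $D$: by the descent theorem used in the proof of Proposition \ref{V V' meme Nabla} there are $r>0$ and a free $\B_{\tau,\mathrm{rig},K}^{\dagger,r}$-module $D^{\dagger,r}$ with $\B_{\tau,\mathrm{rig},K}^\dagger \otimes_{\B_{\tau,\mathrm{rig},K}^{\dagger,r}} D^{\dagger,r} = D$. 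Since $N_\nabla(f([\tilde{\pi}])) = -\lambda[\tilde{\pi}]f'([\tilde{\pi}])$ and differentiation preserves the radius of convergence, $N_\nabla$ preserves $\B_{\tau,\mathrm{rig},K}^{\dagger,r}$, and enlarging $r$ if necessary we may assume $N_\nabla(D^{\dagger,r}) \subset D^{\dagger,r}$. Fix a basis $e_1,\dots,e_d$ of $D^{\dagger,r}$.

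Next, for a compact interval $I=[r;s]$ I would form the Banach module $\Bt_L^I \otimes_{\B_{\tau,\mathrm{rig},K}^{\dagger,r}} D^{\dagger,r}$, extend $N_\nabla$ to it by the Leibniz rule over the operator $N_\nabla = -\frac{\lambda}{t}\nabla_\tau$ on $\Bt_L^I$, and set $\nabla_\tau := -\frac{t}{\lambda}N_\nabla$ there. This $\nabla_\tau$ restricts on the coefficient ring to the Lie-algebra derivation in the $\tau$-direction, so combining Lemma \ref{LieG} (applied to the coefficients) with the fact that the matrix of $N_\nabla$ in the basis $e_i$ has entries in the bounded ring $\B_{\tau,\mathrm{rig},K}^{\dagger,r}$, one obtains an estimate $\|\nabla_\tau^k(x)\| \le C^k\|x\|$ on $G_m$-analytic vectors. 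Exactly as in the proof of Proposition \ref{V V' meme Nabla}, there is then $n\ge 0$ for which $\tau_n := \exp(p^n\nabla_\tau) = \sum_{k\ge 0}\frac{p^{nk}}{k!}\nabla_\tau^k$ converges on $\Bt_L^I \otimes D^{\dagger,r}$, with $\tau_n(e_i)=\sum_j b_{ji}e_j$ and $b_{ji}\in(\Bt_{\mathrm{rig},L}^{\dagger,r})^{\pa}$.

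I then define $D'$ to be $D$ as a $\phi$-module over $\B_{\tau,\mathrm{rig},K}^\dagger$, equipped on $\hat{D}' := \Bt_{\mathrm{rig},L}^\dagger \otimes_{\B_{\tau,\mathrm{rig},K}^\dagger} D'$ with the semilinear action in which $\Gal(L/K_\infty)$ acts trivially on $D'$, $\tau^{p^n}$ sends $e_i$ to $\sum_j b_{ji}e_j$, and both act on $\Bt_{\mathrm{rig},L}^\dagger$ in the usual way; the action of the whole group topologically generated by $\tau^{p^n}$ and $\Gal(L/K_\infty)$ comes from $\tau^{cp^n}:=\exp(cp^n\nabla_\tau)$ for $c\in\Zp$, the group law $\exp((c+c')p^n\nabla_\tau)=\exp(cp^n\nabla_\tau)\exp(c'p^n\nabla_\tau)$ being valid once $n$ is large. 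It remains to verify the axioms and that $D'$ recovers $D$. The inclusion $D'\subset\hat{D}'^{\Gal(L/K_\infty)}$ is clear. The relation $g\tau^{p^n}g^{-1}=(\tau^{p^n})^{\chi_{\mathrm{cycl}}(g)}$ reduces, $\Gal(L/K_\infty)$ acting trivially on $D'$, to $g\nabla_\tau g^{-1}=\chi_{\mathrm{cycl}}(g)\nabla_\tau$, which holds because $\Gal(L/K_\infty)$ fixes $D^{\dagger,r}$ and $\B_{\tau,\mathrm{rig},K}^{\dagger,r}$ pointwise while $g(t/\lambda)=\chi_{\mathrm{cycl}}(g)\cdot t/\lambda$. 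Commutation of $\tau^{p^n}$ with $\phi$ follows from $\phi\nabla_\tau=\nabla_\tau\phi$, which itself follows from the $\phi$--$N_\nabla$ relation $N_\nabla\phi=\frac{E([\tilde{\pi}])}{E(0)}p\,\phi N_\nabla$ carried by $D$, combined with $\phi(t/\lambda)=\frac{E([\tilde{\pi}])}{E(0)}p\cdot t/\lambda$. Finally $\frac{1}{p^n}\log\tau_n=\nabla_\tau$ on $\hat{D}'$, so the $(\phi,N_\nabla)$-module attached to $D'$ has connection $-\frac{\lambda}{t}\nabla_\tau=N_\nabla$, i.e.\ it is $D$.

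The only genuinely delicate point is the convergence estimate in the second step, i.e.\ locating the integer $n$ so that $\exp(p^n\nabla_\tau)$ converges on the Banach model; all the remaining verifications are bookkeeping closely modeled on the proof of Proposition \ref{V V' meme Nabla}.
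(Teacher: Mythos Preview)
Your strategy is the same as the paper's: exponentiate $\nabla_\tau=-\frac{t}{\lambda}N_\nabla$ on a Banach model of $D$ to manufacture the $\tau^{p^n}$-action. But there is a real gap in the passage from convergence on a single compact interval to a well-defined operator on $\hat{D}'=\Bt_{\mathrm{rig},L}^\dagger\otimes_{\B_{\tau,\mathrm{rig},K}^\dagger}D$. You show that $\exp(p^n\nabla_\tau)(e_i)$ converges in $\Bt_L^I\otimes D^{\dagger,r}$ and then assert $b_{ji}\in(\Bt_{\mathrm{rig},L}^{\dagger,r})^{\pa}$; this does not follow. The ring $\Bt_{\mathrm{rig},L}^{\dagger,r}$ is the Fr\'echet limit of the $\Bt_L^{[r;s]}$, and the operator norm of $\nabla_\tau$ on $\Bt_L^{[r;s]}$ grows with $s$ (differentiation is continuous but not uniformly so), so the $n$ you find for a fixed $I$ gives no control on larger intervals. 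As written, your $b_{ji}$ live only in $\Bt_L^I$, and the action on $\hat{D}'$ is undefined. The appeal to Proposition~\ref{V V' meme Nabla} does not help: there the two $\tau^{p^n}$-actions already exist (they come from $V$ and $V'$) and one only has to check they agree, for which a single interval suffices by the injection $\tilde{D}_L^{\dagger,r}\hookrightarrow\tilde{D}_L^I$; here you are constructing the action from scratch.

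The paper fills this gap by a Frobenius gluing, and this is the missing idea. One chooses $I=[r;s]$ with $s\ge pr$ so that $I\cap pI\neq\emptyset$, uses Kedlaya's description of $D^{\dagger,r}$ as a vector bundle $\{E_k\}$ over the intervals $I_k=p^kI$, defines $T_{n,0}$ on $\Bt_L^I\otimes E_0$ exactly as you do, and then \emph{transports} it via $T_{n,k}:=\phi^k\circ T_{n,0}\circ\phi^{-k}$ to $\Bt_L^{I_k}\otimes E_k$. The commutation $\phi\nabla_\tau=\nabla_\tau\phi$ (which you correctly derive, but only invoke after the fact to check an axiom) is precisely what forces $T_{n,k}$ and $T_{n,k+1}$ to agree on the overlap $I_k\cap I_{k+1}$, so the $T_{n,k}$ glue to a single $\tau^{p^n}$-semilinear operator on $\Bt_{\mathrm{rig},L}^{\dagger,r}\otimes D^{\dagger,r}$. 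In short: one $n$ works for one compact interval, and Frobenius propagation carries it over the whole half-line. A minor related slip: $\B_{\tau,\mathrm{rig},K}^{\dagger,r}$ is not a ``bounded ring'' as you call it; it is Fr\'echet, so your operator-norm estimate must be formulated on the genuine Banach piece $\B_{\tau,K}^I$, which is what the paper does.
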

\begin{proof}
Je tiens à remercier Laurent Berger pour m'avoir fait remarquer qu'une telle construction était possible en s'inspirant de \cite[Coro. 4.3]{berger2012multivariable}. Soit $D$ un $(\phi,N_\nabla)$-module sur $\B_{\tau,\mathrm{rig},K}^\dagger$ et soit $r \geq 0$ tel que $D$ soit défini sur $\B_{\tau,\mathrm{rig},K}^{\dagger,r}$. Soit $I = [r;s]$ avec $s \geq pr$, de sorte que $I \cap pI \neq \emptyset$. On pose pour $k \geq 0$, $I_k = p^kI$ et $J_k = I_k \cap I_{k+1}$. Par \cite[Thm. 2.8.4]{slopes}, la donnée de $D^r$ est équivalente à la donnée d'un fibré vectoriel $\left\{E_k\right\}_{k \geq 0}$, où les $E_k$ sont des $\B_{\tau,K}^{I_k}$-modules libres tels que 
$$E_{k+1} \otimes_{\B_{\tau,K}^{I_{k+1}}}\B_{\tau,K}^{I_k} = E_k,$$
et il existe $(e_1,\cdots,e_d) \in \bigcap_{k \geq 0} E_k$ tels que $E_k = \bigoplus_{i=1}^d \B_{\tau,K}^{I_k}\cdot e_i$. 

L'opérateur $N_\nabla$ sur $D$ induit un opérateur sur $E_0 = \B_{\tau,K}^{I} \otimes_{\B_{\tau,\mathrm{rig},K}^{\dagger,r}}D^r$, donné par $-[\tilde{\pi}]\lambda \frac{d}{d[\tilde{\pi}]} \otimes N_\nabla$, qu'on notera toujours $N_\nabla$. Cet opérateur est continu pour la topologie donnée par $V(\cdot,I)$, et donc il existe $n \geq 0$ tel que, pour tout $i \in \{1,\cdots,d\}$, la série donnée par
$$\exp(\frac{-t}{\lambda}p^nN_\nabla(e_i))$$
converge dans $\Bt_L^I$ vers un élément qu'on notera $T_{n,0}(e_i)$. Les $e_i$ formant une base de $E_0$ sur $\B_{\tau,K}^I$, ils forment par extension des scalaires une base de $\tilde{D}_L^I:=\Bt_L^I \otimes_{\B_{\tau,K}^I}E_0 = \Bt_L^I\otimes_{\B_{\tau,\mathrm{rig},K}^{\dagger,r}}D^r$ sur $\Bt_L^I$. On rappelle que $\Bt_L^I$ est muni d'une action continue de $\G_K$, et on étend $T_{n,0}$ en un opérateur $\tau^{p^n}$-semi-linéaire de $\Bt_L^I$ en posant $T_n(x\cdot e_i) = \tau^{p^n}(x)T_{n,0}(e_i)$. Pour $k \geq 0$, on note $E_k' = \Bt_L^{I_k} \otimes_{\B_{\tau,K}^{I_k}}E_k$.

Pour tout $k \geq 0$, le Frobenius induit des bijections $\phi^k : E_k \to E_{k+1}$ et $\phi_k : E_k' \to E_{k+1}'$. Pour $x \in E_k'$, on pose $T_{n,k}(x) = \phi^k \circ T_n(\phi^{-k}(x))$, et on étend $T_{n,k}$ à $\Bt_L^{J_k} \otimes_{\Bt_L^{I_k}}E_{k+1}'$ par semi-$\tau^{p^n}$-linéarité. Comme $\phi$ commute à l'action de $\G_K$ et avec $\frac{-t}{\lambda}N_\nabla$, on en déduit que $T_{n,k+1}(x)=T_{n,k}(x)$ sur 
$$E_{k+1}'\otimes_{\Bt_L^{I_{k+1}}}\Bt_L^{J_k}=E_{k}'\otimes_{\Bt_L^{I_{k}}}\Bt_L^{J_k},$$
ce qui montre que la collection d'opérateurs $\left\{T_{n,k}\right\}_{k \geq 0}$ est compatible à la structure de fibré vectoriel sur $\Bt_{\mathrm{rig},L}^{\dagger,r}$ donnée par la collection $\left\{E_k'\right\}_{k \geq 0}$, c'est-à-dire que les opérateurs $\left\{T_{n,k}\right\}_{k \geq 0}$ se recollent en un opérateur $\tau^{p^n}$-semi-linéaire $T_n$ sur $\Bt_{\mathrm{rig},L}^{\dagger,r}\otimes_{\B_{\tau,\mathrm{rig},K}^{\dagger,r}}D^r$, ce qui confère à $D$ une structure de $(\phi,\tau^{p^n})$-module sur $(\B_{\tau,\mathrm{rig},K}^\dagger,\Bt_{\mathrm{rig},L}^\dagger)$. 

Cette structure de $(\phi,\tau^{p^n})$-module sur $(\B_{\tau,\mathrm{rig},K}^\dagger,\Bt_{\mathrm{rig},L}^\dagger)$ permet de lui associer un $(\phi,N_\nabla)$-module $D'$ sur $\B_{\tau,\mathrm{rig},K}^\dagger$ tel que les $\phi$-modules sous-jacents sont les mêmes (puisqu'on a simplement ajouté une structure additionnelle au $\phi$-module). L'action de $\tau^{p^n}$ sur $\tilde{D'}_L^I=\Bt_L^I \otimes_{\B_{\tau,\mathrm{rig},K}}^{\dagger,r}{D'}^r$ permet de reconstruire un opérateur $N_\nabla$ sur $\tilde{D'}_L^I$, qui coïncide par construction avec l'opérateur $N_\nabla$ de départ sur $D^I$, de sorte que les opérateurs $N_\nabla$ sont les mêmes.
\end{proof}

En particulier, si $D$ est un $(\phi,N_\nabla)$-module étale, la proposition précédente permet de munir $D$ d'une structure de $(\phi,\tau^{p^n})$-module étale, et donc de lui associer une représentation $p$-adique $V_n$ de $\G_{K_n}$. Si on considère $W = \mathrm{ind}_{\G_{K_n}}^{\G_K}V_n$, alors $W$ est une représentation de $\G_K$ qui contient une sous-représentation $V$ dont la restriction à $\G_{K_n}$ est isomorphe à $V_n$, de sorte que $D_{\tau,\mathrm{rig}}^\dagger(V) \simeq D$ en tant que $(\phi,N_\nabla)$-modules sur $\B_{\tau,\mathrm{rig},K}^\dagger$. Cela montre que travailler avec la catégorie des $(\phi,N_\nabla)$-modules étales revient à travailler avec la catégorie des représentations $p$-adiques de $\G_K$ quotientée par la relation d'équivalence ``$V \equiv V'$ si et seulement si $\exists n \geq 0$ tel que $V_{|\G_{K_n}} \simeq V'_{|\G_{K_n}}$''.

\section{Caractérisation des représentations potentiellement semi-stables}
\subsection{Rappels sur les $(\phi,N,\G_{M/K})$-modules filtrés}
Dans ce qui suit, $M$ désigne une extension galoisienne finie de $K$, et on notera $\G_{M/K}$ le groupe de Galois de $M/K$. On note $M_0$ l'extension maximale non ramifiée de $F$ dans $M$, c'est-à-dire $M_0 = W(k_M)[1/p]$.

\begin{defi}
Un $(\phi,N,\G_{M/K})$-module est un $M_0$-espace vectoriel $D$ de dimension finie et muni d'une application $\sigma$-semi-linéaire injective $\phi~: D \to D$ tel que $N\phi = p\phi N$ et d'une action semi-linéaire de $\G_{M/K}$ qui commute à $\phi$ et $N$. 
\end{defi}

\begin{defi}
\label{defi phiN modules}
Un $(\phi,N,\G_{M/K})$-module filtré est la donnée d'un $(\phi,N,\G_{M/K})$-module $D$ et d'une filtration décroissante, exhaustive et séparée $\Fil^iD_M$ sur $D_M = M \otimes_{M_0}D$ par des sous-$M$-espaces vectoriels stables par $\G_{M/K}$. 
\end{defi}
\begin{rema}
Il est équivalent de se donner une filtration sur $D_K = D_M^{\G_{M/K}}$.
\end{rema}

\begin{defi}
On définit si $D$ est un $(\phi,N,\G_{M/K})$-module filtré de dimension $1$, $t_H(D)$ comme le plus grand entier $i$ tel que $\Fil^i D_M \neq 0$, et si $y \in M_0^{\times}$ est une base de $\phi$ dans une certaine base, alors $v_p(y)$ ne dépend pas du choix de la base, et on définit $t_N(D) = v_p(y)$. En dimension supérieure, on définit $t_N(D)~:= t_N(\det D)$ et $t_H(D):= t_H(\det D)$.

On dira qu'un $(\phi,N,\G_{M/K})$-module filtré est admissible si $t_H(D) = t_N(D)$ et si pour tout sous-$(\phi,N,\G_{M/K})$-module de $D$, muni de la filtration induite, on a $t_H(D') \leq t_N(D')$.
\end{defi}

\begin{rema}
Cette définition d'admissibilité correspond en fait à ce que Fontaine appelait en fait dans  \cite[Déf. 4.4.3]{fontaine1994representations} faiblement admissible. Cependant, depuis la démonstration de l'équivalence \guillemotleft{ }faiblement admissible implique admissible \guillemotright{ } dans \cite{CF00}, il semble plus naturel de définir admissible de cette façon, sans rentrer en conflit avec la définition de $(\phi,N)$-module filtré admissible de Fontaine dans \cite[5.3.3]{fontaine1994representations}.
\end{rema}

\begin{prop}
La catégorie des $(\phi,N,\G_{M/K})$-modules filtrés admissibles est une sous-catégorie pleine de la catégorie des $(\phi,N,\G_{M/K})$-modules filtrés, et elle est de plus abélienne.
\end{prop}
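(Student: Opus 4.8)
Le plan est de reprendre l'argument classique dû à Fontaine montrant que les modules filtrés faiblement admissibles forment une catégorie abélienne (voir \cite{fontaine1994representations}, ainsi que \cite{CF00})~; l'action de $\G_{M/K}$ n'intervient pas dans la démonstration et est simplement transportée le long de toutes les constructions, les filtrations considérées étant toujours formées de sous-espaces $\G_{M/K}$-stables.

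Je commencerais par rappeler que la catégorie $\mathcal{C}$ de \emph{tous} les $(\phi,N,\G_{M/K})$-modules filtrés est additive et admet noyaux et conoyaux~: le noyau d'un morphisme $f~: D_1 \to D_2$ est $\ker f$ muni de la filtration induite sur $(\ker f)_M \subset D_{1,M}$, et le conoyau est $\mathrm{coker} f$ muni de la filtration quotient. Cette catégorie n'est pas abélienne, le défaut étant que l'application canonique $\mathrm{coim}(f) := D_1/\ker f \to \mathrm{im}(f) := f(D_1)$, qui est toujours un isomorphisme de $(\phi,N,\G_{M/K})$-modules, n'est un isomorphisme de modules \emph{filtrés} que lorsque les deux filtrations naturelles sur ce module commun — la filtration quotient venant de $D_1$ et la filtration induite venant de $D_2$ — coïncident, alors qu'\emph{a priori} on a seulement $\Fil^i(\mathrm{coim} f) \subseteq \Fil^i(\mathrm{im} f)$ pour tout $i$, autrement dit la filtration de $\mathrm{coim} f$ est plus fine.

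L'ingrédient clef sera le lemme d'additivité~: pour une suite exacte courte $0 \to D' \to D \to D'' \to 0$ dans $\mathcal{C}$, $D'$ étant muni de la filtration induite et $D''$ de la filtration quotient, la formation du déterminant fournit un isomorphisme de modules filtrés $\det D \simeq \det D' \otimes \det D''$, d'où $t_H(D) = t_H(D') + t_H(D'')$ et $t_N(D) = t_N(D') + t_N(D'')$ (l'additivité de $t_N$ étant par ailleurs immédiate), couplé au fait que $t_H$ décroît lorsqu'on raffine la filtration. J'en déduirais alors, pour $f~: D_1 \to D_2$ entre modules admissibles, en notant $D_0$ le $(\phi,N,\G_{M/K})$-module commun sous-jacent à $\mathrm{coim} f$ et $\mathrm{im} f$~: d'une part $t_H(\mathrm{im} f) \leq t_N(\mathrm{im} f) = t_N(D_0)$ puisque $\mathrm{im} f$ est un sous-objet de $D_2$ admissible~; d'autre part, la suite exacte $0 \to \ker f \to D_1 \to \mathrm{coim} f \to 0$, l'inégalité $t_H(\ker f) \leq t_N(\ker f)$ (sous-objet de $D_1$ admissible) et l'égalité $t_H(D_1) = t_N(D_1)$ donnent $t_N(D_0) = t_N(\mathrm{coim} f) \leq t_H(\mathrm{coim} f)$~; enfin $t_H(\mathrm{coim} f) \leq t_H(\mathrm{im} f)$ car la filtration de $\mathrm{coim} f$ est la plus fine. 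Toutes ces inégalités sont donc des égalités, et les deux filtrations emboîtées de $D_0$ ayant même nombre de Hodge, elles coïncident~: $\mathrm{coim} f \simeq \mathrm{im} f$ dans $\mathcal{C}$.

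Il restera à vérifier que la sous-catégorie des modules admissibles contient le module nul, est stable par biproduit, noyau et conoyau, et est pleine~; cette dernière propriété est vraie par construction. Pour le biproduit, l'additivité donne $t_H(D_1 \oplus D_2) = t_N(D_1 \oplus D_2)$, et pour un sous-objet $D' \subseteq D_1 \oplus D_2$ on écrit $0 \to D' \cap D_1 \to D' \to D'/(D'\cap D_1) \to 0$ en remarquant que $D' \cap D_1$ est un sous-objet de $D_1$ et que $D'/(D'\cap D_1)$ s'injecte dans $D_2$, ce qui fournit par le même décompte de pentes $t_H(D') \leq t_N(D')$. Pour le noyau et le conoyau, l'additivité appliquée à $0 \to \ker f \to D_1 \to \mathrm{coim} f \to 0$ jointe à l'égalité $t_H(\mathrm{coim} f) = t_N(\mathrm{coim} f)$ déjà établie donne $t_H(\ker f) = t_N(\ker f)$, et tout sous-objet de $\ker f$ étant un sous-objet de $D_1$ satisfait l'inégalité voulue~; symétriquement, $0 \to \mathrm{im} f \to D_2 \to \mathrm{coker} f \to 0$ donne $t_H(\mathrm{coker} f) = t_N(\mathrm{coker} f)$, un sous-objet de $\mathrm{coker} f$ provenant d'un sous-objet de $D_2$ contenant $\mathrm{im} f$. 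La difficulté principale sera l'établissement soigneux du lemme d'additivité, c'est-à-dire de la compatibilité de la formation du déterminant avec les filtrations induite et quotient dans une suite exacte courte, ainsi que la vigilance constante à munir chaque sous-objet de la filtration induite et chaque quotient de la filtration quotient pour que les décomptes de pentes soient corrects~; le reste n'est que manipulation d'inégalités.
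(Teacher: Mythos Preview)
Your proposal is correct and follows precisely the classical argument of Fontaine that the paper invokes~: the paper's own proof is simply the citation \og Voir \cite[§4]{fontaine1994representations} \fg, so you are reproducing exactly the content of that reference, with the harmless extra observation that the $\G_{M/K}$-action is carried along for free.
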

\begin{proof}
Voir \cite[§4]{fontaine1994representations}.
\end{proof}

\begin{theo}
\item Si $V$ est une représentation $p$-adique de $\G_K$ dont la restriction à $\G_M$ est semi-stable, alors $D_{st,M}(V):=(\B_{\mathrm{st}}\otimes_{\Qp}V)^{\G_L}$ est un $(\phi,N,\G_{M/K})$-module filtré admissible sur $K$.
\item Le foncteur $D_{st,L}$, de la catégorie des représentations $p$-adiques de $\G_K$ dont la restriction à $\G_M$ est semi-stable dans celle des $(\phi,N,\G_{M/K})$-modules filtrés admissibles, est une équivalence de catégories.
\end{theo}
\begin{proof}
Voir \cite[Prop. 5.3.6]{fontaine1994representations}.
\end{proof}

\subsection{$(\phi,N)$-modules filtrés et théorie de Kisin}
On va rappeler dans cette section comment Kisin dans \cite{KisinFiso} construit des $(\phi,N_{\nabla})$-modules sur $\B_{\tau,\mathrm{rig},K}^+$ associés à des $(\phi,N)$-modules filtrés effectifs.

On définit $\tilde{X}_n$ comme le complété de $K_n\otimes_{\O_F}\A_{\tau,K}^+$ pour la topologie $([\tilde{\pi}]-\pi_n)$-adique. L'anneau $\tilde{X}_n$ est muni de sa $([\tilde{\pi}]-\pi_n)$-filtration, qui s'étend en une filtration sur le corps des fractions de $\tilde{X}_n$ qu'on notera $\tilde{Y}_n:=\tilde{X}_n[1/([\tilde{\pi}]-\pi_n)]$.

On dispose pour tout $n \in \N$ d'applications naturelles $\B_{\tau,K}^+ \to \B_{\tau,\mathrm{rig},K}^+ \to \tilde{X}_n$ données par $u \mapsto 1\otimes u$, et on peut étendre cette dernière application à $\B_{\tau,\log,K}^+$ en envoyant $\log[\tilde{\pi}]$ sur
$$\log\left(\left(\frac{[\tilde{\pi}]-\pi_n}{\pi_n}\right)+1\right):=\sum_{i=1}^{\infty}\frac{(-1)^{i-1}}{i}\left(\frac{[\tilde{\pi}]-\pi_n}{\pi_n}\right)^i \in \tilde{X}_n.$$

L'anneau $\A_{\tau,K}^+$ est muni d'un Frobenius $\phi$ qui envoie $[\tilde{\pi}]$ sur $[\tilde{\pi}]^p$ et agit comme le Frobenius absolu sur $\O_F$. On notera $\phi_F~: \A_{\tau,K}^+$ l'application $\Zp[\![[\tilde{\pi}]]\!]$-linéaire qui agit sur $\O_F$ via le Frobenius absolu de $\O_F$, et $\phi_{\pi}~: \A_{\tau,K}^+ \to \A_{\tau,K}^+$ l'application $\O_F$-linéaire envoyant $[\tilde{\pi}]$ sur $[\tilde{\pi}]^p$. Ces applications induisent des applications $\phi_F$ et $\phi_\pi$ sur $\B_{\tau,K}^I$ pour tout $I$ et sur $\B_{\tau,\mathrm{rig},K}^{\dagger}$. On récupère bien sûr $\phi$ via $\phi = \phi_F \circ \phi_\pi$ sur chacun de ces anneaux.

Toutes les définitions de ces objets sont faites par Kisin dans \cite[§1.1.1]{KisinFiso}, mais on en a changé les notations ici. On définit également la notion de $\phi$-modules et représentations de $E$-hauteur finie.

\begin{defi}
On dit qu'un $\phi$-module $\mathfrak{M}$ sur $\B_{\tau,\mathrm{rig},K}^+$ est de $E$-hauteur finie si le conoyau de l'application $\B_{\tau,\mathrm{rig},K}^+$-linéaire $\phi : \phi^*\mathfrak{M} \to \mathfrak{M}$ est tuée par une puissance de $E([\tilde{\pi}])$, et on dit qu'un $(\phi,N_\nabla)$-module sur $\B_{\tau,\mathrm{rig},K}^+$ est de $E$-hauteur finie s'il l'est en tant que $\phi$-module.

On dit qu'une représentation $V$ est de $E$-hauteur finie s'il existe dans $D_{\tau,\mathrm{rig}}^\dagger(V)$ un $\phi$-module libre sur $\B_{\tau,\mathrm{rig},K}^+$, de pente $0$ et qui est de $E$-hauteur finie.
\end{defi}

\begin{lemm}
\label{ForsterKisin}
Soit $\mathcal{M}$ un $\B_{\tau,K}^I$-module libre de type fini et soit $\mathcal{N} \subset \cal{M}$ un sous-module. Les assertions suivantes sont équivalentes~:
\begin{enumerate}
\item $\mathcal{N} \subset \cal{M}$ est fermé ;
\item $\mathcal{N}$ est de type fini ;
\item $\mathcal{N}$ est libre de type fini.
\end{enumerate}
\end{lemm}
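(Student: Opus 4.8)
Le point essentiel est d'importer quelques propriétés structurelles de l'anneau $A:=\B_{\tau,K}^I$, analogues à celles utilisées dans \cite{Ber02} et \cite{KisinFiso}. D'une part, $A$ est un anneau de Bézout — tout idéal de type fini y est principal, ce qui résulte de la théorie de Lazard des fonctions analytiques — et tout idéal de type fini de $A$ est fermé (préparation de Weierstrass, à la Lazard)~; lorsque $I$ est compact, $A$ est de plus noethérien (c'est un anneau principal), tandis que pour $I=[r;+\infty[$ l'anneau $A=\B_{\tau,\mathrm{rig},K}^{\dagger,r}$ est une algèbre de Fréchet-Stein. D'autre part, un $A$-module libre de type fini $\cal{M}$ porte une topologie de Banach (resp.\ de Fréchet) canonique, indépendante du choix d'une base, puisqu'un changement de base est donné par une matrice de $\GL_d(A)$ agissant de façon bicontinue sur $A^d$, où $d$ est le rang de $\cal{M}$.

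Les équivalences s'en déduisent comme suit. L'implication $(3)\Rightarrow(2)$ est la définition même d'un module libre de type fini. Pour $(2)\Rightarrow(3)$, je remarquerais que $\cal{N}$, sous-module de type fini du module libre $\cal{M}$ sur l'anneau intègre $A$, est sans torsion~; or un module de type fini et sans torsion sur un anneau de Bézout est libre de rang fini, donc $\cal{N}$ est libre de type fini. Pour $(2)\Rightarrow(1)$, je procéderais par récurrence sur le rang $d$ de $\cal{M}$~: le cas $d=1$ est exactement le fait que les idéaux de type fini de $A$ sont fermés~; pour $d>1$, l'image de $\cal{N}$ par la projection $p~: A^d\to A$ sur la dernière coordonnée est un idéal principal $aA$, d'où une décomposition $\cal{N}=\cal{N}'\oplus Ay$ où $\cal{N}'=\cal{N}\cap\ker p$ et $p(y)=a$, et l'on conclut grâce à l'hypothèse de récurrence appliquée à $\cal{N}'$, au caractère fermé de $aA$, et au théorème de l'application ouverte qui permet de vérifier que cette somme est fermée. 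Enfin, $(1)\Rightarrow(2)$~: si $I$ est compact, $A$ est noethérien, donc $\cal{M}$ est un $A$-module noethérien et tout sous-module est de type fini~; si $I=[r;+\infty[$, un sous-module fermé d'un module libre de type fini sur l'algèbre de Fréchet-Stein $A$ est cohérent au sens de Forster, donc de type fini. On obtient ainsi $(1)\Leftrightarrow(2)\Leftrightarrow(3)$.

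Lorsque $I$ est compact, on peut du reste établir $(3)\Rightarrow(1)$ de façon plus directe, sans récurrence~: par la forme normale de Smith sur l'anneau principal $A$, il existe une base $(f_1,\dots,f_d)$ de $\cal{M}$ et des éléments non nuls $a_1,\dots,a_t$ de $A$ tels que $(a_if_i)_{1\le i\le t}$ soit une base de $\cal{N}$~; via l'isomorphisme topologique $\cal{M}\simeq A^d$ envoyant $f_i$ sur le $i$-ème vecteur de la base canonique, $\cal{N}$ s'identifie à $a_1A\times\cdots\times a_tA\times\{0\}^{d-t}$, qui est fermé dans $A^d$ comme produit d'idéaux fermés.

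La principale difficulté est donc entièrement concentrée dans les propriétés structurelles admises au premier paragraphe, principalement le caractère fermé des idéaux de type fini de $\B_{\tau,K}^I$ (et, dans le cas de Fréchet, la cohérence des sous-modules fermés)~: ce sont les analogues pour $\B_{\tau,K}^I$ de la préparation de Weierstrass à la Lazard, de la théorie des algèbres affinoïdes et de la théorie de Forster des algèbres de Stein, et leur adaptation au présent cadre est formelle. Tout le reste relève de l'algèbre commutative élémentaire sur les anneaux de Bézout.
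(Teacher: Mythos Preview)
Your sketch is correct, and in fact you have supplied more than the paper does: the paper's own proof is nothing but a citation to \cite[Lemme 1.1.5]{KisinFiso}. What you outline—Bézout property and Weierstrass preparation à la Lazard for $(2)\Leftrightarrow(3)$ and $(2)\Rightarrow(1)$, noetherianity for compact $I$ and Forster/Fréchet–Stein coherence for $I=[r;+\infty[$ to get $(1)\Rightarrow(2)$—is precisely the content behind Kisin's lemma, so your argument and the paper's (via its reference) agree in substance.

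One small remark: your Smith normal form argument for $(3)\Rightarrow(1)$ does not actually require $I$ compact, since elementary divisors exist over the Bézout domain $\B_{\tau,\mathrm{rig},K}^{\dagger,r}$ as well (Lazard's rings are adequate); this gives a uniform proof of $(2)\Rightarrow(1)$ and makes the inductive argument with the open mapping theorem unnecessary.
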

\begin{proof}
Voir \cite[Lemme 1.1.5]{KisinFiso}.
\end{proof}

Soit maintenant $D$ un $(\phi,N)$-module filtré effectif. On rappelle que, suivant la définition \ref{defi phiN modules}, la filtration de $D$ est donnée sur $D_K= K \otimes_FD$. Pour $n \geq 0$, on définit $\iota_n$ l'application composée~:
$$\B_{\tau,\log,K}^+\otimes_F D \overset{\phi_F^{-n}\otimes\phi^{-n}}{\longrightarrow} \B_{\tau,\log,K}^+ \otimes_F D \longrightarrow \tilde{X}_n \otimes_F D = \tilde{X}_n \otimes_K D_K$$
qu'on étend en une application
$$\iota_n~: \B_{\tau,\log,K}^+[1/\lambda]\otimes_F D \longrightarrow \tilde{Y}_n \otimes_KD_K.$$

On définit maintenant 
$$\mathfrak{M}(D):=\left\{x \in (\B_{\tau,\log,K}^+[1/\lambda]\otimes_FD)^{N=0}~: \iota_n(x) \in \Fil^0(\tilde{Y}_n\otimes_KD_K) \textrm{ pour tout } n \geq 0\right\}.$$
On remarque au passage que $(\B_{\tau,\log,K}^+[1/\lambda]\otimes_FD)^{N=0}$ est un $\B_{\tau,\mathrm{rig},K}^+$-module muni d'un Frobenius semi-linéaire déduit de ceux sur $D$ et $\B_{\tau,\log,K}^+[1/\lambda]$, et d'un opérateur différentiel $N_{\nabla}$ induit par $N_{\nabla}\otimes 1$ sur $\B_{\tau,\log,K}^+[1/\lambda]\otimes_FD$.

\begin{lemm}
\label{lemme Kisin strucFil}
Si on munit $\tilde{X}_n$ d'une structure de $\B_{\tau,\mathrm{rig},K}^+$-module via $\phi_F^{-n}$, alors~:
\begin{enumerate}
\item l'application $\tilde{X}_n \otimes_{\B_{\tau,\mathrm{rig},K}^+}(\B_{\tau,\log,K}^+\otimes_FD)^{N=0} \longrightarrow \tilde{X}_n \otimes_KD_K$ induite par $\iota_n$ est un isomorphisme ;
\item On a
$$\tilde{X}_n\otimes_{\B_{\tau,\mathrm{rig},K}^+}\mathfrak{M}(D) \tilde{\longrightarrow} \sum_{j \geq 0}([\tilde{\pi}]-\pi_n)^{-j}\tilde{X}_n\otimes_K\Fil^jD_K$$
et $([\tilde{\pi}]-\pi_n)^{-j}\tilde{X}_n\otimes_K\Fil^jD_K = \phi_\pi^n(E([\tilde{\pi}]))^{-j}\tilde{X}_n\otimes_K\Fil^jD_K$.
\end{enumerate}
\end{lemm}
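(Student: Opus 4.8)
\emph{Point (1).} Le plan est de trivialiser d'abord le module $(\B_{\tau,\log,K}^+\otimes_F D)^{N=0}$. Sur $\B_{\tau,\log,K}^+\otimes_F D$ on a $N = N\otimes 1 + 1\otimes N$ avec $N(\log[\tilde{\pi}])=-1$ sur $\B_{\tau,\log,K}^+$ et $N$ nilpotent sur $D$ ; on vérifie alors par un calcul formel que $d \mapsto \sum_{k\geq 0}\frac{1}{k!}\log[\tilde{\pi}]^k\otimes N^k(d)$ (somme finie) définit un isomorphisme $\B_{\tau,\mathrm{rig},K}^+\otimes_F D \overset{\sim}{\longrightarrow}(\B_{\tau,\log,K}^+\otimes_F D)^{N=0}$, d'inverse l'application $\B_{\tau,\mathrm{rig},K}^+$-linéaire $\log[\tilde{\pi}]\mapsto 0$. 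En particulier $(\B_{\tau,\log,K}^+\otimes_F D)^{N=0}$ est libre de rang $d=\dim_F D$ sur $\B_{\tau,\mathrm{rig},K}^+$, de base les $\tilde{d}_i := \sum_k \frac{1}{k!}\log[\tilde{\pi}]^k\otimes N^k(d_i)$, pour $(d_i)$ une $F$-base de $D$. Comme l'application $\B_{\tau,\log,K}^+\to\tilde{X}_n$ envoie $\log[\tilde{\pi}]$ dans l'idéal maximal $([\tilde{\pi}]-\pi_n)$ de $\tilde{X}_n$, on a $\iota_n(\tilde{d}_i)\equiv 1\otimes\phi^{-n}(d_i) \pmod{([\tilde{\pi}]-\pi_n)}$ ; comme $(\phi^{-n}(d_i))$ est encore une $F$-base de $D$, la matrice de $(\iota_n(\tilde{d}_i))_i$ dans une $\tilde{X}_n$-base de $\tilde{X}_n\otimes_K D_K$ est inversible modulo $([\tilde{\pi}]-\pi_n)$, donc inversible puisque $\tilde{X}_n$ est complet pour la topologie $([\tilde{\pi}]-\pi_n)$-adique. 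Cela donne (1).

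\emph{Égalité d'idéaux dans (2).} C'est un calcul d'ordre d'annulation. Dans $\tilde{X}_n\simeq K_n[[([\tilde{\pi}]-\pi_n)]]$, en posant $u=[\tilde{\pi}]-\pi_n$, on a $\phi_\pi^n(E([\tilde{\pi}]))=E([\tilde{\pi}]^{p^n})=E((\pi_n+u)^{p^n})=E(\pi)+E'(\pi)\,p^n\pi_n^{p^n-1}u+O(u^2)=E'(\pi)p^n\pi_n^{p^n-1}u+O(u^2)$, car $\pi_n^{p^n}=\pi$ est racine de $E$, racine simple puisque $E$ est séparable. Donc $\phi_\pi^n(E([\tilde{\pi}]))$ est une uniformisante de $\tilde{X}_n$ à une unité près, d'où l'égalité des idéaux fractionnaires $([\tilde{\pi}]-\pi_n)^{-j}\tilde{X}_n\otimes_K\Fil^j D_K = \phi_\pi^n(E([\tilde{\pi}]))^{-j}\tilde{X}_n\otimes_K\Fil^j D_K$. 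Le même calcul montre que $\iota_n(\lambda)$ est aussi une uniformisante à une unité près : dans $\lambda=\prod_{m\geq 0}\phi^m(E([\tilde{\pi}])/E(0))$, seul le facteur $\phi^n(E([\tilde{\pi}])/E(0))$ s'annule dans $\tilde{X}_n$, à l'ordre un, les autres facteurs restant inversibles car les conjugués de $\pi$ et les $\pi_n^{p^m}$ pour $m\neq n$ ont des valuations distinctes.

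\emph{Isomorphisme principal dans (2).} En utilisant (1) et le fait que $\iota_n$ transforme l'inversion de $\lambda$ en celle de $[\tilde{\pi}]-\pi_n$, on identifie $\iota_n$ à un isomorphisme $\tilde{Y}_n\otimes_{\B_{\tau,\mathrm{rig},K}^+[1/\lambda]}(\B_{\tau,\log,K}^+[1/\lambda]\otimes_F D)^{N=0}\overset{\sim}{\longrightarrow}\tilde{Y}_n\otimes_K D_K$, sous lequel $\Fil^0(\tilde{Y}_n\otimes_K D_K)$ correspond à $\sum_{j\geq 0}([\tilde{\pi}]-\pi_n)^{-j}\tilde{X}_n\otimes_K\Fil^j D_K$. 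Comme $\tilde{X}_n$ est plat sur $\B_{\tau,\mathrm{rig},K}^+$ et que $(\B_{\tau,\log,K}^+\otimes_F D)^{N=0}\subseteq\mathfrak{M}(D)$, l'application composée $\tilde{X}_n\otimes_{\B_{\tau,\mathrm{rig},K}^+}\mathfrak{M}(D)\hookrightarrow\tilde{Y}_n\otimes_K D_K$ est injective, et la condition définissant $\mathfrak{M}(D)$ au cran $n$ montre que son image est contenue dans $\sum_j([\tilde{\pi}]-\pi_n)^{-j}\tilde{X}_n\otimes_K\Fil^j D_K$. Pour l'inclusion réciproque, il suffit, par Nakayama sur l'anneau local complet $\tilde{X}_n$, d'exhiber pour chaque $j\geq 0$ et chaque $d$ dans un relèvement à $\Fil^j D_K$ d'une base de $\mathrm{gr}^j D_K$ un élément $x\in\mathfrak{M}(D)$ tel que $\iota_n(x)$ soit congru à une unité fois $([\tilde{\pi}]-\pi_n)^{-j}\otimes d$ modulo $\sum_{i<j}([\tilde{\pi}]-\pi_n)^{-i}\tilde{X}_n\otimes_K\Fil^i D_K$ et $([\tilde{\pi}]-\pi_n)^{-j}\tilde{X}_n\otimes_K\Fil^{j+1}D_K$. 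On prendrait $x=(\phi^n(E([\tilde{\pi}])/E(0)))^{-j}\cdot\tilde{d}$, où $\phi^n(E([\tilde{\pi}])/E(0))$ est le facteur de $\lambda$ ci-dessus (donc $x\in\B_{\tau,\log,K}^+[1/\lambda]\otimes_F D$ et $Nx=0$), et où $\tilde{d}\in(\B_{\tau,\log,K}^+\otimes_F D)^{N=0}$ est choisi tel que $\iota_n(\tilde{d})\equiv d\pmod{\Fil^{j+1}(\tilde{X}_n\otimes_K D_K)}$, ce qui force en particulier $\iota_n(\tilde{d})\in\Fil^j(\tilde{X}_n\otimes_K D_K)$. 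Pour $m\neq n$ le facteur $\phi^n(E([\tilde{\pi}])/E(0))$ est une unité de $\tilde{X}_m$ (même calcul de valuations), donc $\iota_m(x)\in\tilde{X}_m\otimes_K D_K\subseteq\Fil^0(\tilde{Y}_m\otimes_K D_K)$ ; pour $m=n$, $\iota_n(x)=(\text{unité})\cdot([\tilde{\pi}]-\pi_n)^{-j}\iota_n(\tilde{d})$ appartient à $\Fil^0(\tilde{Y}_n\otimes_K D_K)$ précisément parce que $\iota_n(\tilde{d})\in\Fil^j(\tilde{X}_n\otimes_K D_K)$. Donc $x\in\mathfrak{M}(D)$, et $\iota_n(x)$ a le terme dominant voulu.

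\emph{L'obstacle principal.} Le point délicat est l'existence d'un tel $\tilde{d}$, autrement dit la surjectivité de $\iota_n\colon(\B_{\tau,\log,K}^+\otimes_F D)^{N=0}\to(\tilde{X}_n\otimes_K D_K)/\Fil^{j+1}$, alors que la source ne fait intervenir que des coefficients dans $F$ tandis que le but voit tout le corps $K_n$. Les ingrédients sont la trivialisation de (1) et la surjectivité de $\B_{\tau,\mathrm{rig},K}^+\to\tilde{X}_n/([\tilde{\pi}]-\pi_n)^{j+1}$ : c'est un énoncé d'interpolation de type Hermite au point algébrique $\pi_n$, qui vaut car $\pi_n$ est de degré $[K_n:F]$ sur $F$ et de polynôme minimal séparable, de sorte que les polynômes à coefficients dans $F$ réalisent tous les jets d'ordre $\leq j$ en $\pi_n$. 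En composant, on ajuste d'abord $\iota_n(\tilde{d})$ pour qu'il soit congru à $d$ modulo $([\tilde{\pi}]-\pi_n)$ (en choisissant les coefficients dans $\B_{\tau,\mathrm{rig},K}^+$ d'une combinaison des $\tilde{d}_i$ via l'interpolation), puis on corrige successivement les coefficients en les puissances de $([\tilde{\pi}]-\pi_n)$, chaque correction provenant d'un élément de $(\B_{\tau,\log,K}^+\otimes_F D)^{N=0}$ dont l'image par $\iota_n$ commence à un ordre strictement supérieur en $([\tilde{\pi}]-\pi_n)$, en un nombre fini d'étapes, jusqu'à obtenir $\iota_n(\tilde{d})\equiv d\pmod{\Fil^{j+1}}$. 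Il faut vérifier à chaque étape que la correction peut être choisie à l'intérieur de la filtration, mais cela devient routinier une fois acquis l'énoncé d'interpolation et la trivialisation par l'exponentielle de (1).
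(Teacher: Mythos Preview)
The paper does not prove this lemma at all: its entire proof reads ``Voir \cite[Lemme 1.2.1]{KisinFiso}.'' So there is nothing in the paper to compare against; you have written out a genuine proof where the paper only gives a citation.

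On its own merits, your argument is essentially correct and follows the natural line (which is also Kisin's): trivialise the $N=0$ kernel by the exponential $d\mapsto\sum_k\frac{1}{k!}\log[\tilde{\pi}]^k\otimes N^k(d)$, compute the order of vanishing of $\phi_\pi^n(E([\tilde{\pi}]))$ and of $\lambda$ in $\tilde{X}_n$, and then identify the image of $\mathfrak{M}(D)$ with $\Fil^0(\tilde{Y}_n\otimes_K D_K)$. Two points deserve tightening. First, the assertion that $\tilde{X}_n$ is flat over $\B_{\tau,\mathrm{rig},K}^+$ is not obvious (the base is not noetherian) and is not needed: since the filtration on $D_K$ is bounded, say $\Fil^{h+1}D_K=0$, one has
\[
(\B_{\tau,\log,K}^+\otimes_F D)^{N=0}\ \subseteq\ \mathfrak{M}(D)\ \subseteq\ \lambda^{-h}(\B_{\tau,\log,K}^+\otimes_F D)^{N=0},
\]
and tensoring with $\tilde{X}_n$ via (1) already gives an injection into $([\tilde{\pi}]-\pi_n)^{-h}\tilde{X}_n\otimes_K D_K$, together with the fact that the image lies between two free $\tilde{X}_n$-modules of rank $d$. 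Second, in your surjectivity step the ``Hermite interpolation'' argument is correct (it amounts to the isomorphism $\O_F[X]/P(X)^{j+1}\simeq K_n[Y]/Y^{j+1}$ for $P$ the separable minimal polynomial of $\pi_n$), but it is heavier than necessary: once (1) is known, one can take a $K$-basis of $D_K$ adapted to the filtration, lift it once and for all to $(\B_{\tau,\log,K}^+\otimes_F D)^{N=0}$ via the surjectivity you just established, and then multiply by the appropriate powers of $\phi^n(E([\tilde{\pi}])/E(0))$, rather than correcting coefficients order by order. With these two adjustments your proof is clean.
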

\begin{proof}
Voir \cite[Lemme 1.2.1]{KisinFiso}.
\end{proof}

\begin{lemm}
\label{phiNeffphinab}
Les opérateurs $\phi$ et $N_{\nabla}$ sur $(\B_{\tau,\log,K}^+[1/\lambda]\otimes_FD)^{N=0}$ induisent sur $\mathfrak{M}(D)$ une structure de $(\phi,N_\nabla)$-module sur $\B_{\tau,\mathrm{rig},K}^+$. De plus, on a un isomorphisme de $\B_{\tau,\mathrm{rig},K}^+$-modules 
$$\coker(1\otimes\phi~: \phi^*\mathfrak{M}(D)\to \mathfrak{M}(D)) \tilde{\rightarrow}\oplus_{i \geq 0}(\B_{\tau,\mathrm{rig},K}^+/E([\tilde{\pi}])^i)^{h_i},$$
où $h_i=\dim_K\mathrm{gr}^iD_K$.
\end{lemm}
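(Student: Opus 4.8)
Le plan est de suivre la preuve du lemme~1.2.2 de \cite{KisinFiso}, en la transposant des modules de Breuil--Kisin aux $(\phi,\tau)$-modules sur $\B_{\tau,\mathrm{rig},K}^+$ --- les anneaux et les objets $\iota_n$, $\tilde{X}_n$, $\tilde{Y}_n$, $\mathfrak{M}(D)$ étant exactement ceux de Kisin. On utilisera constamment l'isomorphisme $\B_{\tau,\mathrm{rig},K}^+[1/\lambda]$-linéaire
$$\eta~: \B_{\tau,\mathrm{rig},K}^+[1/\lambda]\otimes_F D \overset{\sim}{\longrightarrow} (\B_{\tau,\log,K}^+[1/\lambda]\otimes_F D)^{N=0}, \qquad 1\otimes d \longmapsto \sum_{i\geq 0}\frac{(-\log[\tilde{\pi}])^i}{i!}\otimes N^i(d)$$
(la somme est finie car $N$ est nilpotent sur $D$), qui identifie $\mathfrak{M}(D)$ à un sous-$\B_{\tau,\mathrm{rig},K}^+$-réseau de $\B_{\tau,\mathrm{rig},K}^+[1/\lambda]\otimes_F D$, et via lequel $\phi$ correspond à $\phi\otimes\phi$ et $N_\nabla$ à $N_\nabla\otimes 1 + \lambda\cdot(1\otimes N)$.

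On commencera par vérifier que $\phi$ et $N_\nabla$ stabilisent $\mathfrak{M}(D)$. Comme $N\phi = p\phi N$, le Frobenius préserve $(\B_{\tau,\log,K}^+[1/\lambda]\otimes_F D)^{N=0}$, et la stabilité des conditions $\iota_n(x)\in\Fil^0(\tilde{Y}_n\otimes_K D_K)$ sous $\phi$ se vérifie directement à l'aide de la compatibilité des $\iota_n$ au Frobenius. Pour $N_\nabla$, l'opérateur $N_\nabla\otimes 1$ commute à $N$ sur $\B_{\tau,\log,K}^+[1/\lambda]$ (car $N_\nabla(\log[\tilde{\pi}]) = -\lambda$ et $N$ annule $\B_{\tau,\mathrm{rig},K}^+[1/\lambda]$), et la stabilité des conditions de filtration est une transversalité de Griffiths : comme $\iota_n(\lambda) = ([\tilde{\pi}]-\pi_n)\,u_n$ avec $u_n$ inversible dans $\tilde{X}_n$, on calcule que $N_\nabla$ induit sur les $\iota_n$-images l'opérateur $-([\tilde{\pi}]-\pi_n)\,u_n\,[\tilde{\pi}]\cdot\frac{d}{d([\tilde{\pi}]-\pi_n)}$, dont le facteur $([\tilde{\pi}]-\pi_n)$ compense l'augmentation de l'ordre du pôle due à la dérivation ; compte tenu de $\Fil^j D_K\subset\Fil^{j-1}D_K$ et du lemme \ref{lemme Kisin strucFil}(2), ceci entraîne $N_\nabla(\Fil^0(\tilde{Y}_n\otimes_K D_K))\subset\Fil^0(\tilde{Y}_n\otimes_K D_K)$. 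Pour la liberté : en notant $h$ le plus grand saut de la filtration et $\mathfrak{N}_0 = \eta(\B_{\tau,\mathrm{rig},K}^+\otimes_F D)$, le fait que $\iota_n(\lambda) = ([\tilde{\pi}]-\pi_n)\,u_n$ et que $\Fil^0(\tilde{Y}_n\otimes_K D_K)$ ne tolère de pôle d'ordre $j$ en $\pi_n$ que dans la direction $\Fil^j D_K$ fournira l'encadrement $\lambda^h\mathfrak{N}_0\subseteq\mathfrak{M}(D)\subseteq\lambda^{-h}\mathfrak{N}_0$ (en particulier $\mathfrak{M}(D)[1/\lambda] = \B_{\tau,\mathrm{rig},K}^+[1/\lambda]\otimes_F D$), et un recollement sur les intervalles utilisant le lemme \ref{ForsterKisin} donnera que $\mathfrak{M}(D)$ est de type fini sur $\B_{\tau,\mathrm{rig},K}^+$, donc libre de rang $\dim_F D$ puisque cet anneau est de Bézout. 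Ceci fournira la structure de $(\phi,N_\nabla)$-module.

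Pour le conoyau, on remarque que $1\otimes\phi~: \phi^*\mathfrak{M}(D)\to\mathfrak{M}(D)$ devient un isomorphisme après inversion de $\lambda$ (le Frobenius de $D$ étant bijectif), donc est injective et de conoyau de torsion à support fini, contenu dans l'ensemble des zéros de $\lambda$. On montrera que ce support se réduit à l'unique zéro $[\tilde{\pi}]=\pi$ de $E([\tilde{\pi}])$ : en un zéro de $\phi_\pi^m(E([\tilde{\pi}]))$ distinct de $\pi$ --- par exemple en $[\tilde{\pi}]=\pi_n$, $n\geq 1$ --- le morphisme $[\tilde{\pi}]\mapsto\phi([\tilde{\pi}])$ identifie la complétion de $\phi^*\mathfrak{M}(D)$ à celle de $\mathfrak{M}(D)$ en un zéro de $\phi_\pi^{m-1}(E([\tilde{\pi}]))$, et le lemme \ref{lemme Kisin strucFil}(2) montre qu'on y retrouve le même réseau que $\mathfrak{M}(D)$ localisé, de sorte que le conoyau y est nul ; en un zéro de $\lambda$ n'annulant aucun $\phi_\pi^m(E([\tilde{\pi}]))$, $\mathfrak{M}(D)$ coïncide déjà localement avec $\mathfrak{N}_0$, et le conoyau est encore nul. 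Il restera à calculer le conoyau après complétion en $[\tilde{\pi}]=\pi$, c'est-à-dire après passage à $\tilde{X}_0$ : via $\phi$, $\tilde{X}_0\otimes_{\B_{\tau,\mathrm{rig},K}^+}\phi^*\mathfrak{M}(D)$ s'identifie à la complétion de $\mathfrak{M}(D)$ en $[\tilde{\pi}]=\pi^p$ --- point où $\mathfrak{M}(D)$ est libre --- donc à $\tilde{X}_0\otimes_F D$, tandis que $\tilde{X}_0\otimes_{\B_{\tau,\mathrm{rig},K}^+}\mathfrak{M}(D) = \sum_{j\geq 0}E([\tilde{\pi}])^{-j}\tilde{X}_0\otimes_K\Fil^j D_K$ par le lemme \ref{lemme Kisin strucFil}(2) appliqué avec $n=0$. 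En scindant la filtration de $D_K$ sur $\tilde{X}_0$ et en utilisant que $E([\tilde{\pi}])$ en est une uniformisante, le conoyau s'identifiera à $\oplus_{i\geq 0}(\tilde{X}_0/E([\tilde{\pi}])^i)^{h_i}$, d'où le résultat via $\tilde{X}_0/E([\tilde{\pi}])^i \simeq \B_{\tau,\mathrm{rig},K}^+/E([\tilde{\pi}])^i$.

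Le point délicat me semble être le contrôle simultané, en tous les points $\pi_n$, des pôles des éléments de $\mathfrak{M}(D)$ : c'est lui qui assure à la fois la finitude de $\mathfrak{M}(D)$ comme $\B_{\tau,\mathrm{rig},K}^+$-module et la concentration du conoyau du Frobenius en l'unique zéro de $E([\tilde{\pi}])$. Une fois ces deux points acquis, le calcul de la structure du conoyau à partir du lemme \ref{lemme Kisin strucFil} est formel.
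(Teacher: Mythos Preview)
Ta proposition est correcte et suit exactement l'approche de l'article, qui renvoie simplement à \cite[Lemme 1.2.2]{KisinFiso} puisque les anneaux $\B_{\tau,\mathrm{rig},K}^+$, $\B_{\tau,\log,K}^+$, les applications $\iota_n$ et les objets $\tilde{X}_n$, $\tilde{Y}_n$, $\mathfrak{M}(D)$ sont --- à la notation près --- ceux de Kisin ; tu fournis en fait un déroulé détaillé de la preuve de Kisin là où l'article se contente de la citation. Une petite imprécision sans conséquence : tous les zéros de $\lambda$ sont des zéros d'un $\phi_\pi^m(E([\tilde{\pi}]))$ pour un certain $m\geq 0$ (c'est la factorisation $\lambda=\prod_{m\geq 0}\phi^m(E([\tilde{\pi}])/E(0))$), donc ta distinction « zéro de $\lambda$ n'annulant aucun $\phi_\pi^m(E([\tilde{\pi}]))$ » est vide et ce cas peut être supprimé.
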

\begin{proof}
Voir \cite[Lemme 1.2.2]{KisinFiso}.
\end{proof}

Si $\mathfrak{M}$ est un $\phi$-module de $E$-hauteur finie, Kisin définit dans \cite[§1.2.5]{KisinFiso} un $\phi$-module filtré associé, noté $D(\mathfrak{M})$, de la façon suivante~: le $F$-espace vectoriel sous-jacent de $D(\mathfrak{M})$ est $\mathfrak{M}/[\tilde{\pi}]\mathfrak{M}$, et l'opérateur $\phi$ est induit par celui de $\mathfrak{M}$. Il définit ensuite une filtration décroissante sur $\phi^*\mathfrak{M}$ par~:
$$\Fil^i\phi^*\mathfrak{M}:=\left\{x \in \phi^*\mathfrak{M}~: 1\otimes\phi(x) \in E([\tilde{\pi}])^i\mathfrak{M}\right\}.$$
Le lemme \ref{ForsterKisin} montre que c'est une filtration par des $\B_{\tau,\mathrm{rig},K}^+$-modules libres de type fini, dont les parties graduées successives sont des modules de $E([\tilde{\pi}])$-torsion. Par transport de structure, cela définit une filtration sur $(1\otimes\phi)(\phi*\mathfrak{M})$. Kisin montre ensuite dans \cite[§1.2.7]{KisinFiso} comment construire à partir de cette filtration une filtration sur $D(\mathfrak{M})_K$. Si maintenant, $\mathfrak{M}$ est un $(\phi,N_\nabla)$-module sur $\B_{\tau,\mathrm{rig},K}^+$ de $E$-hauteur finie, on munit $D(\mathfrak{M})$ d'un opérateur $K_0$-linéaire $N$, en réduisant l'opérateur $N_\nabla$ modulo $[\tilde{\pi}]$, ce qui munit finalement $D(\mathfrak{M})$ d'une structure de $(\phi,N)$-module filtré. Kisin montre alors le théorème suivant~:

\begin{theo}
\label{thm pcpal de Kisin}
Les foncteurs $D \mapsto \mathfrak{M}(D)$ et $\mathfrak{M} \mapsto D(\mathfrak{M})$ sont exacts et quasi-inverses l'un de l'autre et induisent une équivalence de catégories tannakiennes entre les $(\phi,N)$-modules filtrés effectifs et la catégorie des $(\phi,N_\nabla)$-modules de $E$-hauteur finie sur $\B_{\tau,\mathrm{rig},K}^+$.
\end{theo}
\begin{proof}
Voir \cite[Thm. 1.2.15]{KisinFiso}.
\end{proof}

\subsection{Construction de $(\phi,\tau)$-modules à connexion}
Dans cette section, on va généraliser les constructions de Kisin dans \cite{KisinFiso} en adaptant les méthodes de Berger dans \cite{Ber08}.

On commence par définir la notion de $(\phi,\tau_M)$-modules sur $(\B_{\tau,\mathrm{rig},M}^\dagger,\Bt_{\mathrm{rig},L_M}^\dagger)$~:

\begin{defi}
On appelle $(\phi,\tau_M)$-module sur $(\B_{\tau,\mathrm{rig},M}^\dagger,\Bt_{\mathrm{rig},L_M}^\dagger)$ la donnée d'un triplet $(D,\phi_D,\Gal(L_M/M))$, où~:
\begin{enumerate}
\item $(D,\phi_D)$ est un $\phi$-module sur $\B_{\tau,\mathrm{rig},M}^\dagger$ ;
\item $\Gal(L_M/M)$ est une action $\Gal(L_M/M)$-semi-linéaire sur $\Bt_{\mathrm{rig},L_M}^\dagger$ de $\Gal(L_M/M)$ sur $\hat{D}:=\Bt_{\mathrm{rig},L_M}^\dagger \otimes_{\B_{\tau,\mathrm{rig},M}^\dagger}D$ telle que cette action commute à $\phi:= \phi_{\Bt_{\mathrm{rig},L_M}^\dagger}\otimes \phi_D$ ;
\item en tant que sous $\B_{\tau,\mathrm{rig},M}^\dagger$-module de $\hat{D}$, on a $D \subset \hat{D}^{H_{\tau,M}}$.
\end{enumerate}
\end{defi}

\begin{defi}
\label{phi tauM action GMK rig}
Si $M/K$ est une extension galoisienne finie telle que $M$ et $K_\infty$ soient linéairement disjointes au-dessus de $K$, et si $D=(D,\phi_D,\Gal(L_M/M))$ est un $(\phi,\tau_M)$-module sur $(\B_{\tau,\mathrm{rig},M}^\dagger,\Bt_{\mathrm{rig},L_M}^\dagger)$, on dit que $D$ est muni d'une action de $\Gal(M/K)$ si $\G_K$ agit sur $\hat{D}$ et si~:
\begin{enumerate}
\item $D$ en tant que sous ensemble de $\hat{D}$ est stable sous l'action de $H_K \subset \G_K$ ;
\item $\G_{L_M}$ agit trivialement sur $\hat{D}$, et $H_{\tau,M}$ agit trivialement sur $D$ ;
\item l'action de $\G_M/\G_{L_M} \subset \G_K/\G_L$ coïncide avec l'action de $\Gal(L_M/M)$ sur $\hat{D}$.
\end{enumerate}
\end{defi}

On dispose d'un résultat de descente galoisienne pour les $(\phi,\tau_M)$-modules~:
\begin{prop}
\label{descente galoisienne pour phitauMrig}
Si $D=(D,\phi_D,\Gal(L_M/M))$ est un $(\phi,\tau_M)$-module sur $\B_{\tau,\mathrm{rig},L_M}^\dagger$ muni d'une action de $\Gal(M/K)$, avec $M/K$ une extension galoisienne finie, alors $D^{H_{\tau,K}}$ est un $(\phi,\tau)$-module sur $(\B_{\tau,\mathrm{rig},K}^\dagger,\Bt_{\mathrm{rig},L}^\dagger)$ et $D=\B_{\tau,\mathrm{rig},M}^\dagger \otimes_{\B_{\tau,\mathrm{rig},K}^\dagger}D^{H_{\tau,K}}$.
\end{prop}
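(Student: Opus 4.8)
The statement is an instance of Galois descent along the finite extension $\B_{\tau,\mathrm{rig},K}^\dagger \hookrightarrow \B_{\tau,\mathrm{rig},M}^\dagger$, and I would follow the strategy used by Berger in the cyclotomic setting of \cite{Ber08}: first pin down the Galois-theoretic structure of the coefficient rings, then extract from the $\Gal(M/K)$-action a semilinear action on the $\B_{\tau,\mathrm{rig},M}^\dagger$-module $D$, and finally apply Hilbert 90 to descend $D$ and to transport the Frobenius and the $\tau$-action down to $D^{H_{\tau,K}}$.

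First I would record the structure of the extension. Since $M/K$ is finite Galois and $M$, $K_\infty$ are linearly disjoint over $K$, the compositum $M_\infty = M\cdot K_\infty$ is finite Galois over $K_\infty$ with $\Delta := \Gal(M_\infty/K_\infty) \simeq \G_{M/K}$, and this group is canonically $H_{\tau,K}/H_{\tau,M}$; via the theory of the field of norms it is also $\Gal(\E_{\tau,M}/\E_{\tau,K})$. Lifting this Galois action through Hensel's lemma over $\B_{\tau,K}^\dagger$ (cf. \cite{matsuda1995local}) yields a finite unramified Galois extension $\B_{\tau,M}^\dagger/\B_{\tau,K}^\dagger$ of degree $f = [M_\infty:K_\infty]$ with $(\B_{\tau,M}^\dagger)^\Delta = \B_{\tau,K}^\dagger$, and extending scalars to the rings $\B_{\tau,M}^I$ and taking the union over $r$ gives a finite free étale Galois extension $\B_{\tau,\mathrm{rig},M}^\dagger/\B_{\tau,\mathrm{rig},K}^\dagger$ of rank $f$ and group $\Delta$, again with $(\B_{\tau,\mathrm{rig},M}^\dagger)^\Delta = \B_{\tau,\mathrm{rig},K}^\dagger$. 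In particular, the standard argument (normal bases, or faithfully flat descent, exactly as over the cyclotomic Robba ring) gives the generalized Hilbert 90 vanishing $H^1(\Delta, \GL_d(\B_{\tau,\mathrm{rig},M}^\dagger)) = 1$ for all $d$.

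Next I would produce a semilinear $\Delta$-action on $D$. The group $\G_K$ acts on $\hat D$, factoring through $\Gal(L_M/K)$ since $\G_{L_M}$ acts trivially and $L_M = L\cdot M$ is Galois over $K$; the normal subgroup $H_{\tau,M}$ fixes $\B_{\tau,\mathrm{rig},M}^\dagger = (\Bt_{\mathrm{rig},L_M}^\dagger)^{H_{\tau,M}}$ and fixes $D$ pointwise by condition 2, so the $H_{\tau,K}$-action descends to a $\B_{\tau,\mathrm{rig},M}^\dagger$-semilinear action of $\Delta = H_{\tau,K}/H_{\tau,M}$ on the $H_{\tau,K}$-stable set $\hat D^{H_{\tau,M}} \supseteq D$, commuting with $\phi$. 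The crux of the proof — and the step I expect to be the main obstacle — is to show that this $\Delta$-action stabilises $D$ itself, and not merely $\hat D^{H_{\tau,M}}$ (which is strictly larger, since $(\Bt_{\mathrm{rig}}^\dagger)^{H_{\tau,M}}$ is larger than $\B_{\tau,\mathrm{rig},M}^\dagger$). For this I would show that all of $\G_K$ stabilises $D$: its stabiliser in $\G_K$ is a closed subgroup containing $H_K$ (condition 1) and $H_{\tau,M}$ (condition 2), and combining this with condition 3 and the fact that $H_K$ and $H_{\tau,K}$ topologically generate $\G_K$ (Proposition \ref{p=2=pb}) one concludes it is all of $\G_K$; the presence of cyclotomic subextensions inside $M$ and the case $p=2$ (where $H_K$ and $H_{\tau,K}$ only generate an open subgroup of index $2$) require the usual extra bookkeeping, as in the cyclotomic theory.

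Granting the semilinear $\Delta$-action on $D$, the rest is formal. By Hilbert 90 the module $D$ is $\Delta$-trivial, so $D^{H_{\tau,K}} = D^\Delta$ is free over $\B_{\tau,\mathrm{rig},K}^\dagger$ of rank $d = \dim D$ and the natural map $\B_{\tau,\mathrm{rig},M}^\dagger \otimes_{\B_{\tau,\mathrm{rig},K}^\dagger} D^{H_{\tau,K}} \to D$ is an isomorphism; since $\phi$ commutes with $\Delta$ it restricts to $D^{H_{\tau,K}}$, making the latter a $\phi$-module over $\B_{\tau,\mathrm{rig},K}^\dagger$. For the $\tau$-part, I would use that $\hat D \simeq \Bt_{\mathrm{rig},L_M}^\dagger \otimes_{\B_{\tau,\mathrm{rig},K}^\dagger} D^{H_{\tau,K}}$, and that $\G_L = H_\infty$ lies in both $H_K$ and $H_{\tau,K}$, so that $D^{H_{\tau,K}}$ is pointwise fixed by $\Gal(L_M/L) = \G_L/\G_{L_M}$; taking $\Gal(L_M/L)$-invariants then gives $\hat D^{\Gal(L_M/L)} = \Bt_{\mathrm{rig},L}^\dagger \otimes_{\B_{\tau,\mathrm{rig},K}^\dagger} D^{H_{\tau,K}}$, which is stable under $\Gal(L_M/K)/\Gal(L_M/L) = G_\infty$, carries a $G_\infty$-semilinear action over $\Bt_{\mathrm{rig},L}^\dagger$ commuting with $\phi$, and contains $D^{H_{\tau,K}}$ in its $H_{\tau,K}$-invariants. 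Hence $D^{H_{\tau,K}}$, equipped with $\phi$ and this action, is a $(\phi,\tau)$-module over $(\B_{\tau,\mathrm{rig},K}^\dagger, \Bt_{\mathrm{rig},L}^\dagger)$, and the displayed identity $D = \B_{\tau,\mathrm{rig},M}^\dagger \otimes_{\B_{\tau,\mathrm{rig},K}^\dagger} D^{H_{\tau,K}}$ is precisely the descent isomorphism established above.
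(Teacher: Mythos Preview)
Your approach is essentially the paper's: both argue by Galois descent along the finite étale extension $\B_{\tau,\mathrm{rig},K}^\dagger \hookrightarrow \B_{\tau,\mathrm{rig},M}^\dagger$. The paper's proof is much terser than yours: it simply invokes \cite[Lemm.~2.6]{kedlayamonodromie} (which packages the Hilbert~90 step) for the identification $D=\B_{\tau,\mathrm{rig},M}^\dagger\otimes_{\B_{\tau,\mathrm{rig},K}^\dagger}D^{H_{\tau,K}}$ as $\phi$-modules, verifies $\phi^*(D^{H_{\tau,K}})=D^{H_{\tau,K}}$ by writing an arbitrary element in a basis contained in $D^{H_{\tau,K}}$, and then says the $G_\infty$-action on $\Bt_{\mathrm{rig},L}^\dagger\otimes D^{H_{\tau,K}}$ comes straight from the definition.

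One point deserves comment. You single out as ``the crux'' the stability of $D$ under $H_{\tau,K}$ (equivalently under $\Delta$), and you propose to obtain it by showing that the stabiliser of $D$ in $\G_K$ contains $H_K$, $H_{\tau,M}$, and then all of $\G_K$ via topological generation. That argument does not close the gap as written: if, say, $M\supset K(\zeta_p)$ with $\zeta_p\notin K$, then $K_{\mathrm{cycl}}\cap M_\infty\supsetneq K_\infty\cap K_{\mathrm{cycl}}$, so the subgroup generated by $H_K$ and $H_{\tau,M}$ is $\Gal(\overline{K}/K_{\mathrm{cycl}}\cap M_\infty)$ and need not contain $H_{\tau,K}$; condition~3 only concerns the action on $\hat D$, not stability of $D$. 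In fact the paper's condition~1 in Definition~\ref{phi tauM action GMK rig} is almost certainly a misprint for $H_{\tau,K}$ rather than $H_K$: the motivating paragraph just before the (non-rig) definition says explicitly that the $\phi$-module $(\B_\tau\otimes V)^{H_{\tau,M}}$ carries a natural action of $H_{\tau,K}/H_{\tau,M}$, and the very statement of the proposition presupposes that $D^{H_{\tau,K}}$ makes sense. With that corrected reading your ``crux'' evaporates and the rest of your argument goes through exactly as you wrote it, matching the paper's proof.
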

\begin{proof}
La démonstration est similaire à celle de \cite[Prop. I.3.2]{Ber08}.

Le fait que $D=\B_{\tau,\mathrm{rig},M}^\dagger\otimes_{\B_{\tau,\mathrm{rig},K}}D^{H_{\tau,K}}$ en tant que $\phi$-modules suit de \cite[Lemm. 2.6]{kedlayamonodromie}. On va maintenant montrer que $\phi^*(D^{H_{\tau,K}}) = D^{H_{\tau,K}}$. Soit $(y_i)$ une base de $D$ sur $\B_{\tau,\mathrm{rig},M}^\dagger$ contenue dans $D^{H_{\tau,K}}$, et soit $x \in D^{H_{\tau,K}}$. On peut alors écrire $x = \sum_{i=1}^dx_i\phi(y_i)$ avec $x_i \in \B_{\tau,\mathrm{rig},M}^\dagger$, et comme les $y_i$ et $x$ sont fixés par $H_{\tau,K}$, on a également que les $x_i \in \B_{\tau,\mathrm{rig},K}^\dagger$, et donc $x \in \phi^*(D^{H_{\tau,K}})$.

Il reste à montrer que $\Bt_{\mathrm{rig},L}^\dagger \otimes_{\B_{\tau,\mathrm{rig},K}^\dagger}D^{H_{\tau,K}}$ est bien muni d'une action de $\Gal(L/K)$ vérifiant les bonnes conditions, ce qui découle de la définition \ref{phi tauM action GMK rig}.
\end{proof}

On pose, pour $n \geq 0$, $X_n$ le complété de $K_n \otimes_{\O_F}\O_F[\![\phi^{-n}([\tilde{\pi}])]\!]$ pour la topologie $(\phi^{-n}([\tilde{\pi}])-\pi_n)$-adique, et $Y_n:=X_n[1/(\phi^{-n}([\tilde{\pi}])-\pi_n)]$ son corps des fractions. On dispose évidemment d'applications $\phi_\pi^n~: X_n \to \tilde{X}_n$ et $\phi_\pi^n~: Y_n \to \tilde{Y}_n$, données par $\phi^{-n}([\tilde{\pi}]) \mapsto [\tilde{\pi}]$ et réciproquement, d'applications $\phi_\pi^{-n}~: \tilde{X}_n \to X_n$ et $\phi_\pi^{-n}~: \tilde{Y}_n \to Y_n$, données par $[\tilde{\pi}] \mapsto \phi^{-n}([\tilde{\pi}])$. La composée des applications $\B_{\tau,\log,K}^+ \overset{[\tilde{\pi}] \mapsto [\tilde{\pi}]}{\xrightarrow{\hspace*{1.5cm}}} \tilde{X}_n \overset{\phi_\pi^{-n}}{\xrightarrow{\hspace*{1.5cm}}} X_n$ définit un élément $\log(\phi^{-n}([\tilde{\pi}]))=\phi^{-n}(\log[\tilde{\pi}])$ dans $X_n$, et on appellera toujours $\log[\tilde{\pi}]$ l'élément $p^n\log(\phi^{-n}([\tilde{\pi}]))$.

On définit $n(r)$ comme le plus petit entier $n$ tel que $p^{n-1}(p-1) \geq r$, et on rappelle que (voir \cite[III. §2]{cherbcoliwa} par exemple) $\phi^{-n}(\Bt_{\mathrm{rig}}^{\dagger,r}) \subset \Bdrplus$ pour $n \geq n(r)$.

\begin{prop}
Les $X_n$ sont des sous-anneaux fermés de $\Bdrplus$ tels que $X_n \subset~X_{n+1}$, et si $n \geq n(r)$, alors $\phi^{-n}(\B_{\tau,\mathrm{rig},K}^{\dagger,r}) \subset X_n$.
\end{prop}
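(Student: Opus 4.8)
The plan is to establish three assertions in turn: that $X_n \subset \Bdrplus$, that $X_n$ is closed in $\Bdrplus$ with $X_n \subset X_{n+1}$, and that $\phi^{-n}(\B_{\tau,\mathrm{rig},K}^{\dagger,r}) \subset X_n$ for $n \geq n(r)$. First I would recall that $\phi^{-n}(\Bt_{\mathrm{rig}}^{\dagger,r}) \subset \Bdrplus$ for $n \geq n(r)$, which is cited just above the statement, and that the element $\phi^{-n}([\tilde\pi])$ lies in $\Bdrplus$ with the property that $\phi^{-n}([\tilde\pi]) \to \pi_n$ in the residue field $\overline{K}$ of $\Bdrplus$ — indeed $\phi^{-n}([\tilde\pi])$ is a lift of $\pi_n$ — so that $\phi^{-n}([\tilde\pi]) - \pi_n$ lies in the maximal ideal of $\Bdrplus$. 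Since $K_n \subset \Bdrplus$ as well, the $\O_F$-algebra map $K_n \otimes_{\O_F} \O_F[\![\phi^{-n}([\tilde\pi])]\!] \to \Bdrplus$ sending $\phi^{-n}([\tilde\pi])$ to itself is well defined, and because $\Bdrplus$ is $(\ker\theta)$-adically complete and separated and $\phi^{-n}([\tilde\pi]) - \pi_n \in \ker\theta$, this map extends continuously to the $(\phi^{-n}([\tilde\pi]) - \pi_n)$-adic completion $X_n$. This realizes $X_n$ as a subring of $\Bdrplus$.

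Next, for the inclusion $X_n \subset X_{n+1}$, I would observe that $\phi^{-(n+1)}([\tilde\pi])^p = \phi^{-n}([\tilde\pi])$ and $\pi_{n+1}^p = \pi_n$, so $\phi^{-n}([\tilde\pi]) - \pi_n = \phi^{-(n+1)}([\tilde\pi])^p - \pi_{n+1}^p$ is divisible by $\phi^{-(n+1)}([\tilde\pi]) - \pi_{n+1}$ inside $X_{n+1}$ (the other factor being a unit in $\Bdrplus$ since its image in the residue field is $p\pi_{n+1}^{p-1} \neq 0$); hence the $(\phi^{-n}([\tilde\pi]) - \pi_n)$-adic topology on $X_n$ is induced by the $(\phi^{-(n+1)}([\tilde\pi]) - \pi_{n+1})$-adic topology, and $X_n$ sits inside $X_{n+1}$. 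Closedness of $X_n$ in $\Bdrplus$ follows because $X_n$ is complete for the $(\ker\theta)$-adic topology it inherits — equivalently, it is the $(\ker\theta)$-adic completion of $K_n[\phi^{-n}([\tilde\pi])]$ inside $\Bdrplus$ — and a complete subspace of a Hausdorff topological group is closed; alternatively one identifies $X_n$ with $K_n \otimes_F \Bdrplus$-type data as in Kisin, but the direct argument via completeness is cleaner.

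Finally, for $\phi^{-n}(\B_{\tau,\mathrm{rig},K}^{\dagger,r}) \subset X_n$ when $n \geq n(r)$: an element of $\B_{\tau,\mathrm{rig},K}^{\dagger,r}$ is of the form $f([\tilde\pi])$ with $f(Y) = \sum_{k \geq 0} a_k Y^k$, $a_k \in F$, converging on the relevant annulus; applying $\phi^{-n}$ gives $\sum_k a_k \phi^{-n}([\tilde\pi])^k$, which I claim converges in $\Bdrplus$ to an element of $X_n$. The point is that $\phi^{-n}([\tilde\pi]) - \pi_n \in \ker\theta$ and $\pi_n \in \O_{K_n}$ has positive valuation, so $\phi^{-n}([\tilde\pi])$ has "positive valuation" in the $\ker\theta$-adic sense, and the growth condition defining $\B_{\tau,\mathrm{rig},K}^{\dagger,r}$ (namely $v_p(a_k) + \frac{p-1}{pe}k/\rho \to +\infty$ for $\rho \geq r$) is exactly what is needed — combined with $n \geq n(r)$, i.e. $p^{n-1}(p-1) \geq r$ — to guarantee $\ker\theta$-adic convergence; this is the same computation that underlies the cited fact $\phi^{-n}(\Bt_{\mathrm{rig}}^{\dagger,r}) \subset \Bdrplus$, restricted to the subring generated over $\O_F$ by $[\tilde\pi]$. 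The main obstacle is this last convergence estimate: one must check carefully that the valuation of $\phi^{-n}([\tilde\pi])^k$ in $\Bdrplus$ grows linearly in $k$ with the right slope and that this dominates the possible growth of $v_p(a_k)^{-1}$; once the slope bookkeeping matches the definition of $n(r)$, the rest is formal, and the limit manifestly lies in the $(\phi^{-n}([\tilde\pi]) - \pi_n)$-adic completion $X_n$ rather than merely in $\Bdrplus$.
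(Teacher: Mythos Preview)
Your treatment of the first two assertions is fine and essentially matches the paper: one identifies $X_n$ with the $\ker\theta$-adic completion of $K_n[\phi^{-n}([\tilde\pi])]$ inside $\Bdrplus$ (the paper cites \cite[Prop.~7.14]{Col02} to embed $K_n\otimes_F\Btplus$ and observes that $\phi^{-n}([\tilde\pi])-\pi_n$ generates $\ker\theta$ there), and the inclusion $X_n\subset X_{n+1}$ is immediate.

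The third part, however, has a real gap. First, an element of $\B_{\tau,\mathrm{rig},K}^{\dagger,r}$ is $\sum_{k\in\Z}a_k[\tilde\pi]^k$, not $\sum_{k\ge 0}$; the negative tail cannot be discarded. Second, and more seriously, your convergence argument rests on the claim that ``$\phi^{-n}([\tilde\pi])$ has positive valuation in the $\ker\theta$-adic sense'' so that the valuation of its $k$-th power grows linearly in $k$. This is false: $\theta(\phi^{-n}([\tilde\pi]))=\pi_n\neq 0$, so $\phi^{-n}([\tilde\pi])$ is a \emph{unit} in $\Bdrplus$ and its powers have $\ker\theta$-adic valuation zero. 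The growth condition on the $a_k$ therefore gives you nothing $\ker\theta$-adically, and the series $\sum_k a_k\phi^{-n}([\tilde\pi])^k$ does not converge for the naive reason you suggest.

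What the paper does instead is to separate the $p$-adic and $\ker\theta$-adic convergences by a rearrangement. One writes
\[
\phi^{-n}([\tilde\pi])=\pi_n\exp\!\bigl(\tfrac{\log[\tilde\pi]}{p^n}\bigr),
\qquad \log[\tilde\pi]\in(\phi^{-n}([\tilde\pi])-\pi_n)X_n,
\]
expands the exponential, and interchanges sums to obtain
\[
\phi^{-n}(x)=\sum_{i\ge 0}\frac{(\log[\tilde\pi])^{i}}{p^{ni}\,i!}\Bigl(\sum_{k\in\Z}k^{i}a_k\pi_n^{k}\Bigr).
\]
Each inner sum $\sum_{k\in\Z}k^{i}a_k\pi_n^{k}$ is a $p$-adically convergent series in $K_n$ precisely because $n\ge n(r)$ (this is where the defining estimate on the $a_k$ is used, and it handles negative $k$ as well), while the outer sum converges $(\phi^{-n}([\tilde\pi])-\pi_n)$-adically since $\log[\tilde\pi]$ lies in that ideal. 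This is the missing idea: the convergence in $X_n$ is not a single slope estimate but a two-step reorganisation.
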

\begin{proof}
La proposition \cite[Prop. 7.14]{Col02} montre que $K_n \otimes_F \Btplus$ s'identifie à un sous-anneau de $\Bdrplus$, de sorte que c'est aussi le cas pour $K_n \otimes_{\O_F}\O_F[\![\phi^{-n}([\tilde{\pi}])]\!]$ qui est un sous-anneau de $K_n \otimes_F \Btplus$. De plus, $\phi^{-n}([\tilde{\pi}])-\pi_n$ engendre $\Ker(\theta)$ dans $\O_{K_n}\otimes_{\O_F}\Atplus$ (voir par exemple \cite[§7.4]{Col02}) et donc la topologie $(\phi^{-n}([\tilde{\pi}])-\pi_n)$-adique sur $K_n \otimes_{\O_F}\O_F[\![\phi^{-n}([\tilde{\pi}])]\!]$ coïncide avec la topologie $\Ker(\theta)$-adique, de sorte que $X_n$ est le complété de $K_n \otimes_{\O_F}\O_F[\![\phi^{-n}([\tilde{\pi}])]\!]$ dans $\Bdrplus$. En particulier, c'en est un sous-anneau fermé. Le fait que $X_n \subset X_{n+1}$ provient simplement du fait que $K_n \subset K_{n+1}$ et que $\phi^{-n}([\tilde{\pi}]) \in X_{n+1}$.

Il reste à montrer que $\phi^{-n}(\B_{\tau,\mathrm{rig},K}^{\dagger,r}) \subset X_n$. Comme $\phi^{-n}(\Bt_{\mathrm{rig}}^{\dagger,r}) \subset \Bdrplus$ pour $n \geq n(r)$, on sait déjà que $\phi^{-n}(\B_{\tau,\mathrm{rig},K}^{\dagger,r}) \subset \Bdrplus$, et que les éléments de $\B_{\tau,\mathrm{rig},K}^{\dagger,r}$ sont de la forme $\sum_{k \in \Z}a_k[\tilde{\pi}]^k$ tels que $a_k \in F$ et $v_p(a_k)+\frac{p-1}{pe}k/\rho \rightarrow +\infty$ quand $k \rightarrow \pm \infty$ pour tout $\rho \in [r,+\infty[$. De plus, on a
$$\phi^{-n}([\tilde{\pi}]) = [\tilde{\pi}^{p^{-n}}]=\pi_n\exp(\frac{\log[\tilde{\pi}]}{p^n}) \in X_n.$$
Si maintenant $x \in \B_{\tau,\mathrm{rig},K}^{\dagger,r}$, on écrit $x = \sum_{k \in \Z}a_k[\tilde{\pi}]^k$ et on a~:
\begin{align*}
\phi^{-n}(x) &= \sum_{k \in \Z}\phi^{-n}(a_k)\phi^{-n}([\tilde{\pi}])^k = \sum_{k \in \Z}a_k\pi_n^{k}\exp(\frac{k\log[\tilde{\pi}]}{p^n}) \\
&=\sum_{k \in \Z}a_k\pi_n^{k}\sum_{i=0}^\infty \frac{(k\log[\tilde{\pi}])^i}{p^{ni}i!}\\
&=\sum_{i=0}^\infty \frac{(\log[\tilde{\pi}])^i}{p^{ni}i!}(\sum_{k \in \Z}k^ia_k\pi_n^{k}).
\end{align*}
Comme la somme $\sum_{k \in \Z}k^ia_k\pi_n^{k}$ converge pour tout $i \in \N$ puisque $n \geq n(r)$ et comme $\log[\tilde{\pi}] \in (\phi^{-n}([\tilde{\pi}])-\pi_n)X_n$ par définition, on en déduit que $\phi^{-n}(x)$ définit bien un élément de $X_n$, ce qui termine la preuve.
\end{proof}

On notera par la suite $\iota_n~: \B_{\tau,\mathrm{rig},K}^{\dagger,r} \longrightarrow X_n$ l'application définie par $\phi^{-n}$ pour $n \geq n(r)$.

\begin{prop}
\label{divis élem}
Si $I$ est un idéal principal de $\B_{\tau,\mathrm{rig},K}^{\dagger,r}$ qui divise $\lambda^h$ pour $h \geq 0$, alors $I$ est engendré par un élément de la forme $\prod_{n=n(r)}^{+\infty}\left(\frac{\phi^n(E([\tilde{\pi}]))}{E(0)}\right)^{j_n}$ avec les $j_n \leq h$.
\end{prop}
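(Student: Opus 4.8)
The plan is to exploit the fact that $\lambda = \prod_{n \geq 0}\phi^n(E([\tilde{\pi}])/E(0))$ is a product of ``elementary'' factors, each of which generates a prime ideal of $\B_{\tau,\mathrm{rig},K}^{\dagger,r}$, and that $\B_{\tau,\mathrm{rig},K}^{\dagger,r}$ is a B\'ezout ring (an adequate PID-like ring where finitely generated ideals are principal) whose ``primes'' correspond to closed points of the rigid annulus. First I would recall that in $\B_{\tau,\mathrm{rig},K}^{\dagger,r}$, factorization of a nonzero element into irreducibles is governed by its zeros on the annulus $\{r \leq v_p(T) \text{ small}\}$ (with multiplicity): the element $\phi^n(E([\tilde{\pi}]))$ vanishes exactly at the points where $\phi^{-n}([\tilde{\pi}])$ hits a root of $E$, i.e. at $[\tilde{\pi}] = $ (a conjugate of) $\pi_n$, and these vanishing loci are pairwise disjoint for distinct $n$ since $\pi_n$ has $p$-adic valuation $v_p(\pi)/p^n$, which is strictly decreasing in $n$. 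Note also that $\phi^n(E([\tilde{\pi}]))$ only lies in $\B_{\tau,\mathrm{rig},K}^{\dagger,r}$ — i.e. its zero locus is actually in the annulus of radius $r$ — once $n \geq n(r)$, which is why the product in the statement starts at $n(r)$; for $n < n(r)$ the factor $\phi^n(E([\tilde{\pi}]))$ is a unit of $\B_{\tau,\mathrm{rig},K}^{\dagger,r}$.

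The key steps, in order, would be: (1) Observe that $I$ principal divides $\lambda^h$ means $I = (f)$ with $f \mid \lambda^h$ in $\B_{\tau,\mathrm{rig},K}^{\dagger,r}$; so every zero of $f$ on the annulus is a zero of $\lambda^h$, hence of some $\phi^n(E([\tilde{\pi}]))$ with $n \geq n(r)$, with multiplicity at most $h$. (2) Since the factors $\phi^n(E([\tilde{\pi}]))$ have pairwise disjoint zero loci, the element $f$ — whose divisor is supported on $\bigcup_{n \geq n(r)} \{\text{zeros of } \phi^n(E([\tilde{\pi}]))\}$ with multiplicities $j_n \leq h$ — differs from $\prod_{n \geq n(r)} (\phi^n(E([\tilde{\pi}]))/E(0))^{j_n}$ only by a unit of $\B_{\tau,\mathrm{rig},K}^{\dagger,r}$ (two elements of the Robba ring with the same divisor differ by a unit, since the ring is B\'ezout and the only functions with empty divisor are units). (3) Conclude that $I$ is generated by $\prod_{n=n(r)}^{+\infty}(\phi^n(E([\tilde{\pi}]))/E(0))^{j_n}$ with $j_n \leq h$; the product is finite-looking but one should note it is an infinite product that converges in $\B_{\tau,\mathrm{rig},K}^{\dagger,r}$ — in practice only finitely many $j_n$ are nonzero when $f$ is a genuine element, but the formulation is kept uniform since $\lambda$ itself is such an infinite product. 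One should also check the multiplicity bound $j_n \leq h$ is exactly what ``$f \mid \lambda^h$'' forces: $\lambda^h = \prod_n (\phi^n(E([\tilde{\pi}]))/E(0))^h$, so at each zero the multiplicity of $f$ is $\leq$ the multiplicity $h$ of $\lambda^h$.

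The main obstacle I expect is making precise the ``divisor'' dictionary for $\B_{\tau,\mathrm{rig},K}^{\dagger,r}$: one needs that this ring (and its base changes $\B_{\tau,K}^{I}$) is a B\'ezout domain in which a nonzero element is determined up to a unit by its zeros with multiplicities, and that $E([\tilde{\pi}])$, $\phi^n(E([\tilde{\pi}]))$ are irreducible (or at least have the stated disjoint, transverse zero loci). This is standard for the cyclotomic Robba ring (Lazard, and e.g.\ \cite{Ber02}, \cite{kedlayamonodromie}) and transfers verbatim to the Kummer setting since $\B_{\tau,\mathrm{rig},K}^{\dagger}$ is, after the change of variable $[\tilde{\pi}]$, the same kind of ring of analytic functions on an annulus over $F$; I would invoke this rather than reprove it. A secondary point to handle with care is the interplay between the radius $r$ and which factors $\phi^n(E([\tilde{\pi}]))$ are units: precisely, $\phi^n(E([\tilde{\pi}])) \in (\B_{\tau,\mathrm{rig},K}^{\dagger,r})^{\times}$ for $n < n(r)$ because its zeros lie outside the annulus of radius $r$, which is why any principal ideal of $\B_{\tau,\mathrm{rig},K}^{\dagger,r}$ dividing $\lambda^h$ can only pick up the factors with index $n \geq n(r)$ — matching the lower limit in the claimed product. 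Once these structural facts are in place, the proof is essentially a bookkeeping of divisors.
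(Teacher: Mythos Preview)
Your approach is essentially the same as the paper's: both use the factorization $\lambda = \prod_{n \geq 0} \phi^n(E([\tilde{\pi}]))/E(0)$, observe that each factor generates a maximal ideal of $\B_{\tau,\mathrm{rig},K}^{\dagger,r}$ (the paper notes $\B_{\tau,\mathrm{rig},K}^{\dagger,r}/\phi^n(E([\tilde{\pi}])) \simeq K_n$), and then invoke Lazard's results (specifically \cite[Prop.~10]{lazardfonctions1962}) to conclude that any generator of $I$ dividing $\lambda^h$ is, up to a unit, a product $\prod_{n \geq n(r)} (\phi^n(E([\tilde{\pi}]))/E(0))^{j_n}$ with $j_n \leq h$. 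One small slip in your write-up: the parenthetical claim that ``in practice only finitely many $j_n$ are nonzero'' is false --- $\lambda$ itself divides $\lambda^h$ and has all $j_n = 1$ --- but this does not affect the argument.
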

\begin{proof}
On a $\lambda = \prod_{n=0}^{+\infty}\left(\frac{\phi^n(E([\tilde{\pi}]))}{E(0)}\right)$. De plus, on a $\B_{\tau,\mathrm{rig},K}^{\dagger,r}/\phi^n(E([\tilde{\pi}])) \simeq K_n$, de sorte que les $\phi^n(E([\tilde{\pi}]))$ sont des idéaux maximaux. Si maintenant $x$ est un générateur de $I$, $x|\lambda^h$ et donc par \cite[Prop. 10]{lazardfonctions1962}, $x$ est le produit d'une unité par un élément de la forme $\prod_{n=n(r)}^{+\infty}\left(\frac{\phi^n(E([\tilde{\pi}]))}{E(0)}\right)^{j_n}$.
\end{proof}

\begin{theo}
Si $D$ est un $\phi$-module sur $\B_{\tau,\mathrm{rig},K}^{\dagger}$, alors il existe $r(D) \geq 0$ tel que pour tout $r \geq r(D)$, il existe un unique sous $\B_{\tau,\mathrm{rig},K}^{\dagger,r}$-module $D_r$ de $D$ tel que~:
\begin{enumerate}
\item $D = \B_{\tau,\mathrm{rig},K}^\dagger \otimes_{\B_{\tau,\mathrm{rig},K}^{\dagger,r}}D_r$ ;
\item le $\B_{\tau,\mathrm{rig},K}^{\dagger,pr}$-module $\B_{\tau,\mathrm{rig},K}^{\dagger,pr}\otimes_{\B_{\tau,\mathrm{rig},K}^{\dagger,r}}D_r$ a une base contenue dans $\phi(D_r)$.
\end{enumerate}
\end{theo}
\begin{proof}
Ce résultat est une variante d'un résultat de Cherbonnier \cite{Che96}, et la version dans le cas des $\B_{\tau,\mathrm{rig},K}^\dagger$-modules du théorème I.3.3. de \cite{Ber08}. Comme $D$ est un $\B_{\tau,\mathrm{rig},K}^{\dagger}$-module libre de rang $d$, il en existe une base, qu'on note $e_1,\cdots,e_d$. Il existe donc $r=r(D)$ tel que la matrice de $\phi$ dans cette base soit dans $\GL_d(\B_{\tau,\mathrm{rig},K}^{\dagger,r})$, et donc en posant $D_r = \oplus_{i=1}^d\B_{\tau,\mathrm{rig},K}^{\dagger,r}e_i$, les deux conditions du théorème sont vérifiées. Il reste à montrer l'unicité. Soient maintenant $D_r^{(1)}$ et $D_r^{(2)}$ deux $\B_{\tau,\mathrm{rig},K}^{\dagger,r}$-modules vérifiant les deux conditions du théorème. On choisit des bases de ces deux modules, et on note $M$ la matrice de passage d'une base à l'autre et $P_1$, $P_2$ les matrices de $\phi$ dans les bases correspondantes. Ces matrices vérifient alors la relation $\phi(M)=P_1^{-1}MP_2$, avec $P_1,P_2 \in \GL_d(\B_{\tau,\mathrm{rig},K}^{\dagger,pr})$, et $M \in \mathrm{M}_d(\B_{\tau,\mathrm{rig},K}^{\dagger,s})$ pour un certain $s$. Mais si $s \geq pr$, alors $P_1^{-1}MP_2$ appartient à $M_d(\B_{\tau,\mathrm{rig},K}^{\dagger,s})$ et donc on a également $\phi(M) \in M_d(\B_{\tau,\mathrm{rig},K}^{\dagger,s})$. Mais 
$$\phi(\B_{\tau,\mathrm{rig},K}^{\dagger,s}) \cap \Bt_{\tau,\mathrm{rig},K}^{\dagger,s} = \phi(\B_{\tau,\mathrm{rig},K}^{\dagger,s} \cap \phi^{-1}(\Bt_{\tau,\mathrm{rig},K}^{\dagger,s})) = \phi(\B_{\tau,\mathrm{rig},K}^{\dagger,s/p})$$
puisque $\phi^{-1}(\Bt_{\tau,\mathrm{rig},K}^{\dagger,s}) = \Bt_{\tau,\mathrm{rig},K}^{\dagger,s/p}$ et que $\B_{\tau,\mathrm{rig},K}^{\dagger,s} \cap \Bt_{\tau,\mathrm{rig},K}^{\dagger,s'} = \B_{\tau,\mathrm{rig},K}^{\dagger,s'}$ si $s' \leq s$ par un analogue direct du lemme II.2.2 de \cite{cherbonnier1998representations}. On a donc $\phi(\B_{\tau,\mathrm{rig},K}^{\dagger,s}) \cap \B_{\tau,\mathrm{rig},K}^{\dagger,s} \subset \phi(\B_{\tau,\mathrm{rig},K}^{\dagger,s/p})$ mais réciproquement il est clair que $\phi(\B_{\tau,\mathrm{rig},K}^{\dagger,s/p}) \subset \phi(\B_{\tau,\mathrm{rig},K}^{\dagger,s}) \cap \B_{\tau,\mathrm{rig},K}^{\dagger,s}$ d'où l'égalité $\phi(\B_{\tau,\mathrm{rig},K}^{\dagger,s}) \cap \B_{\tau,\mathrm{rig},K}^{\dagger,s} = \phi(\B_{\tau,\mathrm{rig},K}^{\dagger,s/p})$.

En particulier, si $M \in M_d(\B_{\tau,\mathrm{rig},K}^{\dagger,s})$ avec $s \geq pr$, on peut remplacer $s$ par $s/p$, de sorte que $M$ est en fait dans $\mathrm{M}(\B_{\tau,\mathrm{rig},K}^{\dagger,r})$, et on peut appliquer le même raisonnement à $M^{-1}$, de sorte que $M \in \GL_d(\B_{\tau,\mathrm{rig},K}^{\dagger,r})$ et donc $D_r^{(1)}=D_r^{(2)}$.
\end{proof}

Si $D$ est un $\phi$-module sur $\B_{\tau,\mathrm{rig},K}^{\dagger}$ et si $n \geq n(r)$ avec $r \geq r(D)$, l'application $\iota_n~: \B_{\tau,\mathrm{rig},K}^{\dagger,r} \to X_n$ confère une structure de $\iota_n(\B_{\tau,\mathrm{rig},K}^{\dagger,r})$-module à $X_n$ et la formule $\iota_n(\mu)\cdot x = \mu x$ donne une structure de $\iota_n(\B_{\tau,\mathrm{rig},K}^{\dagger,r})$-module à $D_r$ et on note alors ce module $\iota_n(D_r)$. Cela nous permet donc de définir $X_n \otimes_{\iota_n(\B_{\tau,\mathrm{rig},K}^{\dagger,r})}\iota_n(D_r)$, qu'on écrira $X_n \otimes_{\B_{\tau,\mathrm{rig},K}^{\dagger,r}}^{\iota_n}D_r$ afin d'alléger les notations.

\begin{prop}
\label{unicité phi modules 1/lambda}
Si $D$ est un $\phi$-module de rang $d$ sur $\B_{\tau,\mathrm{rig},K}^{\dagger}$ et si $D^{(1)}$, $D^{(2)}$ sont deux sous $\B_{\tau,\mathrm{rig},K}^\dagger$-modules libres de rang $d$ et stables par $\phi$ de $D[1/\lambda]=\B_{\tau,\mathrm{rig},K}^\dagger[1/\lambda]\otimes_{\B_{\tau,\mathrm{rig},K}^\dagger}D$ tels que~:
\begin{enumerate}
\item $D^{(1)}[1/\lambda] = D^{(2)}[1/\lambda]=D[1/\lambda]$ ;
\item $X_n \otimes_{\B_{\tau,\mathrm{rig},K}^{\dagger,r}}^{\iota_n}D_r^{(1)}=X_n \otimes_{\B_{\tau,\mathrm{rig},K}^{\dagger,r}}^{\iota_n}D_r^{(2)}$ pour tout $n \gg 0$ ;
\end{enumerate}
alors $D^{(1)}=D^{(2)}$.
\end{prop}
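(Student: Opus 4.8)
The plan is to transfer the problem to the change-of-basis matrix between $D^{(1)}$ and $D^{(2)}$ and to control its poles one maximal ideal $(\phi^n(E([\tilde{\pi}])))$ at a time: hypothesis (2) will kill these poles for $n \gg 0$, and the finitely many that remain will be pushed out of the annulus by enlarging the radius.

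First I would use the structure theorem above to fix $r \geq r(D)$ large enough that $D_r$, $D^{(1)}_r$ and $D^{(2)}_r$ are all defined over $\B_{\tau,\mathrm{rig},K}^{\dagger,r}$ and that the maps $\iota_n$ of hypothesis (2) are available; condition (1) then gives $D^{(1)}_r[1/\lambda] = D^{(2)}_r[1/\lambda] = D_r[1/\lambda]$, so choosing $\B_{\tau,\mathrm{rig},K}^{\dagger,r}$-bases $(e_i)$ of $D^{(1)}_r$ and $(f_i)$ of $D^{(2)}_r$ yields a change-of-basis matrix $M \in \GL_d(\B_{\tau,\mathrm{rig},K}^{\dagger,r}[1/\lambda])$, and it suffices to show that $M \in \GL_d(\B_{\tau,\mathrm{rig},K}^{\dagger,r'})$ for some $r' \geq r$, since then $D^{(1)}_{r'} = D^{(2)}_{r'}$ and hence $D^{(1)} = D^{(2)}$ after extension of scalars to $\B_{\tau,\mathrm{rig},K}^\dagger$. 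Because the entries of $M$ lie in $\B_{\tau,\mathrm{rig},K}^{\dagger,r}[1/\lambda]$ and $\lambda = \prod_{n \geq 0}\phi^n(E([\tilde{\pi}])/E(0))$ has, up to a unit of $\B_{\tau,\mathrm{rig},K}^{\dagger,r}$, divisor $\sum_{n \geq n(r)}[\phi^n(E([\tilde{\pi}]))]$, the only possible poles of $M$ are at the maximal ideals $\mathfrak{m}_n = (\phi^n(E([\tilde{\pi}])))$ with $n \geq n(r)$; I write $a_n \geq 0$ for the pole order of $M$ at $\mathfrak{m}_n$ (the maximum of the pole orders of the entries of $M$), and similarly $a'_n$ for $M^{-1}$, which also has entries in $\B_{\tau,\mathrm{rig},K}^{\dagger,r}[1/\lambda]$ since $D^{(1)}_r$ and $D^{(2)}_r$ are lattices in a common module.

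Next I would analyse $M$ through $\iota_n = \phi^{-n}$. The key local computation is that $\iota_n(\phi^m(E([\tilde{\pi}]))) = \phi^{m-n}(E([\tilde{\pi}]))$ is a unit of $X_n$ when $m \neq n$: its image under $\theta$ is the value of an Eisenstein polynomial of degree $e$ over $\O_F$ (a Frobenius twist of $E$) at $\pi^{p^{m-n}}$ if $m > n$, resp. at $\pi_{n-m}$ if $m < n$, and this is nonzero because $\pi = \pi_0$ is the only term of the sequence $(\pi_j)$ that can be a root of such a polynomial (a valuation argument); whereas $\iota_n(\phi^n(E([\tilde{\pi}]))) = E([\tilde{\pi}])$ generates the maximal ideal of the discrete valuation ring $X_n$ with order $1$. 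Hence $\iota_n$ induces an order-preserving local homomorphism from the localization of $\B_{\tau,\mathrm{rig},K}^{\dagger,r}$ at $\mathfrak{m}_n$ into $X_n$, so $\iota_n(M)$, regarded as an element of $\M_d(Y_n)$, has a pole at the maximal ideal of $X_n$ of order exactly $a_n$. For $n \gg 0$, hypothesis (2) gives $X_n \otimes^{\iota_n}_{\B_{\tau,\mathrm{rig},K}^{\dagger,r}} D^{(1)}_r = X_n \otimes^{\iota_n}_{\B_{\tau,\mathrm{rig},K}^{\dagger,r}} D^{(2)}_r$ as $X_n$-submodules of $Y_n \otimes^{\iota_n}_{\B_{\tau,\mathrm{rig},K}^{\dagger,r}} D_r$; the images of $(e_i)$ and $(f_i)$ are then two $X_n$-bases of this common module and $\iota_n(M)$ is the corresponding change of basis, so $\iota_n(M) \in \GL_d(X_n)$, and in particular $a_n = 0$. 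The same reasoning gives $a'_n = 0$ for $n \gg 0$; so there is $N \geq n(r)$ with $a_n = a'_n = 0$ for all $n \geq N$, i.e. the poles of $M$ and $M^{-1}$ are concentrated at the finitely many ideals $\mathfrak{m}_n$ with $n(r) \leq n < N$.

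Finally I would choose $r' \geq r$ with $n(r') \geq N$. For $n < N \leq n(r')$ the polynomial $\phi^n(E([\tilde{\pi}]))$ has all its zeros outside the annulus of convergence attached to $\B_{\tau,\mathrm{rig},K}^{\dagger,r'}$ and is therefore a unit of that ring; writing $M_{ij} = \tilde{M}_{ij} \cdot \prod_{n(r) \leq n < N}\phi^n(E([\tilde{\pi}]))^{-a_n}$ with $\tilde{M}_{ij} \in \B_{\tau,\mathrm{rig},K}^{\dagger,r} \subseteq \B_{\tau,\mathrm{rig},K}^{\dagger,r'}$ shows $M \in \M_d(\B_{\tau,\mathrm{rig},K}^{\dagger,r'})$, and the same for $M^{-1}$, whence $M \in \GL_d(\B_{\tau,\mathrm{rig},K}^{\dagger,r'})$; this gives $D^{(1)}_{r'} = D^{(2)}_{r'}$ and, extending scalars, $D^{(1)} = D^{(2)}$. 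I expect the main obstacle to be the local analysis at $\mathfrak{m}_n$: verifying cleanly that $\iota_n$ realizes the localization–completion of $\B_{\tau,\mathrm{rig},K}^{\dagger,r}$ at $\mathfrak{m}_n$ and preserves orders, that $\phi^m(E([\tilde{\pi}]))$ is a unit of $X_n$ for $m \neq n$, and that $\phi^n(E([\tilde{\pi}]))$ becomes a unit of $\B_{\tau,\mathrm{rig},K}^{\dagger,r'}$ for $n < n(r')$; the remainder is bookkeeping with divisors and radii.
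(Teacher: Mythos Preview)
Your proof is correct and follows the same local-at-each-$\mathfrak{m}_n$ strategy as the paper, which phrases it via the elementary divisors of the inclusion $\lambda^h D^{(2)}_r \subset D^{(1)}_r$ (using Proposition~\ref{divis élem}) rather than pole orders of a change-of-basis matrix, but these are equivalent. One simplification you miss: the paper chooses $r$ large enough from the outset that condition~(2) holds for \emph{all} $n \geq n(r)$, which makes your final step of enlarging $r$ to $r'$ to discard the finitely many remaining poles unnecessary.
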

\begin{proof}
C'est l'analogue de \cite[Prop. I.3.4]{Ber08}.

On a $D^{(1)}[1/\lambda]=D^{(2)}[1/\lambda]$ donc il existe $h \geq 0$ tel que $\lambda^hD^{(2)} \subset D^{(1)}$. Soit maintenant $r \geq \max(r(D^{(1)}),r(D^{(2)}))$ et tel que~:
$$X_n \otimes_{\B_{\tau,\mathrm{rig},K}^{\dagger,r}}^{\iota_n}D_r^{(1)}=X_n \otimes_{\B_{\tau,\mathrm{rig},K}^{\dagger,r}}^{\iota_n}D_r^{(2)}$$ 
pour tout $n \geq n(r)$. Les diviseurs élémentaires de $\lambda^hD^{(2)} \subset D^{(1)}$ sont des idéaux de $\B_{\tau,\mathrm{rig},K}^{\dagger,r}$ qui divisent $\lambda^h$ donc qui sont de la forme $\prod_{n=n(r)}^{+\infty}\left(\frac{\phi^n(E([\tilde{\pi}]))}{E(0)}\right)^{j_{n,i}}$ pour $i=1,\cdots,d$ par la proposition \ref{divis élem}. Le calcul des diviseurs élémentaires commutant à la localisation, ceux de l'inclusion 
$$\lambda^hX_n \otimes_{\B_{\tau,\mathrm{rig},K}^{\dagger,r}}^{\iota_n}D_r^{(2)} \subset X_n \otimes_{\B_{\tau,\mathrm{rig},K}^{\dagger,r}}^{\iota_n}D_r^{(1)}$$ 
sont donnés par les idéaux $(\lambda^{j_{n,i}})$, et donc $j_{n,i}=h$ pour tous $i$ et $n$, et donc $D^{(1)}=D^{(2)}$.
\end{proof}

\begin{lemm}
Si $D$ est un $\phi$-module sur $\B_{\tau,\mathrm{rig},K}^\dagger$, on dispose d'une application
$$\phi_n~: Y_{n+1} \otimes_{Y_n}\left(Y_n \otimes_{\B_{\tau,\mathrm{rig},K}^{\dagger,r}}^{\iota_n}D_r\right) \longrightarrow Y_{n+1} \otimes_{\B_{\tau,\mathrm{rig},K}^{\dagger,r}}^{\iota_{n+1}}D_r$$
donnée par $\phi_n(u_{n+1}\otimes u_n \otimes \iota_n(x)) = u_{n+1}u_n \otimes \iota_{n+1}(\phi(x))$.
\end{lemm}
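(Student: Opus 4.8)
The plan is to verify that the given formula actually defines a well-posed $Y_{n+1}$-linear map; this is a bookkeeping computation whose only substantial ingredient is the compatibility $\iota_{n+1}\circ\phi=\iota_n$ on $\B_{\tau,\mathrm{rig},K}^{\dagger,r}$. First I would fix $r\geq r(D)$ and $n\geq n(r)$ so that $D_r$ is the module furnished by the previous theorem and $\iota_n,\iota_{n+1}$ are defined on $\B_{\tau,\mathrm{rig},K}^{\dagger,r}$; since $n(pr)=n(r)+1$ one has $n+1\geq n(pr)$, so $\iota_{n+1}$ is even defined on $\B_{\tau,\mathrm{rig},K}^{\dagger,pr}$. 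Setting $D_{pr}:=\B_{\tau,\mathrm{rig},K}^{\dagger,pr}\otimes_{\B_{\tau,\mathrm{rig},K}^{\dagger,r}}D_r$, I would note that condition (2) of the previous theorem (together with $\phi(\B_{\tau,\mathrm{rig},K}^{\dagger,r})\subset\B_{\tau,\mathrm{rig},K}^{\dagger,pr}$, obtained by expanding $\phi$ in a basis of $D_r$) gives $\phi(D_r)\subset D_{pr}$; since $Y_{n+1}$ is a $\B_{\tau,\mathrm{rig},K}^{\dagger,pr}$-algebra via $\iota_{n+1}$, one has $Y_{n+1}\otimes_{\B_{\tau,\mathrm{rig},K}^{\dagger,r}}^{\iota_{n+1}}D_r=Y_{n+1}\otimes_{\B_{\tau,\mathrm{rig},K}^{\dagger,pr}}^{\iota_{n+1}}D_{pr}$, so $\iota_{n+1}(\phi(x))$ genuinely lands in the target for $x\in D_r$.

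Next I would use transitivity of base change to identify the source $Y_{n+1}\otimes_{Y_n}\bigl(Y_n\otimes_{\B_{\tau,\mathrm{rig},K}^{\dagger,r}}^{\iota_n}D_r\bigr)$ with $Y_{n+1}\otimes_{\B_{\tau,\mathrm{rig},K}^{\dagger,r}}^{\iota_n}D_r$, where $Y_{n+1}$ is regarded as a $\B_{\tau,\mathrm{rig},K}^{\dagger,r}$-algebra via $\iota_n$ composed with $X_n\subset X_{n+1}\subset Y_{n+1}$. I would then build $\phi_n$ out of the map $(u,x)\mapsto u\otimes\phi(x)$ from $Y_{n+1}\times D_r$ to $Y_{n+1}\otimes_{\B_{\tau,\mathrm{rig},K}^{\dagger,pr}}^{\iota_{n+1}}D_{pr}$: the point to check is that this map is $\B_{\tau,\mathrm{rig},K}^{\dagger,r}$-balanced for the $\iota_n$-structure on $Y_{n+1}$, i.e. that $\iota_n(\mu)u\otimes\phi(x)=u\otimes\phi(\mu x)$ for $\mu\in\B_{\tau,\mathrm{rig},K}^{\dagger,r}$. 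Using $\phi$-semilinearity of $\phi$ on $D$ one rewrites $\phi(\mu x)=\phi(\mu)\phi(x)$ and moves $\phi(\mu)\in\B_{\tau,\mathrm{rig},K}^{\dagger,pr}$ across the tensor product via the $\iota_{n+1}$-structure, reducing the claim to $\iota_{n+1}(\phi(\mu))=\iota_n(\mu)$; this is immediate from $\iota_m=\phi^{-m}$ and the bijectivity of $\phi$ on $\B_{\tau,\mathrm{rig},K}^{\dagger}$. The resulting $\phi_n$ is $Y_{n+1}$-linear by construction, and unwinding the identifications recovers the stated formula $\phi_n(u_{n+1}\otimes u_n\otimes\iota_n(x))=u_{n+1}u_n\otimes\iota_{n+1}(\phi(x))$.

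The step I expect to be the real content — rather than a genuine obstacle, since the verification is short — is precisely this clash of two $\B_{\tau,\mathrm{rig},K}^{\dagger,r}$-module structures on $Y_{n+1}$: the one via $\iota_n$ on the source side and the one via $\iota_{n+1}\circ\phi$ on the target side. The map $\phi_n$ is $\phi$-semilinear relative to the transition from $\iota_n$ to $\iota_{n+1}$, and the identity $\iota_{n+1}\circ\phi=\iota_n$ is exactly what makes this coherent; everything else (well-definedness of $\iota_m$ for $m\geq n(r)$, the inclusion $\phi(D_r)\subset D_{pr}$) is already available, and no convergence or analytic input beyond these facts is needed.
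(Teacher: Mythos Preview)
Your argument is correct and follows exactly the paper's approach: the paper's proof is the one-line observation that the map is well defined because $\phi(D_r)\subset \B_{\tau,\mathrm{rig},K}^{\dagger,pr}\otimes_{\B_{\tau,\mathrm{rig},K}^{\dagger,r}}D_r$, $\iota_{n+1}(\B_{\tau,\mathrm{rig},K}^{\dagger,pr})\subset X_{n+1}$, and $\iota_{n+1}(\phi(\mu))=\iota_n(\mu)$ for $\mu\in\B_{\tau,\mathrm{rig},K}^{\dagger,r}$, which are precisely the three ingredients you isolate and verify. Your write-up is simply a more explicit unpacking of the same balancing check.
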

\begin{proof}
Cela découle du fait qu'on a les inclusions $\phi(D_r) \subset \B_{\tau,\mathrm{rig},K}^{\dagger,pr} \otimes_{\B_{\tau,\mathrm{rig},K}^{\dagger,r}}D_r$, $\iota_{n+1}(\B_{\tau,\mathrm{rig},K}^{\dagger,pr}) \subset X_{n+1}$ et $\iota_{n+1}(\phi(\mu)) = \iota_n(\mu)$ si $\mu \in \B_{\tau,\mathrm{rig},K}^{\dagger,r}$.
\end{proof}

De façon analogue à \cite[Déf. II.1.1]{Ber08}, on définit une notion de suite $\phi$-compatible~:
\begin{defi}
\label{defi suite phi compatible}
Soit $D$ un $\phi$-module sur $\B_{\tau,\mathrm{rig},K}^\dagger$ et soit $r \geq r(D)$. Si $\{M_n\}_{n \geq n(r)}$ est une suite de $X_n$-réseaux de $Y_n \otimes_{\B_{\tau,\mathrm{rig},K}^{\dagger,r}}^{\iota_n}D_r$, on dit que c'est une suite $\phi$-compatible si 
$$\phi_n(X_{n+1}\otimes_{X_n}M_n) = M_{n+1}.$$
\end{defi}

\begin{lemm}
Si $M$ est un sous-$\phi$-module de rang maximal d'un $\phi$-module $D$ sur $\B_{\tau,\mathrm{rig},K}^\dagger$, alors les $M_n:=X_n\otimes_{\B_{\tau,\mathrm{rig},K}^{\dagger,r}}^{\iota_n}M_r$ déterminent une suite $\phi$-compatible de $X_n$-réseaux de $Y_n\otimes_{\B_{\tau,\mathrm{rig},K}^{\dagger,r}}^{\iota_n}D_r$.
\end{lemm}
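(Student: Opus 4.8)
The plan is to establish the two assertions hidden in the statement — that each $M_n$ is an $X_n$-lattice in $Y_n\otimes_{\B_{\tau,\mathrm{rig},K}^{\dagger,r}}^{\iota_n}D_r$, and that the family $\{M_n\}$ is $\phi$-compatible — in that order, the second being the substantive one. The whole argument rests on the structure theorem for $D_r$, applied to $M$ as well as to $D$, together with the relation $\iota_{n+1}\circ\phi=\iota_n$. First I would fix a radius $r$ with $r\geq\max(r(D),r(M))$ and large enough that moreover $M_r\subseteq D_r$: such an $r$ exists because any $\B_{\tau,\mathrm{rig},K}^\dagger$-basis of $M$ is a finite set of elements of $D=\B_{\tau,\mathrm{rig},K}^\dagger\otimes_{\B_{\tau,\mathrm{rig},K}^{\dagger,r_0}}D_{r_0}$, whose coordinates in a fixed basis of $D_{r_0}$ lie in $\B_{\tau,\mathrm{rig},K}^{\dagger,r}$ once $r$ is large. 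Fix bases $(f_1,\dots,f_d)$ of $M_r$ and $(e_1,\dots,e_d)$ of $D_r$ over $\B_{\tau,\mathrm{rig},K}^{\dagger,r}$, and write $f_j=\sum_iA_{ij}e_i$ with $A\in\M_d(\B_{\tau,\mathrm{rig},K}^{\dagger,r})$; since $M$ and $D$ have the same rank, $\det A\neq0$.

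Next I would check that $M_n$ is a lattice. One has $M_n=\bigoplus_jX_n\cdot\iota_n(f_j)$ inside $D_n:=X_n\otimes_{\B_{\tau,\mathrm{rig},K}^{\dagger,r}}^{\iota_n}D_r=\bigoplus_iX_n\cdot\iota_n(e_i)$, with transition matrix $\iota_n(A)$. As $\iota_n=\phi^{-n}$ is injective, $\det\iota_n(A)=\iota_n(\det A)\neq0$ in $X_n$, hence is a unit in the fraction field $Y_n$ of $X_n$; therefore the $\iota_n(f_j)$ are $X_n$-linearly independent in $D_n$, and $Y_n\otimes_{X_n}M_n=Y_n\otimes_{X_n}D_n=Y_n\otimes_{\B_{\tau,\mathrm{rig},K}^{\dagger,r}}^{\iota_n}D_r$. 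Thus $M_n$ is a free $X_n$-module of rank $d$ spanning $Y_n\otimes_{\B_{\tau,\mathrm{rig},K}^{\dagger,r}}^{\iota_n}D_r$, i.e. an $X_n$-lattice in it.

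For the $\phi$-compatibility $\phi_n(X_{n+1}\otimes_{X_n}M_n)=M_{n+1}$, I would exploit that $M$ is itself a $\phi$-module: since $r\geq r(M)$, the structure theorem applied to $M$ gives that $\B_{\tau,\mathrm{rig},K}^{\dagger,pr}\otimes_{\B_{\tau,\mathrm{rig},K}^{\dagger,r}}M_r$ has a basis contained in $\phi(M_r)$, so $\phi(M_r)\subseteq\B_{\tau,\mathrm{rig},K}^{\dagger,pr}\otimes_{\B_{\tau,\mathrm{rig},K}^{\dagger,r}}M_r$ and $\phi(M_r)$ generates this module over $\B_{\tau,\mathrm{rig},K}^{\dagger,pr}$. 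By the very definition of $\phi_n$, the module $\phi_n(X_{n+1}\otimes_{X_n}M_n)$ is the $X_{n+1}$-submodule of $Y_{n+1}\otimes_{\B_{\tau,\mathrm{rig},K}^{\dagger,r}}^{\iota_{n+1}}D_r$ generated by the elements $\iota_{n+1}(\phi(x))$, $x\in M_r$; using that $\iota_{n+1}$ maps $\B_{\tau,\mathrm{rig},K}^{\dagger,pr}$ into $X_{n+1}$ (as $n\geq n(r)$) and that $\iota_{n+1}\circ\phi=\iota_n$ on $\B_{\tau,\mathrm{rig},K}^{\dagger,r}$, this submodule is exactly the $X_{n+1}$-span of $\iota_{n+1}\bigl(\B_{\tau,\mathrm{rig},K}^{\dagger,pr}\otimes_{\B_{\tau,\mathrm{rig},K}^{\dagger,r}}M_r\bigr)$, that is $X_{n+1}\otimes_{\B_{\tau,\mathrm{rig},K}^{\dagger,r}}^{\iota_{n+1}}M_r=M_{n+1}$. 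The delicate point — and the one I expect to be the main obstacle — is precisely this last identification: one must keep careful track of the several $\B_{\tau,\mathrm{rig},K}^\dagger$-module structures involved (those coming from the various $\iota_n$ and the one coming from $\phi$), and observe that the compatibility of $\phi$ with the bundle structure on $\{M_n\}$ is exactly what property (2) of the structure theorem provides once that theorem is applied to the sub-$\phi$-module $M$, and not merely to $D$.
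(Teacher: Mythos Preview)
Your argument is correct and follows the same route as the paper, which records this lemma as an immediate consequence of the definition of $\phi$-compatible sequences and of the construction of the maps $\phi_n$; you have simply unpacked in detail (the choice of $r$, the lattice property via $\det\iota_n(A)\neq 0$, and the use of the structure theorem applied to $M$) what the paper leaves implicit in a one-line proof.
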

\begin{proof}
Cela découle de la définition \ref{defi suite phi compatible} et de la construction des $\phi_n$.
\end{proof}

On dispose également d'un analogue du théorème II.1.2 de \cite{Ber08}~:
\begin{theo}
\label{thm recollement}
Si $D$ est un $\phi$-module sur $\B_{\tau,\mathrm{rig},K}^{\dagger}$, si $r \geq r(D)$ et si $\{M_n\}$ est une suite $\phi$-compatible de réseaux de $Y_n \otimes_{\B_{\tau,\mathrm{rig},K}^{\dagger,r}}^{\iota_n}D_r$, alors il existe un unique sous $\phi$-module $M$ de $D[1/\lambda]$ tel que $X_n\otimes_{\B_{\tau,\mathrm{rig},K}^{\dagger,r}}^{\iota_n}M_r = M_n$ pour tout $n \geq n(r)$. De plus, $M[1/\lambda]=D[1/\lambda]$. Si $D$ est un $(\phi,\tau)$-module sur $(\B_{\tau,\mathrm{rig},K}^\dagger,\Bt_{\mathrm{rig},L}^\dagger)$ tel que les $\B_{\mathrm{dR}}^{\G_L}\otimes_{X_n}M_n$ sont stables sous l'action induite de $G_\infty$, alors $M$ est aussi un $(\phi,\tau)$-module.
\end{theo}
L'unicité d'un tel $M$ est une conséquence directe de la proposition \ref{unicité phi modules 1/lambda}, et on va avoir besoin d'un certain nombre de résultats avant de démontrer son existence. Dans ce qui suit, on a donc fixé un $\phi$-module $D$ sur $\B_{\tau,\mathrm{rig},K}^{\dagger}$, un $r \geq r(D)$ et une suite $\phi$-compatible $\{M_n\}$ de réseaux de $Y_n \otimes_{\B_{\tau,\mathrm{rig},K}^{\dagger,r}}^{\iota_n}D_r$.

\begin{lemm}
\label{exist h tel que t-h < < th}
Il existe un entier $h \geq 0$ tel que pour tout $n \geq n(r)$ on ait~:
$$(\phi^{-n}([\tilde{\pi}])-\pi_n)^hX_n\otimes_{\B_{\tau,\mathrm{rig},K}^{\dagger,r}}^{\iota_n}D_r \subset M_n \subset (\phi^{-n}([\tilde{\pi}])-\pi_n)^{-h}X_n\otimes_{\B_{\tau,\mathrm{rig},K}^{\dagger,r}}^{\iota_n}D_r.$$
\end{lemm}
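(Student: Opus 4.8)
Le plan est d'établir l'encadrement au rang $n_0 := n(r)$ grâce à la théorie des diviseurs élémentaires, puis de le propager à tous les rangs $n \geq n_0$ en utilisant la $\phi$-compatibilité de $\{M_n\}$. On observe d'abord que $X_n$ est un anneau de valuation discrète complet, d'uniformisante $t_n := \phi^{-n}([\tilde{\pi}])-\pi_n$ et de corps résiduel $K_n$, de corps des fractions $Y_n$. On note $\Lambda_n := X_n\otimes_{\B_{\tau,\mathrm{rig},K}^{\dagger,r}}^{\iota_n}D_r$ le réseau standard. Comme $M_{n_0}$ et $\Lambda_{n_0}$ sont deux $X_{n_0}$-réseaux du $Y_{n_0}$-espace vectoriel $Y_{n_0}\otimes_{\B_{\tau,\mathrm{rig},K}^{\dagger,r}}^{\iota_{n_0}}D_r$, de dimension $d$, la théorie des diviseurs élémentaires sur l'anneau de valuation discrète $X_{n_0}$ fournit un entier $h \geq 0$ tel que $t_{n_0}^h\Lambda_{n_0} \subset M_{n_0} \subset t_{n_0}^{-h}\Lambda_{n_0}$.

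On montre ensuite que, pour tout $n \geq n_0$, l'application $\phi_n$ envoie $X_{n+1}\otimes_{X_n}\Lambda_n$ isomorphiquement sur $\Lambda_{n+1}$. En effet, si $(e_1,\dots,e_d)$ est une base de $D_r$ dans laquelle la matrice $P$ de $\phi$ appartient à $\GL_d(\B_{\tau,\mathrm{rig},K}^{\dagger,r})$, comme le fournit le théorème précédent, alors $\phi_n(1\otimes 1\otimes\iota_n(e_i)) = 1\otimes\iota_{n+1}(\phi(e_i)) = \sum_j\iota_{n+1}(P_{ji})\cdot(1\otimes\iota_{n+1}(e_j))$, et $\iota_{n+1}(P)\in\GL_d(X_{n+1})$, de sorte que l'image est exactement $\Lambda_{n+1}$. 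De plus, $\phi_n$ étant $Y_{n+1}$-linéaire et $t_n$ appartenant à $X_n\subset X_{n+1}$, la multiplication par $t_n$ commute à $\phi_n$. Enfin, $t_n$ et $t_{n+1}$ engendrent le même idéal de $X_{n+1}$ : puisque $\phi^{-n}([\tilde{\pi}]) = (\phi^{-(n+1)}([\tilde{\pi}]))^p$ et $\pi_n = \pi_{n+1}^p$, la factorisation $a^p-b^p = (a-b)\sum_{i=0}^{p-1}a^{p-1-i}b^i$ appliquée à $a = \phi^{-(n+1)}([\tilde{\pi}])$ et $b = \pi_{n+1}$ donne $t_n = t_{n+1}\cdot c_{n+1}$, et $c_{n+1}$ est inversible dans $X_{n+1}$ car son image $p\pi_{n+1}^{p-1}$ dans le corps résiduel $K_{n+1}$ est non nulle.

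Il ne reste alors qu'à raisonner par récurrence sur $n \geq n_0$. En appliquant $\phi_n$ à l'encadrement $t_n^h\Lambda_n \subset M_n \subset t_n^{-h}\Lambda_n$, et comme $\phi_n(X_{n+1}\otimes_{X_n}M_n) = M_{n+1}$ par $\phi$-compatibilité, les trois points précédents donnent $t_n^h\Lambda_{n+1} \subset M_{n+1} \subset t_n^{-h}\Lambda_{n+1}$, soit, puisque $t_nX_{n+1} = t_{n+1}X_{n+1}$,
$$t_{n+1}^h\Lambda_{n+1} \subset M_{n+1} \subset t_{n+1}^{-h}\Lambda_{n+1},$$
c'est-à-dire l'encadrement au rang $n+1$ avec le même $h$, ce qui achève la récurrence.

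Le point le plus délicat est le deuxième paragraphe : il faut suivre avec soin les structures de module en jeu — notamment le twist par $\phi$ intervenant dans la définition de $\phi_n$ — pour vérifier que $\phi_n$ préserve exactement les réseaux standard, et comparer l'uniformisante $t_n$ de $X_n$ à celle de $X_{n+1}$, ce qui est précisément l'endroit où la structure explicite des anneaux $X_n$ est utilisée de façon essentielle (le corps résiduel $K_n$ étant de caractéristique nulle, $p$ et les $\pi_{n+1}$ y sont inversibles).
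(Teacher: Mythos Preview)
Your proof is correct and follows essentially the same approach as the paper: establish the bound at $n(r)$ using that $M_{n(r)}$ is a lattice, then propagate by induction using the $\phi$-compatibility of $\{M_n\}$, the fact that $\phi_n$ sends the standard lattice $X_{n+1}\otimes_{X_n}\Lambda_n$ onto $\Lambda_{n+1}$, and the equality $t_nX_{n+1}=t_{n+1}X_{n+1}$. Your write-up simply makes explicit the details the paper leaves to the reader (the elementary-divisor argument at the base step, the matrix computation for $\phi_n(\Lambda_n)=\Lambda_{n+1}$, and the factorisation $a^p-b^p$ for the comparison of uniformisers).
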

\begin{proof}
C'est l'analogue de \cite[Lemm. II.1.3]{Ber08}.
Comme $M_{n(r)}$ est un réseau de $X_{n(r)}\otimes_{\B_{\tau,\mathrm{rig},K}^{\dagger,r}}^{\iota_{n(r)}}D_r$, il existe $h$ tel que l'inclusion du lemme soit vraie pour $n=n(r)$. Comme de plus, $\phi_n(X_{n+1}\otimes_{X_n}M_n) = M_{n+1}$, $(\phi^{-n}([\tilde{\pi}])-\pi_n)X_{n+1} = (\phi^{-(n+1)}([\tilde{\pi}])-\pi_{n+1})X_{n+1}$ et 
$$\phi_n(X_{n+1}\otimes_{X_n}X_n\otimes_{\B_{\tau,\mathrm{rig},K}^{\dagger,r}}^{\iota_n}D_r) = X_{n+1}\otimes_{\B_{\tau,\mathrm{rig},K}^{\dagger,r}}^{\iota_{n+1}}D_r,$$
l'inclusion reste vraie pour $n+1$ si elle l'est pour $n$, ce qui permet de conclure.
\end{proof}

On fixe maintenant $h$ comme dans le lemme précédent, et on a un analogue de \cite[Lemm. II.1.4]{Ber08}~:

\begin{lemm}
\label{1/lambda}
On pose $M_r = \left\{x \in \lambda^{-h}D_r \textrm{ tels que } \iota_n(x) \in M_n \textrm{ pour tout } n \geq n(r)\right\}$. Alors $M_r$ est un $\B_{\tau,\mathrm{rig},K}^{\dagger,r}$-module libre de rang $d$.
\end{lemm}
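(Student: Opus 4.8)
The statement is the analogue of \cite[Lemm. II.1.4]{Ber08}, so the plan is to follow that proof line by line, using the ring-theoretic facts about $\B_{\tau,\mathrm{rig},K}^{\dagger,r}$ that have already been established (in particular that it is a Bézout domain, by \cite{lazardfonctions1962}, and Lemma~\ref{ForsterKisin}). First I would observe that $M_r$ is the intersection of $\lambda^{-h}D_r$ with the preimages $\iota_n^{-1}(M_n)$ as $n$ ranges over $n \ge n(r)$. By Lemma~\ref{exist h tel que t-h < < th} we have $(\phi^{-n}([\tilde\pi])-\pi_n)^h X_n \otimes^{\iota_n} D_r \subset M_n \subset (\phi^{-n}([\tilde\pi])-\pi_n)^{-h} X_n \otimes^{\iota_n} D_r$, so the condition $\iota_n(x)\in M_n$ is automatic for $x \in \lambda^h D_r$ and, at the other end, forces $x \in \lambda^{-h}D_r$ only finitely many conditions deep at each $n$; concretely, $\lambda^h D_r \subset M_r \subset \lambda^{-h} D_r$.

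\textbf{Key steps.} The argument then proceeds as follows. (1) Since $\B_{\tau,\mathrm{rig},K}^{\dagger,r}$ is a Bézout domain and $M_r$ is sandwiched between the two finitely generated (indeed free, rank-$d$) modules $\lambda^h D_r$ and $\lambda^{-h}D_r$, by Lemma~\ref{ForsterKisin} it suffices to show that $M_r$ is closed in $\lambda^{-h}D_r$ (equivalently in $D[1/\lambda]$ with its natural topology): closedness will give finite generation, and a finitely generated torsion-free module over a Bézout domain is free, of the correct rank $d$ because $M_r[1/\lambda] = D_r[1/\lambda]$ (which follows from $\lambda^h D_r \subset M_r$). (2) Closedness is checked by writing $M_r = \lambda^{-h}D_r \cap \bigcap_{n \ge n(r)} \iota_n^{-1}(M_n)$: each $\iota_n \colon \B_{\tau,\mathrm{rig},K}^{\dagger,r} \to X_n$ (extended to $D_r$) is continuous, each $M_n$ is an $X_n$-lattice hence closed in $Y_n \otimes^{\iota_n} D_r$, so each $\iota_n^{-1}(M_n)$ is closed, and an arbitrary intersection of closed sets is closed. (3) One should also verify that the intersection is really governed by finitely many $n$ up to a bounded correction — i.e. that the family $\{M_n\}$ being $\phi$-compatible makes the successive conditions coherent — but this is exactly what Lemma~\ref{exist h tel que t-h < < th} packages, so no extra work is needed beyond invoking it.

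\textbf{Main obstacle.} The only genuinely delicate point is the topological one in step~(2): one must make sure that the topology on $\lambda^{-h}D_r$ for which the $\iota_n^{-1}(M_n)$ are closed is the same as the Fréchet topology of $\B_{\tau,\mathrm{rig},K}^{\dagger,r}$ (so that Lemma~\ref{ForsterKisin}'s equivalence ``closed $\Leftrightarrow$ finitely generated'' applies), and that the maps $\iota_n$ are continuous for it. This is handled exactly as in \cite{Ber08}: the Fréchet structure on $\B_{\tau,\mathrm{rig},K}^{\dagger,r}$ is given by the valuations $V(\cdot,I)$ for compact $I \subset [r,+\infty)$, the map $\iota_n = \phi^{-n}$ lands in $X_n \subset \Bdrplus$ continuously by the construction of $X_n$, and $X_n$-lattices are closed in the $(\phi^{-n}([\tilde\pi])-\pi_n)$-adic (equivalently $\Ker\theta$-adic) topology. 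Once this is in place the proof is formal, and the rank count $d$ is immediate from $M_r \supset \lambda^h D_r$.
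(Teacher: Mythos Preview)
Your proposal is correct and follows essentially the same route as the paper: the paper's proof consists precisely of observing that the $\iota_n$ are continuous so $M_r$ is closed in $\lambda^{-h}D_r$, invoking Lemma~\ref{ForsterKisin} to conclude it is free of finite rank, and then using $\lambda^h D_r \subset M_r$ (from Lemma~\ref{exist h tel que t-h < < th}) to pin down the rank as $d$. Your step~(3) on $\phi$-compatibility and the extended discussion of topologies are not needed here (and you rightly note this); the paper dispatches the whole thing in two sentences.
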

\begin{proof}
Comme les applications $\iota_n~: \lambda^{-h}D_r \to M_n[1/\lambda]$ sont continues, $M_r$ est fermé dans $\lambda^{-h}D_r$ donc est libre de rang fini par le lemme \ref{ForsterKisin}. Comme de plus, $\lambda^hD_r \subset M_r$ par le lemme \ref{exist h tel que t-h < < th}, $M_r$ est un $\B_{\tau,\mathrm{rig},K}^{\dagger,r}$-module libre de rang $d$.
\end{proof}

\begin{lemm}
\label{X_nMr = Mn}
On a $X_n\otimes_{\B_{\tau,\mathrm{rig},K}^{\dagger,r}}^{\iota_n}M_r = M_n$ pour tout $n \geq n(r)$.
\end{lemm}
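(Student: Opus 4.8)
Lemme \ref{X_nMr = Mn} asserts that the module $M_r$ constructed in Lemme \ref{1/lambda} recovers the prescribed lattices $M_n$ after base change along $\iota_n$. This is the analogue of the corresponding step in Berger's construction, and I would model the proof on \cite[Lemm. II.1.5]{Ber08}.

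The plan is as follows. One inclusion $X_n\otimes_{\B_{\tau,\mathrm{rig},K}^{\dagger,r}}^{\iota_n}M_r \subset M_n$ is essentially immediate: by definition every $x \in M_r$ satisfies $\iota_n(x) \in M_n$, so the image of $M_r$ lies in $M_n$, and since $M_n$ is an $X_n$-module the $X_n$-span of that image is still contained in $M_n$; this gives a natural map $X_n\otimes_{\B_{\tau,\mathrm{rig},K}^{\dagger,r}}^{\iota_n}M_r \to M_n$ whose image lands in $M_n$. Both sides are free $X_n$-modules of rank $d$ (the left side by Lemme \ref{1/lambda} together with the fact that $X_n$ is flat over $\iota_n(\B_{\tau,\mathrm{rig},K}^{\dagger,r})$, the right side by hypothesis), so it suffices to check that this map is an isomorphism, and for that it is enough to check it after inverting $\lambda$ and then to compare elementary divisors, or equivalently to check it modulo the maximal ideal.

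Concretely, I would argue as follows. Both $X_n\otimes_{\B_{\tau,\mathrm{rig},K}^{\dagger,r}}^{\iota_n}M_r$ and $M_n$ are $X_n$-lattices inside $Y_n\otimes_{\B_{\tau,\mathrm{rig},K}^{\dagger,r}}^{\iota_n}D_r$, and by Lemme \ref{exist h tel que t-h < < th} together with the definition of $M_r$ they are both squeezed between $(\phi^{-n}([\tilde{\pi}])-\pi_n)^h X_n\otimes^{\iota_n}D_r$ and $(\phi^{-n}([\tilde{\pi}])-\pi_n)^{-h}X_n\otimes^{\iota_n}D_r$. Since $X_n$ is a complete discrete valuation ring with uniformizer $\phi^{-n}([\tilde{\pi}])-\pi_n$ (here one uses that $\phi^{-n}([\tilde{\pi}])-\pi_n$ generates $\Ker\theta$ in $X_n$), a lattice is determined by a tuple of elementary divisors, all powers of that uniformizer bounded by $h$ in absolute value. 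The key point is then to show that for $x \in \lambda^{-h}D_r$, the condition "$\iota_n(x) \in M_n$ for all $n$" cuts out exactly the lattice whose local behaviour at each $n$ is $M_n$: this is where the $\phi$-compatibility of $\{M_n\}$ is used, exactly as in Berger, to propagate a local condition at one level $n$ to all higher levels, so that finitely many conditions (those for $n$ in a bounded range, controlled by $h$ and by the vanishing of $\lambda$) already determine membership. A density/approximation argument — the image of $\lambda^{-h}D_r$ in $\prod_n M_n[1/\lambda]$ is dense enough in the relevant finite truncations — then shows that $M_r$ surjects onto each $M_n$ after base change.

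The main obstacle I anticipate is the approximation step: one needs that elements of $\B_{\tau,\mathrm{rig},K}^{\dagger,r}$ (resp. $D_r$) can be chosen with prescribed images in each $X_n$ up to a controlled error, which requires knowing that the maps $\iota_n$ have suitably dense image and that the zeros of $\lambda$ (the ideals $\phi^n(E([\tilde{\pi}]))$) are "independent" in the sense of Proposition \ref{divis élem} and \cite[Prop. 10]{lazardfonctions1962}. This is the analogue of the technical heart of \cite[Lemm. II.1.5]{Ber08}, and the proof should transcribe essentially verbatim, with $\B_{\mathrm{rig},K}^{\dagger}$ replaced by $\B_{\tau,\mathrm{rig},K}^{\dagger}$ and the cyclotomic $\iota_n$ by the Kummer $\iota_n = \phi^{-n}$ into $X_n$; the only thing to check is that all the facts about $\B_{\rm rig}^{\dagger}$-modules used there have the analogues recorded earlier in this section (Lemme \ref{ForsterKisin}, the structure of $X_n$ as a complete DVR, Proposition \ref{divis élem}), which they do.
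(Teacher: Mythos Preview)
Your proposal lands on the right overall strategy — this is indeed a transcription of \cite[Lemm.~II.1.5]{Ber08}, and the decisive technical input is the Lazard ``parties principales'' approximation you mention at the end. The paper's argument is exactly that: since both sides are complete for the $(\phi^{-n}([\tilde\pi])-\pi_n)$-adic topology, it suffices to show that the map $X_n\otimes^{\iota_n}M_r \to M_n/\lambda^h M_n$ is surjective; given $x\in M_n$, one first lifts it arbitrarily to $y\in\lambda^{-h}D_r$ (possible by Lemme~\ref{exist h tel que t-h < < th}), and then corrects $y$ by multiplying by a Lazard ``bump'' element $\lambda_{n,3h}\in\B_{\tau,\mathrm{rig},K}^{\dagger,r}$ that is $\equiv 1$ modulo a high power at $n$ and $\equiv 0$ modulo a high power at every $m\neq n$, so that $z=\lambda_{n,3h}\,y$ lies in $M_r$ and still hits $x$ modulo $\lambda^h M_n$.

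Two points to straighten out. First, the detour through elementary divisors and ``checking modulo the maximal ideal'' is not how the argument runs and would not obviously close: you have no direct handle on the elementary divisors of $X_n\otimes^{\iota_n}M_r$ inside $M_n$, because $M_r$ is defined by an infinite family of conditions. The paper bypasses this entirely via the completeness-plus-surjectivity trick. Second, your claim that $\phi$-compatibility is the mechanism ``to propagate a local condition at one level $n$ to all higher levels, so that finitely many conditions already determine membership'' is a misattribution. The $\phi$-compatibility of $\{M_n\}$ was already consumed in Lemme~\ref{exist h tel que t-h < < th} to produce the uniform bound $h$; it plays no further role in the present lemma. Membership in $M_r$ really does involve infinitely many conditions, and they are all handled simultaneously by the single Lazard element $\lambda_{n,3h}$, which vanishes to high order at every $m\neq n$ at once. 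So the proof is shorter and more direct than your sketch suggests: drop the elementary-divisor discussion and the $\phi$-compatibility step, and go straight to completeness plus the Lazard bump.
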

\begin{proof}
C'est l'analogue de \cite[Lemm. II.1.5]{Ber08}.

Comme $X_n\otimes_{\B_{\tau,\mathrm{rig},K}^{\dagger,r}}^{\iota_n}M_r$ et $M_n$ sont complets pour la topologie $(\phi^{-n}([\tilde{\pi}])-\pi_n)$-adique, il suffit de montrer que l'application naturelle
$$X_n\otimes_{\B_{\tau,\mathrm{rig},K}^{\dagger,r}}^{\iota_n}M_r \rightarrow M_n/(\phi^{-n}([\tilde{\pi}])-\pi_n)^hM_n = M_n/\lambda^hM_n$$
est surjective pour tout $n$.

Si $x \in M_n$, le lemme \ref{exist h tel que t-h < < th} montre qu'il existe $y \in \lambda^{-h}D_r$ tel que $\iota_n(y)-x \in \lambda^hM_n$.

La solution du problème des \guillemotleft{ }parties principales \guillemotright{ }(cf \cite[§8]{lazardfonctions1962}) montre que, si $\ell \geq 1$, il existe $\lambda_{n,\ell} \in \B_{\tau,\mathrm{rig},K}^{\dagger,r}$ tel que
$$\iota_n(\lambda_{n,\ell}) \in 1+(\phi^{-n}([\tilde{\pi}])-\pi_n)^\ell X_n$$
et
$$\iota_m(\lambda_{n,\ell}) \in (\phi^{-m}([\tilde{\pi}])-\pi_m)^\ell X_m$$
si $m \neq n$. On pose alors $z=\lambda_{n,3h}y$, de sorte que~:
$$\iota_n(z)-\iota_n(y) \in (\phi^{-n}([\tilde{\pi}])-\pi_n)^{2h}X_n \otimes_{\B_{\tau,\mathrm{rig},K}^{\dagger,r}}^{\iota_n}M_r \subset (\phi^{-n}([\tilde{\pi}])-\pi_n)^hM_n = \lambda^hM_n$$
et, si $m \neq n$, 
$$\iota_m(z) \in \lambda^{2h}X_m\otimes_{\B_{\tau,\mathrm{rig},K}^{\dagger,r}}^{\iota_n}D_r \subset \lambda^hM_m \subset M_m$$
et donc $z \in M_r$ et l'application naturelle
$$X_n\otimes_{\B_{\tau,\mathrm{rig},K}^{\dagger,r}}^{\iota_n}M_r \rightarrow (\phi^{-n}([\tilde{\pi}])-\pi_n)^hM_n$$
est bien surjective.
\end{proof}

\begin{proof}[Démonstration du théorème \ref{thm recollement}]
Soit 
$$M_r:= \left\{x \in \lambda^{-h}D_r \textrm{ tels que } \iota_n(x) \in M_n \textrm{ pour tout } n \geq n(r)\right\}.$$
Les lemmes \ref{1/lambda} et \ref{X_nMr = Mn} montrent que $M_r$ est un $\B_{\tau,\mathrm{rig},K}^{\dagger,r}$-module libre de rang $d$ et que $X_n \otimes_{\B_{\tau,\mathrm{rig},K}^{\dagger,r}}M_r = M_n$ pour tout $n \geq n(r)$. On définit donc $M:=\B_{\tau,\mathrm{rig},K}^\dagger \otimes_{\B_{\tau,\mathrm{rig},K}^{\dagger,r}}M_r$.

Comme la suite de réseaux $\{M_n\}$ est $\phi$-compatible, on en déduit que $\phi(M) \subset M$ et que $X_n\otimes_{\B_{\tau,\mathrm{rig},K}^{\dagger,r}}^{\iota_n}\phi^*(M)_{pr}=M_n$ pour tout $n \geq n(pr)$. La proposition \ref{unicité phi modules 1/lambda} appliquée à $M$ et $\phi^*(M)$ montre que $\phi^*(M)=M$, et on a bien $M[1/\lambda]=D[1/\lambda]$ par le lemme \ref{1/lambda}. Ce $M$ vérifie donc toutes les conditions, ce qui termine la preuve de l'existence.

Dans le cas où $D$ est un $(\phi,\tau)$-module tel que les $\B_{\mathrm{dR}}^{\G_L} \otimes_{X_n}M_n$ sont stables sous l'action induite de $G_\infty$, la définition de $M_r$ montre que $\Bt_{\mathrm{rig},L}^\dagger \otimes_{\B_{\tau,\mathrm{rig},K}^\dagger}M$ est muni d'une action de $G_\infty$ commutant à celle de $\phi$, ce qui conclut.
\end{proof}

On va maintenant utiliser ces résultats pour associer à un $(\phi,N)$-module filtré un $(\phi,\tau)$-module sur $\B_{\tau,\mathrm{rig},K}^\dagger$.

Si $D$ est un $(\phi,N)$-module filtré, on pose $\D=(\B_{\tau,\log,K}^\dagger\otimes_F D)^{N=0}$.

\begin{lemm}
On a
$$\B_{\tau,\log,K}^\dagger\otimes_{\B_{\tau,\mathrm{rig},K}^\dagger}\D=\B_{\tau,\log,K}^\dagger\otimes_F D.$$
\end{lemm}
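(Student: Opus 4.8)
The claim is that extending scalars from $\B_{\tau,\mathrm{rig},K}^\dagger$ to $\B_{\tau,\log,K}^\dagger$ along the inclusion $\D = (\B_{\tau,\log,K}^\dagger \otimes_F D)^{N=0} \hookrightarrow \B_{\tau,\log,K}^\dagger \otimes_F D$ recovers the whole of $\B_{\tau,\log,K}^\dagger \otimes_F D$. Recall $\B_{\tau,\log,K}^\dagger = \B_{\tau,\mathrm{rig},K}^\dagger[\log[\tilde{\pi}]]$ with $N(\log[\tilde{\pi}]^i) = -i\log[\tilde{\pi}]^{i-1}$, so $N$ acts on $\B_{\tau,\log,K}^\dagger$ as $-\tfrac{d}{d(\log[\tilde\pi])}$, and on $\B_{\tau,\log,K}^\dagger \otimes_F D$ it acts as $N_{\B}\otimes 1 + 1 \otimes N_D$. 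The plan is to produce, for each element of a fixed $F$-basis of $D$, an explicit lift inside $\D$, and to observe that these lifts still form a $\B_{\tau,\log,K}^\dagger$-basis of $\B_{\tau,\log,K}^\dagger \otimes_F D$; this immediately gives both inclusions.

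**Constructing the horizontal lifts.** Let $(d_1,\dots,d_m)$ be an $F$-basis of $D$ and let $\mathcal{N}$ denote the matrix of $N_D$ in this basis; since $N_D$ is nilpotent (it is a $(\phi,N)$-module, so $N_D^{m}=0$), the element
\[
\widehat{d} \;:=\; \exp\!\bigl(\log[\tilde{\pi}]\cdot \mathcal{N}\bigr)\, d \;=\; \sum_{k\ge 0} \frac{\log[\tilde{\pi}]^k}{k!}\,\mathcal{N}^k\, d
\]
is a finite sum, hence genuinely lies in $\B_{\tau,\log,K}^\dagger \otimes_F D$, for $d$ running over the $d_i$. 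A direct computation with the Leibniz rule for $N$ shows $N(\widehat{d_i}) = 0$: differentiating $\exp(\log[\tilde\pi]\cdot\mathcal N)$ with respect to $\log[\tilde\pi]$ brings down a factor $\mathcal N$ from the left, which cancels against $1\otimes N_D$ applied to $d_i$. So $\widehat{d_1},\dots,\widehat{d_m} \in \D$. First I would record this computation; it is the only place a small verification is needed, and it is routine because the exponential is a polynomial in $\log[\tilde{\pi}]$.

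**Concluding.** The change-of-basis matrix from $(d_1,\dots,d_m)$ to $(\widehat{d_1},\dots,\widehat{d_m})$, viewed over $\B_{\tau,\log,K}^\dagger$, is $\exp(\log[\tilde{\pi}]\cdot\mathcal{N})$, which is unipotent and therefore invertible in $\mathrm{M}_m(\B_{\tau,\log,K}^\dagger)$ (its inverse is $\exp(-\log[\tilde{\pi}]\cdot\mathcal{N})$, again a finite sum). Hence $(\widehat{d_1},\dots,\widehat{d_m})$ is a $\B_{\tau,\log,K}^\dagger$-basis of $\B_{\tau,\log,K}^\dagger \otimes_F D$. Since all the $\widehat{d_i}$ lie in $\D$, we get $\B_{\tau,\log,K}^\dagger \otimes_{\B_{\tau,\mathrm{rig},K}^\dagger}\D \twoheadrightarrow \B_{\tau,\log,K}^\dagger \otimes_F D$; the reverse inclusion is automatic because $\D \subset \B_{\tau,\log,K}^\dagger \otimes_F D$ and the latter is already a $\B_{\tau,\log,K}^\dagger$-module. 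This proves the equality. There is no real obstacle here: the only subtlety is making sure the exponential series terminates, which it does precisely because $N_D$ is nilpotent on a $(\phi,N)$-module; one should state that nilpotency explicitly where it is used.
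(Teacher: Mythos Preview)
Your proof is correct and follows essentially the same strategy as the paper: exploit the nilpotency of $N_D$ to build explicit horizontal lifts of an $F$-basis of $D$ inside $\D$, then observe that the change-of-basis matrix to these lifts is unipotent over $\B_{\tau,\log,K}^\dagger$. The only cosmetic difference is packaging: the paper first puts $N_D$ in Jordan normal form and writes the lifts block by block, whereas you do it in one stroke via the matrix exponential $\exp(\log[\tilde{\pi}]\cdot\mathcal{N})$, which is arguably cleaner and avoids the choice of Jordan basis.
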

\begin{proof}
Par définition de $\D$, on a $\D \subset \B_{\tau,\log,K}^\dagger\otimes_F D$ et il suffit donc de montrer que les éléments de $\D$ engendrent $\B_{\tau,\log,K}^\dagger\otimes_F D$. Comme $N$ est nilpotent sur $D$, il existe une base $(e_1,\cdots,e_d)$ dans laquelle $\Mat(N)$ est sous forme de Jordan, c'est-à-dire qu'il existe une partition de $d$ : $d_1+\cdots+d_r = d$ telle que pour tout $i \in \{1,\cdots,r\}$, le $F$-espace vectoriel $E_{d_i}=\mathrm{Vect}_F(e_{d_1+\cdots+d_i},e_{d_1+\cdots+d_i+1},\cdots,e_{d_1+\cdots+d_{i+1}-1})$ est stable par $N$ et qu'on ait $N(e_{d_1+\cdots+d_i}) = 0$ pour tout $i$ et $N(e_{d_1+\cdots+d_i+k}) = e_{d_1+\cdots+d_i+(k-1)}$ pour tout $k \in \left\{1,\cdots,d_{i+1}\right\}$. 

Comme les $E_i$ sont stables par $N$, il suffit de montrer que pour chacun des $E_i$, il existe une base de $\B_{\tau,\log,K}^\dagger\otimes_{\B_{\tau,\mathrm{rig},K}^\dagger}E_i$ tuée par $N$. Montrons le pour $E_1$, la démonstration étant la même pour les autres. 

On a $N(e_1) = 0$ donc $e_1'=1 \otimes e_1$ est un élément de $\B_{\tau,\log,K}^\dagger\otimes_{\B_{\tau,\mathrm{rig},K}^\dagger}E_1$ tué par $N$. On a $N(e_2) = e_1$ donc $N(1 \otimes e_2 - \log[\tilde{\pi}]\otimes e_1) = 0$ et donc $e_2' = 1 \otimes e_2 - \log[\tilde{\pi}]\otimes e_1$ est tué par $N$. De même, si on pose $e_j'=\sum_{k=1}^j (-1)^{j-k}\log[\tilde{\pi}]^{j-k}\otimes e_{k}$ pour $j \in \{1,\cdots,d_1\}$, alors comme $N(e_{i+1})=e_i$ pour tout $i$, $e_j'$ est bien tué par $N$. Pour conclure, il suffit de remarquer que comme $(e_1,\cdots,e_{d_1})$ est une base de $E_1$, la famille $(e_1',\cdots,e_{d_1}')$ est libre dans $\B_{\tau,\log,K}^\dagger\otimes_{\B_{\tau,\mathrm{rig},K}^\dagger}E_1$.
\end{proof}

En particulier, ce lemme montre que $\D$ est un $\phi$-module sur $\B_{\tau,\mathrm{rig},K}^\dagger$. 

Pour $n \in \Z$, on a $\phi^{-n}(F) = F \subset K$, ce qui confère à $K$, et à $D$, une structure de $\phi^{-n}(F)$-module. On note $\iota_n(D)$ le $\phi^{-n}(F)$-module ainsi obtenu, et on écrira $K\otimes_F^{\iota_n}D$ plutôt que $K \otimes_{\phi^{-n}(F)}\iota_n(D)$ afin d'alléger les notations. L'application $\xi_n~: K\otimes_FD \to K\otimes_F^{\iota_n}D$, envoyant $\mu \otimes x$ sur $\mu \otimes \iota_n(\phi^n(x))$ est un isomorphisme puisque $\iota_n=\phi^{-n}$, ce qui permet de munir $D_K^n:=K\otimes_F^{\iota_n}D$ de la filtration image par cette application $\xi_n$. On définit également une filtration sur $Y_n$ par $\Fil^iY_n = (\phi^{-n}([\tilde{\pi}])-\pi_n)^iY_n$, ce qui nous donne une filtration sur $Y_n \otimes_KD_K^n$, et on pose $M_n(D):=\Fil^0(Y_n\otimes_KD_K^n)$. Le foncteur $D \mapsto M_n(D)$ est alors un $\otimes$-foncteur exact. Comme de plus, 
$$\B_{\tau,\log,K}^\dagger\otimes_{\B_{\tau,\mathrm{rig},K}^\dagger}\D=\B_{\tau,\log,K}^\dagger\otimes_FD$$
et $\iota_n(\B_{\tau,\log,K}^{\dagger,r})\subset Y_n$ pour $n \geq n(r)$, on trouve que
$$Y_n \otimes_{\B_{\tau,\mathrm{rig},K}^{\dagger,r}}^{\iota_n}\D_r=Y_n\otimes_KD_K^n$$
pour $n \geq n(r)$, et donc $M_n(D)$ est un réseau de $Y_n \otimes_{\B_{\tau,\mathrm{rig},K}^{\dagger,r}}^{\iota_n}\D_r$.

\begin{prop}
La famille de réseaux $\{M_n\}$ de $Y_n \otimes_{\B_{\tau,\mathrm{rig},K}^{\dagger,r}}^{\iota_n}\D_r$ définie par 
$$M_n(D)=\Fil^0(Y_n\otimes_KD_K^n)$$
est $\phi$-compatible.
\end{prop}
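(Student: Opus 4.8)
The plan is to transport the statement, through the identifications $Y_m\otimes_{\B_{\tau,\mathrm{rig},K}^{\dagger,r}}^{\iota_m}\D_r = Y_m\otimes_K D_K^m$ recalled just above, into a computation of filtrations inside $Y_{n+1}\otimes_K D_K^{n+1}$; the argument runs parallel to \cite[§ II.1]{Ber08}. Fix $r\geq r(\D)$ and $n\geq n(r)$, so that the maps $\iota_m$, the identifications above, and the transition maps $\phi_m$ are all available for $m\in\{n,n+1\}$. The main step is to describe $\phi_n$ explicitly in these coordinates. On $\B_{\tau,\log,K}^\dagger\otimes_F D = \B_{\tau,\log,K}^\dagger\otimes_{\B_{\tau,\mathrm{rig},K}^\dagger}\D$ the Frobenius is $\phi_{\B_{\tau,\log,K}^\dagger}\otimes\phi_D$, and the maps $\iota_m$ are compatible with $\phi$ in the way recorded in the definition of $\phi_n$, namely $\phi_n(u_{n+1}\otimes u_n\otimes\iota_n(x)) = u_{n+1}u_n\otimes\iota_{n+1}(\phi(x))$; combining this with $\iota_{n+1}\circ\phi = \iota_n$ on the period rings, with the identity $\xi_m$ used to define the filtration on $D_K^m$, and with the fact that $\iota_m(\log[\tilde{\pi}])$ lies in $(\phi^{-m}([\tilde{\pi}])-\pi_m)X_m$ — so that the element of $\D$ lifting a given vector of $D$ and killed by $N$ reduces to that vector modulo $\Fil^1$ — I would check that, under the two identifications, $\phi_n$ becomes the $Y_{n+1}$-linear base change of the canonical \emph{filtered} isomorphism $D_K^n\xrightarrow{\ \xi_n^{-1}\ }D_K\xrightarrow{\ \xi_{n+1}\ }D_K^{n+1}$; in particular it carries $\Fil^j D_K^n$ onto $\Fil^j D_K^{n+1}$ for every $j$.

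Granting this identification, the rest is formal. By the usual formula for the filtration of a tensor product and the definition of $\Fil^\bullet Y_m$,
$$M_m(D) = \Fil^0\!\left(Y_m\otimes_K D_K^m\right) = \sum_{j\in\Z}(\phi^{-m}([\tilde{\pi}])-\pi_m)^{-j}\ X_m\otimes_K\Fil^j D_K^m ,$$
a finite sum, as only finitely many graded pieces of $D_K^m$ are nonzero. Applying $\phi_n$ and using that it sends $\Fil^j D_K^n$ onto $\Fil^j D_K^{n+1}$ and the coefficients $Y_n$ into $Y_{n+1}$, one gets
$$\phi_n\!\left(X_{n+1}\otimes_{X_n}M_n(D)\right) = \sum_{j\in\Z}(\phi^{-n}([\tilde{\pi}])-\pi_n)^{-j}\ X_{n+1}\otimes_K\Fil^j D_K^{n+1} .$$
Now $\phi^{-n}([\tilde{\pi}])-\pi_n$ and $\phi^{-(n+1)}([\tilde{\pi}])-\pi_{n+1}$ both generate $\Ker(\theta)$ in $X_{n+1}$, hence differ by a unit of $X_{n+1}$ — a fact already used in the proof of Lemma \ref{exist h tel que t-h < < th} — so each summand is unchanged when $\phi^{-n}([\tilde{\pi}])-\pi_n$ is replaced by $\phi^{-(n+1)}([\tilde{\pi}])-\pi_{n+1}$. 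The right-hand side is then $\Fil^0(Y_{n+1}\otimes_K D_K^{n+1}) = M_{n+1}(D)$, which is precisely the defining relation $\phi_n(X_{n+1}\otimes_{X_n}M_n(D)) = M_{n+1}(D)$ of a $\phi$-compatible family.

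The only genuine difficulty I expect is the first step: identifying $\phi_n$ requires unwinding three layers of twists — $\iota_m=\phi^{-m}$ on the period rings, the Frobenius twist of the $F$-structure on $D$ entering the definition of $D_K^m$, and the maps $\xi_m$ fixing the filtration on $D_K^m$ — and checking that together they make $\phi_n$ into the plain base change of the canonical filtered identification of the $D_K^m$. The two inputs that make this work are the compatibility $\iota_{n+1}\circ\phi=\iota_n$ (valid already on $\B_{\tau,\log,K}^\dagger\otimes_F D$, hence on $\D$) and the fact that $\log[\tilde{\pi}]$ lands in $\Fil^1$ under each $\iota_m$, which prevents any spurious unit or $E(0)$-factor from appearing; everything else is routine or already recorded in the text. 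Alternatively, one may organise the computation by dévissage along the split filtration of $D_K$, reducing to the case of a single filtration jump $i$, where $M_m(D) = (\phi^{-m}([\tilde{\pi}])-\pi_m)^{-i}\,X_m\otimes_{\B_{\tau,\mathrm{rig},K}^{\dagger,r}}^{\iota_m}\D_r$ and the claim follows at once from $\phi_n\!\left(X_{n+1}\otimes_{X_n}(X_n\otimes^{\iota_n}\D_r)\right) = X_{n+1}\otimes^{\iota_{n+1}}\D_r$ together with the comparison of uniformisers.
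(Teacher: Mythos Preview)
Your proposal is correct and follows essentially the same route as the paper. The paper compresses your main identification into the single identity $\xi_{n+1} = \phi_n \circ \xi_n$ on $D_K$, then chooses a basis $e_1,\ldots,e_d$ of $D_K$ adapted to the filtration (with $h_i = t_H(e_i)$) so that $M_n(D)$ has the explicit basis $(\phi^{-n}([\tilde{\pi}])-\pi_n)^{-h_i}\otimes\xi_n(e_i)$ --- your $\Fil^0$ decomposition in coordinates --- from which $\phi$-compatibility is immediate.
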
 
\begin{proof}
C'est l'analogue de \cite[Prop. II.2.1]{Ber08}.
Le $X_n$-module $M_n(D)=\Fil^0(Y_n\otimes_KD_K^n)$ est libre de rang $d$ et engendré par une base de la forme $(\phi^{-n}([\tilde{\pi}])-\pi_n)^{-h_i}\otimes\xi_n(e_i)$, où $e_1,\cdots,e_d$ est une base de $D_K$ adaptée à la filtration, et $h_i=t_H(e_i)$, de sorte que $M_{n+1}(D)=\phi_n(X_{n+1}\otimes_{X_n}M_n(D))$ puisque $\xi_{n+1} = \phi_n \otimes \xi_n$ sur $D_K$.
\end{proof}

Le théorème \ref{thm recollement} nous permet de définir un $(\phi,\tau)$-module, de façon analogue à \cite[Déf. II.2.2]{Ber08}.
\begin{defi}
\label{defi phiNphitau}
Si $D$ est un $(\phi,N)$-module filtré, on définit $\mathcal{M}(D)$ le $(\phi,\tau)$-module obtenu à partir du théorème \ref{thm recollement} à partir des réseaux 
$$M_n(D)=\Fil^0(Y_n\otimes_KD_K^n)$$
pour le $(\phi,\tau)$-module $\D=(\B_{\tau,\log,K}^\dagger\otimes_F D)^{N=0}$.
\end{defi}

\begin{prop}
Si $D$ est un $(\phi,N,\G_{M/K})$-module filtré et si $D'$ est le $(\phi,N)$-module filtré relatif à $M$ qu'on en déduit par oubli de l'action de $\G_{M/K}$, alors $\mathcal{M}(D')$ est un $(\phi,\tau_M)$-module sur $(\B_{\tau,\mathrm{rig},M}^{\dagger,r},\Bt_{\mathrm{rig},L_M}^{\dagger,r})$ muni d'une action de $\G_{M/K}$.
\end{prop}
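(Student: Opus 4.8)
The plan is to leverage the Galois descent result (Proposition~\ref{descente galoisienne pour phitauMrig}) together with the functoriality of the construction $D \mapsto \mathcal{M}(D)$. First I would recall that, since $D$ is a $(\phi,N,\G_{M/K})$-module filtré, the underlying $(\phi,N)$-module filtré $D'$ relative to $M$ carries a residual semi-linear action of $\G_{M/K}$ commuting with $\phi$ and $N$, and that the filtration on $D'_M = M \otimes_{M_0} D$ is stable under $\G_{M/K}$. The strategy is then to track how this action propagates through each step of the construction of $\mathcal{M}(D')$: passing to $\D' = (\B_{\tau,\log,M}^{\dagger}\otimes_{M_0} D')^{N=0}$, forming the lattices $M_n(D') = \Fil^0(Y_{n,M}\otimes_M D'^{n}_M)$, and finally gluing via Theorem~\ref{thm recollement}.

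The key steps, in order, are as follows. First, one must observe that $\G_{M/K}$ acts naturally on $\B_{\tau,\log,M}^{\dagger}$ (through its action on $M$ and trivially on $\log[\tilde{\pi}]$ and $[\tilde{\pi}]$, which lie in the $K$-rational part), hence on $\B_{\tau,\log,M}^{\dagger}\otimes_{M_0} D'$, and this action commutes with $N$ and $\phi$; therefore it stabilizes $\D'$, giving $\D'$ a $\G_{M/K}$-action compatible with the $\phi$-module structure over $\B_{\tau,\mathrm{rig},M}^{\dagger}$. Second, one checks that $\G_{M/K}$ acts compatibly on the rings $X_{n,M}$, $Y_{n,M}$ (since these are completions of $M_n \otimes_{\O_F}\O_F[\![\phi^{-n}([\tilde{\pi}])]\!]$, and $M_n/K_n$ is Galois with group $\G_{M/K}$ because $M$ and $K_\infty$ are linearly disjoint over $K$), and that the isomorphism $\xi_{n}$ and the filtration on $D'^{n}_M$ are $\G_{M/K}$-equivariant — this uses exactly that $\Fil^\bullet D'_M$ is $\G_{M/K}$-stable. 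It follows that each lattice $M_n(D') = \Fil^0(Y_{n,M}\otimes_M D'^{n}_M)$ is stable under the induced $\G_{M/K}$-action. Third, I would invoke the uniqueness part of Theorem~\ref{thm recollement} (equivalently Proposition~\ref{unicité phi modules 1/lambda}): since applying any $g \in \G_{M/K}$ permutes the data $\{M_n(D')\}$ — in fact fixes it — the $\phi$-module $\mathcal{M}(D') \subset \D'[1/\lambda]$ produced by the gluing is stable under $\G_{M/K}$ by unicity. One then extends scalars: $\Bt_{\mathrm{rig},L_M}^{\dagger}\otimes_{\B_{\tau,\mathrm{rig},M}^{\dagger}}\mathcal{M}(D')$ inherits the $\Gal(L_M/M)$-action coming from the $(\phi,\tau_M)$-module structure of Definition~\ref{defi phiNphitau}, and the $\G_{M/K}$-action just constructed descends $\mathcal{M}(D')$ through $H_{\tau,K}$; checking the three conditions of Definition~\ref{phi tauM action GMK rig} is then a matter of unwinding how $\G_{M}/\G_{L_M}$ sits inside $\G_K/\G_L$ and agrees with $\Gal(L_M/M)$, which is built into the construction.

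The main obstacle I anticipate is the second step: verifying that the $\G_{M/K}$-action on the local rings $X_{n,M}$, $Y_{n,M}$ and on the filtered objects $D'^{n}_M$ is the "right" one, i.e.\ that it is genuinely compatible with the embeddings $\iota_n$ and with the identifications $Y_{n,M}\otimes_{\B_{\tau,\mathrm{rig},M}^{\dagger,r}}^{\iota_n}\D'_r = Y_{n,M}\otimes_M D'^{n}_M$. This requires care because $\iota_n = \phi^{-n}$ twists the $M_0$-structure, and one must confirm that the $\G_{M/K}$-action commutes with $\phi^{-n}$ on $\B_{\tau,\log,M}^{\dagger}$ and with the twisting isomorphism $\xi_n$ — which it does, precisely because $\G_{M/K}$ commutes with $\phi$ on $D$ and acts $M_0$-semilinearly while $\phi^{-n}$ acts on the $M_0$-coefficients. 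Once this compatibility is nailed down, everything else follows formally from the uniqueness in Theorem~\ref{thm recollement} and Proposition~\ref{descente galoisienne pour phitauMrig}, exactly as in \cite[Prop. II.2.3]{Ber08}.
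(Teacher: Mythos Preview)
Your proposal is correct and follows the same idea as the paper, only in far greater detail: the paper's proof is the single sentence ``C'est une cons\'equence directe de la d\'efinition~\ref{phi tauM action GMK rig}'', which amounts precisely to your observation that every step of the construction of $\mathcal{M}(D')$ is $\G_K$-equivariant (the $\G_{M/K}$-action on $D$, the $\G_K$-action on the period rings, and the $\G_{M/K}$-stability of the filtration all being built into the hypotheses), so that the three conditions of Definition~\ref{phi tauM action GMK rig} are satisfied by functoriality. One small remark: you do not actually need Proposition~\ref{descente galoisienne pour phitauMrig} for this statement --- that descent result is what is used immediately \emph{after}, in defining $\mathcal{M}(D) := \mathcal{M}(D')^{H_{\tau,K}}$, not in showing that $\mathcal{M}(D')$ carries a $\G_{M/K}$-action.
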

\begin{proof}
C'est une conséquence directe de la définition \ref{phi tauM action GMK rig}.
\end{proof}

\begin{defi}
Si $D$ est un $(\phi,N,\G_{M/K})$-module filtré, on définit $\mathcal{M}(D)$ par $\mathcal{M}(D)=\mathcal{M}(D')^{H_{\tau,K}}$.
\end{defi}

\begin{prop}
\label{prop phiNrelatif}
Si $D$ est un $(\phi,N,\G_{M/K})$-module filtré, et si on pose $M_n(D_K):=\Fil^0(Y_n \otimes_K D_K^n)$, alors $X_n \otimes_{\B_{\tau,\mathrm{rig},K}^{\dagger,r}}^{\iota_n}\mathcal{M}(D)_r = M_n(D_K)$ pour tout $n \geq n(r)$.
\end{prop}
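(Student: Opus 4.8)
The idea is to reduce the statement to the absolute case, handled by Theorem~\ref{thm recollement} and Lemma~\ref{X_nMr = Mn}, via extension of scalars from $K$ to $M$. Recall that, by construction, $\mathcal{M}(D) = \mathcal{M}(D')^{H_{\tau,K}}$ where $D'$ is the $(\phi,N)$-module filtré relatif à $M$ deduced from $D$ by forgetting the $\G_{M/K}$-action, and that by the preceding proposition $\mathcal{M}(D')$ is a $(\phi,\tau_M)$-module sur $(\B_{\tau,\mathrm{rig},M}^\dagger,\Bt_{\mathrm{rig},L_M}^\dagger)$ muni d'une action de $\G_{M/K}$; the proposition~\ref{descente galoisienne pour phitauMrig} then gives $\mathcal{M}(D') = \B_{\tau,\mathrm{rig},M}^\dagger\otimes_{\B_{\tau,\mathrm{rig},K}^\dagger}\mathcal{M}(D)$.

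First I would fix $r$ large enough, for instance $r\geq\max(r(\mathcal{M}(D)),r(\mathcal{M}(D')),r(M))$, so that $\mathcal{M}(D)_r$ is defined by the structure theorem for $\phi$-modules sur $\B_{\tau,\mathrm{rig},K}^\dagger$, so that $\B_{\tau,\mathrm{rig},M}^{\dagger,r}$ is finite free over $\B_{\tau,\mathrm{rig},K}^{\dagger,r}$, and so that, by the uniqueness in that same structure theorem applied to $\mathcal{M}(D')$, one has $\mathcal{M}(D')_r = \B_{\tau,\mathrm{rig},M}^{\dagger,r}\otimes_{\B_{\tau,\mathrm{rig},K}^{\dagger,r}}\mathcal{M}(D)_r$. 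Let $n\geq n(r)$. Applying the construction of $\mathcal{M}(D')$ (the version relative to $M$ of definition~\ref{defi phiNphitau} and lemma~\ref{X_nMr = Mn}) gives
$$X_n^{(M)}\otimes_{\B_{\tau,\mathrm{rig},M}^{\dagger,r}}^{\iota_n}\mathcal{M}(D')_r = \Fil^0\big(Y_n^{(M)}\otimes_M D_M^n\big) =: M_n(D_M),$$
where $X_n^{(M)}$ and $Y_n^{(M)}$ are the rings attached to $M_\infty/M$ as $X_n$ and $Y_n$ are to $K_\infty/K$, and $D_M^n = M\otimes_{M_0}^{\iota_n}D$ carries the filtration transported from $D_M$. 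Since $M$ and $K_\infty$ are linearly disjoint over $K$ and $\pi_n\in K_n$, the same computation as for $X_n$ shows that $X_n^{(M)}$ and $Y_n^{(M)}$ are closed subrings of $\Bdrplus$ and that $X_n^{(M)}=M\otimes_K X_n$, $Y_n^{(M)}=M\otimes_K Y_n$ and $D_M^n=M\otimes_K D_K^n$, compatibly with $\iota_n$ and with the filtrations involved.

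I would then combine these identities. Using $\mathcal{M}(D')_r = \B_{\tau,\mathrm{rig},M}^{\dagger,r}\otimes_{\B_{\tau,\mathrm{rig},K}^{\dagger,r}}\mathcal{M}(D)_r$ and $X_n^{(M)}=M\otimes_K X_n$, the left-hand side above rewrites as $X_n^{(M)}\otimes_{X_n}\big(X_n\otimes_{\B_{\tau,\mathrm{rig},K}^{\dagger,r}}^{\iota_n}\mathcal{M}(D)_r\big)$; using $D_M^n=M\otimes_K D_K^n$ and the flatness of $M/K$ (which commutes with $\Fil^0$), the right-hand side rewrites as $X_n^{(M)}\otimes_{X_n}M_n(D_K)$. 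These two expressions being sous-$X_n^{(M)}$-modules of $Y_n^{(M)}\otimes_M D_M^n = X_n^{(M)}\otimes_{X_n}(Y_n\otimes_K D_K^n)$, and $X_n\to X_n^{(M)}$ being faithfully flat (it is $-\otimes_K M$ with $M/K$ finite), faithfully flat descent yields $X_n\otimes_{\B_{\tau,\mathrm{rig},K}^{\dagger,r}}^{\iota_n}\mathcal{M}(D)_r = M_n(D_K)$ for all $n\geq n(r)$, which is the assertion. Incidentally, the $\phi$-compatibility of the family $\{M_n(D_K)\}$ required by Theorem~\ref{thm recollement} follows by descent from that of the $\{M_n(D_M)\}$.

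The main obstacle is the delicate part of the next-to-last paragraph: checking that $X_n^{(M)}=M\otimes_K X_n$, $Y_n^{(M)}=M\otimes_K Y_n$ and $D_M^n=M\otimes_K D_K^n$ hold as filtered rings, resp. modules, and that $\iota_n$ is compatible with the extension of scalars $K\to M$; once this is in place, everything else is formal.
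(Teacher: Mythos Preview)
Your proof is correct and follows essentially the same route as the paper, which also reduces to the absolute case over $M$ (where lemma~\ref{X_nMr = Mn} applies to $\mathcal{M}(D')$) and then descends to $K$. The only difference is phrasing: the paper works with $H_{\tau,K}$-invariants (Galois descent along $M/K$) rather than faithfully flat descent, citing \cite[4.3.2]{fontaine1994representations} for the identification $\Fil^0(Y_{M,n}\otimes_M D_M^n)^{H_{\tau,K}} = \Fil^0(Y_n \otimes_K D_K^n)$, which is exactly your ``$\Fil^0$ commutes with $M\otimes_K-$'' combined with descent.
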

\begin{proof}
C'est l'analogue de \cite[Prop. II.2.5]{Ber08}.

Par construction, $X_n \otimes_{\B_{\tau,\mathrm{rig},K}^{\dagger,r}}^{\iota_n}\mathcal{M}(D)_r \subset \Fil^0(Y_{M,n} \otimes_M D_M^n)^{H_{\tau,K}}$. Par \cite[4.3.2]{fontaine1994representations}, on a 
$$\Fil^0(Y_{M,n}\otimes_M D_M^n)^{H_{\tau,K}} = \Fil^0(Y_n \otimes_K D_K^n),$$
et donc $X_n\otimes_{\B_{\tau,\mathrm{rig},K}^{\dagger,r}}^{\iota_n}\mathcal{M}(D)_r \subset M_n(D_K)$.

Si maintenant $x \in M_n(D_K)$, alors $x \in (X_{M,n}\otimes_{\B_{\tau,\mathrm{rig},M}^{\dagger,r}}^{\iota_n}\mathcal{M}(D)_r)^{H_{\tau,K}} = X_n\otimes_{\B_{\tau,\mathrm{rig},K}^{\dagger,r}}^{\iota_n}\mathcal{M}(D)_r$, et donc l'application
$$X_n\otimes_{\B_{\tau,\mathrm{rig},K}^{\dagger,r}}^{\iota_n}\mathcal{M}(D)_r \rightarrow M_n(D_K)$$
est un isomorphisme.
\end{proof}

On dispose d'un théorème analogue à \cite[Thm. II.2.6]{Ber08}~:
\begin{theo}
Le foncteur $D \mapsto \cal{M}(D)$ est un foncteur exact de la catégorie tannakienne des $(\phi,N,\G_K)$-modules filtrés dans la catégorie tannakienne des $(\phi,\tau)$-modules sur $\B_{\tau,\mathrm{rig},K}^\dagger$, et le rang de $\cal{M}(D)$ est égal à la dimension de $D$.
\end{theo}
\begin{proof}
La définition \ref{defi phiNphitau} montre que les $M_n(D)$ ont même rang que la dimension de $D$, et donc $\cal{M}(D)$ a pour rang la dimension de $D$.

Si on a une suite exacte 
$$0 \to D_1 \to D_2 \to D_3 \to 0,$$
on va montrer que $\cal{M}(D_2) \to \cal{M}(D_3)$ est surjectif. Comme $D \mapsto M_n(D)$ est $\otimes$-exact, $M_n(D_2) \to M_n(D_3)$ est surjectif pour tout $n$, de sorte que l'image $\M$ de $\cal{M}(D_2)$ dans $\cal{M}(D_3)$ vérifie 
$$X_n\otimes_{\B_{\tau,\mathrm{rig},K}^{\dagger,r}}^{\iota_n}\M_r=M_n(D_3)$$
et donc par l'unicité dans le théorème \ref{thm recollement}, l'image de $\cal{M}(D_2)$ dans $\cal{M}(D_3)$ est $\cal{M}(D_3)$, ce qui prouve la surjectivité.

Les mêmes arguments montrent que $\cal{M}(D_1\otimes D_2) = \cal{M}(D_1)\otimes\cal{M}(D_2)$.
\end{proof}

On va maintenant montrer que ce foncteur $D \mapsto \cal{M}(D)$ étend celui de Kisin~:
\begin{prop}
\label{étend foncteur Kisin}
Si $D$ est un $(\phi,N)$-module filtré effectif, et si $\mathfrak{M}(D)$ désigne le $(\phi,N_\nabla)$-module sur $\B_{\tau,\mathrm{rig},K}^+$ obtenu à partir de $D$ via le lemme \ref{phiNeffphinab}, alors $\cal{M}(D) \simeq \B_{\tau,\mathrm{rig},K}^\dagger\otimes_{\B_{\tau,\mathrm{rig},K}^+}\mathfrak{M}(D)$ en tant que $(\phi,N_{\nabla})$-modules.
\end{prop}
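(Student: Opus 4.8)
La stratégie est la même que pour l'énoncé analogue dans \cite[§II.2]{Ber08}. On va montrer que les deux $\phi$-modules $\cal{M}(D)$ et $\B_{\tau,\mathrm{rig},K}^\dagger\otimes_{\B_{\tau,\mathrm{rig},K}^+}\mathfrak{M}(D)$ s'identifient à deux sous-$\phi$-modules libres de rang $d=\dim_F D$ du même espace $\D[1/\lambda]$, où $\D = (\B_{\tau,\log,K}^\dagger\otimes_F D)^{N=0}$, puis on conclura à l'aide du critère d'unicité de la proposition \ref{unicité phi modules 1/lambda}.

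D'abord, $\mathfrak{M}(D) \subset (\B_{\tau,\log,K}^+[1/\lambda]\otimes_F D)^{N=0} \subset (\B_{\tau,\log,K}^\dagger[1/\lambda]\otimes_F D)^{N=0}$. Comme $N$ est $\B_{\tau,\mathrm{rig},K}^\dagger$-linéaire et que $\B_{\tau,\log,K}^\dagger\otimes_{\B_{\tau,\mathrm{rig},K}^\dagger}\D = \B_{\tau,\log,K}^\dagger\otimes_F D$, la platitude de la localisation donne $(\B_{\tau,\log,K}^\dagger[1/\lambda]\otimes_F D)^{N=0} = \D[1/\lambda]$ ; ainsi $\B_{\tau,\mathrm{rig},K}^\dagger\otimes_{\B_{\tau,\mathrm{rig},K}^+}\mathfrak{M}(D)$ se plonge dans $\D[1/\lambda]$, et comme inverser $\lambda$ rend $\phi$ inversible et rend vacuantes les conditions de filtration définissant $\mathfrak{M}(D)$, on obtient $\B_{\tau,\mathrm{rig},K}^\dagger[1/\lambda]\otimes_{\B_{\tau,\mathrm{rig},K}^+}\mathfrak{M}(D) = \D[1/\lambda]$. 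En particulier $\phi$ devient inversible sur ce module au-dessus de $\B_{\tau,\mathrm{rig},K}^{\dagger,r}$ pour $r$ assez grand, de sorte qu'il admet un $r(D)$ et que son sous-module $\B_{\tau,\mathrm{rig},K}^{\dagger,r}$-entier $(\cdot)_r$ a bien un sens. De l'autre côté, $\cal{M}(D)$ est par la définition \ref{defi phiNphitau} le $\phi$-module de $\D[1/\lambda]$ obtenu par recollement (théorème \ref{thm recollement}) des réseaux $M_n(D) = \Fil^0(Y_n\otimes_K D_K^n)$, donc est lui aussi libre de rang $d$ avec $\cal{M}(D)[1/\lambda] = \D[1/\lambda]$.

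Il reste à vérifier la deuxième hypothèse de la proposition \ref{unicité phi modules 1/lambda}, à savoir que pour $n \gg 0$
$$X_n\otimes_{\B_{\tau,\mathrm{rig},K}^{\dagger,r}}^{\iota_n}\left(\B_{\tau,\mathrm{rig},K}^\dagger\otimes_{\B_{\tau,\mathrm{rig},K}^+}\mathfrak{M}(D)\right)_r = M_n(D) = \Fil^0(Y_n\otimes_K D_K^n),$$
l'égalité correspondante pour $\cal{M}(D)$ étant, elle, fournie par la proposition \ref{prop phiNrelatif} appliquée avec $M=K$ (ou directement par la construction via le théorème \ref{thm recollement}). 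Pour cela on compare les deux localisations en $n$ : l'isomorphisme $\phi_\pi^{-n}~: \tilde{X}_n \to X_n$ et la relation $\phi = \phi_F\circ\phi_\pi$ identifient $X_n$, muni de sa structure de $\B_{\tau,\mathrm{rig},K}^+$-module \textit{via} $\iota_n = \phi^{-n}$, avec l'anneau $\tilde{X}_n$ de Kisin, muni de sa structure \textit{via} $\phi_F^{-n}$, en échangeant la filtration $(\phi^{-n}([\tilde{\pi}])-\pi_n)$-adique et la filtration $([\tilde{\pi}]-\pi_n)$-adique, et l'application $\xi_n$ servant à définir $D_K^n$ correspond alors au facteur $\phi^{-n}$ sur $D$ intervenant dans le $\iota_n$ de \cite[§1.2]{KisinFiso}. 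Modulo ce dictionnaire, le lemme \ref{lemme Kisin strucFil}(2) affirme exactement que le complété en $n$ de $\B_{\tau,\mathrm{rig},K}^+\otimes_{\B_{\tau,\mathrm{rig},K}^+}\mathfrak{M}(D)$ est $\sum_{j\geq 0}([\tilde{\pi}]-\pi_n)^{-j}\tilde{X}_n\otimes_K\Fil^j D_K = \Fil^0(\tilde{Y}_n\otimes_K D_K)$, ce qui donne l'égalité voulue après identification avec $X_n$ et $D_K^n$. La proposition \ref{unicité phi modules 1/lambda} fournit alors l'égalité $\cal{M}(D) = \B_{\tau,\mathrm{rig},K}^\dagger\otimes_{\B_{\tau,\mathrm{rig},K}^+}\mathfrak{M}(D)$ en tant que $\phi$-modules, et cette identification respecte la connexion puisque des deux côtés $N_\nabla$ est induit par $N_\nabla\otimes 1$ (restreint au noyau de $N$) sur $\B_{\tau,\log,K}^\dagger\otimes_F D$, d'où l'isomorphisme de $(\phi,N_\nabla)$-modules annoncé.

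L'obstacle principal est la mise en place soigneuse de ce dictionnaire entre les anneaux $X_n$ et $\tilde{X}_n$ et entre les normalisations de $\iota_n$ (le $\phi^{-n}$ de ce texte contre le $\phi_F^{-n}\otimes\phi^{-n}$ de \cite{KisinFiso}), ainsi que le contrôle du rayon $r(D)$ garantissant que $\left(\B_{\tau,\mathrm{rig},K}^\dagger\otimes_{\B_{\tau,\mathrm{rig},K}^+}\mathfrak{M}(D)\right)_r$ est bien défini ; une fois ces vérifications effectuées, l'argument d'unicité ci-dessus s'applique sans difficulté supplémentaire.
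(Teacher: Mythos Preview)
Ton esquisse est correcte et suit essentiellement la même démarche que la preuve du papier : on identifie les deux modules à des sous-$\phi$-modules de $\D[1/\lambda]$, on calcule leurs localisés en $n$ via le dictionnaire $\phi_\pi^{-n} : \tilde{X}_n \simeq X_n$ couplé au lemme \ref{lemme Kisin strucFil}(2), puis on conclut par l'unicité (le papier invoque celle du théorème \ref{thm recollement}, qui n'est autre que la proposition \ref{unicité phi modules 1/lambda} que tu cites). La seule différence cosmétique est que le papier établit d'abord l'inclusion directe $\mathfrak{M}(D) \subset \cal{M}(D)$ avant de passer à l'argument d'unicité, ce qui évite d'avoir à justifier séparément l'existence de $r(\mathfrak{M}^\dagger(D))$ puisqu'on peut simplement prendre $r \geq r(\cal{M}(D))$.
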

\begin{proof}
Soit $D$ un $(\phi,N)$-module filtré effectif. On a défini $\cal{M}(D)$ comme
$$\cal{M}(D)=\left\{x \in (\B_{\tau,\log,K}^\dagger[1/\lambda]\otimes_FD)^{N=0}~: \iota_n(x) \in \Fil^0(Y_n \otimes_KD_K^n) \right\}$$
et $\mathfrak{M}(D)$ était défini comme
$$\mathfrak{M}(D)=\left\{x \in (\B_{\tau,\log,K}^+[1/\lambda]\otimes_FD)^{N=0}~: j_n(x) \in \Fil^0(\tilde{Y}_n \otimes_KD_K)\right\}.$$
Comme $(\iota_n)_{|\B_{\tau,\log,K}^+}$ est la composée de $j_n$ et de l'application $\phi_\pi^{-n}~: \tilde{X}_n \to X_n$ et que $\phi_\pi^{-n}(Y_n) = \tilde{Y}_n$, on a $\mathfrak{M}(D) \subset \cal{M}(D)$. On pose $\mathfrak{M}^\dagger(D):=\B_{\tau,\mathrm{rig},K}^\dagger\otimes_{\B_{\tau,\mathrm{rig},K}^+}\mathfrak{M}(D)$, et l'inclusion précédente nous donne $\mathfrak{M}^{\dagger}(D) \subset \cal{M}(D)$. 

Soit $r \geq r(\cal{M}(D))$, et soit pour $n \geq n(r)$~:
$$M_n:=X_n\otimes_{\B_{\tau,\mathrm{rig},K}^{\dagger,r}}^{\iota_n}\mathfrak{M}^{\dagger}_r(D).$$
La suite $M_n$ est donc $\phi$-compatible pour $\mathfrak{M}(D)^{\dagger}$ et donc pour $\cal{M}(D)$ puisque $\cal{M}(D)$ et $\mathfrak{M}(D)$ ont même rang (égal à la dimension de $D$). De plus, par définition de $\mathfrak{M}(D)$ et comme on a un isomorphisme 
$$X_n \otimes_{\B_{\tau,\mathrm{rig},K}^+}(\B_{\tau,\log,K}^+\otimes_FD)^{N=0} \simeq X_n \otimes_KD_K^n$$
en appliquant $\phi_\pi^{-n}$ au $(1)$ du lemme \ref{lemme Kisin strucFil}, on en déduit que $M_n = \Fil^0(Y_n\otimes_KD_K^n)$. Or $\cal{M}(D)$ est défini par la suite $\phi$-compatible $M_n = \Fil^0(Y_n\otimes_KD_K^n)$, ce qui conclut par l'unicité dans le théorème \ref{thm recollement}.
\end{proof}
 
\subsection{Construction de $(\phi,N)$-modules filtrés}~
On va maintenant montrer comment on peut associer à certains $(\phi,\tau)$-modules sur $\B_{\tau,\mathrm{rig},K}^\dagger$ un $(\phi,N)$-module filtré, cette construction étant un inverse de celle de la partie précédente. Avant cela, on va avoir besoin de quelques définitions et rappels sur les modules à connexion qu'on va être amené à considérer.

Comme dans \cite[Déf. III.1.2]{Ber08}, on définit la notion de connexion localement triviale~:
\begin{defi}
\label{defi loc triv}
Soit $\D$ un $(\phi,\tau)$-module sur $\B_{\tau,\mathrm{rig},K}^\dagger$. On dit que l'opérateur $N_\nabla$ est localement trivial sur $\D$ s'il existe $r$ tel que~:
$$X_{n(r)}\otimes_{\B_{\tau,\mathrm{rig},K}^{\dagger,r}}^{\iota_{n(r)}}\D_r = X_{n(r)} \otimes_{K_{n(r)}}(X_{n(r)}\otimes_{\B_{\tau,\mathrm{rig},K}^{\dagger,r}}^{\iota_{n(r)}}\D_r)^{N_\nabla=0}.$$
\end{defi}

Soit $M$ un $Y_n$-espace vectoriel de dimension $d$, muni d'une connexion $N_\nabla~: M \to M$ qui étend celle sur $Y_n$. On appelle section horizontale de $M$ un élément de $M^{N_\nabla=0}$.

La connexion $N_\nabla$ est dite régulière si $M$ possède un $X_n$-réseau $M_0$ tel que $N_\nabla(M_0) \subset M_0$, et on dit que la connexion est triviale si $M$ possède un $X_n$-réseau $M_0$ tel que $N_\nabla(M_0) \subset (\phi^{-n}([\tilde{\pi}])-\pi_n)M_0$. 

\begin{lemm}
Soit $M$ un $Y_n$-espace vectoriel de dimension $d$, muni d'une connexion $N_\nabla~: M \to M$ qui étend celle sur $Y_n$. On a alors~:
\begin{enumerate}
\item $\dim_{K_n}M^{N_\nabla=0} \leq d$ ;
\item si $M_0$ est un $X_n$-réseau de $M$ tel que $N_\nabla(M_0) \subset (\phi^{-n}([\tilde{\pi}])-\pi_n)M_0$, alors $M_0^{N_\nabla=0}$ est un $K_n$-espace vectoriel de dimension $d$ et $M_0 = X_n \otimes_{K_n}M_0^{N_\nabla=0}$.
\end{enumerate}
\end{lemm}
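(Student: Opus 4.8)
Le plan est de traiter les deux assertions l'une après l'autre ; il s'agit dans les deux cas d'énoncés classiques sur les modules à connexion au-dessus de l'anneau de valuation discrète complet $X_n$, d'uniformisante $t_n:=\phi^{-n}([\tilde{\pi}])-\pi_n$, de corps des fractions $Y_n$ et de corps résiduel $K_n$. Le seul ingrédient qu'il me faut concernant la connexion est la forme locale de $N_\nabla$ sur $X_n$ : c'est la dérivation $N_\nabla=\beta_n\,t_n\,\frac{d}{dt_n}$ avec $\beta_n\in X_n^\times$, c'est-à-dire $N_\nabla(X_n)\subset t_nX_n$ et $N_\nabla(t_n)=\beta_nt_n$. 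Cela découle de $N_\nabla=-\frac{\lambda}{t}\nabla_\tau$, de la relation $N_\nabla\phi=\frac{E([\tilde{\pi}])}{E(0)}p\phi N_\nabla$ et des évaluations $\theta(\phi^{-n}(\lambda))=0$ à l'ordre $1$ (car seul le facteur $E([\tilde{\pi}])/E(0)$ de $\phi^{-n}(\lambda)$ s'annule en $\Ker\theta$), tandis que $\phi^{-n}([\tilde{\pi}])$ et les $\phi^{-j}(E([\tilde{\pi}]))$ pour $1\le j\le n$ sont des unités de $X_n$. On en retient en particulier que $Y_n^{N_\nabla=0}=K_n$ : une série de Laurent $\sum_ka_kt_n^k$ est tuée par $\beta_n t_n\,\frac{d}{dt_n}$ si et seulement si $ka_k=0$ pour tout $k$, donc si et seulement si elle appartient à $K_n$.

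Pour le point (1), j'utiliserais l'argument usuel de type wronskien. Soient $m_1,\dots,m_r\in M^{N_\nabla=0}$ linéairement indépendants sur $K_n$ ; je montre qu'ils le sont sur $Y_n$, ce qui donnera aussitôt $r\le\dim_{Y_n}M=d$. Sinon, on choisit une relation non triviale $\sum_{i=1}^sa_im_i=0$ à coefficients dans $Y_n$ avec le nombre $s$ de coefficients non nuls minimal, normalisée de sorte que $a_1=1$. En appliquant $N_\nabla$, en utilisant $N_\nabla(m_i)=0$ et $N_\nabla(a_1)=0$, on obtient une relation $\sum_{i=2}^sN_\nabla(a_i)m_i=0$ avec strictement moins de termes non nuls, donc triviale par minimalité ; ainsi $N_\nabla(a_i)=0$, c'est-à-dire $a_i\in Y_n^{N_\nabla=0}=K_n$ pour tout $i$, ce qui contredit l'indépendance sur $K_n$.

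Pour le point (2), je pose $\bar M:=M_0/t_nM_0$, un $K_n$-espace vectoriel de dimension $d$, et j'étudie l'application de réduction $\rho:M_0^{N_\nabla=0}\to\bar M$, dont je veux montrer que c'est un isomorphisme $K_n$-linéaire. \emph{Injectivité} : si $m\in t_nM_0$ est horizontal et non nul, on écrit $m=t_n^km'$ avec $k\ge1$ et $m'\notin t_nM_0$ ; en développant $0=N_\nabla(t_n^km')=kt_n^{k-1}N_\nabla(t_n)m'+t_n^kN_\nabla(m')$, en divisant par $t_n^k$ et en utilisant $N_\nabla(t_n)=\beta_nt_n$, on trouve $k\beta_nm'+N_\nabla(m')=0$ ; la réduction modulo $t_nM_0$, jointe à $N_\nabla(M_0)\subset t_nM_0$, donne $k\bar\beta_n\bar m'=0$ dans $\bar M$, ce qui force $\bar m'=0$ puisque $k\ne0$ et que $K_n$ est de caractéristique nulle — contradiction. \emph{Surjectivité} : on construit un relèvement horizontal par approximations successives. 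L'hypothèse entraîne que $N_\nabla$ stabilise chaque $t_n^kM_0$ et induit sur $t_n^kM_0/t_n^{k+1}M_0\cong\bar M$ la multiplication par $k\bar\beta_n$, inversible dès que $k\ge1$. Partant de $m_0\in M_0$, on construit des $m^{(k)}\in M_0$ avec $m^{(0)}=m_0$, $m^{(k+1)}\equiv m^{(k)}\bmod t_n^{k+1}M_0$ et $N_\nabla(m^{(k)})\in t_n^{k+1}M_0$ : si $N_\nabla(m^{(k)})=t_n^{k+1}v$, la correction $m^{(k+1)}=m^{(k)}+t_n^{k+1}c$ convient pourvu que $(k+1)\bar\beta_n\bar c=-\bar v$ dans $\bar M$, ce qui est résoluble. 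Comme $M_0$ est libre de type fini sur l'anneau complet $X_n$, donc complet pour la topologie $t_n$-adique, les $m^{(k)}$ convergent vers un $m\in M_0^{N_\nabla=0}$ tel que $\rho(m)=\bar m_0$.

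Une fois $\rho$ reconnu comme isomorphisme, on a $\dim_{K_n}M_0^{N_\nabla=0}=\dim_{K_n}\bar M=d$ ; en prenant une base de $M_0^{N_\nabla=0}$ sur $K_n$, son image est une base de $\bar M$, donc par le lemme de Nakayama sur l'anneau local $X_n$ c'en est une base de $M_0$ sur $X_n$, d'où $M_0=X_n\otimes_{K_n}M_0^{N_\nabla=0}$. Le point véritablement délicat est l'étape de surjectivité dans (2) : c'est exactement là qu'intervient la forme \og logarithmique à résidu unité \fg{} de $N_\nabla$ sur $X_n$, qui rend les opérateurs indiciels $k\bar\beta_n$ inversibles pour tout $k\ge1$ ; sans cela (par exemple pour une connexion régulière quelconque) la récurrence ne se refermerait pas.
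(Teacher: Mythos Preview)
Your argument is correct. For point~(1) it coincides with the paper's: both prove that $K_n$-independent horizontal sections stay $Y_n$-independent by the usual minimal-relation argument, normalizing one coefficient and applying $N_\nabla$.

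For point~(2) your route differs from the paper's in execution, though the underlying input is the same. The paper sets $\partial = \frac{1}{\lambda}N_\nabla$ (so that $\partial(M_0)\subset M_0$, using that $\iota_n(\lambda)$ is $t_n$ times a unit in $X_n$) and writes down an explicit fundamental matrix $H=\sum_{k\ge 0}(-1)^k D_k\,t_n^k/k!$, where $D_k=\Mat(\partial^k)$ in a fixed basis of $M_0$, checking that it solves $\partial(H)+D_1H=0$; the columns of $H$ then give a horizontal basis reducing to the identity modulo $t_n$. You instead prove that the reduction map $\rho:M_0^{N_\nabla=0}\to M_0/t_nM_0$ is an isomorphism by a Hensel-type successive approximation, solving the indicial equations $k\bar\beta_n\bar c=-\bar v$ at each step, and then conclude by Nakayama. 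Both arguments rest on the same fact --- that $N_\nabla(t_n)=\beta_n t_n$ with $\beta_n\in X_n^\times$, equivalently that $\iota_n(\lambda)$ has $t_n$-valuation exactly one --- which you take care to justify. Your approach makes the role of the indicial operator more visible and explains transparently why the hypothesis $N_\nabla(M_0)\subset t_nM_0$ (rather than mere regularity $N_\nabla(M_0)\subset M_0$) is what is needed; the paper's explicit formula is more compact but leaves that point implicit.
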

\begin{proof}
Pour montrer le premier point, il suffit de montrer qu'une famille d'éléments de $M^{N_\nabla=0}$ libre sur $K_n$ l'est encore sur $Y_n$. On va procéder par récurrence sur le nombre d'éléments. S'il n'y en a qu'un seul, il n'y a rien à faire. Sinon, soit $n \geq 2$ et soient $x_1,\cdots,x_n$ des éléments de $M^{N_\nabla=0}$ formant une famille libre sur $K_n$. Si $\lambda_1,\cdots,\lambda_n$ sont des éléments non tous nuls de $Y_n$ tels que 
$$\sum \lambda_ix_i=0$$
alors par hypothèse de récurrence on peut supposer qu'ils sont tous non nuls, et quitte à diviser par $\lambda_n$, on peut réécrire cette égalité sous la forme 
$$x_n = \sum \lambda_i'x_i.$$
En appliquant $N_\nabla$, on trouve 
$$\sum N_\nabla(\lambda_i')x_i = 0$$
de sorte que pour tout $i$, $N_\nabla(\lambda_i') = 0$ et donc $\lambda_i' \in K_n$. L'identité 
$$x_n = \sum \lambda_i'x_i$$
contredit alors la liberté des $x_i$ sur $K_n$, ce qui conclut la preuve.

Pour le deuxième point, soit $M_0$ un $X_n$-réseau de $M$ tel que $N_\nabla(M_0) \subset (\phi^{-n}([\tilde{\pi}])-\pi_n)M_0$. En particulier, si on note $\partial = \frac{1}{\lambda}N_\nabla$, alors $\partial(M_0) \subset M_0$. Soit alors $D_k = \Mat(\partial^k)$ pour $k \in \N$ dans une certaine base de $M_0$. Un petit calcul montre alors que $H = \sum_{k \geq 0}(-1)^kD_k \frac{(\phi^{-n}([\tilde{\pi}])-\pi_n)^k}{k!}$ converge vers une solution de $\partial(H)+D_1H=0$, de sorte que $M_0^{N_\nabla=0}$ est un $K_n$-espace vectoriel de dimension $d$ tel que $M_0 = X_n \otimes_{K_n}M_0^{N_\nabla=0}$.
\end{proof}

En particulier, la connexion est triviale si et seulement si $\dim_{K_n}M^{N_\nabla=0}=d$, et dans ce cas $M_0 = X_n \otimes_{K_n}M^{N_\nabla=0}$ est l'unique $X_n$-réseau de $M$ tel que $N_\nabla(M_0) \subset (\phi^{-n}([\tilde{\pi}])-\pi_n)M_0$.

\begin{lemm}
\label{lemme loc triviale sous espace stable}
Si $N$ est un sous-$Y_n$-espace vectoriel de $M$ stable par $N_\nabla$ et si $N_\nabla$ est triviale sur $M$, alors elle est triviale sur $N$.
\end{lemm}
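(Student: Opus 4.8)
Le plan est de déduire l'énoncé de la caractérisation de la trivialité en termes de réseaux : il s'agit de fabriquer, à partir d'un réseau trivialisant pour $M$, un réseau trivialisant pour $N$, et l'intersection de $N$ avec le réseau de $M$ est le candidat naturel. On notera dans la suite $\varpi_n = \phi^{-n}([\tilde{\pi}])-\pi_n$ ; on rappelle que $X_n$ est un anneau de valuation discrète complet d'uniformisante $\varpi_n$, de corps résiduel $K_n$, et dont $Y_n = X_n[1/\varpi_n]$ est le corps des fractions.

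Puisque $N_\nabla$ est triviale sur $M$, il existe par définition un $X_n$-réseau $M_0$ de $M$ tel que $N_\nabla(M_0) \subset \varpi_n M_0$. Je poserais alors $N_0 := N \cap M_0$. Le premier point à établir est que $N_0$ est bien un $X_n$-réseau de $N$ : c'est un sous-$X_n$-module de $M_0$, donc de type fini car $X_n$ est noethérien, et sans torsion donc libre car $X_n$ est principal ; de plus $N_0$ engendre $N$ sur $Y_n$, car tout $x \in N$ vérifie $\varpi_n^k x \in M_0$ pour $k$ assez grand, et alors $\varpi_n^k x \in N \cap M_0 = N_0$. Ainsi $N_0$ est un réseau de $N$, libre de rang $\dim_{Y_n} N$.

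Il reste à vérifier que $N_\nabla(N_0) \subset \varpi_n N_0$. Comme $N$ est stable par $N_\nabla$ et que $N_0 \subset N$, on a $N_\nabla(N_0) \subset N$ ; comme $N_0 \subset M_0$, on a aussi $N_\nabla(N_0) \subset N_\nabla(M_0) \subset \varpi_n M_0$, d'où $N_\nabla(N_0) \subset \varpi_n M_0 \cap N$. Je conclurais en montrant l'égalité $\varpi_n M_0 \cap N = \varpi_n N_0$ : l'inclusion $\supset$ est immédiate, et si $x \in \varpi_n M_0 \cap N$, on écrit $x = \varpi_n y$ avec $y \in M_0$ ; comme $\varpi_n$ est inversible dans $Y_n$ et que $N$ est un $Y_n$-sous-espace, $y = \varpi_n^{-1}x \in N$, donc $y \in M_0 \cap N = N_0$ et $x \in \varpi_n N_0$. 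On obtient ainsi $N_\nabla(N_0) \subset \varpi_n N_0 = (\phi^{-n}([\tilde{\pi}])-\pi_n)N_0$, ce qui est précisément la définition de la trivialité de $N_\nabla$ sur $N$.

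Le point le plus délicat — quoique tout à fait élémentaire — sera de justifier que $N \cap M_0$ est effectivement un réseau de $N$ (engendre $N$ sur $Y_n$ et est libre du bon rang) : cela repose de façon essentielle sur le fait que $X_n$ est un anneau de valuation discrète, donc noethérien et principal, dont $Y_n$ est le corps des fractions, comme on l'a rappelé au début ; le reste n'est qu'une vérification formelle, parallèle à celle effectuée dans \cite{Ber08} pour le cas cyclotomique.
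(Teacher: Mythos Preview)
Ta démonstration est correcte et suit exactement la même approche que celle de l'article : poser $N_0 = M_0 \cap N$ et vérifier que c'est un réseau de $N$ satisfaisant $N_\nabla(N_0) \subset (\phi^{-n}([\tilde{\pi}])-\pi_n)N_0$. Tu as simplement explicité les détails (notamment pourquoi $N_0$ est bien un réseau et l'égalité $\varpi_n M_0 \cap N = \varpi_n N_0$) que l'article se contente d'affirmer en une ligne.
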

\begin{proof}
C'est un analogue de \cite[Lemm. III.1.3]{Ber08}.

Si $M_0$ est un $X_n$-réseau de $M$ tel que $N_\nabla(M_0) \subset (\phi^{-n}([\tilde{\pi}])-\pi_n)M_0$, alors $M_0 \cap N$ est un $X_n$-réseau de $N$ tel que $N_\nabla(M_0 \cap N) \subset (\phi^{-n}([\tilde{\pi}])-\pi_n)(M_0 \cap N)$.
\end{proof}

\begin{lemm}
\label{lemm loc triv n > n(r)}
Soient $\D$ et $r$ comme dans la définition \ref{defi loc triv}. Si $N_\nabla$ est localement triviale sur $\D$, alors $N_\nabla$ est triviale sur $Y_n \otimes_{\B_{\tau,\mathrm{rig},K}^{\dagger,r}}^{\iota_n}\D_r$ pour tout $n \geq n(r)$.

De plus, si $D_n$ est le réseau de $Y_n \otimes_{\B_{\tau,\mathrm{rig},K}^{\dagger,r}}^{\iota_n}\D_r$ tel que $N_\nabla(D_n) \subset (\phi^{-n}([\tilde{\pi}])-\pi_n)D_n$, alors $D_{n+1}=\phi_n(X_{n+1}\otimes_{X_n}D_n)$.
\end{lemm}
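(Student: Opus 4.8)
The plan is to prove both assertions simultaneously by induction on $n \geq n(r)$, the mechanism of propagation being the Frobenius comparison maps $\phi_n$. The base case $n = n(r)$ is precisely the hypothesis: by Définition~\ref{defi loc triv}, $X_{n(r)}\otimes^{\iota_{n(r)}}\D_r$ admits a basis of horizontal sections over $X_{n(r)}$, so $N_\nabla$ vanishes on this $X_{n(r)}$-lattice, and by the uniqueness statement in the preceding lemma on $Y_n$-espaces vectoriels à connexion this lattice is the trivializing lattice $D_{n(r)}$. For the inductive step, assuming $N_\nabla$ is trivial on $Y_n\otimes^{\iota_n}\D_r$ with trivializing lattice $D_n$ (so $N_\nabla(D_n)\subset(\phi^{-n}([\tilde{\pi}])-\pi_n)D_n$), I would set $D_{n+1}:=\phi_n(X_{n+1}\otimes_{X_n}D_n)$ and show it is an $X_{n+1}$-lattice in $Y_{n+1}\otimes^{\iota_{n+1}}\D_r$ satisfying $N_\nabla(D_{n+1})\subset(\phi^{-(n+1)}([\tilde{\pi}])-\pi_{n+1})D_{n+1}$; by the same lemma this yields at once that $N_\nabla$ is trivial at level $n+1$ and that $D_{n+1}=\phi_n(X_{n+1}\otimes_{X_n}D_n)$.

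The crux — and the only place where something nontrivial happens — is to understand how $N_\nabla$ interacts with $\phi_n$. On $\B_{\tau,\mathrm{rig},K}^\dagger$ one has $N_\nabla\phi=\tfrac{pE([\tilde{\pi}])}{E(0)}\phi N_\nabla$ (the relation used in the proof of Proposition~\ref{stabilité connexion}), and since $\iota_{n+1}\circ\phi=\iota_n$ on $\B_{\tau,\mathrm{rig},K}^{\dagger,r}$ this gives a commutation relation $N_\nabla\circ\phi_n=u_n\cdot(\phi_n\circ N_\nabla)$ with $u_n:=\iota_{n+1}(pE([\tilde{\pi}])/E(0))=\tfrac{p}{E(0)}E(\phi^{-(n+1)}([\tilde{\pi}]))$. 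I would first check that $u_n$ is a unit of $X_{n+1}$: $E(0)$ is a uniformisante de $\O_F$ since $E$ is Eisenstein, so $p/E(0)\in\O_F^\times$; and $\theta(\phi^{-(n+1)}([\tilde{\pi}]))=\pi_{n+1}$ has valuation $p^{-(n+1)}v_p(\pi)<v_p(\pi)$, hence is not a root of $E$, so $E(\phi^{-(n+1)}([\tilde{\pi}]))$ does not lie in the maximal ideal $(\phi^{-(n+1)}([\tilde{\pi}])-\pi_{n+1})$ of $X_{n+1}$. A parallel (and slightly finer) computation transports $N_\nabla$ itself: writing $N_\nabla=-[\tilde{\pi}]\lambda\,d/d[\tilde{\pi}]$ on $\B_{\tau,\mathrm{rig},K}^\dagger$ and observing that $\iota_n(\lambda)$ equals $\phi^{-n}([\tilde{\pi}])-\pi_n$ up to a unit of $X_n$ (among the factors $\phi^m(E([\tilde{\pi}])/E(0))$ of $\lambda$, only the one with $m=n$ becomes non-invertible after applying $\iota_n=\phi^{-n}$), one finds that $N_\nabla$ induces on $X_n$ an operator of the form $a\cdot(\phi^{-n}([\tilde{\pi}])-\pi_n)\,d/d(\phi^{-n}([\tilde{\pi}]))$ with $a\in X_n^\times$, which in particular sends $X_n$, and after base change $X_{n+1}$, into the ideal generated by $\phi^{-(n+1)}([\tilde{\pi}])-\pi_{n+1}$. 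Pinning down these scalar factors, and checking they are units, is the main obstacle; everything else is formal.

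Once these facts are established the inductive step is bookkeeping. The map $\phi_n$ is an isomorphism after extending scalars to $Y_{n+1}$ (because $\phi$ identifies $\phi^*\D_r$ with $\D_{pr}$) and $X_{n+1}\otimes_{X_n}D_n$ is an $X_{n+1}$-lattice in the source, so $D_{n+1}$ is an $X_{n+1}$-lattice in $Y_{n+1}\otimes^{\iota_{n+1}}\D_r$. Using the identities $\phi^{-n}([\tilde{\pi}])=(\phi^{-(n+1)}([\tilde{\pi}]))^p$ and $\pi_n=\pi_{n+1}^p$, the element $\phi^{-n}([\tilde{\pi}])-\pi_n$ is divisible by $\phi^{-(n+1)}([\tilde{\pi}])-\pi_{n+1}$ in $X_{n+1}$, so the previous paragraph (for the action on scalars) together with the induction hypothesis (for the action on $D_n$) gives $N_\nabla(X_{n+1}\otimes_{X_n}D_n)\subset(\phi^{-(n+1)}([\tilde{\pi}])-\pi_{n+1})(X_{n+1}\otimes_{X_n}D_n)$. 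Applying $\phi_n$, using the relation $N_\nabla\circ\phi_n=u_n\cdot(\phi_n\circ N_\nabla)$ and the invertibility of $u_n$, one obtains $N_\nabla(D_{n+1})\subset(\phi^{-(n+1)}([\tilde{\pi}])-\pi_{n+1})D_{n+1}$, and the characterization of trivial connections (from the lemma on $Y_n$-espaces vectoriels à connexion) then closes the induction and proves both statements.
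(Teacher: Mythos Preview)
Your proof is correct and follows the same inductive scheme as the paper's, which dispatches the inductive step with the single phrase \og puisque $N_\nabla$ commute à $\phi_n$\fg. You are in fact more careful than the paper here: the genuine relation is $N_\nabla\circ\phi_n = u_n\cdot(\phi_n\circ N_\nabla)$ with $u_n\in X_{n+1}^\times$, as you spell out, and the paper tacitly absorbs this unit (note also that $(\phi^{-n}([\tilde{\pi}])-\pi_n)X_{n+1}=(\phi^{-(n+1)}([\tilde{\pi}])-\pi_{n+1})X_{n+1}$, not just divisibility, as used in the proof of Lemme~\ref{exist h tel que t-h < < th}).
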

\begin{proof}
C'est un analogue de \cite[Lemm. III.1.4]{Ber08}.

Si $D_n = X_n \otimes_{K_n}(Y_n \otimes_{\B_{\tau,\mathrm{rig},K}^{\dagger,r}}^{\iota_n}\D_r)^{N_\nabla=0}$, alors $N_\nabla(D_{n(r)}) \subset (\phi^{-n}([\tilde{\pi}])-\pi_n)D_{n(r)}$ par hypothèse. Si $n \geq n(r)$ et $N_\nabla(D_n) \subset (\phi^{-n}([\tilde{\pi}])-\pi_n)D_n$, alors en notant $D_{n+1}$ le réseau de $Y_{n+1} \otimes_{\B_{\tau,\mathrm{rig},K}^{\dagger,r}}^{\iota_{n+1}}\D_r$ donné par $D_{n+1} = \phi_n(X_{n+1}\otimes_{X_n}D_n)$, on a $N_\nabla(D_{n+1}) \subset (\phi^{-(n+1)}([\tilde{\pi}])-\pi_{n+1})D_{n+1}$ puisque $N_\nabla$ commute à $\phi_n$.
\end{proof}

\begin{lemm}
\label{phi N filtré implique loc triv}
Si $D$ est un $(\phi,N,\G_{L/K})$-module filtré, alors $N_\nabla$ est localement triviale sur $\cal{M}(D)$.
\end{lemm}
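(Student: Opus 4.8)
The plan is to adapt Berger's proof of the corresponding statement in the cyclotomic case (\cite{Ber08}). First I would reduce everything to a statement about a single $X_{n(r)}$-lattice. By Proposition~\ref{prop phiNrelatif}, for $r$ large enough and every $n \geq n(r)$ one has $X_n \otimes_{\B_{\tau,\mathrm{rig},K}^{\dagger,r}}^{\iota_n} \cal{M}(D)_r = \Fil^0(Y_n \otimes_K D_K^n)$, which already expresses this lattice purely in terms of the Hodge filtration on $D_K$ (so the $\G_{L/K}$-action plays no further role, and the case $M=L$ is handled exactly as the case of a $(\phi,N)$-module). In view of Définition~\ref{defi loc triv} and the criterion recorded just after the lemma characterising trivial connections --- a connection on a $Y_n$-vector space $M$ of dimension $d$ is trivial iff $\dim_{K_n} M^{N_\nabla=0}=d$, and then $X_n\otimes_{K_n}M^{N_\nabla=0}$ is the unique lattice $M_0$ with $N_\nabla(M_0)\subset(\phi^{-n}([\tilde\pi])-\pi_n)M_0$ --- it suffices to exhibit, for $n=n(r)$, a $K_n$-basis of $N_\nabla$-horizontal vectors of $Y_n\otimes_K D_K^n$ lying in $\Fil^0(Y_n\otimes_K D_K^n)$ and generating it over $X_n$; Lemme~\ref{lemm loc triv n > n(r)} then propagates the conclusion to all $n\geq n(r)$.

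Second, I would construct these horizontal vectors from the definition of $\cal{M}(D)$. Since $N$ is nilpotent on $D$, the lemma identifying $\B_{\tau,\log,K}^\dagger\otimes_{\B_{\tau,\mathrm{rig},K}^\dagger}\D$ with $\B_{\tau,\log,K}^\dagger\otimes_F D$ endows $\D=(\B_{\tau,\log,K}^\dagger\otimes_F D)^{N=0}$ with a $\B_{\tau,\mathrm{rig},K}^\dagger$-basis obtained from a Jordan basis of $(D,N)$ by the $\log[\tilde\pi]$-twist recalled there; because $N_\nabla(\log[\tilde\pi])=-\lambda$, the matrix of $N_\nabla$ in this basis is the scalar derivation plus a strictly upper-triangular matrix with entries in $\lambda\,\B_{\tau,\mathrm{rig},K}^\dagger$. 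Applying $\iota_n$, and using that $\iota_n(\log[\tilde\pi])\in(\phi^{-n}([\tilde\pi])-\pi_n)X_n$ while $\iota_n(\lambda)$ is a uniformiser of $X_n$, this basis reduces modulo $(\phi^{-n}([\tilde\pi])-\pi_n)$ to a basis of $D_K^n$; multiplying the $i$-th vector by $\iota_n(\lambda)^{-h_i}$, where $h_i$ is the Hodge jump of the corresponding vector of $D_K^n$, puts it into $\Fil^0(Y_n\otimes_K D_K^n)$. Using $N_\nabla(\lambda^j)\in\lambda^j\,\B_{\tau,\mathrm{rig},K}^+$ together with the commutation $N_\nabla\phi=(E([\tilde\pi])/E(0))\,p\,\phi\,N_\nabla$ (which lets one replace a basis of $\D$ by $\phi^m$ of it without leaving $\B_{\tau,\mathrm{rig},K}^\dagger$), I would correct these vectors into genuinely $N_\nabla$-horizontal ones still lying in that lattice, the horizontal vectors being read off from the $N_\nabla$-killed (equivalently constant) part of $\D$.

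The hard part will be the last bookkeeping step: checking that the $X_n$-span of these $d$ horizontal vectors is \emph{exactly} $\Fil^0(Y_n\otimes_K D_K^n)$, with no spurious power of the uniformiser $\phi^{-n}([\tilde\pi])-\pi_n$ gained or lost. This is precisely where the compatibility of the connection $N_\nabla$ with the Hodge filtration is genuinely used, and it is the analogue of the corresponding lemma of \cite{Ber08}; as there, I expect it to come down to an explicit computation in $X_n\simeq K_n[\![\phi^{-n}([\tilde\pi])-\pi_n]\!]$, exploiting that $\iota_n(\lambda)$ generates $\Fil^1 X_n$ and that $N_\nabla$ is regular, i.e.\ it preserves each filtration step $(\phi^{-n}([\tilde\pi])-\pi_n)^j X_n$. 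One can also first establish triviality on the ambient $Y_n$-space $Y_n\otimes_K D_K^n$ (which, after inverting the uniformiser, is the same for $\cal{M}(D)$ and for $\D$) using Lemme~\ref{lemme loc triviale sous espace stable} and the structure of $\D^{N_\nabla=0}$, and then pin down which $X_n$-lattice is the trivialising one.
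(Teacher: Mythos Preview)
Your plan works in principle, but it is far more laborious than what the paper does, and your ``hard part'' turns out to be a non-issue.

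The paper's proof is a single observation. By Proposition~\ref{prop phiNrelatif} one has, after inverting the uniformiser,
\[
Y_n \otimes_{\B_{\tau,\mathrm{rig},K}^{\dagger,r}}^{\iota_n}\cal{M}(D)_r \;=\; Y_n \otimes_K D_K^n,
\]
and the elements of $D_K^n$ are automatically killed by $N_\nabla$: indeed the connection on $\cal{M}(D)\subset (\B_{\tau,\log,K}^\dagger[1/\lambda]\otimes_{F}D)^{N=0}$ is induced by $N_\nabla\otimes 1$, so it acts only on the coefficient ring and trivially on $D$ (in the $(\phi,N,\G_{M/K})$-case the residual action of $G_\infty$ on $D$ factors through the finite group $\G_{M/K}$, hence its infinitesimal part vanishes as well). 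Thus $D_K^n$ already furnishes a full $K_n$-basis of horizontal sections of the localised module, and the connection on $Y_n\otimes_K D_K^n$ is trivial. That is the whole proof.

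Your primary route --- starting from the Jordan basis of $\D$, twisting by powers of $\iota_n(\lambda)$, and then correcting the resulting vectors into genuinely horizontal ones that sit precisely in $\Fil^0(Y_n\otimes_K D_K^n)$ --- reconstructs by hand what is obtained for free once one notices that $1\otimes D$ is horizontal for $N_\nabla\otimes 1$. The delicate ``bookkeeping'' you anticipate (matching the span to $\Fil^0$ with no spurious powers of the uniformiser) is simply not needed: the definition of local triviality and its use in Lemme~\ref{lemm loc triv n > n(r)} and Théorème~\ref{thm phitau phiN} only require triviality of the connection on the ambient $Y_n$-space, not that the specific lattice $X_n\otimes\cal{M}(D)_r$ coincide with the trivialising lattice. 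In fact the alternative you sketch in your last sentence --- establish triviality on $Y_n\otimes_K D_K^n$ directly --- \emph{is} the paper's argument, and you can stop right there; there is nothing to ``pin down'' afterwards.
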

\begin{proof}
C'est l'analogue de \cite[Prop. III.1.5]{Ber08}.

La proposition \ref{prop phiNrelatif} montre que 
$$Y_n \otimes_{\B_{\tau,\mathrm{rig},K}^{\dagger,r}}^{\iota_n}\mathcal{M}(D)_r = Y_n \otimes_KD_K^n,$$
et donc $N_\nabla=0$ sur $D_K^n$ puisque $\G_{M/K}$ est fini.
\end{proof}

En particulier, le lemme \ref{phi N filtré implique loc triv} montre que si $\cal{M}$ est dans l'image de $D \mapsto \cal{M}(D)$, alors $N_\nabla$ est localement triviale sur $\cal{M}$. On va maintenant s'intéresser à la réciproque, et on va déterminer l'image essentielle du foncteur $D \mapsto \mathcal{M}(D)$. 

Avant cela, on va avoir besoin de faire quelques rappels sur les équations différentielles $p$-adiques et notamment la conjecture de monodromie $p$-adique, dans le cas qui nous intéresse.

\begin{defi}
Si $\D$ est un $\phi$-module sur $\B_{\tau,\mathrm{rig},K}^\dagger$ muni d'un opérateur différentiel $\partial_{\D}$ qui étend l'opérateur $\partial~: f \mapsto X\frac{df}{dX}$ et tel que $\partial_\D \circ \phi = p \cdot \phi \circ \partial_\D$, alors on dit que $\D$ est une équation différentielle $p$-adique avec structure de Frobenius.
\end{defi}

\begin{rema}
\label{rmqeqdiff}
De façon plus générale, on appelle équation différentielle $p$-adique avec structure de Frobenius tout $\phi$-module $\D$ sur l'anneau de Robba $\cal{R}_K$ associé à un corps $p$-adique $K$ (et sur lequel on a choisi un Frobenius), muni d'un opérateur différentiel $\partial_{\D}$ qui étend n'importe quelle dérivation continue $\partial$  et tel que $\partial_\D \circ \phi = \frac{\partial(\phi(T))}{\phi(\partial T)} \cdot \phi \circ \partial_\D$.
\end{rema}

En particulier, si $\D$ est un $(\phi,\tau)$-module sur $\B_{\tau,\mathrm{rig},K}^\dagger$ et si $\D$ est stable par l'opérateur $\partial_\D = \frac{1}{\lambda}N_\nabla$, alors $\D$ est une équation différentielle avec structure de Frobenius.

Le théorème suivant, connu dans le cas général de la remarque \ref{rmqeqdiff} sous le nom de conjecture de monodromie $p$-adique, a été conjecturé par Crew et démontré par André \cite{andre2002}, Kedlaya \cite{kedlayamonodromie} et Mebkhout \cite{Meb02}~:

\begin{theo}
\label{thm eqdiff padic}
Si $\D$ est une équation différentielle $p$-adique avec structure de Frobenius, alors il existe une extension finie $M/K$ telle que l'application naturelle~:
$$\B_{\tau,\mathrm{rig},M}^\dagger[\log[\tilde{\pi}]] \otimes_{M_0'} (\B_{\tau,\mathrm{rig},M}^\dagger[\log[\tilde{\pi}]]\otimes_{\B_{\tau,\mathrm{rig},K}^\dagger}\D)^{\partial_\D = 0} \longrightarrow \B_{\tau,\mathrm{rig},M}^\dagger[\log[\tilde{\pi}]] \otimes_{\B_{\tau,\mathrm{rig},K}^\dagger}\D$$
soit un isomorphisme, $M_0'$ étant l'extension maximale non ramifiée de $K_0$ dans $M_\infty~:= M \cdot K_\infty$.
\end{theo}

\begin{rema}
Pour plus de détails sur les conjectures de monodromies $p$-adiques, on renvoie également au séminaire Bourbaki de Colmez \cite{Col01}.
\end{rema}

Si maintenant $\D$ est un $(\phi,\tau)$-module sur $\B_{\tau,\mathrm{rig},K}^\dagger$, stable sous l'action de $\partial_D = \frac{1}{\lambda}N_\nabla$, on pose $S_M(\D) = (\B_{\tau,\mathrm{rig},M}^\dagger[\log[\tilde{\pi}]]\otimes_{\B_{\tau,\mathrm{rig},K}^\dagger}\D)^{\partial_\D=0}$. C'est un $M_0'$-espace vectoriel, et comme $\nabla_\tau=0$ sur $S_M(\D)$, quitte à étendre les scalaires à une extension finie de $M$, on peut supposer que $\Gal((M\cdot L)/M_{\mathrm{cycl}})$ agit trivialement sur $S_M(\D)$. En particulier, on a alors $M_0 = M_0'$. Sans perte de généralité, on peut aussi supposer que $M_\infty/K_\infty$ est galoisienne. Sous ces hypothèses, $S_M(\D)$ est un $(\phi,N,\G_{M/K})$-module tel que 
$$\B_{\tau,\mathrm{rig},M}^\dagger[\log[\tilde{\pi}]] \otimes_{M_0} S_M(D) = \B_{\tau,\mathrm{rig},M}^\dagger[\log[\tilde{\pi}]]\otimes_{\B_{\tau,\mathrm{rig},K}^\dagger}\D.$$

On dispose d'un analogue du théorème III.2.3 de \cite{Ber08}~:
\begin{theo}
\label{thm phitau phiN}
Si $\M$ est un $(\phi,\tau)$-module sur $\B_{\tau,\mathrm{rig},K}^\dagger$ tel que $N_\nabla$ est localement triviale, alors il existe un unique $(\phi,\tau)$-module $\D \subset \M[1/\lambda]$ tel que $\D[1/\lambda]=\M[1/\lambda]$ et tel que $\partial_\M(\D) \subset \D$. De plus, la donnée de $\M$ détermine une filtration sur $M \otimes_{M_0}S_M(\D)$ et donc une structure de $(\phi,N,\G_{M/K})$-module filtré sur $S_M(\D)$ telle que $\M = \cal{M}(S_M(\D))$.
\end{theo}
Avant de démontrer ce résultat, on aura besoin du lemme suivant, qui est le lemme 7.6 de \cite{berger2012multivariable}~:
\begin{lemm}
\label{lemm W=Fil0}
Soit $D$ un $F$-espace vectoriel, et soit $W$ un $\Bdrplus$-réseau de $\Bdr \otimes_F D$ stable sous l'action de $\G_F$, où $\G_F$ agit trivialement sur $D$. Si on pose $\Fil^iD=D \cap t^i\cdot W$, alors $W=\Fil^0(\Bdr\otimes_F D)$.
\end{lemm}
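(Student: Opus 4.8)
The plan is to show $W = W'$ where $W' := \Fil^0(\Bdr\otimes_F D) = \sum_{i\in\Z} t^{-i}\,\Bdrplus\cdot\Fil^i D$. One inclusion is immediate: if $x\in\Fil^i D = D\cap t^i W$ then $t^{-i}x\in W$, so $t^{-i}\Bdrplus\cdot\Fil^i D\subseteq W$ because $W$ is a $\Bdrplus$-module, and summing over $i$ (a finite sum, since $\Fil^i D = D$ for $i\ll0$ and $\Fil^i D = 0$ for $i\gg0$) gives $W'\subseteq W$. For the reverse inclusion I would prove the sharper \emph{diagonalization statement}: there exist an $F$-basis $e_1,\dots,e_d$ of $D$ and integers $h_1,\dots,h_d$ with $W = \bigoplus_{i=1}^d t^{-h_i}\Bdrplus e_i$. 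Granting this, the lemma follows by direct computation: writing $x = \sum_i a_i e_i\in D$ with $a_i\in F$, one has $x\in t^j W$ iff $a_i\in F\cap t^{j-h_i}\Bdrplus$ for all $i$; since $F$ injects into $\Bdrplus/t\Bdrplus = \Cp$, we get $F\cap t^k\Bdrplus = 0$ for $k\ge1$ and $= F$ for $k\le0$, hence $\Fil^j D = \bigoplus_{i:h_i\ge j}F e_i$, and plugging this into the definition of $W'$ yields $W' = \bigoplus_i\big(\sum_{j\le h_i}t^{-j}\Bdrplus\big)e_i = \bigoplus_i t^{-h_i}\Bdrplus e_i = W$.

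The diagonalization statement I would prove by induction on $d = \dim_F D$, using throughout that $\Bdr\otimes_F D$ is the \emph{trivial} $\Bdr$-representation of $\G_F$, so that $(\Bdr\otimes_F D)^{\G_F} = D$. For $d = 1$ this is elementary, every $\Bdrplus$-lattice in a one-dimensional $\Bdr$-representation being of the form $t^{-h}\Bdrplus e$ for a $\G_F$-fixed generator $e$. For $d\ge2$, set $\Fil^j D = D\cap t^j W$ and let $h$ be the largest $j$ with $\Fil^j D\ne0$. If $\Fil^h D = D$ (the trivially filtered case), one reduces by a twist to $h = 0$ and shows $W = \Bdrplus\otimes_F D$: a nonzero torsion quotient $W/(\Bdrplus\otimes_F D)$ would have a nonzero $t$-torsion part $S$, and multiplication by $t$ embeds $S(1)$ into the trivial $\Cp$-representation $\Cp\otimes_F D$, so by Tate--Sen theory $S(1)$ is trivial, $S(1) = \Cp\otimes_F V_0$ with $V_0\subseteq D$; but each $v_0\in V_0$ then satisfies $t^{-1}v_0\in W$, whence $v_0\in tW\cap D = \Fil^1 D = 0$, a contradiction. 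If $\Fil^h D\subsetneq D$, the same kind of argument (applying the induction hypothesis to the lattice $W\cap(\Bdr\otimes_F\Fil^h D)$ and using $\Fil^{h+1}D = 0$) shows $W\cap(\Bdr\otimes_F\Fil^h D) = t^{-h}(\Bdrplus\otimes_F\Fil^h D)$, and writing $\bar D = D/\Fil^h D$ and $\bar W$ for the image of $W$ in $\Bdr\otimes_F\bar D$, the induction hypothesis gives $\bar W = \bigoplus_i t^{-h_i}\Bdrplus\bar e_i$ with $\bar e_i\in\bar D$; we thus obtain a short exact sequence of $\Bdrplus$-lattices with semilinear $\G_F$-action
\[ 0\longrightarrow t^{-h}(\Bdrplus\otimes_F\Fil^h D)\longrightarrow W\longrightarrow\bar W\longrightarrow0. \]

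The heart of the proof is that this extension splits $\G_F$-equivariantly. Its class lies in $\bigoplus_i H^1(\G_F,\Bdrplus(h_i-h))^{\oplus\dim\Fil^h D}$, and under the map induced by $\Bdrplus\hookrightarrow\Bdr$ it maps to the class of the extension obtained by applying $\Bdr\otimes_F(-)$ to the split exact sequence $0\to\Fil^h D\to D\to\bar D\to0$ of $F$-vector spaces, hence to $0$. Moreover $H^1(\G_F,\Bdrplus(n))\to H^1(\G_F,\Bdr(n))$ is injective for every $n$: for $n\ge1$ the source vanishes (Tate's computation of $H^\ast(\G_F,\Cp(m))$), and for $n\le0$ the quotient $(\Bdr/\Bdrplus)(n)$ admits an exhaustive filtration with graded pieces $\Cp(m)$, $m\le-1$, so its $H^0$ and $H^1$ vanish and the map is an isomorphism. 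Therefore the extension class is $0$; choosing a $\G_F$-equivariant splitting we may lift each eigenvector $t^{-h_i}\bar e_i$ of $\bar W$ to an eigenvector $\tilde e_i\in W$ with $g(\tilde e_i) = \chi_{\mathrm{cycl}}(g)^{-h_i}\tilde e_i$, and then $e_i := t^{h_i}\tilde e_i$ lies in $(\Bdr\otimes_F D)^{\G_F} = D$ and maps to $\bar e_i$; together with an $F$-basis of $\Fil^h D$ this gives the required basis of $D$ diagonalizing $W$. The main obstacle is thus precisely this splitting — concretely, the torsion-freeness $H^1(\G_F,\Bdrplus(n))\hookrightarrow H^1(\G_F,\Bdr(n))$ combined with Tate's cohomology computations — together with the auxiliary lattice-intersection identity $W\cap(\Bdr\otimes_F\Fil^h D) = t^{-h}(\Bdrplus\otimes_F\Fil^h D)$, which likewise rests on the Tate--Sen theory of $\Cp$-representations.
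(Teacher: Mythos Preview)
Your argument is correct and self-contained. The paper, however, does not give its own proof of this lemma: it simply records that this is \cite[Lemme~7.6]{berger2012multivariable} and moves on. So there is no in-paper argument to compare against; you have supplied a complete proof where the paper defers to the literature.

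For what it is worth, your strategy---reduce to a diagonalization $W=\bigoplus_i t^{-h_i}\Bdrplus e_i$ with $e_i\in D$, prove this by induction on $\dim_F D$, handle the trivially-filtered case via Sen theory for sub-$\Cp$-representations of a trivial one, and split the inductive short exact sequence using Tate's computation of $H^\ast(\G_F,\Cp(m))$ to get injectivity of $H^1(\G_F,\Bdrplus(n))\to H^1(\G_F,\Bdr(n))$---is exactly the circle of ideas one expects here, and each step checks out. The only place worth a word of caution is the sentence ``by Tate--Sen theory $S(1)$ is trivial'': this really uses that Sen's functor is an equivalence of abelian categories (so that a sub-$\Cp$-semilinear representation of a trivial one is again trivial), not merely the vanishing of $H^1(\G_F,\Cp(m))$ for $m\neq0$; you might make that dependence explicit.
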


\begin{proof}[Démonstration du théorème \ref{thm phitau phiN}]
Puisque $N_\nabla$ est localement triviale, il existe une famille de réseaux $D_n$ de $Y_n\otimes_{\B_{\tau,\mathrm{rig},K}^{\dagger,r}}^{\iota_n}\M_r$ tels que $N_{\nabla}(D_n) \subset (\phi^{-n}([\tilde{\pi}])-\pi_n)D_n$. Comme de plus, $D_{n+1}=\phi_n(X_{n+1}\otimes_{X_n}D_n)$ par le lemme \ref{lemm loc triv n > n(r)}, la famille $\{D_n\}$ est $\phi$-compatible. Le théorème \ref{thm recollement} nous donne alors un $(\phi,\tau)$-module $\D$ tel que $X_n \otimes_{\B_{\tau,\mathrm{rig},K}^{\dagger,r}}^{\iota_n} \D_r = D_n$. Or $N_\nabla(D_n) \subset (\phi^{-n}([\tilde{\pi}])-\pi_n)D_n$ pour tout $n$, et donc $N_\nabla(\D) \subset \lambda\D$, ce qui conclut la démonstration de l'existence d'un tel $\D$. L'unicité vient alors du fait qu'on a forcément $X_n\otimes_{\B_{\tau,\mathrm{rig},K}^{\dagger,r}}^{\iota_n}\D_r = D_n$ pour tout $n \geq n(r)$. 

Le théorème \ref{thm eqdiff padic} de monodromie $p$-adique et la discussion qui suit montrent qu'il existe une extension finie $M$ de $K$ telle que $M_\infty/K_\infty$ est galoisienne, $\Gal((M\cdot L)/M_{\mathrm{cycl}})$ agit trivialement sur $S_M(\D)$ et $S_M(\D)$ est un $(\phi,N,\G_{M/K})$-module tel que 
$$\B_{\tau,\mathrm{rig},M}^\dagger[\log[\tilde{\pi}]] \otimes_{M_0} S_M(\D) = \B_{\tau,\mathrm{rig},M}^\dagger[\log[\tilde{\pi}]]\otimes_{\B_{\tau,\mathrm{rig},K}^\dagger}\D.$$

Dans ce qui suit et pour simplifier les notations, on notera $D=S_M(\D)$ et $Y_{M,n} = Y_n \cdot M$, $X_{M,n}= \Fil^0(Y_{M,n})$. On va maintenant construire une filtration sur $D_M = M\otimes_{M_0}D$. On dispose d'isomorphismes
$$Y_{M,n}\otimes_MD_M^n = Y_{M,n}\otimes_{\B_{\tau,\mathrm{rig},K}^{\dagger,r}}^{\iota_n}\D_r \simeq Y_{M,nr} \otimes_{\B_{\tau,\mathrm{rig},K}^{\dagger,r}}^{\iota_n}\M_r$$
qu'on utilise pour définir $\Fil^iD_M^n = D_M^n \cap (\phi^{-n}([\tilde{\pi}])-\pi_n)^iY_{M,n}\otimes_{\B_{\tau,\mathrm{rig},K}^{\dagger,r}}^{\iota_n}\M_r$, et on définit alors $\Fil^iD_M$ comme le tiré en arrière de la filtration $\Fil^iD_M^n$ par l'isomorphisme $D_M \rightarrow D_M^n$.

Par définition, l'isomorphisme $D_M \simeq D_M^n$ est donné par $\mu \otimes x \mapsto \mu \otimes \iota_n(\phi^n(x))$, de sorte que la filtration induite sur $D_M$ ne dépend pas de $n$.

Pour finir, on a $X_n \otimes_{\B_{\tau,\mathrm{rig},K}^{\dagger,r}}^{\iota_n}\M_r = \Fil^0(Y_n \otimes_K D_K^n)$ de sorte que $\M = \cal{M}(D) = \cal{M}(S_M(\D))$ par le lemme \ref{lemm W=Fil0}.
\end{proof}

Comme conséquence directe, on obtient le théorème suivant~:
\begin{theo}
Le foncteur $D \mapsto \cal{M}(D)$, de la catégorie des $(\phi,N,\G_{M/K})$-modules filtrés, vers la catégorie des $(\phi,\tau)$-modules sur $\B_{\tau,\mathrm{rig},K}^\dagger$ dont la connexion associée est localement triviale, est une équivalence de catégories.
\end{theo}

\begin{coro}
\label{coro sousobjets phiN}
Si $D$ est un $(\phi,N,\G_{M/K})$-module filtré, et si $\M'$ est un sous-$(\phi,\tau)$-module saturé de $\M = \cal{M}(D)$, alors il existe un sous-objet $D' \subset D$ tel que $\M' = \cal{M}(D')$.
\end{coro}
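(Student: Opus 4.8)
The plan is to adapt the argument of \cite{Ber08} for $(\phi,\Gamma)$-modules, transporting the saturated sub-$(\phi,\tau)$-module $\M'$ through the construction of Theorem~\ref{thm phitau phiN}. Write $\M = \cal{M}(D)$. By Lemma~\ref{phi N filtré implique loc triv} the operator $N_\nabla$ is locally trivial on $\M$, and the proof of Theorem~\ref{thm phitau phiN} produces a $\partial_\M$-stable $(\phi,\tau)$-module $\D \subset \M[1/\lambda]$ with $\D[1/\lambda] = \M[1/\lambda]$, together with the identification $D = S_M(\D)$ for a suitable finite extension $M$ of $K$; since the monodromy extension of a $(\phi,\tau)$-module also works for its sub-objects, we may fix this $M$ once and for all. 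The first step is to check that $N_\nabla$ is locally trivial on $\M'$ as well. Fix $r$ large enough that $\M$, $\M'$ and $\M/\M'$ are all defined over $\B_{\tau,\mathrm{rig},K}^{\dagger,r}$ and that the local triviality of $\M$ holds at $r$; since $\M'$ is saturated in $\M$, the model $\M'_r$ is saturated in $\M_r$. Because $\M'$ is a sub-$(\phi,\tau)$-module, $\Bt_{\mathrm{rig},L}^\dagger \otimes_{\B_{\tau,\mathrm{rig},K}^\dagger}\M'$ is $G_\infty$-stable, so for $n \geq n(r)$ the subspace $Y_n \otimes^{\iota_n}\M'_r$ of $Y_n \otimes^{\iota_n}\M_r$ is stable under $N_\nabla$ (this is the hypothesis appearing in Theorem~\ref{thm recollement} and Lemma~\ref{lemm loc triv n > n(r)}). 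As $N_\nabla$ is trivial on $Y_n \otimes^{\iota_n}\M_r$ with trivializing lattice $X_n \otimes^{\iota_n}\M_r$, Lemma~\ref{lemme loc triviale sous espace stable} shows it is trivial on the subspace $Y_n\otimes^{\iota_n}\M'_r$ with lattice $(X_n \otimes^{\iota_n}\M_r) \cap (Y_n \otimes^{\iota_n}\M'_r) = X_n \otimes^{\iota_n}\M'_r$, the last equality being exactly the saturation of $\M'_r$ in $\M_r$; this gives the local triviality of $N_\nabla$ on $\M'$.

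Now Theorem~\ref{thm phitau phiN} applied to $\M'$ yields a $\partial$-stable $(\phi,\tau)$-module $\D' \subset \M'[1/\lambda]$ with $\D'[1/\lambda] = \M'[1/\lambda]$ together with a filtration making $S_M(\D')$ a $(\phi,N,\G_{M/K})$-module filtré satisfying $\cal{M}(S_M(\D')) = \M'$. I would then identify $\D'$ with $\D \cap \M'[1/\lambda]$ (intersection taken inside $\M[1/\lambda]$): the latter is $\phi$- and $G_\infty$-stable, free of rank $\dim \M'$ over $\B_{\tau,\mathrm{rig},K}^\dagger$ by Lemma~\ref{ForsterKisin}, stable under $\partial_\M$ because $\D$ and $\M'[1/\lambda]$ are, and satisfies $(\D \cap \M'[1/\lambda])[1/\lambda] = \M'[1/\lambda]$ since $\D[1/\lambda] = \M[1/\lambda] \supseteq \M'[1/\lambda]$; so by the uniqueness in Theorem~\ref{thm phitau phiN}, i.e.\ Proposition~\ref{unicité phi modules 1/lambda}, it coincides with $\D'$. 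In particular $\D' \subseteq \D$, whence $D' := S_M(\D') = (\B_{\tau,\mathrm{rig},M}^\dagger[\log[\tilde{\pi}]] \otimes_{\B_{\tau,\mathrm{rig},K}^\dagger}\D')^{\partial_{\D'}=0}$ is a $\B_{\tau,\mathrm{rig},K}^\dagger$-submodule of $S_M(\D) = D$, stable under $\phi$, $N$ and $\G_{M/K}$.

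It then remains to check that the filtration carried by $D'$ via Theorem~\ref{thm phitau phiN} agrees with the one induced from $D$; granting this, $D'$ is a sub-$(\phi,N,\G_{M/K})$-module filtré of $D$ and $\cal{M}(D') = \M'$, which proves the corollary. This is where the saturation hypothesis is really used: the filtration on $D'_M$ is defined by $\Fil^i(D')_M^n = (D')_M^n \cap (\phi^{-n}([\tilde{\pi}])-\pi_n)^i\big(Y_{M,n}\otimes^{\iota_n}\M'_r\big)$ and that on $D_M$ by the same formula with $\M'_r$ replaced by $\M_r$, so they induce the same filtration on $D' \subset D$ precisely because $\M'_r$ is saturated in $\M_r$ (intersecting with $(D')_M^n$ then kills the difference between the two lattices). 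I expect this filtration-compatibility verification, together with the bookkeeping of the saturation of $\M'_r$ in $\M_r$, to be the only non-formal point; everything else is the transfer of structure along the equivalence of categories of the preceding theorem, and indeed the whole statement amounts to the observation that under that equivalence a saturated sub-object of $\M$ corresponds to a strict monomorphism into $D$.
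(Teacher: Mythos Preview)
Your argument is correct and follows the same line as the paper: show that $N_\nabla$ is locally trivial on $\M'$ via Lemma~\ref{lemme loc triviale sous espace stable}, then invoke Theorem~\ref{thm phitau phiN}. The paper's proof is much terser because it leans on the equivalence of categories established just before the corollary: from the short exact sequence $0 \to \M' \to \M \to \M/\M' \to 0$ (which exists precisely because $\M'$ is saturated) and the exactness of $D \mapsto \cal{M}(D)$, one gets a strict monomorphism $0 \to D' \to D$ of $(\phi,N,\G_{M'/K})$-modules filtrés for some large $M'$, and then $\G_{M'/M}$ acts trivially on $D'$ since it does on $D$. Your explicit identification $\D' = \D \cap \M'[1/\lambda]$ and the direct verification of filtration compatibility are exactly what that categorical step unpacks, so the two arguments are really the same in substance; yours buys a self-contained verification at the cost of length, while the paper's buys brevity by pointing to the general framework.

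Two minor points. First, your appeal to Lemma~\ref{ForsterKisin} for the freeness of $\D \cap \M'[1/\lambda]$ is not quite the right citation: that lemma concerns $\B_{\tau,K}^I$-modules, not $\B_{\tau,\mathrm{rig},K}^\dagger$-modules. What you actually need is that $\D \cap \M'[1/\lambda]$ is saturated in $\D$ (because $\M'[1/\lambda]$ is saturated in $\D[1/\lambda]=\M[1/\lambda]$), and saturated submodules of finite free modules over the B\'ezout domain $\B_{\tau,\mathrm{rig},K}^\dagger$ are free. Second, your early claim that one may ``fix this $M$ once and for all'' is slightly premature at the point you make it: it only becomes justified once you know $\D' \subset \D$, since then the horizontal sections of $\D'$ over $\B_{\tau,\mathrm{rig},M}^\dagger[\log[\tilde{\pi}]]$ form a subspace of those of $\D$ of the correct dimension. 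The paper sidesteps this by allowing a larger $M'$ first and descending afterwards.
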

\begin{proof}
Le lemme \ref{lemme loc triviale sous espace stable} montre que la connexion $N_\nabla$ est localement triviale sur $\M'$, et donc on peut écrire $\M' = \cal{M}(D')$ où $D'$ est un $(\phi,N,\G_{M'/K})$-module filtré avec $M'/K$ finie mais suffisamment grande. Il reste à montrer que comme $\M' \subset \M$ est saturé (le résultat est faux sans cette hypothèse), on obtient une inclusion $D' \subset D$ comme $(\phi,N,\G_{M'/K})$-modules filtrés, de sorte que $\G_{M'/M}$ agit trivialement sur $D'$ et donc que $D'$ est un sous-$(\phi,N,\G_{M/K})$-module filtré de $D$, ce qui permettra de conclure. Comme $\M' \subset \M$ est saturé, on en déduit que $\M/\M'$ est muni d'une structure de $(\phi,\tau)$-module, et on dispose d'une suite exacte 
$$0 \rightarrow \M' \rightarrow \M \rightarrow \M/\M' \rightarrow 0$$
et donc d'une suite exacte
$$0 \rightarrow D' \rightarrow D$$
dans la catégorie des $(\phi,N,\G_{M'/K})$-modules filtrés. 
\end{proof}

\subsection{Représentations potentiellement semi-stables}
Dans cette partie, on va s'intéresser au calcul des pentes de Frobenius du $(\phi,\tau)$-module $\cal{M}(D)$ associé au $(\phi,N)$-module filtré $D$. Ces résultats sont un analogue de ceux de la partie IV de Berger dans \cite{Ber08}.

\begin{theo}
Si $D$ est un $(\phi,N,\G_{M/K})$-module filtré, alors la pente de $\det \cal{M}(D)$ est égale à $t_N(D)-t_H(D)$.
\end{theo}
\begin{proof}
C'est l'analogue de \cite[Thm. IV.2.1]{Ber08}.

Le foncteur $D \mapsto \cal{M}(D)$ étant exact et compatible aux produits tensoriels, on a $\det\cal{M}(D) = \cal{M}(\det D)$. De plus, par définition, on a $t_N(D) = t_N(\det D)$ et $t_H(D)=t_H(\det D)$, de sorte qu'il suffit de montrer le résultat lorsque $D$ est de rang $1$.

Soit $D$ de rang $1$ et $e$ une base de $D$ telle que $\phi(e)=p^{\nu}\lambda_0\cdot e$, avec $\lambda_0 \in \O_{K_0}^\times$, et on note $\eta = t_H(e)$. Alors $\cal{M}(D) = \B_{\tau,\mathrm{rig},K}^{\dagger}\lambda^{-\eta} \otimes e$, et $\phi(\lambda^{-\eta}\otimes e)=(E([\tilde{\pi}])/E(0))^{\eta}p^{\nu}\lambda_0\lambda^{-\eta} \otimes e$ et donc la pente de $\cal{M}(D)$ est égale à $\nu-\eta$, c'est-à-dire $t_N(D)-t_H(D)$.
\end{proof}

\begin{prop}
\label{admissible = étale}
Si $D$ est un $(\phi,N,\G_{M/K})$-module filtré, alors $D$ est admissible si et seulement si $\cal{M}(D)$ est un $(\phi,\tau)$-module étale.
\end{prop}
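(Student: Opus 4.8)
Le plan est de lire les deux implications sur la formule de pentes $\mathrm{pente}(\det\cal{M}(D)) = t_N(D)-t_H(D)$ (le théorème précédent), sur le dictionnaire entre sous-$(\phi,\tau)$-modules saturés de $\cal{M}(D)$ et sous-$(\phi,N,\G_{M/K})$-modules filtrés de $D$ (corollaire \ref{coro sousobjets phiN}), et sur la théorie des pentes de Kedlaya \cite{slopes}, qui caractérise les $\phi$-modules étales sur $\B_{\tau,\mathrm{rig},K}^\dagger$ comme ceux qui sont purs de pente $0$, c'est-à-dire les $\cal{M}$ tels que $\mathrm{pente}(\det\cal{M})=0$ et $\mathrm{pente}(\det\cal{N})\geq 0$ pour tout sous-$\phi$-module saturé $\cal{N}$ de $\cal{M}$.

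Supposons d'abord que $\cal{M}(D)$ est étale. Alors $\mathrm{pente}(\det\cal{M}(D))=0$, donc $t_H(D)=t_N(D)$. Si $D'\subset D$ est un sous-$(\phi,N,\G_{M/K})$-module filtré muni de la filtration induite, alors comme le foncteur $D\mapsto\cal{M}(D)$ est exact, $\cal{M}(D)/\cal{M}(D')=\cal{M}(D/D')$ est un $(\phi,\tau)$-module donc est sans torsion, de sorte que $\cal{M}(D')$ est un sous-$\phi$-module saturé de $\cal{M}(D)$. Le critère de Kedlaya donne alors $\mathrm{pente}(\det\cal{M}(D'))\geq 0$, c'est-à-dire $t_H(D')\leq t_N(D')$, et $D$ est donc admissible.

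Réciproquement, supposons $D$ admissible : on a $t_H(D)=t_N(D)$, donc $\mathrm{pente}(\det\cal{M}(D))=0$. Supposons par l'absurde que $\cal{M}(D)$ ne soit pas pur de pente $0$ ; sa filtration de pentes sur $\B_{\tau,\mathrm{rig},K}^\dagger$ est alors non triviale, et comme la pente totale est nulle elle contient un sous-$\phi$-module saturé $\cal{N}$ de $\mathrm{pente}(\det\cal{N})<0$ (le cran extrémal, à savoir le plus grand sous-$\phi$-module saturé de pente minimale). Le point clé est qu'on peut prendre $\cal{N}$ stable sous $N_\nabla$ et sous l'action de $G_\infty$ : la filtration de pentes du $\phi$-module $\Bt_{\mathrm{rig},L}^\dagger\otimes_{\B_{\tau,\mathrm{rig},K}^\dagger}\cal{M}(D)$ est canonique, donc stable sous l'action de $G_\infty$ qui commute à $\phi$, et elle redescend par $H_{\tau,K}$-invariants en la filtration de pentes de $\cal{M}(D)$ ; comme $N_\nabla$ provient de l'action infinitésimale de $\tau^{\Zp}$, son cran extrémal est stable sous $N_\nabla$ et $G_\infty$, c'est-à-dire que c'est un sous-$(\phi,\tau)$-module de $\cal{M}(D)$. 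Le corollaire \ref{coro sousobjets phiN} fournit donc un sous-$(\phi,N,\G_{M/K})$-module filtré $D'\subset D$ tel que $\cal{N}=\cal{M}(D')$, et la formule de pentes donne $t_N(D')-t_H(D')=\mathrm{pente}(\det\cal{N})<0$, ce qui contredit l'admissibilité de $D$. Donc $\cal{M}(D)$ est pur de pente $0$, c'est-à-dire étale.

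Le principal obstacle est la compatibilité de la filtration de Harder--Narasimhan de $\cal{M}(D)$ avec la structure de $(\phi,\tau)$-module (stabilité sous $N_\nabla$ et $G_\infty$), qui permet d'appliquer le corollaire \ref{coro sousobjets phiN} à son cran extrémal ; cela repose sur la canonicité de la filtration de pentes de Kedlaya et sur le fait que $N_\nabla$ est construit à partir de l'action infinitésimale de $\tau^{\Zp}$. Il faut aussi faire attention aux conventions de signe pour que \guillemotleft{ }pur de pente $0$\guillemotright{ } corresponde bien à \guillemotleft{ }$\mathrm{pente}(\det\cal{M})=0$ et tout sous-$\phi$-module saturé $\cal{N}$ vérifie $\mathrm{pente}(\det\cal{N})\geq 0$\guillemotright{ }. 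Une autre voie, évitant la théorie des pentes, consiste à remarquer que si $\cal{M}(D)$ est étale alors (par Kedlaya et l'équivalence pour les $(\phi,\tau)$-modules étales) il provient d'une représentation $p$-adique $V$ de $\G_K$, potentiellement semi-stable car sa connexion est localement triviale, avec $D\simeq D_{st,M}(V)$ donc admissible ; et réciproquement, si $D$ est admissible, \cite{CF00} donne $D\simeq D_{st,M}(V)$ pour $V$ potentiellement semi-stable, et $\cal{M}(D)\simeq D_{\tau,\mathrm{rig}}^\dagger(V)$ est étale par le théorème \ref{surconvphitau} (en utilisant la proposition \ref{étend foncteur Kisin} et une torsion via la proposition \ref{tordre rep phitau} pour se ramener au cas effectif).
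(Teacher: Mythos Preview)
Ta preuve principale est correcte et suit essentiellement le même schéma que celle de l'article : filtration de pentes de Kedlaya combinée au corollaire \ref{coro sousobjets phiN} et à la formule $\mathrm{pente}(\det\cal{M}(D))=t_N(D)-t_H(D)$.

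Une nuance dans le sens \guillemotleft{ }étale $\Rightarrow$ admissible\guillemotright{ } : tu invoques directement le critère de semi-stabilité de Kedlaya (tout sous-$\phi$-module saturé d'un module pur de pente $0$ a pente $\geq 0$), après avoir vérifié via l'exactitude du foncteur que $\cal{M}(D')$ est saturé. L'article procède autrement : il passe aux puissances extérieures pour se ramener au rang $1$, en notant que $\cal{M}(\det D')$ est un sous-$\phi$-module de rang $1$ de $\Lambda^{d'}\cal{M}(D)$ (qui est étale), puis invoque \cite[Prop. IV.1.3]{Ber08}. Les deux arguments sont valables ; le tien est un peu plus direct et évite le détour par $\Lambda^{d'}$.

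Attention à ton \guillemotleft{ }autre voie\guillemotright{ } finale : elle repose implicitement sur l'identification $\cal{M}(D_{st,M}(V))\simeq D_{\tau,\mathrm{rig}}^\dagger(V)$, qui dans l'article est établie à la proposition \ref{prop foncteur vrai Dst}, \emph{après} la proposition que tu démontres et en l'utilisant dans sa preuve. Telle quelle, cette voie alternative serait donc circulaire dans l'ordonnancement de l'article ; elle ne constitue une preuve indépendante que si l'on dispose d'une démonstration de cette identification qui ne passe pas par la proposition en question.
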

\begin{proof}
C'est la même démonstration que \cite[Prop. IV.2.2.]{Ber08}.

On va commencer par montrer le sens direct. Supposons donc que $D$ est admissible. Le théorème 6.4.1 de \cite{slopes} montre que $\cal{M}(D)$ admet une filtration canonique par des sous $(\phi,\tau)$-modules isoclines de pentes croissantes. La somme de ces pentes est, avec multiplicité, la pente de $\det\cal{M}(D)$, c'est-à-dire $t_N(D)-t_H(D) = 0$. Il suffit donc de montrer que les pentes de $\cal{M}(D)$ sont toutes $\geq 0$. Le corollaire \ref{coro sousobjets phiN} nous dit que tout sous-objet de $\cal{M}(D)$ est de la forme $\cal{M}(D')$ où $D' \subset D$ et la pente de $\det\cal{M}(D')$ est $t_N(D')-t_H(D') \geq 0$ puisque $D$ est admissible. En particulier, $\cal{M}(D)$ ne peut pas contenir de sous-objet isocline de pente $< 0$, et donc $\cal{M}(D)$ est étale.

Pour l'autre sens, supposons à présent que $\cal{M}(D)$ est étale. La pente de $\det \cal{M}(D)$ est nulle et donc $t_N(D)-t_H(D) = 0$. Si $D'$ est un sous-objet de $D$, de dimension $d'$, alors $\det(D')$ est de dimension $1$ dans $\Lambda^{d'}D$ et $\cal{M}(\det D')$ est un sous-$\phi$-module de rang $1$ de $\cal{M}(\Lambda^{d'}D)$. La proposition \cite[Prop. IV.1.3]{Ber08} et le fait que $\cal{M}(\Lambda^{d'}D) = \Lambda^{d'}\cal{M}(D)$ est étale montrent que la pente de $\cal{M}(\det D')$ est positive, de sorte que $t_N(D')-t_H(D') \geq 0$, et donc $D$ est admissible.
\end{proof}

En particulier, ce résultat permet de caractériser les représentations potentiellement semi-stables en fonction de la connexion associée à leur $(\phi,\tau)$-module sur $\B_{\tau,\mathrm{rig},K}^\dagger$~: ce sont celles dont la connexion est localement triviale. On peut également montrer que le $(\phi,N,\G_{M/K})$-module filtré qu'on récupère à partir du $(\phi,\tau)$-module associé à une représentation $V$ est exactement $D_{\mathrm{st}}(V)$~:

\begin{prop}
\label{prop foncteur vrai Dst}
Si $V$ est une représentation $p$-adique de $\G_K$, dont la restriction à $\G_M$ est semi-stable, alors $\mathcal{M}(D_{\mathrm{st},M}(V)) = D_{\tau,\mathrm{rig}}^\dagger(V)$.
\end{prop}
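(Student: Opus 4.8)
The strategy, following \cite[\S III.4]{Ber08}, is to produce abstractly a filtered $(\phi,N,\G_{M/K})$-module whose image under $\cal{M}$ is $D_{\tau,\mathrm{rig}}^\dagger(V)$, and then to recognize it as $D_{\mathrm{st},M}(V)$. First I would check that the connection $N_\nabla$ on $D_{\tau,\mathrm{rig}}^\dagger(V)$ is locally trivial in the sense of Definition \ref{defi loc triv}: applying the semi-stable comparison isomorphism of Proposition \ref{iso comparaison phitau} over $M$, the module $D_{\tau,\mathrm{rig},M}^\dagger(V)[1/\lambda]$ has a basis consisting of elements of $D_{\mathrm{st},M}(V)$, which are fixed by $\tau$, hence annihilated by $\nabla_\tau$ and a fortiori by $N_\nabla=-\tfrac{\lambda}{t}\nabla_\tau$; the connection is therefore (locally) trivial on $D_{\tau,\mathrm{rig},M}^\dagger(V)$, and descending through $H_{\tau,K}$ by Proposition \ref{descente galoisienne pour phitauMrig} it is locally trivial on $D_{\tau,\mathrm{rig}}^\dagger(V)$ (this is also the characterization of potentially semi-stable representations deduced from Proposition \ref{admissible = étale}). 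Since $D_{\tau,\mathrm{rig}}^\dagger(V)$ is stable under $\partial_\D=\tfrac1\lambda N_\nabla$ it is an équation différentielle $p$-adique à structure de Frobenius, so the monodromy theorem \ref{thm eqdiff padic} and Theorem \ref{thm phitau phiN} apply and produce a filtered $(\phi,N,\G_{M/K})$-module $D''=S_M(\D)$ (the extension furnished by \ref{thm eqdiff padic} may be taken to be $M$, since $V_{|\G_M}$ is already semi-stable), where $\D\subset D_{\tau,\mathrm{rig}}^\dagger(V)[1/\lambda]$ is the unique $\partial_\D$-stable $(\phi,\tau)$-submodule with $\D[1/\lambda]=D_{\tau,\mathrm{rig}}^\dagger(V)[1/\lambda]$ (uniqueness by Proposition \ref{unicité phi modules 1/lambda}), and $\cal{M}(D'')=D_{\tau,\mathrm{rig}}^\dagger(V)$. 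It then suffices to prove $D''=D_{\mathrm{st},M}(V)$ as filtered $(\phi,N,\G_{M/K})$-modules.

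For the underlying $(\phi,N,\G_{M/K})$-module: because $\D[1/\lambda]=D_{\tau,\mathrm{rig}}^\dagger(V)[1/\lambda]$ and $D_{\tau,\mathrm{rig},M}^\dagger(V)=\B_{\tau,\mathrm{rig},M}^\dagger\otimes_{\B_{\tau,\mathrm{rig},K}^\dagger}D_{\tau,\mathrm{rig}}^\dagger(V)$, the ambient module $\B_{\tau,\mathrm{rig},M}^\dagger[\log[\tilde{\pi}]][1/\lambda]\otimes_{\B_{\tau,\mathrm{rig},K}^\dagger}\D$ is $D_{\tau,\log,M}^\dagger(V)[1/\lambda]$, and since $\partial_\D=-t^{-1}\nabla_\tau$ the horizontal sections $S_M(\D)=(D_{\tau,\log,M}^\dagger(V)[1/\lambda])^{\partial_\D=0}$ coincide — using the $\Gamma$- and $\Gal((M\cdot L)/M_{\mathrm{cycl}})$-triviality built into the construction of $S_M(\D)$ — with $(D_{\tau,\log,M}^\dagger(V)[1/\lambda])^{\tau=1}$, which by Theorem \ref{retrouver Dst Dcris phitau} applied over $M$ is exactly $D_{\mathrm{st},M}(V)$. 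The $\G_{M/K}$-action matches by functoriality of all the constructions involved.

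The remaining and hardest point is that the filtration attached to $D''$ by Theorem \ref{thm phitau phiN} coincides with the Hodge filtration on $M\otimes_{M_0}D_{\mathrm{st},M}(V)=D_{\mathrm{dR}}(V)$. By construction it is read off from the lattices $\Lambda_n:=\Bdrplus\otimes_{X_n}\bigl(X_n\otimes_{\B_{\tau,\mathrm{rig},K}^{\dagger,r}}^{\iota_n}D_{\tau,\mathrm{rig}}^\dagger(V)_r\bigr)$ inside $\Bdr\otimes_{\Qp}V$. Since $\iota_n=\phi^{-n}$ maps $\B_\tau^{\dagger,r}$ into $\Bdrplus$ for $n\geq n(r)$ one has $\Lambda_n\subset\Bdrplus\otimes_{\Qp}V$, with $\Lambda_n[1/t]=\Bdr\otimes_{\Qp}V$ because $D_{\tau,\mathrm{rig}}^\dagger(V)$ has rank $\dim_{\Qp}V$ and $\D[1/\lambda]=D_{\tau,\mathrm{rig}}^\dagger(V)[1/\lambda]$; moreover $\Lambda_n$ is stable under $\G_{K_n}$, the verification being that of \cite[\S III.4]{Ber08} (pass to $\Bt_{\mathrm{rig},L}^\dagger\otimes_{\B_{\tau,\mathrm{rig},K}^\dagger}D_{\tau,\mathrm{rig}}^\dagger(V)$, where $G_\infty$ acts, and observe that $\phi^{-n}$ intertwines this action with the genuine action of $\G_{K_n}$ on $\Bdr\otimes_{\Qp}V$). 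Lemma \ref{lemm W=Fil0} then forces $\Lambda_n=\Fil^0(\Bdr\otimes_{\Qp}V)$, and unwinding the identifications $(\Bdr\otimes_{\Qp}V)^{\G_{K_n}}=K_n\otimes_KD_{\mathrm{dR}}(V)$ and the $\xi_n$-twist entering the definition of $D_M^n$ shows that the filtration induced on $D_{\mathrm{dR}}(V)$ is the Hodge one, i.e. the filtration carried by $D_{\mathrm{st},M}(V)$. Hence $D''=D_{\mathrm{st},M}(V)$ as filtered $(\phi,N,\G_{M/K})$-modules, and therefore $\cal{M}(D_{\mathrm{st},M}(V))=\cal{M}(D'')=D_{\tau,\mathrm{rig}}^\dagger(V)$. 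I expect this last step to be the main obstacle: the careful bookkeeping of the $\phi^{-n}$-twists and of the $\G_{K_n}$-stability of $\Lambda_n$ needed so that Lemma \ref{lemm W=Fil0} recovers the Hodge filtration; everything else reduces directly to Theorem \ref{thm phitau phiN}, the $p$-adic monodromy theorem \ref{thm eqdiff padic}, and the comparison results already established.
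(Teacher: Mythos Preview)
Your approach is essentially the same as the paper's: identify the underlying $(\phi,N,\G_{M/K})$-module via Theorem \ref{retrouver Dst Dcris phitau} applied over $M$, then match the filtrations by passing to $\Bdrplus$-lattices and invoking Lemma \ref{lemm W=Fil0}. The paper organizes the argument slightly differently (it starts from an arbitrary admissible $D$, produces $V$ with $\cal{M}(D)=D_{\tau,\mathrm{rig}}^\dagger(V)$ via Proposition \ref{admissible = étale}, then shows $D=D_{\mathrm{st},M}(V)$), but the substance is identical.

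One slip to correct: it is \emph{not} true that $D_{\tau,\mathrm{rig}}^\dagger(V)$ is stable under $\partial_\D=\tfrac{1}{\lambda}N_\nabla$; only $N_\nabla$ preserves it (Proposition \ref{stabilité connexion}). The module that \emph{is} $\partial_\D$-stable is the $\D$ produced inside the proof of Theorem \ref{thm phitau phiN}, which you correctly describe afterwards. Since Theorem \ref{thm phitau phiN} only requires $N_\nabla$ to be locally trivial (not $\partial_\D$-stability of the input), this misstatement does not affect your argument --- simply delete the claim and invoke Theorem \ref{thm phitau phiN} directly once local triviality is established.
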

\begin{proof}
Soit $D$ un $(\phi,N,\G_{M/K})$-module filtré admissible. Le théorème \ref{admissible = étale} montre qu'il existe une représentation $p$-adique $V$ de $\G_K$ telle que $\mathcal{M}(D) = D_{\tau,\mathrm{rig}}^\dagger(V)$. Or, le théorème \ref{retrouver Dst Dcris phitau} appliqué aux $(\phi,\tau_M)$-modules nous donne que 
$$D_{\mathrm{st},M}(V) = (\B_{\tau,\log,M}^\dagger[1/\lambda] \otimes_{\B_{\tau,\mathrm{rig},K}^\dagger}D_{\tau,\mathrm{rig}}^\dagger(V))^{\tau_M=1}.$$
Comme de plus, on a
$$\B_{\tau,\log,M}^\dagger[1/\lambda] \otimes_{\B_{\tau,\mathrm{rig},K}}\mathcal{M}(D) = \B_{\tau,\log,M}^\dagger[1/\lambda] \otimes_{M_0} D$$
et que $(\B_{\tau,\log,M}^\dagger[1/\lambda])^{\tau_M=1} = M_0$, on a alors $D = (\B_{\tau,\log,M}^\dagger[1/\lambda] \otimes_{\B_{\tau,\mathrm{rig},K}}D_{\tau,\mathrm{rig}}^\dagger(V))^{\tau_M=1}$ et donc que $D = D_{\mathrm{st},M}(V)$ en tant que $(\phi,N,\G_{M/K})$-modules.

Il reste à voir que les filtrations sur $D$ et $D_{\mathrm{st},M}(V)$ coïncident. La filtration sur $M \otimes_{M_0} D$ est par construction la même que celle sur $S_M(\mathcal{M}(D))$, et il faut donc voir qu'elle coïncide avec celle provenant de l'isomorphisme $(M\otimes_{M_0}D_{\mathrm{st},M}(V))^{\G_{M/K}} = D_{\mathrm{dR}}(V)$. Or, si $n \geq n(r)$, l'application $\iota_n$ envoie $\B_{\tau,\log,M}^{\dagger,r}\otimes_{\B_{\tau,\mathrm{rig},K}^{\dagger,r}}D_{\tau,\mathrm{rig}}^{\dagger,r}(V)$ dans $\Bdr \otimes_{\Qp}V$ et donc $Y_{M,n} \otimes_{M}D_M \subset \Bdr \otimes_{\Qp} V$. Mais pour $r$ tel que $D_{\tau,\mathrm{rig}}^{\dagger,r}$ engendre $D_{\tau,\mathrm{rig}}^\dagger$ et $n \geq n(r)$, si on pose 
$$W=\Bdrplus \otimes_{X_{M,n}}(X_{M,n} \otimes_{\B_{\tau,\mathrm{rig},K}^{\dagger,r}}^{\iota_n}D_{\tau,\mathrm{rig}}^{\dagger,r}(V)),$$
alors $W$ est un $\Bdrplus$-réseau de $\Bdr \otimes_{\Qp}V = \Bdr \otimes_{M}(M \otimes_{M_0}D_{\mathrm{st},M}(V))$, stable sous Galois, de sorte que par le lemme \ref{lemm W=Fil0}, on a $W = \Fil^0(\Bdr \otimes_{M}(M \otimes_{M_0}D_{\mathrm{st},M}(V)))$ et donc 
$$X_{M,n}\otimes_{\B_{\tau,\mathrm{rig},K}^{\dagger,r}}^{\iota_n}D_{\tau,\mathrm{rig}}^{\dagger,r}(V) = \Fil^0(Y_{M,n} \otimes_K D_{\mathrm{dR}}(V))$$
de sorte que la filtration sur $D$ construite dans le théorème \ref{thm phitau phiN} coïncide avec la filtration provenant de celle de $D_{\mathrm{dR}}(V)$, d'où l'égalité $D = D_{\mathrm{st},M}(V)$ en tant que $(\phi,N,\G_{M/K})$-modules filtrés.
\end{proof}

\section{Représentations de $E$-hauteur finie}
\subsection{Potentielle semi-stabilité des représentations de $E$-hauteur finie}

On a vu au chapitre 2 qu'on pouvait attacher à un $(\phi,\tau)$-module $D$ sur $(\B_{\tau,\mathrm{rig},K}^\dagger,\Bt_{\mathrm{rig},L}^\dagger)$ un opérateur différentiel $N_\nabla$ qui stabilise le $\phi$-module sur $\B_{\tau,\mathrm{rig},K}^\dagger$ et donc de considérer le $(\phi,N_\nabla)$-module sur $\B_{\tau,\mathrm{rig},K}^\dagger$ associé. On va en fait montrer que les propositions \ref{stabilité connexion} et \ref{V V' meme Nabla} permettent de retrouver le théorème principal de \cite{GaoEhauteur} (qui était un des principaux résultats de l'article de Caruso \cite[Thm. 3]{Car12} mais il semblerait qu'il y ait un problème dans la preuve, on renvoie d'ailleurs à l'appendice de \cite{GaoEhauteur} pour une discussion sur la question), c'est-à-dire que les représentations de $E$-hauteur finie de $\G_K$ sont potentiellement semi-stables. On signale au passage que la stratégie employée ici repose principalement sur les mêmes arguments que ceux utilisés par Gao. 

On note $\Mod_{/\B_{\tau,\mathrm{rig},K}^+}^{\phi,N_\nabla,0}$ la catégorie des $(\phi,N_\nabla)$-modules étales sur $\B_{\tau,\mathrm{rig},K}^+$ de $E$-hauteur finie, et on note $\Mod_{/\B_{\tau,\mathrm{rig},K}^+}^{\phi,\tau,0}$ la catégorie des $(\phi,\tau)$-modules étales sur $(\B_{\tau,\mathrm{rig},K}^+,\Bt_{\mathrm{rig},L}^+)$ de $E$-hauteur finie. 

On va en fait reconstruire un analogue du foncteur $S_{\phi,N_\nabla}$ de \cite[Thm. 3.5]{Car12}. Comme dans \cite[3.1.3]{Car12}, on construit un foncteur $\cal{R}_{\phi,\tau} : \Mod_{/\B_{\tau,\mathrm{rig},K}^+}^{\phi,N_\nabla,0} \to \Mod_{/\B_{\tau,\mathrm{rig},K}^+}^{\phi,\tau,0}$ (c'est le même foncteur $\cal{R}_{\phi,\tau}$ que celui de Caruso au théorème de pleine fidélité de Kedlaya \cite[Thm. 6.3.3]{slopes} près, c'est-à-dire que notre $\cal{R}_{\phi,\tau}(\cal{M})$ est le tensorisé de celui de Caruso par $(\B_{\tau,\mathrm{rig},K}^+,\Bt_{\mathrm{rig},L}^+)$ au-dessus de $(\B_{\tau,K}^+,\Bt_L^+)$). Soit $\mathfrak{M}$ un $(\phi,N_\nabla)$-module sur $\B_{\tau,\mathrm{rig},K}^+$ qui est de $E$-hauteur finie. La théorie de Kisin \cite[Thm. 1.2.15]{KisinFiso} associe à un tel objet une représentation semi-stable $V(\mathfrak{M})$ à poids de Hodge-Tate négatifs (en fait Kisin associe à un tel objet un $(\phi,N)$-module filtré effectif, mais on utilise ici directement l'équivalence de catégories entre $(\phi,N)$-modules filtrés effectifs et représentations semi-stables $V$ à poids de Hodge-Tate négatifs) dont le $(\phi,\tau)$-module associé est engendré par un $(\phi,\tau)$-module défini sur $(\B_{\tau,K}^+,\Bt_L^+)$, et on note $\cal{R}_{\phi,\tau}(\cal{M})$ ce $(\phi,\tau)$-module différentiel sur $(\B_{\tau,K}^+,\Bt_L^+)$. La démonstration du théorème D de l'introduction repose en fait sur la construction d'un presque quasi-inverse de $\cal{R}_{\phi,\tau}$, qui découle du théorème suivant~:

\begin{theo}
\label{theo pcpal Gao}
Soit $D$ un $(\phi,\tau)$-module étale sur $(\B_{\tau,\mathrm{rig},K}^+,\Bt_{\mathrm{rig},L}^+)$ de $E$-hauteur finie. Alors la connexion associée $N_\nabla : \B_{\tau,\mathrm{rig},K}^\dagger \otimes_{\B_{\tau,\mathrm{rig},K}^+}D \to \B_{\tau,\mathrm{rig},K}^\dagger \otimes_{\B_{\tau,\mathrm{rig},K}^+}D$ vérifie $N_\nabla(D) \subset D$.
\end{theo}
\begin{proof}
Voir \cite[Prop. 6.1.1]{GaoEhauteur}.
\end{proof}

\begin{rema}
Ce qui est clair est que l'opérateur $\nabla_\tau$ laisse $(\Bt_{\mathrm{rig},L}^+)^{\pa}\otimes_{\B_{\tau,\mathrm{rig},K}^+}D$ invariant, mais lorsqu'on pose $N_\nabla = -\frac{\lambda}{t}\nabla_\tau$, on fait apparaître des singularités qui font qu'\textit{a priori} $N_\nabla$ n'est pas définie sur $D$. Le théorème précédent montre que, dans le cas où $D$ est de $E$-hauteur finie, ces singularités sont effaçables.
\end{rema}

On montre maintenant comment en déduire le théorème $D$ de l'introduction :
\begin{theo}
\label{theo Gao}
Soit $V$ une représentation $p$-adique de $\G_K$. Alors $V$ est de $E$-hauteur finie si, et seulement si, la restriction de $V$ à $\G_{K_s}$ est semi-stable à poids de Hodge-Tate négatifs, $s$ étant le plus grand entier tel que $K^{\mathrm{unr}}$ contienne $\zeta_{p^s}$.
\end{theo}
\begin{proof}
Commençons par remarquer que la catégorie des représentations de $E$-hauteur finie est équivalente, par passage aux $(\phi,\tau)$-modules, à la catégorie $\Mod_{/\B_{\tau,\mathrm{rig},K}^+}^{\phi,\tau,0}$.

On va à présent construire un foncteur $S_{\phi,N_\nabla} : \Mod_{/\B_{\tau,\mathrm{rig},K}^+}^{\phi,\tau,0} \to \Mod_{/\B_{\tau,\mathrm{rig},K}^+}^{\phi,N_\nabla,0}$ vérifiant les propriétés suivantes (on pourra comparer avec \cite[Thm 3.5]{Car12})~:
\begin{enumerate}
\item pour tout objet $D \in \Mod_{/\B_{\tau,\mathrm{rig},K}^+}^{\phi,N_\nabla,0}$, il y a un isomorphisme canonique 
$$f_D : S_{\phi,N_\nabla} \circ \cal{R}_{\phi,\tau}(D) \simeq D$$
dans la catégorie $\Mod_{/\B_{\tau,\mathrm{rig},K}^+}^{\phi,N_\nabla,0}$ ;
\item pour tout objet $\mathfrak{M} \in \Mod_{/\B_{\tau,\mathrm{rig},K}^+}^{\phi,\tau,0}$, il y a un isomorphisme canonique 
$$g_{\mathfrak{M}} : \cal{R}_{\phi,\tau} \circ S_{\phi,N_\nabla}(\mathfrak{M}) \simeq \mathfrak{M}$$
en tant que $(\phi,\tau^{p^s})$-modules.
\end{enumerate}
Soit $D \in \Mod_{/\B_{\tau,\mathrm{rig},K}^+}^{\phi,\tau,0}$. Par le théorème \ref{theo pcpal Gao}, la connexion $N_\nabla$ provenant de l'action infinitésimale de $\tau$ laisse stable le $\phi$-module $D$ sur $\B_{\tau,\mathrm{rig},K}^+$, ce qui confère à $D$ une structure de $(\phi,N_\nabla)$-module étale sur $\B_{\tau,\mathrm{rig},K}^+$, et on note $S_{\phi,N_\nabla}(D)$ l'objet obtenu, qui est toujours de $E$-hauteur finie puisque $D$ l'est. Il reste à montrer que $S_{\phi,N_\nabla}$ vérifie les propriétés énoncées. \'Etant donné $\mathfrak{M} \in \Mod_{/\B_{\tau,\mathrm{rig},K}^+}^{\phi,N_\nabla,0}$, les propositions \ref{prop foncteur vrai Dst} et \ref{étend foncteur Kisin} montrent que, si $V$ est la représentation associée à $\mathfrak{M}$ par la théorie de Kisin, alors $\mathfrak{M} \simeq D_{\tau,\mathrm{rig}}^+(V)$ en tant que $(\phi,N_\nabla)$-modules de $E$-hauteur finie sur $\B_{\tau,\mathrm{rig},K}^+$, c'est-à-dire que $S_{\phi,N_\nabla} \circ R_{\phi,\tau}(\mathfrak{M}) \simeq \mathfrak{M}$ dans la catégorie $\Mod_{/\B_{\tau,\mathrm{rig},K}^+}^{\phi,N_\nabla,0}$.

Réciproquement, étant donné un $(\phi,\tau)$-module $D$ de $E$-hauteur finie sur $\B_{\tau,\mathrm{rig},K}^+$, ce qu'on vient de faire montre qu'on a un isomorphisme 
$$S_{\phi,N_\nabla}(D) \simeq S_{\phi,N_\nabla}(\cal{R}_{\phi,\tau} \circ S_{\phi,N_\nabla})(D),$$
de sorte qu'il existe $n \geq 0$ tel que $D \simeq (\cal{R}_{\phi,\tau} \circ S_{\phi,N_\nabla})(D)$ en tant que $(\phi,\tau^{p^n})$-modules par la proposition \ref{V V' meme Nabla}. Mais, si on pose $D'=(\cal{R}_{\phi,\tau} \circ S_{\phi,N_\nabla})(D)$, alors $D'$ est le $(\phi,\tau)$-module associé à une représentation semi-stable $V(D')$ à poids négatifs par le théorème \ref{thm pcpal de Kisin}, d'où le fait que la représentation $V(D)$ associée à $V$ devient semi-stable à poids négatifs en restriction à $\G_{K_n}$, et donc est en fait semi-stable à poids négatifs comme représentation de $\G_{K_s}$ (voir par exemple \cite[Lemm. 6.2.1]{GaoEhauteur}), et on a alors $V(D) \simeq V(D')$ en tant que $\G_{K_s}$-représentations (cf. \cite[Prop. 3.4]{Car12}), ce qui conclut.

Si maintenant $V$ est une représentation $p$-adique dont la restriction à $\G_{K_s}$ est semi-stable à poids négatifs, le théorème \ref{thm pcpal de Kisin} de Kisin appliqué aux représentations de $\G_{K_s}$ montre que $V$ est de $E$-hauteur finie.
\end{proof} 

\begin{rema}
On pouvait presque démontrer ce résultat à la suite des propriétés des $(\phi,N_\nabla)$-modules du chapitre 2, mais on avait besoin du fait que le $(\phi,N_\nabla)$-module de $E$-hauteur finie construit dans la théorie de Kisin et associé à une représentation semi-stable $V$ à poids négatifs correspond exactement à $D_{\tau,\mathrm{rig}}^\dagger(V)$, ce qui est une conséquence des propositions \ref{prop foncteur vrai Dst} et \ref{étend foncteur Kisin} du chapitre 3.
\end{rema}

En particulier, le théorème \ref{theo Gao} permet de réinterpréter le fait d'être de $E$-hauteur finie en fonction de la connexion $N_\nabla$, en démontrant un analogue des résultats de \cite[§5.2]{Ber02}. 

\begin{prop}
\label{conditions equiv unipotent}
Soit $D$ un $(\phi,N_\nabla)$-module de rang $d$ sur $\B_{\tau,\mathrm{rig},K}^\dagger$ (resp. $\B_{\tau,\mathrm{rig},K}^+$). Les propriétés suivantes sont équivalentes~:
\begin{enumerate}
\item $N_\nabla$ est triviale sur $D \otimes_{\B_{\tau,\mathrm{rig},K}^\dagger}\B_{\tau,\log,K}^\dagger$ (resp. sur $D \otimes_{\B_{\tau,\mathrm{rig},K}^+}\B_{\tau,\log,K}^+$), c'est-à-dire qu'il existe 
$$e_0,\cdots,e_{d-1} \in D \otimes_{\B_{\tau,\mathrm{rig},K}^\dagger}\B_{\tau,\log,K}^\dagger (\textrm{resp. } D \otimes_{\B_{\tau,\mathrm{rig},K}^+}\B_{\tau,\log,K}^+)$$
tels que pour tout $i$, $N_\nabla e_i=0$ et $\bigoplus e_i\B_{\tau,\log,K}^\dagger[1/\lambda] = D \otimes_{\B_{\tau,\mathrm{rig},K}^\dagger}\B_{\tau,\log,K}^\dagger[1/\lambda]$ (resp. $\bigoplus e_i\B_{\tau,\log,K}^+[1/\lambda] = D \otimes_{\B_{\tau,\mathrm{rig},K}^+}\B_{\tau,\log,K}^+[1/\lambda]$) ;
\item il existe $d$ éléments $f_0, \cdots,f_{d-1}$ de $D$, formant une base de $D \otimes_{\B_{\tau,\mathrm{rig},K}^\dagger}\B_{\tau,\mathrm{rig},K}^\dagger[1/\lambda]$ sur $\B_{\tau,\mathrm{rig},K}^\dagger[1/\lambda]$ (resp. formant une base de $D \otimes_{\B_{\tau,\mathrm{rig},K}^+}\B_{\tau,\mathrm{rig},K}^+[1/\lambda]$ sur $\B_{\tau,\mathrm{rig},K}^+[1/\lambda]$), tels que $N_\nabla(f_i) \in \lambda\cdot\pscal{f_{i-1},\cdots,f_0}$
où $\pscal{\cdot}$ est le $\B_{\tau,\mathrm{rig},K}^\dagger$-module engendré (resp. le $\B_{\tau,\mathrm{rig},K}^+$-module engendré).
\end{enumerate}
\end{prop}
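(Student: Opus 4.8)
The plan is to prove the two implications by transcribing the arguments of \cite[§5.2]{Ber02}, the only genuinely new ingredient being an explicit description of the operator $\partial_\nabla := \frac1\lambda N_\nabla$ on the rings involved. Since $N_\nabla([\tilde\pi]) = -\lambda[\tilde\pi]$ and $N_\nabla$ is a derivation, $\partial_\nabla$ acts on $\B_{\tau,\mathrm{rig},K}^\dagger$ by $\sum_{k\in\Z}a_k[\tilde\pi]^k \mapsto -\sum_k k a_k[\tilde\pi]^k$; hence its kernel is $F$ and its cokernel, represented by the constant term, is again $F$. Because $N_\nabla(\log[\tilde\pi]) = -\lambda$ one has $\partial_\nabla(\log[\tilde\pi]) = -1$, so adjoining $\log[\tilde\pi]$ absorbs that residual cokernel: by induction on the degree in $\log[\tilde\pi]$ I would check that $\partial_\nabla : \B_{\tau,\log,K}^\dagger \to \B_{\tau,\log,K}^\dagger$ is surjective with kernel $F$ (and the same over $\B_{\tau,\log,K}^+$). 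I would also record that $N_\nabla$ is divisible, in $\B_{\tau,\mathrm{rig},K}^\dagger$, by each factor $\phi^n(E([\tilde\pi]))$ of $\lambda$: indeed $N_\nabla(\phi^n(E([\tilde\pi])))$ equals $-\lambda[\tilde\pi]$ times the $[\tilde\pi]$-derivative of $\phi^n(E([\tilde\pi]))$, and $\lambda/\phi^n(E([\tilde\pi]))$ is a unit near the zero locus of $\phi^n(E([\tilde\pi]))$. Finally one has $N_\nabla(\B_{\tau,\mathrm{rig},K}^\dagger) \subset \lambda\B_{\tau,\mathrm{rig},K}^\dagger$.

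For $(2)\Rightarrow(1)$ I would build the horizontal basis $e_0,\dots,e_{d-1}$ of $D \otimes_{\B_{\tau,\mathrm{rig},K}^\dagger}\B_{\tau,\log,K}^\dagger[1/\lambda]$ by induction on $i$, keeping it inside $D\otimes\B_{\tau,\log,K}^\dagger$. Set $e_0 := f_0$, which is horizontal since $\pscal{f_{i-1},\dots,f_0}=0$ for $i=0$. If $e_0,\dots,e_{i-1}$ are horizontal and deduced from $f_0,\dots,f_{i-1}$ by a unitriangular base change over $\B_{\tau,\log,K}^\dagger$, then from $N_\nabla(f_i)\in\lambda\pscal{f_{i-1},\dots,f_0}$ and $N_\nabla(\B_{\tau,\mathrm{rig},K}^\dagger)\subset\lambda\B_{\tau,\mathrm{rig},K}^\dagger$ one gets $N_\nabla(f_i)=\sum_{j<i}h_je_j$ with $h_j\in\lambda\B_{\tau,\log,K}^\dagger$. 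Solving $N_\nabla(g_j)=h_j$ with $g_j\in\B_{\tau,\log,K}^\dagger$ (surjectivity of $\partial_\nabla$) and putting $e_i:=f_i-\sum_{j<i}g_je_j$ gives $N_\nabla(e_i)=0$, keeps the base change unitriangular, hence makes $(e_i)$ a $\B_{\tau,\log,K}^\dagger[1/\lambda]$-basis of $D\otimes\B_{\tau,\log,K}^\dagger[1/\lambda]$.

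The direction $(1)\Rightarrow(2)$ is the main obstacle; here I would argue by induction on the rank $d$ of $D$, again following \cite[§5.2]{Ber02}. The case $d=1$ is immediate: writing $e_0=g\,m$ in a basis $m$ of $D$, condition (1) forces $g$ to be a unit of $\B_{\tau,\log,K}^\dagger[1/\lambda]=\B_{\tau,\mathrm{rig},K}^\dagger[1/\lambda][\log[\tilde\pi]]$; units of this polynomial ring lie in $\B_{\tau,\mathrm{rig},K}^\dagger[1/\lambda]$, and since $g\in\B_{\tau,\log,K}^\dagger$ this gives $g\in\B_{\tau,\mathrm{rig},K}^\dagger$, so $e_0\in D$ and $f_0:=e_0$ works. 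For $d>1$ I would extract from the $\log$-trivialization a horizontal vector already lying in $D[1/\lambda]$ — by considering the part of $D[1/\lambda]$ of minimal degree in $\log[\tilde\pi]$ and using that a saturated $\partial_\nabla$-stable submodule of a trivial connection over $\B_{\tau,\mathrm{rig},K}^\dagger[1/\lambda]$ contains a horizontal vector — correct it by a $\log$-polynomial and by powers of the $\phi^n(E([\tilde\pi]))$ so that it lands in $D$ and generates a saturated rank-one $\partial_\nabla$-stable submodule $D'$ on which $\partial_\nabla$ vanishes, check that $D/D'$ still satisfies (1), apply the inductive hypothesis to it, and lift the resulting flag to $D$.

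The delicate point throughout $(1)\Rightarrow(2)$ is the $\lambda$-integrality bookkeeping: guaranteeing that the horizontal line and the lifted flag vectors $f_i$ can be chosen \emph{in} $D$ — not just in $D[1/\lambda]$ — while the structure constants of $N_\nabla$ stay in $\lambda\cdot\B_{\tau,\mathrm{rig},K}^\dagger$ with no denominators. This rests on a Prop \ref{divis élem}-style analysis of elementary divisors (whose only possible prime factors are the $\phi^n(E([\tilde\pi]))$) together with the divisibility of $N_\nabla$ by the $\phi^n(E([\tilde\pi]))$ established in the first step. The statement over $\B_{\tau,\mathrm{rig},K}^+$ follows by the same arguments, all of the computations of $\partial_\nabla$ (kernel $F$, surjectivity after adjoining $\log[\tilde\pi]$, divisibilities) being equally valid over $\B_{\tau,\mathrm{rig},K}^+$ and $\B_{\tau,\log,K}^+$.
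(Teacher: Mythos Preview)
Your $(2)\Rightarrow(1)$ argument and your preliminary computation of $\partial_\nabla$ (kernel $F$, surjectivity on $\B_{\tau,\log,K}^\dagger$ and $\B_{\tau,\log,K}^+$) match the paper's proof essentially verbatim.

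For $(1)\Rightarrow(2)$, however, the paper takes a much shorter and more direct route than your rank induction, and completely sidesteps the ``$\lambda$-integrality bookkeeping'' you flag as delicate. The key ingredient you are missing is the monodromy operator $N$ on $\B_{\tau,\log,K}^\dagger$ (the one with $N(\log[\tilde\pi])=-1$, extended $\B_{\tau,\mathrm{rig},K}^\dagger$-linearly). It commutes with $N_\nabla$, hence stabilises the $F$-span of the $e_i$ (this span is the full kernel of $N_\nabla$ on $D\otimes\B_{\tau,\log,K}^\dagger[1/\lambda]$). Here the $\phi$-structure on $D$ enters: the relation $N\phi=p\phi N$ forces $N$ to be nilpotent on $\langle e_0,\dots,e_{d-1}\rangle_F$, so after a change of basis one may assume $N(e_i)\in\langle e_0,\dots,e_{i-1}\rangle_F$. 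Now expand $e_i=\sum_j \log^j[\tilde\pi]\cdot d_{ji}$ with $d_{ji}\in D$ and set $f_i:=d_{0i}$. Comparing constant terms (in $\log[\tilde\pi]$) of $N_\nabla(e_i)=0$ yields $N_\nabla(f_i)=\lambda\,d_{1i}$, and comparing constant terms of $N(e_i)\in\langle e_0,\dots,e_{i-1}\rangle_F$ gives $d_{1i}\in\langle f_0,\dots,f_{i-1}\rangle_F$. Thus $N_\nabla(f_i)\in\lambda\langle f_0,\dots,f_{i-1}\rangle$, with the $f_i$ lying in $D$ \emph{automatically} --- no elementary-divisor analysis, no corrections by powers of $\phi^n(E([\tilde\pi]))$.

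Your inductive strategy is not obviously wrong, but the step you yourself call ``the delicate point'' is not carried out, and it is not clear your argument can produce the $f_i$ inside $D$ rather than $D[1/\lambda]$. More tellingly, you never use the Frobenius on $D$; in the paper's argument it is exactly what makes $N$ nilpotent on the horizontal sections and lets the construction go through in one shot.
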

\begin{proof}
La preuve est similaire à \cite[Prop. 5.5]{Ber02}. On ne va montrer que le cas où $D$ est un $(\phi,N_\nabla)$-module de rang $d$ sur $\B_{\tau,\mathrm{rig},K}^\dagger$, la preuve étant la même sur $\B_{\tau,\mathrm{rig},K}^+$ (à un détail près qu'on explicitera). On va commencer par montrer que le premier point implique le second. Comme les $e_i$ engendrent le noyau de $N_\nabla$ sur $D \otimes_{\B_{\tau,\mathrm{rig},K}^\dagger}\B_{\tau,\log,K}^\dagger$, et comme $N_\nabla$ commute à $N$, on en déduit que le $F$-espace vectoriel engendré par les $e_i$ est laissé stable par $N$. L'égalité $N\phi = p\phi N$ montre que $N$ est nilpotent sur $\pscal{e_0,\cdots,e_{d-1}}$, et on peut donc supposer que $N(e_i) \in \pscal{e_{i-1},\cdots,e_0}$. On peut écrire de manière unique $e_i = \sum_{j=0}^{d-1} \log^j(\pi) d_{ji}$ et on va montrer que $f_i=d_{0,i}$ est une famille qui satisfait la condition du deuxième point. Le fait que $N_\nabla(e_i)=0$ implique que 
$$N_\nabla(d_{0,i})=\lambda\cdot d_{1,i}$$
et le fait que $N(e_i) \in \pscal{e_{i-1},\cdots,e_0}$ montre que 
$$d_{1,i} \in \pscal{d_{0,i-1},\cdots,d_{0,0}}.$$
On en déduit que $N_\nabla(d_{0,i}) \in \lambda  \pscal{d_{0,i-1},\cdots,d_{0,0}}$.

Montrons à présent que les $f_i$ engendrent $D \otimes_{\B_{\tau,\mathrm{rig},K}^\dagger}\B_{\tau,\mathrm{rig},K}^\dagger[1/\lambda]$ sur $\B_{\tau,\mathrm{rig},K}^\dagger[1/\lambda]$. Si $x \in D \otimes_{\B_{\tau,\mathrm{rig},K}^\dagger}\B_{\tau,\mathrm{rig},K}^\dagger[1/\lambda]$, alors par hypothèse on peut écrire $x =\sum x_i e_i$ et donc $x = \sum \lambda_i f_i +y \cdot \log([\tilde{\pi}])$ où $y_i$ est le terme constant de $x_i$ et comme $x \in D \otimes_{\B_{\tau,\mathrm{rig},K}^\dagger}\B_{\tau,\mathrm{rig},K}^\dagger[1/\lambda]$ on a $y=0$.

Il reste à montrer l'implication réciproque. On va montrer par récurrence que l'on peut prendre $e_i = \sum_{j=0}^i f_j a_{ji}$ avec $a_{ji} \in \B_{\tau,\log,K}^\dagger$ et $a_{ii}=1$. Pour $D$ de rang $1$, il n'y a rien à faire. Si $d \geq 2$ tel que le résultat est vrai pour $D$ de rang $d-1$, la connexion $N_\nabla$ induit une connexion sur $(\bigoplus f_i\B_{\tau,\mathrm{rig},K}^\dagger)/f_0 \B_{\tau,\mathrm{rig},K}^\dagger$ 
qui satisfait les mêmes conditions et il existe donc $e'_1,\cdots,e'_{d-1}$ et $\alpha_1,\cdots,\alpha_{d-1}$ tels que $N_\nabla(e'_i)=\alpha_i f_0$. Le fait que $N_\nabla(f_i) \in \lambda\cdot\pscal{f_{i-1},\cdots,f_0}$ montre que les $\alpha_i \in \lambda \B_{\tau,\log,K}^\dagger$. S'il existe des $\beta_i \in \B_{\tau,\log,K}^\dagger$ tels que $N_\nabla(\beta_i)=\alpha_i$, on pose alors $e_0=f_0$ et $e_i=e'_i-\beta_i f_0$, ce qui termine la récurrence. La matrice de passage des $f_i$ aux $e_i$ est triangulaire avec des $1$ sur la diagonale de sorte que les $e_i$ engendrent bien $D \otimes_{\B_{\tau,\log,K}^\dagger}\B_{\tau,\log,K}^\dagger[1/\lambda]$ sur $\B_{\tau,\log,K}^\dagger[1/\lambda]$.

Il reste à montrer que $\frac{1}{\lambda}N_\nabla : \B_{\tau,\log,K}^\dagger \to \B_{\tau,\log,K}^\dagger$ est surjective (et que $\frac{1}{\lambda}N_\nabla : \B_{\tau,\log,K}^+ \to \B_{\tau,\log,K}^+$ est surjective pour adapter la preuve au cas où $D$ est un $(\phi,N_\nabla)$-module de rang $d$ sur $\B_{\tau,\mathrm{rig},K}^+$). Notons $\partial = \frac{1}{\lambda}N_\nabla$ et soit $f \in \B_{\tau,\mathrm{rig},K}^\dagger$. On écrit $f = \sum_{n \in \Z}a_n[\tilde{\pi}]^n$ avec les $a_n$ dans $F$, et on vérifie alors que $g = a_0\cdot \log [\tilde{\pi}] + \sum_{n \neq 0}\frac{a_n}{n}\cdot [\tilde{\pi}]^n$ vérifie $\partial g = f$, de sorte que $\partial$ réalise une surjection de $\B_{\tau,\mathrm{rig},K}^\dagger + F\cdot \log [\tilde{\pi}]$ dans $\B_{\tau,\mathrm{rig},K}^\dagger$ et de $\B_{\tau,\mathrm{rig},K}^+ + F\cdot \log [\tilde{\pi}]$ dans $\B_{\tau,\mathrm{rig},K}^+$. Pour finir, comme $\partial (h(T)\cdot \log^j [\tilde{\pi}]) = \log^j [\tilde{\pi}] \partial h([\tilde{\pi}]) + h([\tilde{\pi}])\log^{j-1} [\tilde{\pi}]$, on en déduit par récurrence que $\partial : \B_{\tau,\log,K}^\dagger \to \B_{\tau,\log,K}^\dagger$ est surjective, et qu'il en est de même pour $\partial : \B_{\tau,\log,K}^+ \to \B_{\tau,\log,K}^+$. Cela conclut la preuve. 
\end{proof}

On dit qu'un $(\phi,N_\nabla)$-module vérifiant les conditions précédentes est unipotent, et qu'il est trivial si on peut choisir les $f_i$ du second point de la proposition tels que $N_\nabla(f_i) = 0$.

\begin{prop}
\label{carac cristal unipotent}
Soit $V$ une représentation $p$-adique de dimension $d$. Alors il existe $n$ tel que la restriction de $V$ à $\G_{K_n}$ est semi-stable (resp. cristalline) si et seulement si $D_{\tau,\mathrm{rig}}^\dagger(V)[1/\lambda]$ contient un sous $(\phi,N_\nabla)$-module unipotent (resp. trivial) de rang $d$.
\end{prop}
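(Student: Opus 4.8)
Le plan est de démontrer les deux implications, en partant de l'observation que l'extension de Kummer associée à $K_n$ est encore $K_\infty$ : on a donc $H_{\tau,K_n} = H_{\tau,K}$, $L_{K_n} = L$, et $D_{\tau,\mathrm{rig}}^\dagger(V_{|\G_{K_n}})$ s'identifie, \emph{via} $\B_{\tau,\mathrm{rig},K_n}^\dagger = \phi^{-n}(\B_{\tau,\mathrm{rig},K}^\dagger)$, à $D_{\tau,\mathrm{rig}}^\dagger(V)$ muni de la même connexion $N_\nabla$. Pour le sens direct, je supposerais $V_{|\G_{K_n}}$ semi-stable (resp.\ cristalline) et poserais $D = D_{\mathrm{st},K_n}(V)$ (resp.\ $D = D_{\mathrm{cris},K_n}(V)$), qui est un $(\phi,N)$-module sur $F$. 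La proposition \ref{prop foncteur vrai Dst} appliquée à $V_{|\G_{K_n}}$ donne alors $\cal{M}(D) = D_{\tau,\mathrm{rig}}^\dagger(V)$, d'où $\D[1/\lambda] = D_{\tau,\mathrm{rig}}^\dagger(V)[1/\lambda]$ pour le $(\phi,N_\nabla)$-module $\D := (\B_{\tau,\log,K}^\dagger\otimes_F D)^{N=0}$ sur $\B_{\tau,\mathrm{rig},K}^\dagger$ (stable par $N_\nabla = N_\nabla\otimes 1$ car $N_\nabla$ commute à la monodromie $N$). Si $(d_i)$ est une base de Jordan de $N$ sur $D$, alors $\B_{\tau,\log,K}^\dagger\otimes_{\B_{\tau,\mathrm{rig},K}^\dagger}\D = \B_{\tau,\log,K}^\dagger\otimes_F D$ admet $(1\otimes d_i)$ comme base, et $N_\nabla(1\otimes d_i) = N_\nabla(1)\otimes d_i = 0$ : le premier critère de la proposition \ref{conditions equiv unipotent} montre que $\D$ est unipotent, donc est un sous-$(\phi,N_\nabla)$-module unipotent de rang $d$ de $D_{\tau,\mathrm{rig}}^\dagger(V)[1/\lambda]$. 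Dans le cas cristallin $N = 0$ sur $D$, donc $\D = \B_{\tau,\mathrm{rig},K}^\dagger\otimes_F D$ admet $(1\otimes d_i)$ comme base annulée par $N_\nabla$, et $\D$ est trivial.

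Pour la réciproque, partant d'un sous-$(\phi,N_\nabla)$-module unipotent (resp.\ trivial) $\mathfrak{M}$ de rang $d$ de $D_{\tau,\mathrm{rig}}^\dagger(V)[1/\lambda]$, de sorte que $\mathfrak{M}[1/\lambda] = D_{\tau,\mathrm{rig}}^\dagger(V)[1/\lambda]$, le premier critère de la proposition \ref{conditions equiv unipotent} fournit $e_0,\cdots,e_{d-1} \in \mathfrak{M}\otimes_{\B_{\tau,\mathrm{rig},K}^\dagger}\B_{\tau,\log,K}^\dagger$ (resp.\ $\in \mathfrak{M}$) tués par $N_\nabla$ et formant une base sur $\B_{\tau,\log,K}^\dagger[1/\lambda]$ (resp.\ $\B_{\tau,\mathrm{rig},K}^\dagger[1/\lambda]$). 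L'étalité de $D_{\tau,\mathrm{rig}}^\dagger(V)$ entraîne qu'après extension des scalaires les $e_i$ forment une base de $\Bt_{\log}^\dagger[1/\lambda]\otimes_\Qp V$ (resp.\ $\Bt_{\mathrm{rig}}^\dagger[1/\lambda]\otimes_\Qp V$). Le lemme \ref{D_tau est loc ana} et le caractère pro-analytique de $\log[\tilde\pi]$ montrent que les $e_i$ sont localement analytiques pour $G_\infty$ ; ils sont invariants sous $\Gal(L/K_\infty)$ (car $D_{\tau,\mathrm{rig}}^\dagger(V) \subset \tilde D_{\mathrm{rig},L}^\dagger(V)^{\gamma=1}$, et $\B_{\tau,\mathrm{rig},K}^\dagger$ et $\log[\tilde\pi]$ le sont) et sous $H_{\tau,K}$. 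De $N_\nabla(e_i) = 0$ on tire $\nabla_\tau(e_i) = 0$, puis, les $e_i$ étant localement analytiques, $\tau^{p^m}(e_i) = \exp(p^m\nabla_\tau)(e_i) = e_i$ pour $m$ assez grand. Chaque $e_i$ est donc fixé par le sous-groupe fermé $\Gamma_m$ de $G_\infty$ engendré topologiquement par $\Gal(L/K_\infty)$ et $\tau^{p^m}$ : comme $c(\tau) = 1$, la structure de produit semi-direct donne $\Gamma_m = \Gal(L/K_m)$ si $p \neq 2$ ou $K_\infty\cap K_{\mathrm{cycl}} = K$, et, par la proposition \ref{p=2=pb}, un sous-groupe d'indice au plus $2$ de $\Gal(L/K_m)$ dans le cas restant ; dans tous les cas $L^{\Gamma_m}$ est une extension finie de $K$ contenue dans $K_\infty$, donc incluse dans un $K_n$, et les $e_i$ sont alors fixés par $\Gal(L/K_n)$, c'est-à-dire par $\G_{K_n}$. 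Les $e_i$ engendrant $\Bt_{\log}^\dagger[1/t]\otimes_\Qp V$ (resp.\ $\Bt_{\mathrm{rig}}^\dagger[1/t]\otimes_\Qp V$) sur $\Bt_{\log}^\dagger[1/t]$ (resp.\ $\Bt_{\mathrm{rig}}^\dagger[1/t]$), puisque $\Bt_{\log}^\dagger[1/t] = \Bt_{\log}^\dagger[1/\lambda]$ et de même pour $\Bt_{\mathrm{rig}}^\dagger$, on en déduit que $V_{|\G_{K_n}}$ est $\Bt_{\log}^\dagger[1/t]$-admissible (resp.\ $\Bt_{\mathrm{rig}}^\dagger[1/t]$-admissible), donc semi-stable (resp.\ cristalline) d'après la discussion suivant la proposition \ref{log+ = logdagger}.

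L'étape la plus délicate me semble être, dans la réciproque, le passage de l'horizontalité des $e_i$ pour $N_\nabla$ à leur invariance sous un $\G_{K_n}$ : il faut à la fois légitimer l'égalité $\tau^{p^m} = \exp(p^m\nabla_\tau)$ sur les $e_i$ uniformément en $i$ (via leur analyticité locale, qui impose un rayon commun) et identifier le sous-groupe de $G_\infty$ engendré par $\Gal(L/K_\infty)$ et $\tau^{p^m}$, le cas $p = 2$ nécessitant les précautions usuelles de la proposition \ref{p=2=pb}. Un point secondaire, dans le sens direct, est de suivre soigneusement le twist de Frobenius lié à l'identification $\B_{\tau,\mathrm{rig},K_n}^\dagger = \phi^{-n}(\B_{\tau,\mathrm{rig},K}^\dagger)$ pour pouvoir appliquer la proposition \ref{prop foncteur vrai Dst} « au-dessus de $K_n$ ».
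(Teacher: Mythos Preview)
Your proposal is correct. The converse direction is exactly the paper's argument, fleshed out with the local-analyticity details and the $p=2$ caveat that the paper leaves implicit (it simply says ``$\log(\tau)$ agit trivialement'' on the span of the $e_i$ and concludes).

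For the forward direction you take a different route. The paper stays entirely within the Section~2 tools: it invokes (the $K_n$-variant of) Theorem~\ref{retrouver Dst Dcris phitau} to place a basis of $D_{\mathrm{st}}(V_{|\G_{K_n}})$ directly inside $D_{\tau,\log}^\dagger(V)[1/\lambda]$, and then checks condition~(1) of Proposition~\ref{conditions equiv unipotent} via the comparison isomorphism of Proposition~\ref{iso comparaison phitau}, which immediately yields the unipotent submodule (trivial when $N=0$). You instead appeal to Proposition~\ref{prop foncteur vrai Dst} ``over $K_n$'' to obtain $\cal{M}(D)=D_{\tau,\mathrm{rig}}^\dagger(V)$ and hence $\D[1/\lambda]=D_{\tau,\mathrm{rig}}^\dagger(V)[1/\lambda]$. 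This is valid, but it pulls in the heavier Section~3 machinery and, as you yourself flag, forces you to track the identification $\B_{\tau,\mathrm{rig},K_n}^\dagger \simeq \phi^{-n}(\B_{\tau,\mathrm{rig},K}^\dagger)$ to descend from the $(\phi,\tau_{K_n})$-theory back to the $(\phi,\tau)$-theory over $K$. The paper's route sidesteps this bookkeeping entirely: since the periods of $V_{|\G_{K_n}}$ already lie in the over-$K$ module $D_{\tau,\log}^\dagger(V)[1/\lambda]$ (pro-analytic vectors plus Frobenius descent, exactly as in the proof of Theorem~\ref{retrouver Dst Dcris phitau}), no change of base field is needed. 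Your approach has the virtue of exhibiting the unipotent submodule explicitly as $\D=(\B_{\tau,\log,K}^\dagger\otimes_F D)^{N=0}$, but in effect it unwinds to the same comparison statement the paper invokes directly.
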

\begin{proof}
C'est un analogue direct de \cite[Prop.5.6]{Ber02}.

Soit $V$ une représentation $p$-adique de $\G_K$. On a vu à la proposition \ref{retrouver Dst Dcris phitau} que $V$ est une représentation semi-stable de $G_{K_n}$ si et seulement si $D_{\tau,\log}^\dagger(V)[1/\lambda]$ contient $d$ éléments linéairement indépendant fixes sous $\tau^{p^n}$ qui forment alors une base de $D_{\mathrm{st}}(V_{|\G_{K_n}})$. Si $V_{|\G_{K_n}}$ est semi-stable, alors $N_\nabla$ est triviale sur le $\B_{\tau,\mathrm{rig},K}^\dagger$-module qu'ils engendrent, et donc satisfont la première condition de la proposition précédente en raison du théorème de comparaison 
$$D_{\mathrm{st}}(V) \otimes_F \B_{\tau,\log,K}^\dagger[1/\lambda] = D_{\tau,\mathrm{rig}}^\dagger(V) \otimes_{\B_{\tau,\mathrm{rig},K}^\dagger}\B_{\tau,\log,K}^\dagger[1/\lambda].$$

Le deuxième point de la proposition précédente fournit alors un sous $(\phi,N_\nabla)$-module unipotent de $D_{\tau,\mathrm{rig},K}^\dagger(V)[1/\lambda]$.

Réciproquement, si $D_{\tau,\mathrm{rig}}^\dagger(V)[1/\lambda]$ contient un sous $(\phi,N_\nabla)$-module unipotent, alors les $e_0,\cdots,e_{d-1}$ engendrent un $F$-espace vectoriel sur lequel $\log(\tau)$ agit trivialement, de sorte que les $e_i$ sont stables par $\tau^{p^n}$ pour $n$ assez grand et forment alors une base de $D_{\mathrm{st}}(V_{|\G_{K_n}})$.

De plus $V$ est cristalline si et seulement si on peut choisir les $f_i$ tels que $N_\nabla(f_i)=0$ et la connexion est alors triviale.
\end{proof}

\begin{prop}
Soit $V$ une représentation $p$-adique de dimension $d$. Alors $V$ est de $E$-hauteur finie si et seulement si $D_{\tau,\mathrm{rig}}^+(V)$ est unipotent de rang $d$.
\end{prop}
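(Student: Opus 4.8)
The plan is to derive the equivalence from the characterisation of potentially semi-stable representations already obtained (Proposition \ref{carac cristal unipotent} and Theorem \ref{theo Gao}), together with Kisin's equivalence and the compatibility of $\cal{M}$ with Kisin's functor. Throughout I use that $K_n \subset K_\infty$, so $H_{\tau,K_n} = H_{\tau,K}$ and the underlying $\phi$-module of $D_{\tau,\mathrm{rig}}^\dagger(V_{|\G_{K_n}})$ over $\B_{\tau,\mathrm{rig},K}^\dagger$ coincides with that of $D_{\tau,\mathrm{rig}}^\dagger(V)$ (only the $\tau$-action is renormalised); the same holds for $D_{\tau,\mathrm{rig}}^+$ and for $N_\nabla$, which depends only on the action of $\tau^{\Zp}$.

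First, suppose $V$ is of finite $E$-height. By Theorem \ref{theo pcpal Gao}, $N_\nabla$ stabilises $D_{\tau,\mathrm{rig}}^+(V)$, which is then a $(\phi,N_\nabla)$-module of finite $E$-height on $\B_{\tau,\mathrm{rig},K}^+$ of rank $d$; by Theorem \ref{thm pcpal de Kisin} it is of the form $\mathfrak{M}(D)$ for an effective $(\phi,N)$-filtered module $D$ with $\dim_F D = d$, admissible because $\cal{M}(D) = \B_{\tau,\mathrm{rig},K}^\dagger\otimes_{\B_{\tau,\mathrm{rig},K}^+}\mathfrak{M}(D) = D_{\tau,\mathrm{rig}}^\dagger(V)$ is étale (Propositions \ref{étend foncteur Kisin} and \ref{admissible = étale}). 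If $d_1,\dots,d_d$ is a basis of $D$ over $F$, the elements $1\otimes d_i$ are killed by $N_\nabla$ (which acts as $N_\nabla\otimes 1$), they span $\B_{\tau,\log,K}^+[1/\lambda]\otimes_{\B_{\tau,\mathrm{rig},K}^+}\mathfrak{M}(D) = \B_{\tau,\log,K}^+[1/\lambda]\otimes_F D$ over $\B_{\tau,\log,K}^+[1/\lambda]$, and since $\Fil^0 D_K = D_K$ (effectivity) Lemma \ref{lemme Kisin strucFil} gives $\iota_n(1\otimes d_i)\in\Fil^0(\tilde Y_n\otimes_K D_K)$ for every $n$, so $1\otimes d_i\in\mathfrak{M}(D) = D_{\tau,\mathrm{rig}}^+(V)$ by the definition of $\mathfrak{M}(D)$ preceding Lemma \ref{phiNeffphinab}. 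Hence the $1\otimes d_i$ realise the first condition of Proposition \ref{conditions equiv unipotent} for $D_{\tau,\mathrm{rig}}^+(V)$, which is therefore unipotent of rank $d$.

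Conversely, assume $D_{\tau,\mathrm{rig}}^+(V)$ is unipotent of rank $d$. Then $D_{\tau,\mathrm{rig}}^+(V)[1/\lambda]$ is a unipotent $(\phi,N_\nabla)$-submodule of rank $d$ of $D_{\tau,\mathrm{rig}}^\dagger(V)[1/\lambda]$, so Proposition \ref{carac cristal unipotent} provides $n\geq 0$ with $W:=V_{|\G_{K_n}}$ semi-stable. Working over $\G_{K_n}$, Propositions \ref{retrouver Dst Dcris phitau} and \ref{prop foncteur vrai Dst} identify $D_{\mathrm{st}}(W)$ with the space of $N_\nabla$-horizontal vectors of $D_{\tau,\mathrm{rig}}^\dagger(V)\otimes_{\B_{\tau,\mathrm{rig},K}^\dagger}\B_{\tau,\log,K}^\dagger[1/\lambda]$, an $F$-vector space of dimension $d$; since the first condition of Proposition \ref{conditions equiv unipotent} supplies $d$ independent horizontal vectors $e_1,\dots,e_d$ already lying in $D_{\tau,\mathrm{rig}}^+(V)\otimes_{\B_{\tau,\mathrm{rig},K}^+}\B_{\tau,\log,K}^+$, these form a basis of $D_{\mathrm{st}}(W)$, whence $1\otimes D_{\mathrm{st}}(W)\subset D_{\tau,\mathrm{rig}}^+(V)\otimes_{\B_{\tau,\mathrm{rig},K}^+}\B_{\tau,\log,K}^+$ and $\B_{\tau,\log,K}^+[1/\lambda]\otimes_F D_{\mathrm{st}}(W) = D_{\tau,\mathrm{rig}}^+(V)\otimes_{\B_{\tau,\mathrm{rig},K}^+}\B_{\tau,\log,K}^+[1/\lambda]$. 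It then suffices to show $D_{\mathrm{st}}(W)$ is effective, i.e. that $W$ has negative Hodge-Tate weights, for then $V$ is of finite $E$-height by Theorem \ref{theo Gao}. To obtain this I would apply $\iota_n$ for $n$ large: since $D_{\tau,\mathrm{rig}}^+(V)$ is a module over $\B_{\tau,\mathrm{rig},K}^+$ (no poles), $\iota_n$ of a model of it is an $X_n$-lattice inside the $j=0$ summand $X_n\otimes_K D_K^n$ of $\Fil^0(Y_n\otimes_K D_K^n) = X_n\otimes_{\B_{\tau,\mathrm{rig},K}^{\dagger,r}}^{\iota_n}D_{\tau,\mathrm{rig}}^\dagger(V)_r$ (Proposition \ref{prop foncteur vrai Dst}); combining this with $\lambda^m\cdot D_{\tau,\mathrm{rig}}^\dagger(V)_r \subset \B_{\tau,\mathrm{rig},K}^{\dagger,r}\otimes_{\B_{\tau,\mathrm{rig},K}^+}D_{\tau,\mathrm{rig}}^+(V)_r$ for $m$ large and the description of $\Fil^0(Y_n\otimes_K D_K^n)$ in Lemma \ref{lemme Kisin strucFil} forces the jumps of $\Fil^\bullet D_K$ to be $\geq 0$; equivalently one verifies $\Bdrplus\otimes_{X_n}\iota_n(D_{\tau,\mathrm{rig}}^+(V)_r) = \Fil^0(\Bdr\otimes_F D_{\mathrm{st}}(W))$ via Lemma \ref{lemm W=Fil0} and reads off the sign of the weights.

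The main obstacle is this last point: passing from the fact that $D_{\tau,\mathrm{rig}}^+(V)$ is defined over $\B_{\tau,\mathrm{rig},K}^+$ (and not merely over $\B_{\tau,\mathrm{rig},K}^\dagger$) to the positivity of the filtration on $D_{\mathrm{st}}(W)$, which requires a careful $\iota_n$-by-$\iota_n$ comparison of lattices with the filtration as in Lemmas \ref{lemme Kisin strucFil} and \ref{lemm W=Fil0}. Everything else is formal once Proposition \ref{carac cristal unipotent}, Theorem \ref{theo Gao}, and the compatibility of $\cal{M}$ with Kisin's $\mathfrak{M}$ (Propositions \ref{étend foncteur Kisin} and \ref{prop foncteur vrai Dst}) are available.
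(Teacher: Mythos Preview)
Your forward direction is essentially the paper's argument filtered through Kisin's theorem, but there is a slip: the claim $1\otimes d_i\in\mathfrak{M}(D)$ is false in the non-cristalline case, because $\mathfrak{M}(D)$ is by definition contained in $(\B_{\tau,\log,K}^+[1/\lambda]\otimes_F D)^{N=0}$ and $N(1\otimes d_i)=1\otimes N(d_i)$ need not vanish. What you actually need (and what Proposition~\ref{conditions equiv unipotent} asks for) is $1\otimes d_i\in \B_{\tau,\log,K}^+\otimes_{\B_{\tau,\mathrm{rig},K}^+}\mathfrak{M}(D)$, which does hold: by effectivity $(\B_{\tau,\log,K}^+\otimes_F D)^{N=0}\subset\mathfrak{M}(D)$, and the analogue over $\B_{\tau,\log,K}^+$ of the lemma $\B_{\tau,\log,K}^\dagger\otimes_{\B_{\tau,\mathrm{rig},K}^\dagger}\D=\B_{\tau,\log,K}^\dagger\otimes_F D$ then gives $\B_{\tau,\log,K}^+\otimes_{\B_{\tau,\mathrm{rig},K}^+}\mathfrak{M}(D)\supset\B_{\tau,\log,K}^+\otimes_F D$. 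The paper does this step differently, first establishing the comparison $\B_{\tau,\log,K}^+[1/\lambda]\otimes_{\B_{\tau,K}^+}D_\tau^+(V)=\B_{\tau,\log,K}^+[1/\lambda]\otimes_F D_{\mathrm{st}}(V_{|\G_{K_n}})$ directly from Remarque~\ref{rema isocomparaisonst} and the $E$-hauteur finie hypothesis (which makes $\phi^n$ invertible after inverting $\lambda$), and reading off the first condition of Proposition~\ref{conditions equiv unipotent} from there.

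For the converse you are making your life much harder than necessary. The paper's argument for the negativity of the Hodge--Tate weights is one line and avoids all $\iota_n$-localisation: the horizontal elements $e_0,\dots,e_{d-1}$ you produced lie in $D_{\tau,\mathrm{rig}}^+(V)\otimes_{\B_{\tau,\mathrm{rig},K}^+}\B_{\tau,\log,K}^+\subset\Bt_{\log}^+\otimes_{\Qp}V$; since they are fixed by $\tau^{p^n}$ and by $H_{\tau,K}$ they lie in $(\Bt_{\log}^+\otimes_{\Qp}V)^{\G_{K_n}}=D_{\mathrm{st}}^+(V_{|\G_{K_n}})$, and as they already span a $d$-dimensional $F$-space one gets $D_{\mathrm{st}}(V_{|\G_{K_n}})=D_{\mathrm{st}}^+(V_{|\G_{K_n}})$, hence negative weights, and Theorem~\ref{theo Gao} concludes. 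Your proposed route via $\Bdrplus$-lattices and Lemma~\ref{lemm W=Fil0} is not wrong in principle, but it is circuitous and the ``main obstacle'' you flag simply disappears with this observation.
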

\begin{proof}
Le théorème \ref{theo Gao} montre que $V$ est de $E$-hauteur finie si, et seulement si, il existe $n \geq 0$ tel que $V_{|\G_{K_n}}$ est semi-stable à poids négatifs. En particulier, la preuve sera presque identique à celle de la proposition \ref{carac cristal unipotent}. 

On a vu à la proposition \ref{retrouver Dst Dcris phitau} que $V$ est une représentation semi-stable de $G_{K_n}$ à poids négatifs si et seulement si $D_{\tau,\log}^+(V)$ contient $d$ éléments linéairement indépendant fixes sous $\tau^{p^n}$ qui forment alors une base de $D_{\mathrm{st}}(V_{|\G_{K_n}})$. En particulier, $N_\nabla$ est triviale sur le $\B_{\tau,\mathrm{rig},K}^\dagger$-module qu'ils engendrent. Si on note $(e_0,\cdots,e_{d_1})$ ces éléments, il reste à montrer qu'ils satisfont 
$$\bigoplus e_i\B_{\tau,\log,K}^+[1/\lambda] = D \otimes_{\B_{\tau,\mathrm{rig},K}^+}\B_{\tau,\log,K}^+[1/\lambda].$$

La remarque \ref{rema isocomparaisonst} montre que 
$$\B_{\tau,\log,K,\infty}^+ \otimes_{\B_{\tau,K}^+}D_{\tau}^+(V) =  \B_{\tau,\log,K,\infty}^+ \otimes_F D_{\mathrm{st}}(V),$$
et donc en particulier, il existe $n \geq 0$ tel que 
$$D_{\tau,\log}^+(V) \subset \B_{\tau,\log,K,n}^+ \otimes_F D_{\mathrm{st}}(V),$$
et donc
$$\phi^n(D_{\tau,\log}^+(V)) \subset \B_{\tau,\log,K}^+ \otimes_F D_{\mathrm{st}}(V).$$

Mais comme $V$ est de $E$-hauteur finie et que $\phi(\lambda) = \frac{E(0)}{E(u)}\lambda$, on en déduit que 
$$\B_{\tau,\log,K}^+ \otimes_{\phi^n(\B_{\tau,\log,K}^+)}  \phi^n(D_{\tau,\log}^+(V)[\frac{1}{\lambda}]) =  D_{\tau,\log}^+(V)[\frac{1}{\lambda}].$$
On a donc un isomorphisme de comparaison 
$$\B_{\tau,\log,K}^+[1/\lambda] \otimes_{\B_{\tau,K}^{\dagger}}D_{\tau}^+(V) =  \B_{\tau,\log,K}^+[1/\lambda] \otimes_F D_{\mathrm{st}}(V).$$

Comme les $e_i$ forment une base de $D_{\mathrm{st}}(V)$, cela montre qu'ils satisfont la condition 
$$\bigoplus e_i\B_{\tau,\log,K}^+[1/\lambda] = D \otimes_{\B_{\tau,\mathrm{rig},K}^+}\B_{\tau,\log,K}^+[1/\lambda].$$
Le deuxième point de la proposition \ref{conditions equiv unipotent} fournit alors un sous $(\phi,N_\nabla)$-module unipotent de $D_{\tau,\mathrm{rig},K}^+(V)$.

Réciproquement, si $D_{\tau,\mathrm{rig}}^+(V)$ est unipotent, alors les $e_0,\cdots,e_{d-1}$ engendrent un $F$-espace vectoriel sur lequel $\log(\tau)$ agit trivialement, de sorte que les $e_i$ sont stables par $\tau^{p^n}$ pour $n$ assez grand et forment alors une base de $D_{\mathrm{st}}(V_{|\G_{K_n}})$, et ces éléments sont dans $\B_{\tau,\log,K}^+ \otimes_{\B_{\tau,K}^+}D_{\tau}^+(V) \subset \Bt_{\log}^+ \otimes V$, de sorte que $V$ est à poids négatifs. On en déduit donc que $V$ est de $E$-hauteur finie par le théorème \ref{theo Gao}.
\end{proof}

\subsection{Passage du $(\phi,\tau)$-module d'une représentation à son $(\phi,\tau')$-module}
Soit $K_\infty/K$ une extension de Kummer déterminée par le choix d'une uniformisante $\pi \in \O_K$ et d'un élément $\tilde{\pi} \in \Etplus$, et soit $K_\infty'$ une autre extension de Kummer, associée à une uniformisante $\pi'$ (on n'exclut pas le choix $\pi=\pi'$) et un élément $\tilde{\pi}' \in \Etplus$. On note aussi $L' = K_\infty \cdot K_\infty'$. On va dans cette section construire un anneau $\B_{\tau,\tau',K}$ permettant de faire le lien entre le $(\phi,\tau)$-module et le $(\phi,\tau')$-module attachés à une représentation $V$. 

Soit $\E_{\tau,\tau',K,0}=k(\!(\tilde{\pi},\tilde{\pi}')\!) \subset \Et$, et soit $\E_{\tau,\tau'}$ la clôture séparable de $\E_{\tau,\tau',K,0}$ dans $\Et$. On note $\A_{\tau,\tau',K,0}$ le complété $p$-adique du localisé de $\O_F[\![ [\tilde{\pi}],[\tilde{\pi}'] ]\!]$, qui est donc un anneau de Cohen pour $\E_{\tau,\tau',K,0}$, et on pose $\B_{\tau,\tau',K,0}=\A_{\tau,\tau',K,0}[1/p]$. On définit $\A_{\tau,\tau'}$ comme le complété $p$-adique de l'unique extension étale infinie de $\A_{\tau,\tau',K,0}$ incluse dans $\At$ et dont le corps résiduel s'identifie à $\E_{\tau,\tau'}$. On pose alors $\A_{\tau,\tau',K} = \A_{\tau,\tau'}^{\Gal(\overline{K}/L'}$ et $\B_{\tau,\tau',K}=\A_{\tau,\tau',K}[1/p]$.

\begin{prop}
Soit $V$ une représentation $p$-adique de $\G_K$. Alors on a 
$$\B_{\tau,\tau',K} \otimes_{\B_{\tau,K}}D(V) = \B_{\tau,\tau',K} \otimes_{\B_{\tau',K}}D'(V).$$
De plus, on peut récupérer $D(V)$ en fonction de $D'(V)$ \textit{via} la formule
$$D(V) = (\B_{\tau,\tau',K} \otimes_{\B_{\tau',K}}D'(V))^{H_{\tau,K}}.$$
\end{prop}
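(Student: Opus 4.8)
Le plan est de procéder par descente galoisienne à partir de l'isomorphisme fondamental de la théorie de Fontaine, tout le raisonnement ayant lieu à l'intérieur de $\Bt \otimes_{\Qp} V$. Je commencerai par établir les propriétés de l'anneau $\B_{\tau,\tau',K}$ qui viennent d'être construit et qui vont servir de base. Comme $\E_{\tau,K} = k(\!(\tilde{\pi})\!)$ et $\E_{\tau',K} = k(\!(\tilde{\pi}')\!)$ sont contenus dans $\E_{\tau,\tau',K,0}$, et qu'un élément séparable algébrique sur un corps l'est encore sur toute extension, les clôtures séparables dans $\Et$ vérifient $\E_\tau \subset \E_{\tau,\tau'}$ et $\E_{\tau'} \subset \E_{\tau,\tau'}$ ; par hensélianité les extensions étales correspondantes s'emboîtent, et après complétion $p$-adique on obtient $\B_\tau \subset \B_{\tau,\tau'}$ et $\B_{\tau'} \subset \B_{\tau,\tau'}$, d'où aussi $\B_{\tau,K} \subset \B_{\tau,\tau',K}$ et $\B_{\tau',K} \subset \B_{\tau,\tau',K}$ (un élément de $\A_{\tau,K}$ est fixé par $H_{\tau,K}$, donc par $H_{\tau,\tau',K} := \Gal(\overline{K}/L')$, donc appartient à $\A_{\tau,\tau'}^{H_{\tau,\tau',K}} = \A_{\tau,\tau',K}$). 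Enfin, comme $\A_{\tau,\tau',K} = \A_{\tau,\tau'}^{H_{\tau,\tau',K}}$ par définition et que l'inversion de $p$ commute aux invariants, on a $\B_{\tau,\tau',K} = \B_{\tau,\tau'}^{H_{\tau,\tau',K}}$.

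Pour la première égalité, je pars des isomorphismes fondamentaux $\B_\tau \otimes_{\B_{\tau,K}} D(V) \simeq \B_\tau \otimes_\Qp V$ et $\B_{\tau'} \otimes_{\B_{\tau',K}} D'(V) \simeq \B_{\tau'} \otimes_\Qp V$, où le groupe agit diagonalement sur le membre de droite et à travers l'anneau de coefficients seulement sur le membre de gauche (puisque $D(V) = (\B_\tau \otimes_\Qp V)^{H_{\tau,K}}$ est fixé par $H_{\tau,K}$, et de même pour $D'(V)$). En étendant les scalaires le long de $\B_\tau \hookrightarrow \B_{\tau,\tau'}$ et $\B_{\tau'} \hookrightarrow \B_{\tau,\tau'}$, j'obtiens, à l'intérieur de $\B_{\tau,\tau'} \otimes_\Qp V$,
$$\B_{\tau,\tau'} \otimes_{\B_{\tau,K}} D(V) = \B_{\tau,\tau'} \otimes_\Qp V = \B_{\tau,\tau'} \otimes_{\B_{\tau',K}} D'(V),$$
le groupe $H_{\tau,\tau',K} = \G_{L'}$ agissant diagonalement. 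Comme $\G_{L'}$ est contenu à la fois dans $\G_{K_\infty} = H_{\tau,K}$ et dans $\G_{K_\infty'} = H_{\tau',K}$, cette action est triviale sur $D(V)$ et sur $D'(V)$ ; en prenant les invariants sous $H_{\tau,\tau',K}$ (ce qui commute à l'extension des scalaires libre de type fini, en choisissant une base), et en utilisant $\B_{\tau,\tau'}^{H_{\tau,\tau',K}} = \B_{\tau,\tau',K}$, je trouve
$$\B_{\tau,\tau',K} \otimes_{\B_{\tau,K}} D(V) = (\B_{\tau,\tau'} \otimes_\Qp V)^{H_{\tau,\tau',K}} = \B_{\tau,\tau',K} \otimes_{\B_{\tau',K}} D'(V),$$
ce qui est la première formule.

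Pour la seconde formule, j'utilise la première pour remplacer $\B_{\tau,\tau',K} \otimes_{\B_{\tau',K}} D'(V)$ par $\B_{\tau,\tau',K} \otimes_{\B_{\tau,K}} D(V)$, module qui se plonge dans $\Bt \otimes_\Qp V$ — l'injectivité venant de ce qu'une $\B_{\tau,K}$-base de $D(V)$ est, par l'isomorphisme de Fontaine, une $\Bt$-base de $\Bt \otimes_\Qp V$ — sur lequel $H_{\tau,K} = \G_{K_\infty}$ agit. Comme une telle base $(e_i)$ de $D(V)$ est fixée par $H_{\tau,K}$, un élément $\sum_i b_i e_i$ avec $b_i \in \B_{\tau,\tau',K}$ est $H_{\tau,K}$-invariant si et seulement si chaque $b_i$ est fixé par $H_{\tau,K}$ dans $\Bt$, c'est-à-dire $b_i \in \B_{\tau,\tau',K} \cap \Bt^{H_{\tau,K}}$. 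L'identité clé $\B_{\tau,\tau',K} \cap \Bt^{H_{\tau,K}} = \B_{\tau,K}$ donne alors $(\B_{\tau,\tau',K} \otimes_{\B_{\tau,K}} D(V))^{H_{\tau,K}} = D(V)$, soit la seconde formule.

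Le point délicat sera précisément cette dernière identité de corps d'invariants. L'inclusion $\supset$ est immédiate, car $\B_{\tau,K} = \B_\tau^{H_{\tau,K}}$. Pour l'inclusion réciproque, on se ramène, par séparation et approximation $p$-adiques, à l'énoncé modulo $p$ : le corps des invariants de $\E_{\tau,\tau'}$ sous $\G_{K_\infty}$ est $\E_{\tau,K} = k(\!(\tilde{\pi})\!)$. C'est un argument de type Ax--Sen--Tate pour le corps des normes à deux variables $\E_{\tau,\tau'}$ : le sous-groupe $\G_{K_\infty}$ fixe $\tilde{\pi}$ mais agit sur $\tilde{\pi}'$ par $g(\tilde{\pi}') = \epsilon^{c'(g)} \tilde{\pi}'$, où $c'$ est le cocycle de Kummer de $K_\infty'/K$ ; à partir d'un élément $x$ séparable algébrique sur $k(\!(\tilde{\pi},\tilde{\pi}')\!)$ et fixé par tout $\G_{K_\infty}$, on doit en déduire, en exploitant l'existence d'un $g$ avec $c'(g) \neq 0$ et le fait que $\epsilon^a \neq 1$ pour $a \in \Zp \setminus \{0\}$, que $x$ est en réalité séparable algébrique sur $k(\!(\tilde{\pi})\!)$ et fixé par $\G_{K_\infty}$, donc que $x \in \E_\tau^{\G_{K_\infty}} = \E_{\tau,K}$ d'après le théorème du corps des normes. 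Le reste — extension des scalaires de Fontaine, commutation des invariants aux modules libres de type fini, et les plongements dans $\Bt \otimes_\Qp V$ — est de pure routine.
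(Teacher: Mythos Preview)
Your argument follows the paper's approach exactly: start from the Fontaine comparison $\B_\tau \otimes_\Qp V \simeq \B_\tau \otimes_{\B_{\tau,K}} D(V)$, extend scalars to $\B_{\tau,\tau'}$, and then take $\G_{L'}$-invariants for the first formula and $H_{\tau,K}$-invariants for the second. You have in fact been more careful than the paper, which writes the intermediate object somewhat loosely as $\B_{\tau,\tau',K}\otimes_\Qp V$ and dispatches the second formula in a single phrase (``en prenant les invariants sous $H_{\tau,K}$'') without isolating the identity $\B_{\tau,\tau',K}\cap\Bt^{H_{\tau,K}}=\B_{\tau,K}$ that you correctly flag as the crux; your reduction of that identity to characteristic $p$ and your outline of an Ax--Sen--Tate-type argument remain a sketch rather than a complete proof, but the paper does not address this point at all.
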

\begin{proof}
Par la théorie des $(\phi,\tau)$-modules (voir \cite[Lemm. 1.20]{Car12}), on a 
$$\B_\tau \otimes_{\Qp}V \simeq \B_\tau \otimes_{\B_{\tau,K}}D_\tau(V),$$
de sorte qu'en tensorisant par $\B_{\tau,\tau'}$, on obtient
$$\B_{\tau,\tau',K} \otimes_{\B_{\tau,K}}D(V) = \B_{\tau,\tau',K} \otimes_{\Qp}V = \B_{\tau,\tau',K} \otimes_{\B_{\tau',K}}D'.$$
La deuxième partie de la proposition s'obtient en prenant les invariants sous $H_{\tau,K}$.
\end{proof}

Cette méthode pour récupérer le $(\phi,\tau)$-module $D(V)$ à partir de $D'(V)$ est loin d'être explicite, notamment parce que l'anneau $\B_{\tau,\tau',K}$ ne possède pas de description satisfaisante. Si on considère les modules définis sur les anneaux de Robba $D_{\mathrm{rig}}^\dagger$ et $D_{\mathrm{rig}}'^{\dagger}$, une stratégie analogue à celle de \cite{GP18} devrait pouvoir montrer que les deux modules deviennent les mêmes en tant que $(\Bt_{\mathrm{rig},L'}^\dagger)^{\pa}$, et que les anneaux $(\Bt_{\mathrm{rig},L'}^I)^{\la}$ sont les perfectisés d'anneaux de séries en deux variables qui correspondent à des extensions algébriques de $\B_{\tau,\tau',K,0}^\dagger$. Il n'existe cependant pas, à la connaissance de l'auteur, de résultats analogues à ceux de Kedlaya dans \cite{slopes} pour des anneaux de Robba en plusieurs variables qui permettraient de descendre de modules sur ces extensions algébriques à des modules sur $\B_{\tau,\tau',\mathrm{rig},K,0}^\dagger$, ni de modules sur $\B_{\tau,\tau',\mathrm{rig},K,0}^\dagger$ à des modules sur $\B_{\tau,\tau',K,0}^\dagger$. Avec de tels résultats, on pourrait peut-être montrer que les deux modules deviennent les mêmes sur $\B_{\tau,\tau',K,0}^\dagger$.

Il semble néanmoins compliqué dans le cas général de donner une méthode explicite permettant de passer de $D(V)$ à $D'(V)$, puisque comme le montre la proposition suivante, dûe à Gee et Liu, une telle méthode ne peut pas respecter la notion de $E$-hauteur~:

\begin{prop}
Soit $V$ une représentation qui est de $E$-hauteur finie pour tout $E$. Alors $V$ est semi-stable.
\end{prop}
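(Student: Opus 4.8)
The statement says: if $V$ is a $p$-adic representation which is of $E$-height finite for every choice of uniformizer $\pi$ (equivalently, for every Kummer extension $K_\infty/K$), then $V$ is semi-stable. The strategy is to combine the characterization of finite $E$-height just proved (Theorem \ref{theo Gao}) with the fact that semi-stability can be tested after base change to infinitely many distinct Kummer extensions, and that the intersection of the corresponding open subgroups $\G_{K_n}$ is as small as we need.

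First I would use Theorem \ref{theo Gao}: for each uniformizer $\pi$ with associated Kummer tower $K_\infty^{(\pi)}/K$, the hypothesis that $V$ is of $E_\pi$-height finite implies that $V_{|\G_{K_s^{(\pi)}}}$ is semi-stable (with non-positive Hodge--Tate weights, after a twist), where $s$ is the largest integer with $\zeta_{p^s} \in K^{\mathrm{unr}}$; note $s$ depends only on $K$, not on $\pi$. So $V$ becomes semi-stable over $K_s^{(\pi)} = K^{\mathrm{unr}} \cap K(\pi_{p^s})$-type extensions — more precisely over $K(\pi_s)$ for the chosen $\pi$. The key observation is then that $V$ is semi-stable over a whole collection of finite totally ramified extensions $K(\pi_s)$, one for each uniformizer $\pi$, and these extensions can be chosen to be pairwise linearly disjoint (or at least to have trivial common intersection beyond $K$), for instance by taking $\pi' = u\pi$ for suitable units $u \in \O_K^\times$ so that the fields $K(\pi_s)$ and $K(\pi'_s)$ differ.

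Next I would invoke the standard fact that the property ``$V$ is semi-stable'' can be detected by the condition $\dim_{K_0}(\B_{\mathrm{st}}\otimes_{\Qp}V)^{\G_M} = \dim_{\Qp}V$ for $M$ running over finite extensions, together with the transitivity/descent behavior: if $V$ is semi-stable over two finite extensions $M_1, M_2$ of $K$ with $M_1 \cap M_2 = K$ inside $\overline K$ (or more robustly, with $\Gal$-closures intersecting trivially), and both are contained in an abelian-by-finite situation, one concludes semi-stability over $K$. Actually the cleanest route is: $V$ is semi-stable over $M$ iff $V$ is de Rham over $K$ \emph{and} the restriction to $\G_M$ has trivial monodromy action in the sense that $D_{\mathrm{pst}}(V)$ is unramified over $M$; being de Rham is insensitive to the tower, and $D_{\mathrm{pst}}(V)$ carries an action of the Weil group $W_K$ through a finite quotient, so if this finite quotient is trivial on $W_{K(\pi_s)}$ for infinitely many $\pi$ whose fields have trivial common intersection, it must be trivial on all of $W_K$, giving semi-stability over $K$. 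I would first show $V$ is de Rham (it is semi-stable over some $K(\pi_s) \supset K$, hence de Rham over that field, hence de Rham over $K$ since de Rham is stable under restriction and descent along finite extensions), then pin down the monodromy.

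The main obstacle will be making the ``pairwise linear disjointness / trivial intersection'' argument precise: one must verify that the finite extensions $K(\pi_s)$, as $\pi$ ranges over all uniformizers, generate enough of $\overline K$ that the common fixed field of all the $W_{K(\pi_s)}$ inside the (finite) image of $W_K$ acting on $D_{\mathrm{pst}}(V)$ is reduced to the identity — equivalently, that the intersection $\bigcap_\pi K(\pi_s)$ (or the compositum of Galois closures) contains the extension cut out by the $\mathrm{pst}$-monodromy. A clean way is: the inertia subgroup of the finite quotient of $W_K$ through which $D_{\mathrm{pst}}(V)$ factors is killed after restriction to $\G_{K(\pi_s)}$ for \emph{each} $\pi$; since varying $\pi$ over $\O_K^\times \cdot \pi_0$ the fields $K(\pi_s)$ exhaust a cofinal family of finite subextensions of $K^{\mathrm{ab}}$-type totally ramified extensions, their intersection inside $\overline K$ over $K$ is $K$ itself, forcing the inertia image to be trivial, i.e.\ $V$ is crystalline-up-to-unramified-twist... but since we also know it is semi-stable over \emph{some} $K(\pi_s)$ and we have shown de Rham plus unramified monodromy over $K$, Fontaine's criterion gives semi-stability of $V$ over $K$. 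I would attribute the combinatorial input about exhausting totally ramified extensions to Gee--Liu as indicated, and keep the $p$-adic Hodge theory bookkeeping (de Rham $\Leftrightarrow$ potentially semi-stable, $D_{\mathrm{pst}}$ functoriality) as cited background.
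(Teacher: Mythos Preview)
The paper does not prove this statement; it simply cites \cite[Thm.~F.11]{emertongeestacks}, a result due to Gee and Liu. Your outlined strategy --- apply Theorem~\ref{theo Gao} for each choice of uniformizer to obtain semi-stability of $V$ over $K(\pi_s)$ with $s$ independent of $\pi$, then pass to $D_{\mathrm{pst}}(V)$ and argue that the finite inertia image must be trivial --- is the natural one and is, as far as I can tell, the approach taken in that reference.

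Two points to tighten in your write-up. First, the direction of your key containment is garbled: what you actually have is that the kernel $H \subset I_K$ of the inertia action on $D_{\mathrm{pst}}(V)$ contains each $I_{K(\pi_s)}$, hence by Galois correspondence $\overline{K}^{\,H} \subset \bigcap_\pi K^{\mathrm{unr}}\cdot K(\pi_s)$; the assertion to establish is that this intersection equals $K^{\mathrm{unr}}$, forcing $H = I_K$. Second, resist any temptation to reduce to finitely many uniformizers: it is \emph{false} in general that ``$V$ semi-stable over $M_1$ and over $M_2$, both totally ramified with $M_1 \cap M_2 = K$, implies $V$ semi-stable over $K$'' --- a biquadratic configuration $\Gal(M/K)\cong(\Z/2)^2$ with inertia equal to the third $\Z/2$ already gives a counterexample. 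The argument genuinely requires the full family of uniformizers, and verifying that $\bigcap_\pi K^{\mathrm{unr}}(\pi_s) = K^{\mathrm{unr}}$ is exactly the field-theoretic input you correctly defer to Gee--Liu.
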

\begin{proof}
Voir \cite[Thm. F.11]{emertongeestacks}.
\end{proof}

On signale que, le cas où $V$ est semi-stable à poids négatifs, Liu a construit (voir \cite[Thm. 2.2.1]{liudifferentuniformizers}) un certain sous-anneau de $\Atplus$ permettant de faire le lien entre $D^+(V)$ et $D'^+(V)$.

\bibliographystyle{amsalpha}
\bibliography{bibli}
\end{document}